\documentclass[a4paper,12pt]{amsart}
\usepackage{amssymb,amsmath}
\let\amslrcorner\lrcorner
\usepackage{mathabx}
\usepackage{stmaryrd}
\usepackage{amscd,amsthm,amssymb}
\usepackage{enumerate}
\usepackage{color}
\usepackage[all,cmtip]{xy}
\usepackage{hyperref}
\usepackage{mathrsfs} 

\let\lrcorner\amslrcorner

\usepackage{latexsym}

\usepackage{hyperref}
\hypersetup{
    colorlinks=true, 
    linktoc=all,     
   linkcolor=blue,  
}

\def\sideremark#1{\ifvmode\leavevmode\fi\vadjust{\vbox to0pt{\vss
\hbox to 0pt{\hskip\hsize\hskip1em%
\vbox{\hsize2cm\tiny\raggedright\pretolerance10000%
\noindent {\color{red}{#1}}\hfill}\hss}\vbox to8pt{\vfil}\vss}}}%

\def \bbR{\mathbb{R}}
\def\.{\cdot}

\def\la{\langle}
\def\ra{\rangle}

\def\t{\tilde}
\def\beq{\begin{equation}}
\def\eeq{\end{equation}}
\def\bea{\begin{eqnarray*}}
\def\eea{\end{eqnarray*}}
\def\beaa{\begin{eqnarray}}
\def\eeaa{\end{eqnarray}}
\def\ba{\begin{array}}
\def\ea{\end{array}}

\def \L{\mathscr{L}}

\def \bdel{\overline{\partial}}

\def\Ric{\mathrm{Ric}}
\def\id{\mathrm{id}}
\def\be{\begin{equation}}
\def\ee{\end{equation}}
\def\tr{\mathrm{tr}}

\def\Sym{\mathrm{Sym}}
\def\SL2{\mathfrak{sl}_2(\bbC)} 

\def\D{\mathbf{b}} 

\def\End{\mathrm{End}}

\def\vol{\mathrm{vol}}

\def\Sym{\mathrm{Sym}}

\def\LL{\mathrm{L}}
\def\grad{\mathrm{grad}}

\def \t5{\frac{1}{\sqrt{5}}}

\def \bfG{\mathbf{G}}
\def\FN{\mathrm{FN}} 

\DeclareMathOperator{\TT}{TT} 

\DeclareMathOperator{\ric}{\mathrm{ric}}

\def \bfv{\mathbf{v}}
\def \bfw{\mathbf{w}}

\DeclareMathOperator{\di}{d} 

\DeclareMathOperator{\bbC}{\mathbb{C}}

\newtheorem{pro}{Proposition}[section]
\newtheorem{conj}{Conjecture}[section]
\newtheorem{teo}[pro]{Theorem}
\newtheorem{lema}[pro]{Lemma}
\newtheorem{coro}[pro]{Corollary}
\theoremstyle{definition}
\newtheorem{defi}[pro]{Definition}
\newtheorem{rema}[pro]{Remark}

\title{Third order Einstein deformations for K\"ahler-Einstein metrics}
\author{Paul-Andi Nagy}
\address[Paul-Andi Nagy]{Center for Complex Geometry \\
Institute for Basic Science(IBS)\\
55 Expo-ro, Yuseong-gu \\
34126 Daejeon, South Korea}
\date{\today}

\begin{document}

\begin{abstract}
For compact K\"ahler manifolds $(M,g,J)$ with negative scalar curvature we study the existence problem 
for non-trivial Einstein deformations of $g$, that is small time curves $g_t$ of Einstein metrics with $g_0=g$. 
No asssumption on the complex structure $J$ is made; also we do not assume that the metrics $g_t$ are K\"ahler w.r.t. $J$. We determine explicitly 
the obstruction to third order Einstein deformation for $g$; that is we fully solve the equations 
$(\Ric^{g_t})^{(k)}(0)=0$ for $1 \leq k \le 3$ in terms of the Taylor expansion $g^{-1}g_t=\id+th_1+\tfrac{t^2}{2!}h_2+\tfrac{t^3}{3!}h_3+o(t^4)$ at $t=0$. Up to a suitable gauge transformation we show that third order integrability for the Einstein equation amounts to Maurer-Cartan type equations and polynomial identities relating the coefficients $h_3,h_2,h_1$. This result is interpreted 
in terms of the underlying complex geometry of $M$ by means of the Cayley transform of the metric $g$; the Cayley transform is also used for formulating conjectures for the higher order Einstein deformation problem.

\noindent
2020 {\it Mathematics Subject Classification}: Primary 32Q20; Secondary 
53C25, 53C55, 53C21.

\noindent{\it Keywords}: negative K\"ahler Einstein metric, Einstein deformation, complex deformation, obstruction to deformation; Kodaira-Spencer and Nijenhuis brackets, Cayley transform, gauge transformation.

\end{abstract}
\maketitle
\tableofcontents
\section{Introduction} \label{intro}
Let $(M^{2m},g,J)$ be compact and K\"ahler-Einstein with Einstein constant $E<0$. In this paper we study the Einstein 
deformation problem for the metric $g$, that is the existence of small time families of Riemannian metrics $g_t$ with 
$g_0=g$ which are Einstein with Einstein constant $E$, that is 
$$\Ric^{g_t}=E\id.$$
We {\it{do not}} assume a priori that the metrics $g_t$ are K\"ahler, nor related in any way to the complex structure $J$; also no assumption is made a priori on the complex structure $J$, in the sense we do not assume that $J$ can be deformed to a small time family $J_t$ of complex structures on $M$. Indicating with  
$\mathscr{M}_1$ the space of Riemannian metrics on $M$ having the same volume as $g$ we recall that we may reduce the problem to the case when $g_t \in \mathscr{M}_1$. For various normalisation procedures we will frequently 
use that the set of Einstein metrics $\mathscr{M}_1^{\mathrm{E}} \subseteq \mathscr{M}_1$ is naturally acted on by the gauge group $\bfG$ of volume preserving diffeomorphisms on $M$.

To the best of our knowledge the Einstein deformation problem at order $3$, that is solutions to the equation 
$$(\Ric^{g_t})^{(3)}(0)=0$$ 
in terms of the Taylor series expansion given by $g^{-1}g_t=\id+th_1+\tfrac{t^2}{2}h_2+\tfrac{t^3}{3!}h_3+o(t^4)$ at $t=0$ is widely open and no explicit information is available on the differential operators governing the theory. This paper fully solves this problem for negative K\"ahler-Einstein metrics as follows. To explain the statement of our main results we consider the vector fields 
\begin{equation} \label{norm-3i}
\begin{split}
\mathbf{X}_1(g_t)=&\delta^gh_1\\
\mathbf{X}_2(g_t)=&\delta^g(h_2-h_1^2)+\tfrac{1}{4}\di\!\tr(h_1^2)\\
\mathbf{X}_3(g_t)=&\delta^g \left (h_3-\tfrac{3}{2}\{h_1,h_2-h_1^2\}-\tfrac{3}{2}h_1^3+\tfrac{3}{8}\tr(h_1^2)h_1 \right )
+\tfrac{3}{8}\di\!\tr \{h_1,h_2-h_1^2\}
\end{split}
\end{equation} 
whenever the family of Riemannian metrics $g_t \in \mathscr{M}_1$ has Taylor series expansion given by $g^{-1}g_t=\id+th_1+\tfrac{t^2}{2!}h_2+\tfrac{t^3}{3!}h_3+o(t^4)$ at $t=0$. Here $\delta^g$ indicates the divergence operator $\delta^g :\Gamma \left ( \Sym^2TM\right ) \to \Omega^1M$.

Furthermore we consider the real version of the complex $\bdel$-operator, i.e., 
$$\bdel :\Omega^1(M,TM) \to \Gamma \left (\lambda^2_{-}(M,TM) \right )$$ 
where the real vector bundle $\lambda^2_{-}(M,TM):=\{\alpha \in \Omega^2(M,TM) : \alpha (\cdot, J \cdot)=-J\alpha(\cdot ,\cdot)\}$. 
We also consider the space 
$\mathrm{Harm}^{0,2}(M,T^{0,1}M):=\ker \bdel \cap \ker \delta^g \cap \Gamma \left (\lambda^2_{-}(M,TM) \right )$ which actually captures the harmonic representatives in the Dolbeault cohomology group $H^{0,2}(M,T^{0,1}M)$; for, 
$\mathrm{Harm}^{0,2}(M,T^{0,1}M) \otimes_{\bbR} \bbC$ is canonically isomorphic with $H^{0,2}(M,T^{1,0}M)$. Also recall that the bundle of symmetric $2$-tensors splits as 
\begin{equation} \label{typeS-i}
\Sym^{2}TM=\Sym^{2,-}TM \oplus \Sym^{2,+}TM
\end{equation}
into $J$-anti-invariant, respectively $J$-invariant components. Whenever $h \in \Gamma \left (\Sym^2TM \right )$ we indicate with $h^{\pm}$ the corresponding components with respect to this splitting. 

We also need to recall the initial data the deformation theory for Einstein metrics builds on, namely 
the space of so-called (essential) infinitesimal deformations
$$\varepsilon(g):=\{h \in \Gamma \left (\Sym^2TM \right ) \in \ker \Delta_E : 
\delta^gh=0 \ \mathrm{and} \ \tr(h)=0\}$$
where $\Delta_E$ is the Einstein operator acting on symmetric tensors. 
Indeed if $g_t \in \mathscr{M}_1$ is a family of Einstein metrics with Taylor expansion $g^{-1}g_t=\id+th_1+\tfrac{t^2}{2}h_2+\tfrac{t^3}{3!}h_3+o(t^4)$ then up to a gauge 
transformation in $\bfG$ we may assume that $h_1 \in \varepsilon(g)$(see e.g. \cite{Besse}).  Because $(M,g,J)$ is K\"ahler-Einstein with $E<0$ the space $\varepsilon(g)$ reduces to \cite{Koiso-I}
$$\varepsilon(g)=\varepsilon^{-}(g):=\{ h \in \Gamma \left (\Sym^{2,-}TM \right ) : \bdel h=0 \ \mathrm{and} \ 
\delta^gh=0\}.$$
This fact plays an important role in this paper and essentially motivates the assumption $E<0$ made in what follows. In particular the space $\varepsilon(g)$ coincides, up to gauge normalisation, with the infinitesimal 
deformation space of the complex structure $J$.
\begin{rema} \label{C-Y}
\begin{itemize}
\item[(i)] Assume that $(M,g,J)$ is compact and K\"ahler-Einstein with $E<0$. If one knows that the complex structure $J$ can be deformed to a small time family of complex structures $J_t$ on $M$ then, by work of Koiso 
\cite{Koiso-I}, the metric 
$g$ can also be deformed to a family of Einstein metrics $g_t$, in fact such that $(g_t,J_t)$ is K\"ahler.\\

\item[(ii)] For Calabi-Yau manifolds, that is when $E=0$, the deformation theory of the complex structure $J$ is unobstructed in the sense that for any $h_1 \in \varepsilon^{-}(g)$ there exists a small time family $J_t$ of complex structures on $M$ with $J_0=J$ which are tangent to $h_1$; see \cite{T1,T2}.
\item[(iii)] In relation to (i) we record that all explicit examples of negative K\"ahler-Einstein metrics in real dimension $2m \geq 6$ have 
the property that the deformation theory of the complex structure $J$ is fully unobstructed \cite{Dai}. However 
in real dimension $4$, Hans-Joachim Hein has remarked, see also \cite{S-Sw}[page 28], that there are examples of negative K\"ahler-Einstein metrics 
due to Horikawa \cite{Hori} where the complex structure $J$ is rigid; in fact the deformation theory for $J$ is obstructed to second order in these examples.
\end{itemize}
\end{rema}
With the notation thus set our main result is contained in the following 
\begin{teo} \label{main-i1}
Let $(M,g,J)$ be K\"ahler-Einstein with $E<0$ and let $g_t \in \mathscr{M}_1$ be a small time Einstein deformation of $g$ with Taylor series expansion $g^{-1}g_t=\id+th_1+\tfrac{t^2}{2!}h_2+\tfrac{t^3}{3!}h_3+o(t^4)$ at $t=0$. The following hold 
\begin{itemize}
\item[(i)] there exists a gauge transformation  $f_t \in \mathbf{G}$ with $f_0=\id$ such that the Einstein deformation $\tilde{g}_t:=f_t^{\star}g_t \in \mathscr{M}_1$ of $g$ is normalised to third order, in the sense that $\di\!\mathbf{X}_1(\tilde{g}_t)=\di\!\mathbf{X}_2(\tilde{g}_t)=\di\!\mathbf{X}_3(\tilde{g}_t)=0$
\item[(ii)] assuming that $g_t$ is normalised to third order we have that the Einstein equation up to third order for $g_t$, namely 
$(\Ric^{g_t})^{(k)}(0)=0$ for $1 \leq k \le 3$, is equivalent to the following 
\begin{itemize}
\item[a)] the $J$-invariant components of the tensors $h_2$ and $h_3$ are determined from 
\begin{equation} \label{alg-hs}
\begin{split}
&h_2^{+}=h_1^2 \ \mathrm{and} \ h_3^{+}=\tfrac{3}{2}\{h_1,h_2-h_1^2\}\\
\end{split}
\end{equation}
\item[b)] the infinitesimal Einstein deformation $h_1 \in \varepsilon(g)$ is such that the cohomology class of the Kodaira-Spencer bracket $[h_1,h_1]^c$ in $H^{0,2}(M,T^{0,1}M)$ vanishes, more precisely 
\begin{equation} \label{obs-i}
\bdel (h_2-h_1^2)+[h_1,h_1]^c=0
\end{equation}
\item[c)]
the tensor $\bdel (h_3^{-}-\tfrac{3}{2}h_1^3)+3[h_1,h_2-h_1^2]^c$ is harmonic, that is 
\begin{equation} \label{harm-I}
\bdel (h_3^{-}-\tfrac{3}{2}h_1^3)+3[h_1,h_2-h_1^2]^c \in \mathrm{Harm}^{0,2}(M,T^{0,1}M)
\end{equation}
\item[d)] the vector fields  $\mathbf{X}_1(g_t), \mathbf{X}_2(g_t)$ and $\mathbf{X}_2(g_t)$ vanish identically over $M$ that is 
$\mathbf{X}_1(g_t)=\mathbf{X}_2(g_t)=\mathbf{X}_3(g_t)=0$.
\end{itemize}
\end{itemize}
\end{teo}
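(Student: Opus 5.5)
The plan is to expand $\Ric^{g_t}$ order by order in $t$. At each order $k$ the highest coefficient $h_k$ appears only through the linearised Ricci operator $\Ric'_g$. At a K\"ahler--Einstein metric this operator splits as
$$\Ric'_g(h)=\tfrac12\Delta_E h-\delta^{g,\star}\delta^g h-\tfrac12\nabla^2\tr h .$$
All remaining terms are polynomial in $h_1,\dots,h_{k-1}$ and their covariant derivatives.

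For (i) I will build the diffeomorphisms order by order. Take $f_t$ to be the flow of a time-dependent vector field $Y_t=tY_1+t^2Y_2+t^3Y_3$, constrained to be divergence free so that $f_t\in\bfG$ and volume is preserved. Pulling back changes $h_k$ by $\mathscr{L}_{Y_k}g$, plus lower-order terms coming from the Baker--Campbell--Hausdorff expansion. Correspondingly $\mathbf{X}_k$ changes by $\delta^g\mathscr{L}_{Y_k}g$ plus already fixed quantities. The condition $\di\mathbf{X}_k=0$ then becomes a linear elliptic equation $\di\delta^g\mathscr{L}_{Y_k}g=\text{(known)}$ for divergence-free $Y_k$, i.e. for a coclosed $1$-form. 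Because $E<0$ there are no Killing fields, so this equation is solvable: by Hodge theory and the Bochner formula $\delta\mathscr{L}_Yg=\Delta Y-2EY$ on divergence-free fields, which is invertible. The combinations defining $\mathbf{X}_2,\mathbf{X}_3$ are chosen exactly so that the lower-order corrections are exact or gauge compatible.

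For (ii) I will work one order at a time.

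\emph{Order 1.} This gives $h_1\in\varepsilon(g)=\varepsilon^-(g)$ by Koiso's result.

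\emph{Order 2.} Write $(\Ric)''(0)=\Ric'_g(h_2)+Q_2(h_1,h_1)$. I will split the equation into $J$-invariant and $J$-anti-invariant parts, using that on $\Sym^{2,-}$ one has the Weitzenb\"ock identity $\Delta_E=\bdel^\star\bdel$ (up to the divergence term), and on $\Sym^{2,+}$ $\Delta_E$ is the Hodge Laplacian on $(1,1)$-forms shifted by $E$.
\begin{itemize}
\item The $J$-invariant part, after substituting $h_2=h_1^2+u$, should become an equation of the form $(\Delta-2E)u^+=\text{gauge terms}$. With $E<0$ and $\mathbf{X}_2$ closed this forces $u^+=0$ and $\mathbf{X}_2=0$; taking the trace and integrating handles the trace part.
\item The $J$-anti-invariant part should become $\bdel^\star\bigl(\bdel(h_2-h_1^2)+[h_1,h_1]^c\bigr)=0$. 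Pairing with $\bdel(h_2-h_1^2)+[h_1,h_1]^c$ and using $\bdel[h_1,h_1]^c=0$ (Jacobi/Maurer--Cartan, since $\bdel h_1=0$) gives (\ref{obs-i}).
\end{itemize}

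\emph{Order 3.} This is the same scheme with $h_3$ shifted by $\tfrac32\{h_1,h_2-h_1^2\}+\tfrac32h_1^3-\tfrac38\tr(h_1^2)h_1$.
\begin{itemize}
\item The $J$-invariant part again gives $h_3^+$ as in (\ref{alg-hs}) together with $\mathbf{X}_3=0$.
\item The anti-invariant part gives $\bdel^\star$ of the tensor in (\ref{harm-I}) equal to zero. Its $\bdel$-closedness follows from $\bdel[h_1,h_2-h_1^2]^c=-[h_1,\bdel(h_2-h_1^2)]^c=[h_1,[h_1,h_1]^c]^c=0$ by Jacobi. Closed plus coclosed gives harmonicity.
\end{itemize}
The converse direction amounts to reading these equivalences backwards.

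The main obstacle is the explicit third-order computation of $\Ric'''(0)$. The difficulty is to identify the cubic and mixed quadratic terms, after type decomposition, with $\bdel$ of $h_1^3$, with Kodaira--Spencer brackets, and with $\delta^{g,\star}$ of $\mathbf{X}_3$. I would first express the Ricci expansion via the difference tensor $\nabla^{g_t}-\nabla^g$ and rewrite brackets through the Nijenhuis/Fr\"olicher formalism, checking the $J$-invariant cancellations by using $\bdel h_1=0$ and $\delta h_1=0$ repeatedly.
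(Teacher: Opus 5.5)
Your gauge fixing for (i) is essentially the paper's Proposition~\ref{norm-2nd}. Your description of the order-$2$ equation on $\Sym^{2,-}TM$ is also correct: after gauge fixing it says $\delta^g\Phi=0$ for $\Phi:=\bdel(h_2-h_1^2)+[h_1,h_1]^c$, equivalently $\Delta_E\mathbf{h}_2=-2\delta^g[h_1,h_1]^c$.

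The gap is the conclusion you draw from this. Since $\bdel\Phi=0$, the order-$2$ equation only says that $\Phi$ is harmonic. Pairing with $\Phi$ gives $\Vert\Phi\Vert^2_{L^2}=\la\bdel(h_2-h_1^2),\Phi\ra_{L^2}+\la[h_1,h_1]^c,\Phi\ra_{L^2}=\la[h_1,h_1]^c,\Phi\ra_{L^2}$. So $\Phi$ is just the harmonic representative of the class of $[h_1,h_1]^c$, and nothing at order $2$ makes it vanish. In fact no order-$2$ argument can give \eqref{obs-i}. The Einstein equation is unobstructed to second order: for every $h_1\in\varepsilon(g)$ the right-hand side $-2\delta^g[h_1,h_1]^c$ lies in $\TT^{-}(M,g)$ and is $L^2$-orthogonal to $\varepsilon^{-}(g)$, so $\mathbf{h}_2$ always exists. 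Yet the class of $[h_1,h_1]^c$ can be non-zero, e.g.\ on Horikawa's surfaces.

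In the paper the obstruction is the solvability condition of the \emph{third}-order equation on $\Sym^{2,-}TM$, tested against the cokernel element $H=h_1$ (Theorem~\ref{main1-bN}). The argument runs as follows.
\begin{itemize}
\item The cubic terms $\mathbf{w}(h_1)+\bfv(h_1,h_1^2)$ are computed on $\Sym^{2,-}TM$ (Theorem~\ref{E3-1}).
\item The new formula expressing $\Vert[h,h]^c\Vert^2_{L^2}$ through an operator acting on $h^2$ (Theorem~\ref{NKS-I}, polarised in Corollary~\ref{N7}) converts these terms into $2\la[h_1,h_1]^c,[h_1,H]^c\ra_{L^2}$.
\item With Lemma~\ref{v-fin} the right-hand side becomes $2\la[h_1,h_2-h_1^2]^c,\bdel H\ra_{L^2}+2\la[h_1,H]^c,\Phi\ra_{L^2}-\tfrac12\la\delta^g\{h_1,h_2-h_1^2\},\delta^gH\ra_{L^2}$.
\item At $H=h_1$ the left-hand side and every term except the middle one vanish. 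This gives $\la[h_1,h_1]^c,\Phi\ra_{L^2}=0$, which together with the identity above forces $\Phi=0$.
\end{itemize}
Your plan contains neither this test against $\varepsilon^{-}(g)$ nor any analogue of the norm identity, so (b) is not proved. Your order-$3$ steps also presuppose (b):
\begin{itemize}
\item Before (b) is known, the anti-invariant equation carries the extra term $2\la[h_1,H]^c,\Phi\ra_{L^2}$, so it does not reduce to the divergence of the tensor in \eqref{harm-I} vanishing.
\item Your Jacobi argument for $\bdel$-closedness of that tensor uses \eqref{obs-i}.
\item In the paper the $J$-invariant part at order $3$ also depends on it, via Proposition~\ref{f-vf}.
\end{itemize}

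A smaller point concerns (d). Closedness of $\mathbf{X}_k$ together with the trace identity only makes $\mathbf{X}_k$ harmonic. If $b_1(M)\neq0$, pulling $g_t$ back by the flow of a harmonic (hence divergence-free) vector field $X$ changes $h_1$ by $2\delta^{\star_g}X$. It changes $\mathbf{X}_1=\delta^gh_1$ by $-2EX\neq0$ while keeping it closed. You need the exact normalisation $\mathbf{X}_k=\grad_g\varphi_k$, which is what Proposition~\ref{norm-2nd} actually provides. Your own construction yields this too, since $\Delta-2E$ is invertible on all coclosed $1$-forms, harmonic ones included.
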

We now comment on the content of Theorem \ref{main-i1} as follows. The vanishing of the cohomology class of $[h_1,h_1]^c$ in \eqref{obs-i} describes the obstruction to third order Einstein 
deformation for the metric $g$. It is the first time that the obstruction to third order Einstein deformation is computed explicitly in the literature. Also recall that the Einstein deformation theory is {\it{unobstructed}} to second order \cite{NS-E,N1}, i.e., whenever $h_1 \in \varepsilon(g)$ there exists a deformation $g_t$ of $g$ which is tangent to $h_1$ and satisfies $(\Ric^{g_t})^{(k)}(0)=0$ for $1 \leq k \le 2$; thus obstructions appear only at order $3$. 

Recall that a deformation $J_t$ of the complex structure is a small time
family of complex structures with $J_0=J$; the infinitesimal deformation space for $J_t$ coincides with 
$\varepsilon(g)$ since $E<0$. The obstruction to Einstein deformation in \eqref{obs-i} actually describes the obstruction to second order deformation of the complex structure $J$; in addition, the vanishing of the harmonic tensor $\bdel (h_3^{-}-\tfrac{3}{2}h_1^3)+3[h_1,h_2-h_1^2]^c $ from part (c) above is equivalent with saying that the deformation theory of $J$ is unobstructed to order $3$.
\begin{rema} \label{rkm-1}
By Theorem \ref{main-i1} the negative K\"ahler-Einstein metrics on Horikawa's surface \cite{Hori} (see also Remark \ref{C-Y}) are thus deformation rigid in the sense they do not admit non-trivial Einstein deformations; hence those metrics are isolated points in the moduli space $\mathscr{M}_1^{\mathrm{E}} \slash \bfG$ of Einstein metrics. See also Remark \ref{rkm-4} for a different, purely $4$-dimensional, argument.
\end{rema}
To prove \eqref{obs-i} a crucial ingredient is an entirely new --and fully explicit-- formula for the $\LL^2$-norm of the Kodaira-Spencer bracket which reads as follows.
\begin{teo} \label{NKS-I}
Let $h \in \Gamma \left (\Sym^{2,-}TM \right )$ satisfies $\bdel h=0$ and $\delta^gh=0$. Then 
\begin{equation} \label{norm-KSI}
2 \Vert [h,h]^c\Vert^2_{L^2}=\la (\Delta_E+4\ring{R})h^2,h^2\ra_{L^2}+\la \delta^{\star_g}(2\delta^gh^2+\di\!\tr h^2),h^2\ra_{L^2}.
\end{equation} 
\end{teo}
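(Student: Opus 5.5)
We work in the real formalism of the introduction. A tensor $h\in\Gamma(\Sym^{2,-}TM)$ is an endomorphism anticommuting with $J$, and we view it as a $TM$-valued $1$-form. The Kodaira--Spencer bracket $[h,h]^c$ is then, up to the normalisation fixed in the paper, the $\lambda^2_-(M,TM)$-part of the Fr\"olicher--Nijenhuis type bracket $(X,Y)\mapsto (\nabla_{hX}h)Y-(\nabla_{hY}h)X$. The plan is to expand $\Vert [h,h]^c\Vert^2_{L^2}$ pointwise in terms of $\nabla h$, and then integrate by parts until only terms in $h^2$ remain.

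\emph{Step 1: pointwise identity.} In a local orthonormal frame, write $|[h,h]^c|^2$ as a quadratic expression in $h\otimes\nabla h$. It has two kinds of terms: a ``diagonal'' term $\sum|(\nabla_{he_i}h)e_j|^2$ and a ``cross'' term $\sum\la(\nabla_{he_i}h)e_j,(\nabla_{he_j}h)e_i\ra$. We use $\bdel h=0$, which says that $d^\nabla h$ has no $\lambda^2_-$ component. Together with the type condition $hJ=-Jh$, this lets us symmetrise $\nabla h$: the component $(\nabla_X h)Y-(\nabla_Y h)X$ is $J$-invariant of the appropriate type. As a result, the cross term can be rewritten as a diagonal-type term plus terms involving $d^\nabla h$ that pair to zero against $\lambda^2_-$. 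So I expect $2|[h,h]^c|^2$ to reduce to a combination of $\la \nabla_{he_i}h,\nabla_{he_i}h\ra$ and $\la (\nabla_{e_i}h)h,(\nabla_{e_i}h)h\ra$-type expressions.

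\emph{Step 2: comparison with $h^2$.} Compute $\nabla h^2=(\nabla h)h+h(\nabla h)$, and from it $|\nabla h^2|^2$, $\delta^g h^2$ and $\di\!\tr h^2=2\la\nabla h,h\ra$. The right-hand side of \eqref{norm-KSI} is, by the Weitzenb\"ock formula $\Delta_E=\nabla^\star\nabla-2\ring{R}$ plus the Ricci terms with $E$, equal to
\[
\Vert\nabla h^2\Vert^2+2\la\ring{R}h^2,h^2\ra+2\Vert\delta^g h^2\Vert^2+\la \di\!\tr h^2,\delta^g h^2\ra .
\]
Here we keep track of the Einstein constant. We then express each of these as integrals of quartic expressions $h\,h\,\nabla h\,\nabla h$.

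\emph{Step 3: integration by parts.} To match the two sides, move derivatives across terms of the form $\la(\nabla_{he_i}h)e_j,h(\nabla_{e_j}h)e_i\ra$. This uses $\delta^g h=0$ and $\bdel h=0$, which produces second derivatives of $h$. Commuting these produces curvature terms, and the K\"ahler--Einstein condition together with $\Delta_E h=0$ turns $\nabla^\star\nabla h$ into $2\ring{R}h+$(Einstein terms). The resulting curvature terms must assemble into $4\la\ring{R}h^2,h^2\ra$ minus the Weitzenb\"ock contribution.

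\emph{Main obstacle.} I expect the main difficulty to be this bookkeeping of curvature terms: identifying $\sum R(he_i,e_j,\cdot,\cdot)$-type contractions with $\la\ring{R}h^2,h^2\ra$. For this I would use $J$-invariance of $R$ and the anti-invariance of $h$, which converts $R(hX,hY)$-contractions into $\ring{R}$ acting on the $J$-invariant tensor $h^2$. I would first test the identity on the flat torus, where there is no curvature, to fix the constants. After that I would verify the curvature coefficient separately.
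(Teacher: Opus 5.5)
Your proposal outlines a brute-force computation, but the two concrete mechanisms it relies on do not work, so the actual proof is missing. Your rewriting of the right-hand side in Step~2 is fine; there are in fact no $E$-terms.

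\textbf{Step~1.} The identity holds only modulo divergences, so the pointwise stage can do no more than the type splitting. Since $[h,h]^{\FN}=\di_{\nabla^g}h^2-h\sharp\di_{\nabla^g}h$ and $\di_{\nabla^g}h^2$ has no component in $\lambda^2_{-}(M,TM)$, you get $[h,h]^c=-(h\sharp\di_{\nabla^g}h)_{\lambda^2_{-}}$. When $\bdel h=0$ this is made entirely of $\di_{\nabla^g}h=\partial^g h$. So the $\di_{\nabla^g}h$-terms are exactly what the bracket consists of, and they cannot ``pair to zero against $\lambda^2_{-}$''. Likewise, $\bdel h=0$ does not let you trade $(\nabla_{he_j}h)e_i$ for $(\nabla_{e_i}h)he_j$, because the difference is $\partial^gh(he_j,e_i)\neq 0$. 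What you can honestly extract pointwise is the orthogonal splitting $|[h,h]^c|^2=|[h,h]^{\FN}|^2-|\di^{+}_{\nabla^g}h^2|^2$. Expanding with $h\sharp(h\sharp\alpha)=h^2\sharp\alpha+2\alpha(h\cdot,h\cdot)$ then leaves the genuinely quartic integrals $\la h^2\sharp\di_{\nabla^g}h,\di_{\nabla^g}h\ra_{L^2}$ and $\la\di_{\nabla^g}h(h\cdot,h\cdot),\di_{\nabla^g}h\ra_{L^2}$ (compare Proposition~\ref{N4-new}).

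\textbf{Step~3.} All the content lies here, and ``move derivatives across'' does not close. An integration by parts of the kind you describe determines $\la\di_{\nabla^g}h(h\cdot,h\cdot),\di_{\nabla^g}h\ra_{L^2}$ only up to the term $-3\la h\circ\di_{\nabla^g}h^2,\di_{\nabla^g}h\ra_{L^2}$, which that computation leaves undetermined. In the paper this integration by parts is the divergence of $(X,Y)\mapsto(\nabla_{hY}h)hX$ together with the Ricci identity (Proposition~\ref{N2-new+}).

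The missing idea is a second, independent evaluation of the same quantity. It goes through the algebraic identity $\di_{\nabla^g}h(h\cdot,h\cdot)=h\sharp\di_{\nabla^g}h^2-\di_{\nabla^g}h^3+h\circ[h,h]^{\FN}$ and the type decomposition of $[h,h]^{\FN}$ (Lemma~\ref{N1-new}); there the troublesome term enters with coefficient $+1$. Eliminating it between the two evaluations, and using the separate formula of Proposition~\ref{hash-o} for $\la h^2\sharp\di_{\nabla^g}h,\di_{\nabla^g}h\ra_{L^2}$, is what finally yields an expression in $h^2$. A flat-torus test cannot substitute for these relations.

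\textbf{Curvature terms.} The mechanism you propose is not available. The quantities $\la\ring{R}h,h^3\ra$ and $\la\ring{R}h^2,h^2\ra$ are algebraically independent even for K\"ahler--Einstein curvature. On a product of two hyperbolic curves, take $h$ interchanging the factors and anticommuting with $J$: then $\ring{R}h=0$, while $h^2$ is a multiple of the identity, so $\la\ring{R}h^2,h^2\ra=E\vert h^2\vert^2\neq 0$. Hence no use of $J$-invariance converts one into the other. The $\la\ring{R}h,h^3\ra$-terms must instead cancel among themselves. In the paper they arise from Proposition~\ref{hash-o}, from $\la\partial^gh^3,\partial^gh\ra_{L^2}=\la(\tfrac12\Delta_E+E+\ring{R})h^3,h\ra_{L^2}$ and from Lemma~\ref{L-exp}, and they cancel once $\Delta_Eh=0$ is used.
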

Here $\Delta_E$ denotes the Einstein operator and $\delta^{\star_g}$ is the formal adjoint of the divergence $\delta^g :\Gamma \left ( \Sym^2TM\right ) \to \Omega^1M$.

Remarkably, the $\LL^2$-norm of the Kodaira-Spencer bracket is expressed in terms of an operator acting solely on $h^2$. As a matter of fact we develop an entirely general 
formula in section \ref{norm-bra-new} of the paper with no assumptions on the tensor $h \in \Gamma \left (\Sym^{2,-}TM \right )$.
\subsection{Role of the Cayley transform}\label{Cay}
Letting $h_t:=g^{-1}g_t$ we define the Cayley transform 
$$ \mathrm{C}_t:=(1-h_t)(1+h_t)^{-1}.
$$
Note that the definition of $\mathrm{C}_t$ makes sense since the tensor $1+h_t>0$ with respect to the metric $g$.

The result in Theorem \ref{main-i1} has in fact a conceptual explanation in terms of the Cayley transform. Its use 
clarifies the geometric significance of the polynomial expressions in the coefficients of the Taylor expansion of $g_t$ which appear 
in parts a) and c) of our main result; also it serves to relate explicitely 
the obstruction to third order integrability in \eqref{obs-i} as well as the third order Einstein equation in \eqref{harm-I} to the complex deformation theory of $J$.

To proceed, we consider the Taylor series expansion 
$\mathrm{C}_t=\sum \limits_{n=0}^{\infty} \tfrac{t^n}{n!}\mathrm{C}_n$ for small time $t$. The first $3$ coefficients are given by $\mathrm{C}_0=0$ as well as 
\begin{equation} \label{cay-co}
\begin{split}
&-2\mathrm{C}_1=h_1, \ -2\mathrm{C}_2=h_2-h_1^2\\
&-2\mathrm{C}_3=h_3-\tfrac{3}{2}\{h_1,h_2-h_1^2\}-\tfrac{3}{2}h_1^3.
\end{split}
\end{equation}
The algebraic part in Theorem \ref{main-i1}, that is the constraint in \eqref{alg-hs} is to say that 
$$ \mathrm{C}^{+}_3= \mathrm{C}^{+}_2=0.
$$
Equivalently, 
$$J\mathrm{C}_t =-\mathrm{C}_tJ+o(t^4).$$
\begin{rema} \label{g-O}
Let $\omega=g(J \cdot ,\cdot) \in \Omega^2M$ be the K\"ahler form of the K\"ahler structure $(g,J)$. A Riemannian 
metric $g^{\prime}$ is said to be compatible with $\omega$, with shorthand notation $g^{\prime} \in \omega$, 
provided that the composition $I^{\prime}:=(g^{\prime})^{-1} \omega$ defines an almost complex structure on 
$M$. In general $I^{\prime}$ is not integrable, however when that is the case $(g^{\prime},I^{\prime})$ is automatically a K\"ahler structure. 
The vanishing of the coefficients $\mathrm{C}^{+}_k$ with $k=2,3$ can be intepreted thus in a more intrinsic way; that is after gauge normalisation we have $g_t \in \omega$ up to order $3$.
\end{rema}
The obstruction to deformation in the same theorem (see equation \eqref{obs-i}) guarantees that 
$ \bdel \mathrm{C}_1=0, \ \tfrac{1}{2}\bdel \mathrm{C}_2=[\mathrm{C}_1, \mathrm{C}_1]^c$
so the Cayley transform satisfies, up to order $2$, the Maurer-Cartan equation 
$$ \bdel \mathrm{C}_t=[\mathrm{C}_t,\mathrm{C}_t]^c+o(t^3).
$$
In addition the Einstein equation to order $3$ in Theorem \ref{main-i1} (see \eqref{harm-I}) ensures that the tensor $\tfrac{1}{2}\bdel \mathrm{C}^3-3[\mathrm{C}_1,\mathrm{C}_2]^c$ belongs to $\ker \bdel \cap 
\ker \delta^g$ and is thus harmonic; thus also \eqref{harm-I} is consistent with the expectations coming from the deformation theory of $J$.

The next observation relates directly to the interpretation, again in terms of the Cayley transform, of the gauge normalisation used in Theorem \ref{main-i1}.
\begin{rema} \label{conj-ga}
The gauge normalisation to $\delta^gh_1=\mathbf{X}_2(g_t)=\mathbf{X}_3(g_t)=0$ which is granted by part (d) in Theorem \ref{main-i1} may be re-interpreted in terms of the Cayley transform. Indeed, this gauge normalisation is equivalent to 
\begin{equation} \label{gau-int}
\delta^g\mathrm{C}_t=-\tfrac{1}{2}(1+\mathrm{C}_t)\di \ln \det(1+\mathrm{C}_t)+o(t^4).
\end{equation}
See Proposition \ref{cayley-1} in the body of the paper .
\end{rema}
In light of these facts it is reasonable to formulate the following 
\begin{conj} \label{conj1}
Let $(M,g,J)$ be compact and Einstein with $E<0$ and let $g_t \in \mathscr{M}_1$ be a small time Einstein deformation. Then the following hold 
\begin{itemize}
\item[(i)] there exists a gauge transformation  $f_t \in \mathbf{G}$ with $f_0=\id$ such that the Cayley transform $\mathrm{\tilde{C}}_t$ of the family $\tilde{g}_t:=f_t^{\star}g_t \in \mathscr{M}_1$ is normalised 
according to 
\begin{equation} \label{g-conj}
\delta^g\mathrm{\tilde{C}}_t=-\tfrac{1}{2}(1+\mathrm{\tilde{C}}_t)\di \ln \det(1+\mathrm{\tilde{C}}_t)+\grad_g \varphi_t
\end{equation}
for a time dependent family of smooth functions $\varphi_t$ on $M$
\item[(ii)] assuming that the Cayley transform $\mathrm{C}_t$ of $g_t$ is normalised as  in (i) we must have
$$ \mathrm{C}_t \in \Gamma \left (\Sym^{2,-}TM \right )\ \mathrm{and} \ \bdel \mathrm{C}_t=[\mathrm{C}_t,\mathrm{C}_t]^c
$$
as well as the divergence equation $\delta^g\mathrm{C}_t=-\tfrac{1}{2}(1+\mathrm{C}_t)\di \ln \det(1+\mathrm{C}_t)$
\item[(iii)] the almost complex structure given by $J_t=(1-\mathrm{C}_t)^{-1}J (1-\mathrm{C}_t)$ is integrable, thus $(g_t,J_t)$ is a family K\"ahler-Einstein structures on $M$ with constant K\"ahler form $\omega$.
\end{itemize}
\end{conj}
In section \ref{nos} of the paper we prove, up to order $3$, the normalisation in part (i) above for arbitrary Riemanniann metrics with fixed volume; the arguments utilised for that proof extend formally to obtain \eqref{g-conj} as formal power series. That is the gauge transformation $f_t$ is constructed by integrating a time dependent vector field build from a formal power series; the analytical aspects related to the convergence of that power series are a separate issue.

Also note that part (iii) in Conjecture \ref{conj1} is just restating in a geometric way the technical aspects in 
(ii) of the same conjecture; indeed 
having $g_t(J_t \cdot, \cdot)=\omega$ follows from 
$\mathrm{C}_t$ in $\Gamma \left (\Sym^{2,-}TM \right )$; also 
the Maurer-Cartan equation $\bdel \mathrm{C}_t=[\mathrm{C}_t,\mathrm{C}_t]^c$ is equivalent with
the integrability of $J_t$. We may now restate Theorem \ref{main-i1} as follows.

\begin{teo} \label{main3-i}
Let $(M,g,J)$ be compact and K\"ahler-Einstein with $E<0$ and let $g_t \in \mathscr{M}_1$ be a small time Einstein deformation. Then Conjecture \ref{conj1} is true up to order $3$ in the Taylor series expansion of the Cayley transform.
\end{teo}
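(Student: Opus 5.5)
The plan is to treat Theorem \ref{main3-i} as a translation of Theorem \ref{main-i1} into the language of the Cayley transform. We therefore check each item of Conjecture \ref{conj1} modulo $o(t^4)$, i.e.\ coefficient by coefficient for $\mathrm{C}_1,\mathrm{C}_2,\mathrm{C}_3$. The basic input is the expansion \eqref{cay-co}. It follows by writing $h_t=\id+k_t$ with $k_t=th_1+\tfrac{t^2}{2}h_2+\tfrac{t^3}{6}h_3+\dots$ and expanding
\[
\mathrm{C}_t=-k_t(2+k_t)^{-1}=-\tfrac12 k_t\Bigl(1-\tfrac12 k_t+\tfrac14 k_t^2-\dots\Bigr)
\]
to third order. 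Note that the products $h_1h_2$ appear symmetrised as $\{h_1,h_2\}$, since $\mathrm{C}_t$ is $g$-symmetric.

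For part (i), we take the gauge transformation $f_t$ from Theorem \ref{main-i1}(i). By Theorem \ref{main-i1}(ii)(d) the normalised deformation satisfies $\mathbf{X}_1=\mathbf{X}_2=\mathbf{X}_3=0$. We then verify, as stated in Remark \ref{conj-ga} (Proposition \ref{cayley-1}), that this is equivalent to \eqref{gau-int}. Concretely, we expand $\ln\det(1+\mathrm{C}_t)=\tr\ln(1+\mathrm{C}_t)=\tr \mathrm{C}_t-\tfrac12\tr \mathrm{C}_t^2+\tfrac13 \tr\mathrm{C}_t^3-\dots$ and use that the volume constraint $g_t\in\mathscr{M}_1$ kills the relevant trace terms order by order (for instance $\tr h_1=0$ and $\tr h_2=\tr h_1^2$ from $\det h_t=1$). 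Comparing the coefficients of $t,t^2,t^3$ on both sides then reproduces exactly the vector fields $-2\cdot$(coefficient) $=\mathbf{X}_k$ in \eqref{norm-3i}. This also gives \eqref{g-conj} with $\varphi_t=o(t^4)$. The divergence equation in (ii) follows in the same way.

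For part (ii), the algebraic constraints \eqref{alg-hs} combined with \eqref{cay-co} give $\mathrm{C}_2^+=\mathrm{C}_3^+=0$, and $\mathrm{C}_1=-\tfrac12h_1\in\Sym^{2,-}$ since $h_1\in\varepsilon^-(g)$. Hence $\mathrm{C}_t\in\Sym^{2,-}$ up to $o(t^4)$. For the Maurer--Cartan equation we compare coefficients:
\begin{itemize}
\item at order $1$ we need $\bdel\mathrm{C}_1=0$, which holds because $\bdel h_1=0$;
\item at order $2$ we need $\tfrac12\bdel\mathrm{C}_2=[\mathrm{C}_1,\mathrm{C}_1]^c$, which is \eqref{obs-i} after substituting $\mathrm{C}_1=-\tfrac12h_1$ and $\mathrm{C}_2=-\tfrac12(h_2-h_1^2)$ and using bilinearity of the bracket;
\item at order $3$ we need $\tfrac16\bdel \mathrm{C}_3=[\mathrm{C}_1,\mathrm{C}_2]^c$ (up to the normalisation of $\tfrac{t^n}{n!}$), i.e.\ that the tensor in \eqref{harm-I} vanishes, not merely that it is harmonic.
\end{itemize}
Here we must use that $\bdel h_1^3$ and the $J$-anti-invariant part of $h_3$ combine correctly, namely $\mathrm{C}_3^-=-\tfrac12(h_3^--\tfrac32 h_1^3)$, using that $h_1^3\in\Sym^{2,-}$ and $\{h_1,h_2-h_1^2\}$ is $J$-invariant when $h_2-h_1^2$ is anti-invariant.

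The main obstacle is this third-order step. Theorem \ref{main-i1}(c) only gives membership of $\Psi:=\bdel(h_3^--\tfrac32h_1^3)+3[h_1,h_2-h_1^2]^c$ in $\mathrm{Harm}^{0,2}(M,T^{0,1}M)$, not its vanishing. First we would show that $\Psi$ is $\bdel$-exact plus a bracket that is itself $\bdel$-exact once \eqref{obs-i} holds. The tool is the graded Jacobi identity: $\bdel[h_1,h_2-h_1^2]^c=-[h_1,\bdel(h_2-h_1^2)]^c=[h_1,[h_1,h_1]^c]^c=0$. A harmonic tensor that is $L^2$-orthogonal to exact ones then vanishes. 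If exactness of the bracket term cannot be established, we would instead interpret ``true up to order $3$'' as the Maurer--Cartan equation holding modulo the harmonic space. In that case the remaining component is absorbed into the freedom of $\varphi_t$ and of $h_3^-$, since $\Psi$ changes by $\bdel$ of the choice of $h_3^-$ within the gauge class. Part (iii) then follows formally: $\mathrm{C}_t$ anti-commuting with $J$ gives $g_t(J_t\cdot,\cdot)=\omega$, and the Maurer--Cartan equation up to order $3$ gives integrability of $J_t$ to that order.
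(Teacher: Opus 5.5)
Your handling of $\mathrm{C}_1,\mathrm{C}_2$ is fine, as is $\mathrm{C}_2^+=\mathrm{C}_3^+=0$ and the divergence normalisation via Proposition \ref{cayley-1}. The genuine gap is the third-order Maurer--Cartan step. The Jacobi identity only gives
\[
\bdel[h_1,h_2-h_1^2]^c=-[h_1,\bdel(h_2-h_1^2)]^c=[h_1,[h_1,h_1]^c]^c=0,
\]
so the bracket is $\bdel$-\emph{closed}; nothing makes it $\bdel$-\emph{exact}. Since $\Psi=\bdel(h_3^--\tfrac32h_1^3)+3[h_1,h_2-h_1^2]^c$ is harmonic, it equals $3$ times the harmonic projection of $[h_1,h_2-h_1^2]^c$. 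That class is exactly the obstruction to extending the second-order jet of the complex deformation of $J$ to third order. The Einstein equations up to order $3$ do not force it to vanish. The paper says so explicitly in its concluding remarks: $\Psi=0$ is expected only after differentiating the Einstein equation to order $5$.

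Your fallback also fails as written. Changing $h_3^-$ (or the gauge at order $3$) changes $\Psi$ only by $\bdel$-exact terms. These are $L^2$-orthogonal to $\mathrm{Harm}^{0,2}(M,T^{0,1}M)$, because $\la\bdel H,\Psi\ra_{L^2}=\la H,\delta^g\Psi\ra_{L^2}=0$. So the harmonic part cannot be absorbed. In the Einstein case $\varphi_t$ is in any case forced to be $o(t^4)$ by $\mathbf{X}_3=0$.

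What Theorem \ref{main3-i} actually asserts, and what the paper proves, is the following list:
\begin{itemize}
\item $J\mathrm{C}_t=-\mathrm{C}_tJ+o(t^4)$;
\item the normalisation \eqref{gau-int} holds to $o(t^4)$;
\item $\bdel\mathrm{C}_t=[\mathrm{C}_t,\mathrm{C}_t]^c+o(t^3)$;
\item the $t^3$-coefficient $\bdel\mathrm{C}_3-6[\mathrm{C}_1,\mathrm{C}_2]^c$ lies in $\ker\delta^g\cap\ker\bdel$ (Theorem \ref{main-LL}(ii), i.e.\ part (c) of Theorem \ref{main-i1}).
\end{itemize}
You should drop the claimed vanishing at order $3$ and state the result in this form. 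Correspondingly, your part (iii) gives $g_t(J_t\cdot,\cdot)=\omega$ to order $3$, but integrability of $J_t$ only to order $2$, with a harmonic (possibly non-zero) defect at order $3$.
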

Following the discussion in \cite{S-Sw} of LeBrun's results on supreme Einstein metrics
we also record below that Conjecture \ref{conj1} is essentially true in real dimension $4$ for specific dimensional reasons related to the use of Seiberg-Witten theory.
\begin{rema} \label{rkm-4}
Let $(M^4,g,J)$ be compact and K\"ahler-Einstein with $E<0$. Consider the Einstein–Hilbert functional 
$\mathcal{S}: \mathscr{M} \to \mathbb{R}$ defined by
$\mathcal{S}(g):=\int_M \mathrm{scal}_g \vol_g$ with $g$ in 
$\mathscr{M}$, 
where the latter denotes the space of all Riemannian metrics on $M$. The functional $\mathcal{S}$ is locally constant on the Einstein moduli space 
$\mathscr{M}_1^{\mathrm{E}} \slash \bfG$, see \cite{S-Sw}[Theorem 3.6].
On the other hand, by work in \cite{LeB-1,LeB-2}, the metric $g$ is supreme and conversely any other supreme Einstein metric on $M$ is K\"ahler-Einstein; thus following \cite{S-Sw}[Theorem 6.12] we have that any nearby Einstein metric, in this case $g_t$, is also supreme, hence K\"ahler–Einstein for some complex structure $J_t$. Moreover since the complex structures $J_t$ and $J$ are homotopic for small $t$, they must have the same first Chern class, hence the cohomology class of the K\"ahler form $\Omega_t=g_t(J_t \cdot, \cdot)$  coincides with that of $\omega$; this allows constructing, by Moser's method, a family of diffeomorphisms of $M$ with 
$\phi_t^{\star}\Omega_t=\omega$. It follows that the metric $(\phi^{-1}_t)^{\star}g_t$ is compatible with 
$\omega$ (see also Remark \ref{g-O}) and also K\"ahler-Einstein. Note that the results in \cite{LeB-1,LeB-2} are established using 
Seiberg-Witten theory and do not have analogues in arbitrary dimension.
\end{rema}

\subsection{Key ideas and outline of proofs} \label{key}
Let $(M,g)$ be Einstein with Einstein constant $E$. We consider small time deformations $g_t \in \mathscr{M}_1$ of $g$ with fixed volume and such that the metric $g_t$ is Einstein with Einstein constant $E$. In addition we consider the Taylor 
series expansion $g^{-1}g_t=\id+th_1+\tfrac{t^2}{2!}h_2+\tfrac{t^3}{3!}h_3+o(t^4)$. The first result in this paper is to derive the explicit differential equation which encodes the vanishing of the third order derivative of the Ricci tensor of $g$ at $t=0$. Note that, to the best of our knowledge, it is the first time such an equation is obtained in the literature. We prove 
\begin{teo} \label{D3-intro}
The Einstein equation to order $3$ reads 
\begin{equation} \label{E3-intro}
(\widetilde{\Delta}_E+\delta^{\star_g}\di\tr)\mathbf{u}_3=3\mathbf{v}(h_1,h_2)-
\tfrac{3}{4}\{h_1,(\widetilde{\Delta}_E+2E+\delta^{\star_g}\di\tr )h_2\}+3\mathbf{w}(h_1).
\end{equation}
\end{teo}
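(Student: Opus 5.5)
We plan to prove Theorem \ref{D3-intro} by differentiating the Ricci map three times along the curve $g_t$ at $t=0$. The idea is to organise the computation so that every term that is linear in the top coefficient is absorbed into the linearised operator applied to a corrected coefficient $\mathbf{u}_3$. Everything else, namely the mixed terms in $(h_1,h_2)$ and the cubic terms in $h_1$, is collected into the bilinear form $\mathbf{v}$ and the cubic form $\mathbf{w}$.

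First, write the Ricci tensor of $g_t$ as an endomorphism, $\Ric^{g_t}=g_t^{-1}\ric^{g_t}$, with $g_t=g h_t$ and $h_t=\id+th_1+\tfrac{t^2}{2}h_2+\tfrac{t^3}{6}h_3+o(t^4)$. Then use the classical formula for the difference of Levi-Civita connections,
$$\nabla^{g_t}-\nabla^g=\tfrac12 h_t^{-1}\big((\nabla h_t)\cdot+\dots\big),$$
which is explicit and algebraic in $h_t^{-1}$ and $\nabla h_t$. The Ricci tensor follows as $\Ric^{g_t}=\Ric^g+\mathrm{tr}$-contractions of $\nabla A_t$ and $A_t\wedge A_t$, where $A_t=\nabla^{g_t}-\nabla^g$. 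We expand $h_t^{-1}=\id-th_1+\tfrac{t^2}{2}(2h_1^2-h_2)+\dots$ and read off the coefficients $A_1,A_2,A_3$. By design the linear part $\tfrac{d}{dt}\Ric^{g_t}|_{t=0}$ equals $\tfrac12(\widetilde{\Delta}_E+\delta^{\star_g}\di\tr)h$ up to the normalisation used for $\widetilde{\Delta}_E$; we take this from the paper's first and second order computations, which precede this statement. The third derivative then contains $\tfrac12(\widetilde{\Delta}_E+\delta^{\star_g}\di\tr)h_3$, plus cross terms coming from $A_1\cdot A_2$ and from $\nabla$ of the $h_1\cdot\nabla h_2$ part of $A_3$, plus cubic terms.

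The key algebraic step is to choose $\mathbf{u}_3:=h_3-\tfrac{3}{2}\{h_1,h_2\}+c\,h_1^3$, or a similar combination (its form can be guessed from \eqref{cay-co}). The constant is chosen so that all second-order derivatives of $h_2$ in the cross terms are either absorbed into $(\widetilde{\Delta}_E+\delta^{\star_g}\di\tr)\mathbf{u}_3$ or appear only in the form $\{h_1,(\widetilde{\Delta}_E+2E+\delta^{\star_g}\di\tr)h_2\}$. To do this we need a Leibniz-type identity for the Einstein operator on anticommutators:
$$\widetilde{\Delta}_E\{h_1,h_2\}=\{\widetilde{\Delta}_Eh_1,h_2\}+\{h_1,\widetilde{\Delta}_Eh_2\}-2\,\mathrm{(first\ order\ cross\ terms)}+\mathrm{curvature\ terms}.$$
The curvature terms are simplified using $\Ric^g=E\id$, which is where the shift $2E$ arises. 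The analogous identity for $\delta^{\star_g}\di\tr$ is handled the same way. The remaining first-order bilinear expressions $\nabla h_1\ast\nabla h_2$ and zero-order ones $\nabla^2h_1\ast h_2$ are grouped into $\mathbf{v}(h_1,h_2)$. Lower-order equations are then substituted: the first and second order Einstein equations let us replace $\widetilde{\Delta}_Eh_1$-type terms and the $h_1$-quadratic part of the second order operator. Whatever cubic residue in $h_1$ and its first two derivatives remains defines $\mathbf{w}(h_1)$. The factor $3$ is the combinatorial multiplicity from the Leibniz rule $\tfrac{d^3}{dt^3}(ab)$.

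The main obstacle is the bookkeeping in this absorption step. Terms with $\nabla^2h_2$ multiplied by $h_1$ must match exactly $-\tfrac34\{h_1,\cdot\}$ applied to the full operator, including the non-symmetric contractions of the Hessian part and the $\delta^{\star_g}\delta^g$ pieces hidden in $\widetilde{\Delta}_E$. We would first verify this matching on the principal symbol level, where it reduces to a pointwise identity of quadratic forms in the covector. Only then would we track the curvature terms, commuting covariant derivatives and using the Einstein condition.
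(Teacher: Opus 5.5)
Your plan has a genuine gap. It treats $\bfv$, $\mathbf{w}$ and $\mathbf{u}_3$ as objects to be shaped by the computation, but the statement fixes all three:
\begin{itemize}
\item $\bfv$ is defined by the cyclic sum of $\la\delta^g[h_a,h_b]^{\FN},h_c\ra_{L^2}$;
\item $\mathbf{w}(h)$ is defined by pairing with $2\delta^gQ(h)-\mathbf{p}(h^2,h)-h\circ\mathbf{p}(h,h)$;
\item $\mathbf{u}_3=-h_3+\tfrac{9}{4}\{h_1,h_2-h_1^2\}-\tfrac{3}{2}h_1^3=I_3-\tfrac{3}{4}\{h_1,h_2\}$, with $I_3=(h_t^{-1})^{(3)}(0)$.
\end{itemize}
Letting $\mathbf{w}(h_1)$ be ``whatever cubic residue remains'' and putting leftover bilinear terms into $\bfv(h_1,h_2)$ only shows that \emph{some} equation of this shape holds. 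The theorem is the identification of those remainders, and nothing in your scheme produces a Fr\"olicher--Nijenhuis bracket, $\mathbf{p}$ or $Q$.

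Several concrete steps also fail as set up.
\begin{itemize}
\item Your Cayley-transform guess $h_3-\tfrac{3}{2}\{h_1,h_2\}+c\,h_1^3$ does not match $\mathbf{u}_3$ for any $c$: the coefficient of $\{h_1,h_2\}$ must be $\tfrac{9}{4}$ (up to your sign), not $\tfrac{3}{2}$.
\item The planned principal-symbol check is the wrong identity. $\bfv(h_1,h_2)$ is not of lower order in $h_2$; already $\delta^g[h_1,h_2]^{\FN}$ contains $h_1\ast\nabla^2h_2$ terms. So these terms cannot be matched against $(\widetilde{\Delta}_E+\delta^{\star_g}\di\tr)\mathbf{u}_3$ and $-\tfrac{3}{4}\{h_1,\cdot\}$ alone.
\item The first variation of $\Ric^{g_t}$ is $\tfrac12\widetilde{\Delta}_Eh$, not $\tfrac12(\widetilde{\Delta}_E+\delta^{\star_g}\di\tr)h$. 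The operator $\widetilde{\Delta}_E+\delta^{\star_g}\di\tr=\Delta_E-2\delta^{\star_g}\delta^g$ appears only because the pointwise constraint $\det h_t=1$ removes the $\di\ln\det h_t$ terms. Your plan never uses this constraint, yet it matters at this order since $\tr h_3\neq 0$ in general.
\item You should not substitute the second order equation. The stated form uses only $\widetilde{\Delta}_Eh_1=0$ and $\tr h_1=0$, and keeps $h_2$ explicit.
\end{itemize}

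The missing idea is the exact divergence form of the Einstein equation.
\begin{itemize}
\item From \eqref{eta1} and $\det h_t=1$ one gets $A_t=2\delta^gh_t^{-1}$ (Lemma \ref{Mt}).
\item The terms $2\eta_{e_i}(\nabla^g_Xh_t^{-1})e_i+\eta_{e_i}\eta_Xh_t^{-1}e_i$ collapse to $\mathbf{p}(h_t^{-1},h_t)$ (Lemma \ref{L1}).
\item By Theorem \ref{E-sym} and Proposition \ref{red-toS}, one then differentiates \eqref{mainES}, written in terms of $\mathbf{D}_t=\di_{\nabla^g}h_t(h_t^{-1}\cdot,h_t^{-1}\cdot)$.
\item This gives $\mathbf{D}_3=-\di_{\nabla^g}I_3+6[h_1,h_2]^{\FN}+6Q(h_1)$, which is where the bracket and $Q$ come from.
\item The third derivative of $h_t^{-1}\circ\mathbf{p}(h_t^{-1},h_t)$ produces $6h_1\circ\mathbf{p}(h_1,h_1)$, $\mathbf{p}(h_1^2,h_1)$ and $\mathbf{p}(h_1,h_2)+\mathbf{p}(h_2,h_1)$.
\item \eqref{wz1} turns the linear part into $-(\widetilde{\Delta}_E+\delta^{\star_g}\di\tr)I_3$.
\end{itemize}
The mixed terms are converted by the polarised identity of Lemma \ref{diff-Ls}:
$\la 2\delta^g[h_1,h_2]^{\FN}+\tfrac12(\mathbf{p}(h_1,h_2)+\mathbf{p}(h_2,h_1)),H\ra_{L^2}=\la\bfv(h_1,h_2)+(\mathrm{L}-\tilde{\mathrm{L}})(h_1,h_2),H\ra_{L^2}$.
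Your Leibniz rule for anticommutators is in the right spirit here; it is essentially Lemma \ref{L-exp}. With $\widetilde{\Delta}_Eh_1=0$ and $\tr h_1=0$ it yields
$(\mathrm{L}-\tilde{\mathrm{L}})(h_1,h_2)=-\tfrac{1}{4}\{h_1,(\widetilde{\Delta}_E+2E+\delta^{\star_g}\di\tr)h_2\}+\tfrac{1}{4}(\widetilde{\Delta}_E+\delta^{\star_g}\di\tr)\{h_1,h_2\}$.
The last term is what gets absorbed into $\mathbf{u}_3$. Without these identities you have no route from your remainders to $3\bfv(h_1,h_2)$ and $3\mathbf{w}(h_1)$.
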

In the above statement the operator $\widetilde{\Delta}_E$ is a gauge-invariant perturbation of the Einstein 
operator $\Delta_E$ and $\mathbf{u}_3$ 
is the polynomial quantity 
\begin{equation*} 
\mathbf{u}_3:=-h_3+\tfrac{9}{4}\{h_1,h_2-h_1^2\}-\tfrac{3}{2}h_1^3 \in \Gamma \left (\Sym^2(TM) \right ).
\end{equation*}
The operator 
$\bfv : \Gamma \left ( \Sym^2TM\right ) \oplus \Gamma \left ( \Sym^2TM\right ) \to 
\Gamma \left ( \Sym^2TM\right )$ is determined from 
\begin{equation} \label{bfv-def}
\la \bfv(h_1,h_2),h_3 \ra_{L^2}=\mathfrak{S}_{abc} \la \delta^g[h_a,h_b]^{\FN},h_c \ra_{L^2}
\end{equation}
where $\mathfrak{S}$ indicates the cyclic sum and $[\cdot, \cdot ]^{\FN}$ is the Fr\"olicher-Nijenhuis bracket acting on symmetric tensors. In fact, as showed in \cite{NS-E}, this operator 
governs the second order Einstein deformation theory for the metric $g$. The new operator which appears when differentiating the Einstein equation to third order is $ \mathbf{w} : \Gamma \left (\Sym^2TM \right ) \to 
\Gamma \left (\Sym^2TM \right )$ defined according to 
\begin{equation} \label{w-def-i}
\la \mathbf{w}(h), H \ra_{L^2}:=\la 2\delta^gQ(h)-\mathbf{p}(h^2,h)-h \circ \mathbf{p}(h,h),H \ra_{L^2}.
\end{equation}
Note the algebraically, $\mathbf{w}$ is cubic in $h$ and that the main building blocks in its definition are given by 
\begin{itemize}
\item[$\bullet$] the quadratic differential operator 
$ \mathbf{p} : \Gamma \left (\Sym^2TM \right ) \oplus \Gamma \left (\Sym^2TM \right ) \to \Gamma \left (\End( TM )\right )
$
defined in a local orthonormal frame $\{e_i\}$ according to 
\begin{equation*}
g(\mathbf{p}(h_1,h_2)X,Y):=g((\nabla^g_{X}h_1)e_i,2(\nabla^g_{e_i}h_2)Y-(\nabla^g_{Y}h_2)e_i)
\end{equation*}
where $\nabla^g$ is the Levi-Civita connection of $g$. Above and in the rest of the paper we use systematically Einstein's summation convention.
\item[$\bullet$] the cubic differential operator $ Q: \Gamma \left (\Sym^2TM \right ) \to \Omega^2(M,TM)$ given by 
\begin{equation*} 
Q(h):=-\di_{\nabla^g}\!h^3+h^2 \sharp \di_{\nabla^g} h+\di_{\nabla^g} h(h \cdot ,h \cdot).
\end{equation*}
Above  $\di_{\nabla^g}:\Omega^{k}(M,TM) \to \Omega^{k+1}(M,TM)$ indicates the coupled exterior differential and 
the algebraic action $H \sharp \alpha:=\alpha(H \cdot, \cdot)+\alpha(\cdot ,H \cdot)$ whenever the pair 
$(H,\alpha)$ belongs to $\Gamma \left (\Sym^2TM \right ) \oplus \Omega^2(M,TM)$.
\end{itemize}
The proof of Theorem \ref{D3-intro} builds on the crucial observation (see Theorem \ref{E-sym} in the body of the paper) that the Einstein equation for $g_t$ can be brought to explicit divergence form and is thus amenable to direct differentiation with respect to $t$. 
\begin{rema} \label{apps--intro}
Theorem \ref{D3-intro} has many potential applications in all instances where the infinitesimal 
deformation space $\varepsilon(g)$ can described explicitly. These include nearly-K\"ahler \cite{MNS} and 
nearly $\mathrm{G}_2$-structures \cite{Al-Se,NSE_LMS}; of particular interest are also 
the cases of squashed $3$-Sasaki metrics in dimension $7$ \cite{NS-G2} and also the odd complex 
Grassmanians \cite{NS-E,HaSe}. Thus using theorem \ref{D3-intro}  is expected to produce obstructions to the higher order Einstein deformation problem for these geometries.
\end{rema}
 
The rest of the paper is devoted to fully render explicit equation \eqref{E3-intro} in the case when 
$(M,g,J)$ is a compact K\"ahler-Einstein manifold with negative scalar curvature. The main task consists in computing the operator $\mathbf{w}(h)$ when $h \in \Gamma \left ( \Sym^{2,-}TM\right )$ satisfies $\bdel h=0$ and 
$\delta^gh=0$. This is performed in several steps as follows; we consider $\bfv_2(h):=\bfw(h)+\bfv(h,h^2)$ and split 
\begin{equation*}
\bfv_2(h)=\bfv_2^{-}(h)+\bfv_2^{+}(h)
\end{equation*}
according to the type decomposition \eqref{typeS-i}. 

The component $\bfv_2^{-}$ is determined explicitly in section \ref{+part} of the paper, in several steps;  
first we determine the component of $\bfv$ acting on $\Gamma \left (\Sym^{2,-}TM \right ) \oplus \Gamma \left (\Sym^{2,-}TM \right ) \oplus \Gamma \left (\Sym^{2,+}TM \right ) $ by using the integral formulas for 
quantities $\la H_{+} \sharp \di_{\nabla^g}H_{-},\di_{\nabla^g}H_{-}\ra_{L^2}, H_{\pm} \in \Gamma \left ( \Sym^{\pm}TM
\right )$ previously established in \cite{N1}. The comparison formulas Fr\"olicher-Nijenhuis v.s. Kodaira-Spencer brackets proved in the same reference and some specific representation theory arguments eventually show that 
\begin{equation*}
\bfv_2^{-}(h)=-\left (\bfv(h,h^2)+\widetilde{\Delta}_Eh^3 \right )^{-}+\{h,\left ( \Delta_E+E+2\ring{R}+\tfrac{1}{4}\delta^{\star_g,+}\di\!\tr\right )h^2\}.
\end{equation*}
See proof of Theorem \ref{E3-1}, in particular equation \eqref{aux1}. Above $\delta^{\star_g,\pm }$ are the components of $\delta^{\star_g}$ with respect to 
\eqref{typeS-i}. In order to obtain the obstruction to deformation in \eqref{obs-i}, that is the obstruction to solving the latter equation with respect to $\mathbf{u}_3$, additional steps are 
required: 
\begin{itemize}
\item[$\bullet$] we take the $\LL^2$-scalar product with $h_1$ in \eqref{E3-intro} and use the expression for 
$\mathbf{v}_2^{-}$ found above
\item[$\bullet$] we use the fact ( see \cite{N1}[Proposition 4.14]) that the restriction of $\mathbf{v}$ to 
$\Gamma \left ( \Sym^{2,-}TM \right )$ is fully determined by the divergence of the Kodaira-Spencer bracket and
divergence terms. Since $h_2-h_1^2$ lies (after suitable gauge normalisation ) in $\Gamma \left ( \Sym^{2,-}TM \right )$ (see \cite{N1}[Theorem 1.6]) we end up with an explicit formula for $\bfv(h_1,h_1,h_2-h_1^2)$, see Lemma \ref{v-fin}.
\end{itemize}
Summarising the considerations above thus shows that the pair $(h_2,h_1)$ is constrained according to 
\begin{equation*}
2\la \delta^{\star_g}[h_1,h_1]^c, h_2-h_1^2 \ra_{L^2}+\la \left ( \Delta_E+4\ring{R}+\delta^{\star_g,+}(2\delta^g+\di\!\tr)\right )h_1^2,h_1^2\ra_{L^2}=0.
\end{equation*}
Remarkably the second summand above can be described solely in terms of the 
norm of the Kodaira-Spencer bracket by \eqref{norm-KSI}; the integrability condition in \eqref{obs-i} follows then by a straightforward Hodge theory argument.

The component $\bfv_2^{+}$ is determined explicitly in section \ref{+part} of the paper by using representation theory and again the comparison between the Fr\"olicher-Nijenhuis and Kodaira-Spencer brackets 
developed in \cite{N1}. We first develop a general formula for the remaining mixed type component in 
$\bfv$, which is the restriction of $\bfv$ to $\Gamma \left (\Sym^{2,+}TM \right ) \oplus \Gamma \left (\Sym^{2,+}TM \right ) \oplus \Gamma \left (\Sym^{2,-}TM \right ) $. After also finding explicit formulas for the actions 
$\mathbf{p}(h^2,h)$ respectively $h \circ \mathbf{p}(h,h)$ this leads to showing that $\bfv_2^{+}(h)$ is essentially 
build up from divergences; we show that
\begin{equation*}
\begin{split}
\mathbf{v}_2^{+}(h)=&\delta^{\star_g,+}\delta^gh^3-\tfrac{1}{2}\{h,\delta^{\star_g,-}\delta^gh^2\},
\end{split}
\end{equation*}
see Proposition \ref{w+}. As a direct consequence, we establish that after a gauge normalisation of $g_t$ such 
that the vector fields $\mathbf{X}_1(g_t)=\mathbf{X}_2(g_t)=0$, the component on 
$\Gamma \left ( \Sym^{2,+}TM \right )$ of \eqref{E3-intro} simply reads 
\begin{equation} \label{E333+}
\Delta_E\mathrm{C}_3^{+}=-\delta^{\star_g,+}\mathbf{X}_3(g_t).
\end{equation}
See Theorem \ref{vmmm2} in the paper. Equation \eqref{E333+} provides a justification for the expression of 
$\mathbf{X}_3(g_t)$ as given in \eqref{norm-3i}; it also leads to the vanishing of $\mathrm{C}_3^{+}$ by  operating a gauge transformation in $\mathbf{G}$ after which $ \mathbf{X}_3(g_t)$ is a gradient. 

\subsection{Concluding remarks} \label{conc-rmks}
The next step in proving Conjecture \ref{conj1} is thus differentiating the Einstein equation to higher order. 
Below we summarise some our expectations as follows.
Since the Taylor coefficient
$$-2\mathrm{C}_4=h_4+4\{h_1,\mathrm{C}_3\}-3(h_2-h_1^2)^2-3\{h_1^2,h_2-h_1^2\}-3h_1(h_2-h_1^2)h_1-3h_1^4$$
the expectation after differentiating the Einstein equation to order $4$ is to 
establish the identity $h_4^{+}=2\{h_1,h_3^{-}-\tfrac{3}{2}h_1^3\}
+3(h_2-h_1^2)^2+3h_1^4$, in other words to show that $\mathrm{C}_4^{+}=0$. The main step in this direction is to establish an analogue for \eqref{E333+}; more precisely, normalising the family $g_t$ as in part d) of Theorem \ref{main-i1} we conjecture that the component on $\Gamma \left (\Sym^{2,+}TM \right )$ for the Einstein equation to order $4$ i.e. $(\Ric^{g_t})^{(4)}(0)=0$, is given by 
$$\Delta_E\mathrm{C}_4^{+}=-\delta^{\star_g,+}\mathbf{X}_4(g_t)$$
for some vector field $\mathbf{X}_4(g_t)$. 
The correct normalisation of $h_4$ under the group $\mathbf{G}$ should be follow from the explicit form of 
$\mathbf{X}_4(g_t)$, once found. It is however reasonable to expect this gauge  normalisation to be an extension of \eqref{gau-int} to order $4$.

The integrability condition for the almost compplex structure $J_t$ to order $3$ (see also \eqref{harm-I}) namely having that $\bdel(h_3^{-}-\tfrac{3}{2}h_1^3)+3[h_1,h_2-h_1^2]^c=0$ is expected to appear only after differentiating the Einstein to order $5$.
\subsection*{Acknowledgments}
Paul-Andi Nagy was supported by the Institute for Basic Science (IBS-R032-D1). It is a pleasure to thank Hans-Joachim Hein and Uwe Semmelmann for useful exchanges during the preparation of this paper.
\section{Preliminaries} \label{prel}
Let $(M^n, g)$ be a Riemannian manifold and denote with $\nabla^g$ the Levi-Civita connection of $g$ on $TM$ and all
tensor bundles. For the Riemannian curvature tensor $R^g$ we use the convention $R^g(X,Y)=(\nabla^g)^2_{Y,X}-(\nabla^g)^2_{X,Y}$
for tangent vectors $X, Y \in TM$. 
In the following we will make  a notational difference between the Ricci form $\ric^g$ considered as a symmetric bilinear form and the Ricci tensor $\Ric^g$  
considered as a symmetric endomorphism. We will denote by $g^{-1}$ the isomorphism, induced by the metric, between symmetric bilinear forms and symmetric
endomorphisms, e.g. we have $g^{-1} \ric^g = \Ric^g$. The subbundle of $g$-symmetric endomorphisms   is denoted by $\Sym^2TM \subseteq \End(TM)$. It is preserved 
by $\nabla^g$ and  the curvature action $h \mapsto \ring{R}h:=\sum_i R^g(e_i,\cdot)he_i$, where $\{ e_i\}$ is some $g$-orthonormal frame. 

We will use the coupled exterior differential $\di_{\nabla^g}:\Omega^{k}(M,TM) \to \Omega^{k+1}(M,TM)$. For $k=0$, i.e. on vector fields,
it coincides with the covariant derivative $\nabla^g$. For $k=1$, i.e. on endo\-morphisms $h$ considered as elements in $ \Omega^1(M,TM)$, it  
can be described according to 
$\di_{\nabla^g}h(X,Y)=(\nabla^g_Xh)Y-(\nabla_Y^gh)X$. The operator formally  adjoint to $\di_{\nabla^g}$  is the divergence operator  $\delta^g:\Omega^{k}(M,TM) \to \Omega^{k - 1}(M,TM)$ defined according to the convention $\delta^g = -  \sum_i e_i \lrcorner \nabla^g_{e_i}$.

Restricting the divergence to symmetric $2$-tensors we have $\delta^g : \Gamma(\Sym^2TM) \to  \Omega^1M$. Its formal adjoint 
$\delta^{\star_g} : \Omega^1M \to \Gamma(\Sym^2TM)$ is the symmetric part of $\nabla^g$, i.e.  $\nabla^g=\delta^{\star_g}+\frac{1}{2}\di$ on $\Omega^1M$.
As a consequence we record for subsequent application the identities 
\begin{equation*}
\begin{split}
&\tr (\delta^{\star_g} \alpha)  = - \di^{\star}\!\alpha \ \mathrm{and} \ \delta^{\star_g}\alpha =\frac{1}{2}g^{-1}\L_{\alpha^{\sharp}}g
\end{split}
\end{equation*}
which hold for any $1$-form $\alpha$. The last identity actually characterises the kernel of $\delta^{\star_g}$
as $1$-forms dual to Killing vector fields.

In dealing with the trace and divergence of symmetric tensors we will use the Bianchi operator $\D^g: \Gamma(\Sym^2TM) \to \Omega^1M$ given by  $\D^g=2\delta^g+\di\! \circ \tr$. 
Certainly $\D^g$ vanishes on the space of the so-called $\TT$-tensors defined by
$$
\TT(g):=\{h \in \Gamma(\Sym^2TM) : \delta^g h=0  \ \mathrm{and} \ \tr(h)=0\} . 
$$
However terms involving divergence and trace must be kept track of in order to obtain gauge invariant equations. 
For latter use recall that $\D^g \, \Ric^g =0$, which is a consequence of the differential Bianchi identity 
(see \cite{Besse}, 12.33).

On symmetric $2$-tensors and for Einstein metrics $g$ with Einstein constant $E$ we will consider the Einstein operator 
$\Delta_E := (\nabla^g)^{\star}\nabla^g-2\ring{R} $. Note that $\Delta_E = \Delta_L - 2E$, where $\Delta_L$ is the Lichnerowicz
Laplacian on symmetric $2$-tensors (see \cite{Besse}, 1.143). We will also use a perturbation of the Einstein operator, 
the differential operator $\widetilde{\Delta}_E:
\Gamma(\Sym^2TM) \to \Gamma(\Sym^2TM)$ given by 
\begin{equation}\label{D-tilde}
 \widetilde{\Delta}_E \;:= \;  \Delta_E \, - \, 2\delta^{\star_g} \circ (\delta^g+\frac{1}{2}\di \circ \tr)\; = \; \Delta_E \, - \, \delta^{\star_g} \circ \D^g.
\end{equation}

The following Weitzenb\"ock formula on  $2$-tensors  
(see \cite{bourg}, Proposition 4.1) will be used at several  places in this paper.
\begin{equation}\label{wz0}
\delta^g \circ \di_{\nabla^g} \, + \, \di_{\nabla^g} \circ \delta^g \; =\; (\nabla^g)^{\star}\nabla^g \, + \, E \,  - \, \ring{R} \;= \;\Delta_E\,  + E \,  +\,  \ring{R}  . 
\end{equation}
Frequently it will be applied under the form 
\begin{equation} \label{wz1}
\delta^g \circ \di_{\nabla^g} \;=\;  -\nabla^g \circ \delta^g  \, + \, \Delta_E \, + E+\,  \ring{R} \; =\;  - (\delta^{\star_g}+\frac{1}{2}\di) \circ \delta^g\, +\, \Delta_E\, +E+\, \ring{R} .
\end{equation}
We still need another Weitzenb\"ock form, this time on $1$-forms. Here we have
\begin{equation} \label{wz2}
2 \delta^g \delta^{\star_g}  \, - \,   \di \di^{\star}  \; = \;  \Delta \, - \,  2E ,
\end{equation}
where $\Delta = \di \di^{\star} + \di^{\star} \di$ is the Hodge Laplacian and $g$ is again an Einstein metric. Note that the formula  is a special case of a 
Weitzenb\"ock formula on symmetric tensors of arbitrary degree  (see \cite{AU}, proof of Proposition 6.2, for further details).
\begin{lema} \label{tr-00}
Let $H$ belong to $\Gamma \left (\Sym^2TM \right )$. Then 
\begin{itemize}
\item[(i)] $\tr \delta^g\di_{\nabla^g}H=\di^{\star}(\delta^gH+\di\!\tr(H))$
\item[(ii)] $\tfrac{1}{2}\tr \widetilde{\Delta}_E H=\di^{\star}(\delta^gH+\di\!\tr(H))-E\tr(H)$.
\end{itemize}
\end{lema}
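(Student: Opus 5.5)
Both identities follow by taking traces in the Weitzenb\"ock formula \eqref{wz1} and in the definition \eqref{D-tilde} of $\widetilde{\Delta}_E$. This needs only three elementary trace identities, established first and applied to $H \in \Gamma(\Sym^2TM)$ and $1$-forms $\alpha$.
\begin{itemize}
\item[$\bullet$] $\tr(\delta^{\star_g}\alpha)=-\di^{\star}\alpha$, as recorded in the preliminaries. Moreover $\tr(\di\alpha)=0$, since $\di\alpha$, viewed as an endomorphism via $g$, is skew-symmetric. Hence $\tr(\nabla^g\alpha)=-\di^{\star}\alpha$.
\item[$\bullet$] $\tr(\ring{R}H)=E\,\tr(H)$. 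Indeed $\ring{R}$ is $g$-symmetric on $\Sym^2TM$, so $\tr(\ring{R}H)=\la \ring{R}H,\id\ra=\la H,\ring{R}\,\id\ra$. With the curvature convention of the paper, $\ring{R}\,\id=\sum_i R^g(e_i,\cdot)e_i=\Ric^g=E\,\id$. I expect checking this sign to be the only delicate point; the value on $\id$ settles it.
\item[$\bullet$] $\tr$ commutes with $(\nabla^g)^{\star}\nabla^g$, because $\id$ is parallel. Hence $\tr\big((\nabla^g)^{\star}\nabla^g H\big)=\Delta\tr(H)=\di^{\star}\di\,\tr(H)$. Combining with the previous item gives $\tr(\Delta_E H)=\di^{\star}\di\,\tr(H)-2E\,\tr(H)$.
\end{itemize}

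For (i), take the trace of \eqref{wz1}, written as
$$\delta^g\di_{\nabla^g}H=-\nabla^g(\delta^gH)+\Delta_EH+EH+\ring{R}H.$$
By the first item, $-\tr\nabla^g(\delta^gH)=\di^{\star}\delta^gH$. The remaining three terms contribute
$$\di^{\star}\di\,\tr(H)-2E\,\tr(H)+E\,\tr(H)+E\,\tr(H)=\di^{\star}\di\,\tr(H).$$
Adding the two contributions gives $\tr\,\delta^g\di_{\nabla^g}H=\di^{\star}\big(\delta^gH+\di\,\tr(H)\big)$, which is (i).

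For (ii), use $\widetilde{\Delta}_E=\Delta_E-\delta^{\star_g}\D^g$ with $\D^gH=2\delta^gH+\di\,\tr(H)$. By the first item, $-\tr\big(\delta^{\star_g}\D^gH\big)=\di^{\star}\D^gH=2\di^{\star}\delta^gH+\di^{\star}\di\,\tr(H)$. Adding $\tr(\Delta_EH)=\di^{\star}\di\,\tr(H)-2E\,\tr(H)$ gives
$$\tr(\widetilde{\Delta}_EH)=2\,\di^{\star}\big(\delta^gH+\di\,\tr(H)\big)-2E\,\tr(H).$$
Halving yields (ii).

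Alternatively, (ii) can be derived from (i): the trace of \eqref{wz1} gives $\tr\Delta_EH=\tr\,\delta^g\di_{\nabla^g}H-\di^{\star}\delta^gH-2E\,\tr(H)$, which one then combines with (i).
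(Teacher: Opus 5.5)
Your proof is correct and follows the paper's route: take the trace of the Weitzenb\"ock formula \eqref{wz1} and of the definition \eqref{D-tilde} of $\widetilde{\Delta}_E$, using $\tr\circ\delta^{\star_g}=-\di^{\star}$ and $\tr(\ring{R}H)=E\tr(H)$. You also make explicit the step the paper leaves implicit, namely $\tr(\Delta_EH)=\di^{\star}\di\tr(H)-2E\tr(H)$, which follows because $\tr$ commutes with $(\nabla^g)^{\star}\nabla^g$. Your sign check $\ring{R}\,\id=\Ric^g=E\,\id$ is also right.
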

\begin{proof}
Record the identity $\tr \circ \delta^{\star_g}=-\di^{\star} $ and also that $\tr \ring{R}H=E \tr(H)$. Both claims follow now from \eqref{wz1}.
\end{proof}

\subsection{The Fr\"olicher-Nijenhuis bracket} \label{FN-br}

On a given manifold $M$ we first recall that the Fr\"olicher-Nijenhuis bracket  for sections $h$ of $\End(TM)$ reads 
$$
[h,h]^{\FN}(X,Y) \, = \, -h^2[X,Y] \, + \, h([hX,Y] \, + \, [X,hY]) \, - \, [hX,hY]  .
$$ 
See also \cite{Kollar}[section 8] for a detailed discussion of the Fr\"olicher-Nijenhuis bracket of arbitrary degree forms in $\Omega^{\star}(M,TM)$. From now on assume that $g$ is a Riemannian metric on $M$ and also that $M$ is compact and connected. Recast in terms of the Levi-Civita connection $\nabla^g$ of $g$ this bracket reads 
\begin{equation} \label{bra-na}
[h,h]^{\FN}(X,Y) \, = \, -(\nabla^g_{hX}h)Y \, + \, (\nabla^g_{hY}h)X  \, + \, (h  \circ \di_{\nabla^g}h)(X,Y) .
\end{equation}
Here  $\di_{\nabla^g}$ acts on $h$  considered as  an element of $\Omega^1(M, TM)$.
More concisely 
\begin{equation} \label{bra-nac}
[h,h]^{\FN} \, = \, -h\sharp \di_{\nabla^g}h \, + \, \di_{\nabla^g}h^2
\end{equation}
where the algebraic action $\alpha \in \Lambda^2(M,TM) \mapsto h \sharp \alpha \in \Lambda^2(M,TM)$ is defined according to $h \sharp \alpha(X,Y)=\alpha(hX,Y)+\alpha(X,hY)$.
The Fr\"olicher-Nijenhuis bracket is extended to a symmetric bracket on the space  $\Gamma(\Sym^2TM )$ via 
\begin{equation} \label{bra-nac2}
2[h_1,h_2]^{\FN} \, = \, -(h_1 \sharp \di_{\nabla^g}h_2 \, + \, h_2 \sharp \di_{\nabla^g}h_1) \, + \, \di_{\nabla^g} \{h_1,h_2\}
\end{equation}
where the algebraic anti-commutator $\{h_1,h_2\}:=h_1 \circ h_2+h_2 \circ h_1$. In particular we have 
$[\id ,h]^{\FN}=0$ for all $h \in \End(TM)$. 
Below we recall some properties of the divergence of the Fr\"olicher-Nijenhuis bracket as follows.
\begin{lema} \cite{N1}[Lemma 3.1]\label{sym-bra}
Assume that $\delta^g h=0$. The following hold 
\begin{itemize}
\item[(i)] the tensor $\delta^g\left ([h,h]^{\FN}-h \circ \di_{\nabla^g}h \right )- (\ring{R}h) \circ h$ is symmetric
\item[(ii)] we have 
\begin{equation*}
\begin{split}
\la \delta^g ([h,h]^{\FN},H \ra_{L^2}=&\la h \circ \di_{\nabla^g}h, \di_{\nabla^g}H \ra_{L^2}+
\la \di_{\nabla^g}h, H \circ \di_{\nabla^g}h \ra_{L^2}\\
&-\la \nabla^g_{he_i}h, \nabla^g_{e_i}H \ra_{L^2}\\
&+\la (\ring{R}h) \circ h-Eh^2-\mathrm{L}(h),H\ra_{L^2}
\end{split}
\end{equation*}
for all $H \in \Gamma \left (\Sym^2TM \right )$.
\end{itemize}
\end{lema}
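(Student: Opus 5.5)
The plan is to reduce everything to the pointwise formula \eqref{bra-na} and then compute the divergence in a local orthonormal frame. Comparing \eqref{bra-na} with \eqref{bra-nac}, I would first write
\begin{equation*}
[h,h]^{\FN}-h\circ \di_{\nabla^g}h=\beta, \qquad \beta(X,Y):=-(\nabla^g_{hX}h)Y+(\nabla^g_{hY}h)X .
\end{equation*}
Equivalently, expanding $\di_{\nabla^g}h^2(X,Y)=h(\di_{\nabla^g}h)(X,Y)+(\nabla^g_Xh)hY-(\nabla^g_Yh)hX$ and subtracting $h\sharp\di_{\nabla^g}h$ gives the same $\beta$. Part (i) then amounts to showing that $\delta^g\beta-(\ring{R}h)\circ h$ is a symmetric endomorphism. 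Part (ii) amounts to computing $\la \delta^g\beta,H\ra_{L^2}$ for symmetric $H$, and adding $\la \delta^g(h\circ\di_{\nabla^g}h),H\ra_{L^2}=\la h\circ \di_{\nabla^g}h,\di_{\nabla^g}H\ra_{L^2}$ by adjointness.

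For the divergence I would use $\delta^g\beta(Y)=-\sum_i(\nabla^g_{e_i}\beta)(e_i,Y)$ at a point where $\nabla^g e_i=0$. This splits into two pieces.

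\emph{First piece.} The term $\nabla^g_{e_i}\bigl((\nabla^g_{he_i}h)Y\bigr)$ gives $(\nabla^g_{(\nabla^g_{e_i}h)e_i}h)Y+(\nabla^{2}_{e_i,he_i}h)Y$. The first summand vanishes because $\sum_i(\nabla^g_{e_i}h)e_i=-\delta^gh=0$.

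\emph{Second piece.} The term $-\nabla^g_{e_i}\bigl((\nabla^g_{hY}h)e_i\bigr)$ gives $-(\nabla^2_{e_i,hY}h)e_i-(\nabla^g_{(\nabla^g_{e_i}h)Y}h)e_i$. Here I commute second derivatives:
\begin{equation*}
\nabla^2_{e_i,hY}h=\nabla^2_{hY,e_i}h+[R^g(\cdot,\cdot),h],
\end{equation*}
with the sign fixed by the paper's convention. The term $\nabla^2_{hY,e_i}h\,e_i$ traces to $-\nabla^g_{hY}\delta^gh=0$. The curvature commutator, after summing over $i$, produces exactly $(\ring{R}h)\circ h$ together with a Ricci contraction equal to $Eh^2$, since $g$ is Einstein.

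What remains are terms of the form $(\nabla^2_{e_i,he_i}h)Y$ and $(\nabla^g_{(\nabla^g_{e_i}h)Y}h)e_i$. For the first, the symmetrised second derivative paired with the symmetric $h$ is symmetric in the endomorphism slots once one uses $\sum_i R^g(e_i,he_i)=0$. For the second, its $g$-transpose is obtained by exchanging the two copies of $\nabla^gh$. These checks give (i).

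For (ii) I would pair the resulting expression for $\delta^g\beta$ with $H$ and integrate by parts once on the second-order term $\nabla^2_{e_i,he_i}h$. Moving one derivative onto $H$ yields $-\la \nabla^g_{he_i}h,\nabla^g_{e_i}H\ra_{L^2}$; the derivative that lands on $h$ in the subscript produces only first-order quadratic terms. The term $(\nabla^g_{(\nabla^g_{e_i}h)Y}h)e_i$ and its companions have to be rearranged, using the symmetry of $H$, into $\la \di_{\nabla^g}h,H\circ\di_{\nabla^g}h\ra_{L^2}$ plus an algebraic-in-$\nabla^g h$ remainder. That remainder is what is collected into $\mathrm{L}(h)$, as in \cite{N1}.

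I expect the main obstacle to be this last bookkeeping step: matching the quadratic first-order terms exactly with $\la \di_{\nabla^g}h,H\circ \di_{\nabla^g}h\ra_{L^2}$ while keeping signs consistent with the curvature convention $R^g(X,Y)=\nabla^2_{Y,X}-\nabla^2_{X,Y}$. I would first carry out this step by testing on $H=\id$ and on $H=h$ as consistency checks.
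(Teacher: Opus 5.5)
Your proposal is correct, and the computation closes exactly as you outline. The paper gives no proof of this lemma; it is quoted from \cite{N1}. The only hint is the remark after Proposition \ref{o3-p} that (ii) can be re-derived from that proposition with $h_1=h_2=h$, $h_3=H$. That route uses \eqref{bra-nac} to write $\la\delta^g[h,h]^{\FN},H\ra_{L^2}=-\la h\sharp\di_{\nabla^g}h,\di_{\nabla^g}H\ra_{L^2}+\la\di_{\nabla^g}h^2,\di_{\nabla^g}H\ra_{L^2}$, expands the first term by Proposition \ref{o3-p}, and then rewrites $\mathbf{q}(h^2,H)$, $\mathbf{q}(H\circ h,h)$, $\la h,\mathrm{L}(h,H)\ra_{L^2}$ and $\la\di_{\nabla^g}h^2,\di_{\nabla^g}H\ra_{L^2}$ via Lemma \ref{q}, Lemma \ref{L-exp} and \eqref{wz1}. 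All terms in $h^2$ then cancel, and $(\ring{R}h)\circ h-Eh^2$ survives from $\mathbf{q}(H\circ h,h)$. Because that is an integral identity tested only against symmetric $H$, it sees only the symmetric part of $\delta^g[h,h]^{\FN}$ and cannot give (i). Your pointwise computation is more self-contained. It yields the explicit formula
\[
\delta^g\beta=(\ring{R}h)\circ h-Eh^2+\Phi,\qquad \Phi(Y):=\sum_i\bigl((\nabla^g)^2_{e_i,he_i}h\bigr)Y-\sum_i\bigl(\nabla^g_{(\nabla^g_{e_i}h)Y}h\bigr)e_i,
\]
from which (i) and (ii) follow together after a single integration by parts. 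It needs neither the Weitzenb\"ock formula nor Lemma \ref{L-exp}.

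Two small points.

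First, the symmetry of $\sum_i(\nabla^g)^2_{e_i,he_i}h$ needs no curvature identity. Each $(\nabla^g)^2_{X,Y}h$ is a symmetric endomorphism because $h$ is symmetric and $\nabla^g$ is metric, so your appeal to $\sum_iR^g(e_i,he_i)=0$ is superfluous.

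Second, the bookkeeping you flag as the main obstacle is a pointwise identity involving only the single term $\sum_i(\nabla^g_{(\nabla^g_{e_i}h)Y}h)e_i$; there are no companions. Expand $\di_{\nabla^g}h(e_i,e_j)=(\nabla^g_{e_i}h)e_j-(\nabla^g_{e_j}h)e_i$ in $g(\di_{\nabla^g}h,H\circ\di_{\nabla^g}h)$. The two diagonal products give $g(\mathrm{L}(h,h),H)$, with $\mathrm{L}(h,h)=\sum_i(\nabla^g_{e_i}h)^2$. The two cross products give
\[
-\sum_{i,j}g\bigl((\nabla^g_{e_i}h)e_j,H(\nabla^g_{e_j}h)e_i\bigr)=-\sum_{i,k}g\bigl((\nabla^g_{(\nabla^g_{e_i}h)e_k}h)e_i,He_k\bigr),
\]
using the symmetry of $h$ and $H$. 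This is exactly the first-order part of $g(\Phi,H)$, so no further integration by parts is needed. With the paper's curvature convention the commutator term does come out as $+(\ring{R}h)\circ h-Eh^2$, as required.
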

We also recall the following alternative way of computing the operator $\bfv$, as defined in \eqref{bfv-def},  directly from the Fr\"olicher-Nijenhuis bracket. We have 
\begin{equation} \label{bfv-1}
\begin{split}
\la \bfv(h_1,h_2),H\ra_{L^2}=&2\la [h_1,h_2]^{\FN}, \di_{\nabla^g}H \ra_{L^2}-\la H \sharp \di_{\nabla^g}\!h_1,
\di_{\nabla^g} \!h_2\ra_{L^2}+
\la \tilde{\mathrm{L}}(h_1,h_2),H\ra_{L^2}\\
\end{split}
\end{equation}
for all $h_1,h_2$ and $H$ in $\Gamma \left (\Sym^2TM \right )$. Here the operator 
\begin{equation} \label{L-t}
2\tilde{\rm L}(h_1,h_2):=\{h_1,\delta^g \di_{\nabla^g}\!h_2\}+\{h_2,\delta^g \di_{\nabla^g}\!h_1\}-\delta^g
\di_{\nabla^g}\{h_1,h_2\}
\end{equation}
The proof is entirely similar to the argument used in the proof of Theorem 4.4 in \cite{NS-E} and hence will be omitted. Following \cite{Besse} we also recall the following 
\begin{defi} \label{IED}
Let $(M,g)$ be compact and Einstein. The space of essential infinitesimal Einstein deformations of $g$ is given by
$$ \varepsilon(g):=\{h \in \Gamma \left (\Sym^2TM \right ) : 
\Delta_Eh=0, \ \delta^gh=0 \ \mathrm{and} \ \tr(h)=0\}.
$$
\end{defi} 
To finish this section we also recall the following result which clarifies the structure theory of Einstein deformations up to second order.
\begin{teo} \cite{N1}[Theorem 1.1]\label{norm-thm}
Let $(M,g)$ be compact and Einstein and let $g_t \in \mathscr{M}_1$ be a family of Einstein metrics with Taylor expansion $g^{-1}g_t=\id+th_1+\tfrac{t^2}{2}h_2+o(t^3)$ at $t=0$. Up to a time dependent gauge 
transformation $g_t \mapsto f_t^{\star}g_t$ where $f_t \in \mathbf{G}$ we may assume that
\begin{itemize}
\item[(i)] the first order variation $h_1$ belongs to $\varepsilon(g)$
\item[(ii)] the second order variation $h_2$  satisfies 
\begin{equation} \label{o2-new}
\Delta_E(h_2-h_1^2)=\tfrac{1}{2}\widetilde{\Delta}_Eh_1^2+Eh_1^2-\bfv(h_1,h_1)
\end{equation}
as well as 
\begin{equation} \label{o2-no2new}
\delta^g(h_2-h_1^2)=-\frac{1}{4} \di\!\tr(h_1^2). 
\end{equation}
\end{itemize}
\end{teo}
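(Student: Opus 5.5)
The plan is to follow the standard slice argument (Ebin, Besse 12.C) at first order, then at second order to use the residual gauge freedom together with the volume constraint to fix $\delta^g h_2$ and $\tr h_2$. After that, I would differentiate $\Ric^{g_t}=E\id$ twice and rewrite the result in terms of the operator $\bfv$ from \eqref{bfv-def}.

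\textbf{First order (part (i)).} Replace $g_t$ by $f_t^{\star}g_t$, where $f_t$ is the flow of a time dependent, divergence-free vector field $X_t$. This changes $h_1$ into $h_1+2\delta^{\star_g}X_0^\flat$. Since $\delta^g\delta^{\star_g}$ is elliptic and invertible on the orthogonal complement of Killing $1$-forms, and $\delta^gh_1$ is automatically $L^2$-orthogonal to Killing fields, I can solve for $X_0$ and reach $\delta^g h_1=0$. For volume preservation I would use the Hodge decomposition: the $\di^\star$-exact part of $\delta^g h_1$ is absorbed by $\tr$-terms. Next I differentiate $\Ric^{g_t}=E\id$ once, which gives $\tfrac12\widetilde{\Delta}_E h_1+\tfrac12\delta^{\star_g}\di\tr h_1=0$. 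Taking the trace with Lemma \ref{tr-00}(ii), and using that $\int\tr h_1=0$ (volume constraint), gives an equation of the form $\Delta \tr h_1=c\,E\tr h_1$. Lichnerowicz--Obata then forces $\tr h_1=0$, except on the round sphere, which is excluded for $E\le0$ and handled as usual otherwise. With $\tr h_1=\delta^gh_1=0$ the linearised equation reduces to $\Delta_E h_1=0$, so $h_1\in\varepsilon(g)$.

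\textbf{Gauge at second order (\eqref{o2-no2new}).} Composing with a further diffeomorphism $\exp(\tfrac{t^2}{2}Y)$ leaves $h_1$ unchanged and shifts $h_2$ by $2\delta^{\star_g}Y^\flat$ plus terms depending only on $h_1$. I would choose $Y$ by solving $2\delta^g\delta^{\star_g}Y^\flat=-\tfrac14\di\tr(h_1^2)-\delta^g(h_2-h_1^2)$. The right-hand side is orthogonal to Killing fields, because Killing fields are coclosed and $\delta^g$ has image orthogonal to $\ker\delta^{\star_g}$, so the equation is solvable. The volume constraint gives $\tfrac{d^2}{dt^2}\ln\det(g^{-1}g_t)|_{t=0}=\tr h_2-\tr h_1^2$ integrating to zero. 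The pointwise identity $\tr(h_2-h_1^2)=0$ should then come out of the trace of the second-order Einstein equation, by the same Obata-type argument as at first order. With this, $\D^g(h_2-h_1^2)=-\tfrac12\di\tr h_1^2$.

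\textbf{Second-order equation (\eqref{o2-new}); the main obstacle.} I would compute $(\Ric^{g_t})''(0)$. It should split as a linear part $\tfrac12(\widetilde\Delta_E+\delta^{\star_g}\di\tr)h_2$ and a quadratic part in $h_1$. The quadratic part has to be identified with a combination of $\bfv(h_1,h_1)$, $\widetilde\Delta_E h_1^2$ and $Eh_1^2$, using the Frölicher--Nijenhuis form \eqref{bfv-1}, \eqref{L-t} and Lemma \ref{sym-bra}. This identification is the hard step. I would try to get it by differentiating the formula $\Ric$ in terms of $\delta^g\di_{\nabla^g}$ for the metric $g(\id+h\,\cdot,\cdot)$, and checking symmetry through Lemma \ref{sym-bra}(i). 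Finally, I would substitute $\widetilde\Delta_E=\Delta_E-\delta^{\star_g}\D^g$ together with the normalisations $\D^g(h_2-h_1^2)=-\tfrac12\di\tr h_1^2$ and $\tr(h_2-h_1^2)=0$. The $\delta^{\star_g}$-terms should then cancel and leave exactly \eqref{o2-new}.
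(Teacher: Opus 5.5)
Your outline has the right overall shape: slice at first order, fix $\mathbf{X}_2(g_t)$ at second order, differentiate twice, and finally substitute $\widetilde{\Delta}_E=\Delta_E-\delta^{\star_g}\D^g$; that last substitution is correct. There are, however, two genuine gaps.

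\textbf{The second-order equation.} The step that carries the content of \eqref{o2-new} is only announced. Nothing in the proposal turns the quadratic part of $(\Ric^{g_t})^{(2)}(0)$ into $Eh_1^2-\bfv(h_1,h_1)+\tfrac12\widetilde{\Delta}_Eh_1^2$. Lemma \ref{sym-bra}(i) is a symmetry statement and cannot do this. Your instinct to express $\Ric^{g_t}$ through $\delta^g\di_{\nabla^g}$ is right, but the missing ingredient is the exact divergence form of Theorem \ref{E-sym}. For $\det(g^{-1}g_t)=1$ the Einstein equation involves only $\delta^g\mathbf{D}_t$ with $\mathbf{D}_t=\di_{\nabla^g}h_t(h_t^{-1}\cdot,h_t^{-1}\cdot)$, $\delta^{\star_g}\delta^gh_t^{-1}$, curvature terms in $h_t^{-1}$, and $h_t^{-1}\circ\mathbf{p}(h_t^{-1},h_t)$. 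At order two one has $\mathbf{D}_2=\di_{\nabla^g}(h_2-2h_1^2)+2[h_1,h_1]^{\FN}$ and $\mathbf{p}_2=-2\mathbf{p}(h_1,h_1)$. The quadratic terms are then converted by Lemma \ref{diff-Ls}, namely $\la 2\delta^g[h,h]^{\FN}+\mathbf{p}(h,h),H\ra_{L^2}=\la\bfv(h,h)+\{h,\mathbf{L}h\}-\mathbf{L}h^2,H\ra_{L^2}$, which rests on Lemma \ref{quad-1L} and \eqref{bfv-1}. With $\mathbf{L}h_1=-Eh_1$ this gives \eqref{xxx2}.

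\textbf{Gauge and traces.} In this paper $g_t\in\mathscr{M}_1$ means $\det(g^{-1}g_t)=1$ pointwise. So $\tr h_1=\tr(h_2-h_1^2)=0$ come for free (Lemma \ref{tr-u3}). What must actually be secured is $f_t\in\mathbf{G}$, i.e.\ divergence-free generators.

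Your $X_0$ and $Y$ come from inverting $\delta^g\delta^{\star_g}$ on all $1$-forms orthogonal to Killing fields, so in general they are not divergence free. Since $2\delta^g\delta^{\star_g}=\Delta-2E$ on co-closed forms, divergence-free fields can only make $\delta^gh_1$ and $\mathbf{X}_2(g_t)$ exact (Lemma \ref{ll-0}, Proposition \ref{norm-2nd}). The gradient parts must then be killed by tracing the Einstein equations with Lemma \ref{tr-00}.

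Your first-order operator $\tfrac12\widetilde{\Delta}_Eh_1+\tfrac12\delta^{\star_g}\di\tr h_1$ is what \eqref{mainES} gives under $\det(g^{-1}g_t)=1$, i.e.\ it presupposes $\tr h_1=0$. For arbitrary $h$ the linearisation is $\tfrac12\widetilde{\Delta}_Eh$, which annihilates every $\delta^{\star_g}X$; yours does not. So using it to prove $\tr h_1=0$ is circular. With the correct operator your reversed argument gives $(\Delta-E)\tr\tilde h_1=0$. Hence $\tr\tilde h_1=0$, using $\int\tr\tilde h_1=0$ if $E=0$ and Lichnerowicz's bound if $E>0$, with no Obata exception. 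Consequently $\di^{\star}X_0=0$ a posteriori.

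At second order the trace is not a homogeneous Obata-type equation. Tracing \eqref{xxx2} with $\tr(h_2-h_1^2)=0$ gives
$2\di^{\star}\mathbf{X}_2(g_t)=\tr\bigl(Eh_1^2-\bfv(h_1,h_1)+\tfrac12\widetilde{\Delta}_Eh_1^2\bigr)$.
The right-hand side depends only on $h_1$, so no choice of $Y$ can change it. Hence \eqref{o2-no2new} holds only if this trace vanishes for $h_1\in\varepsilon(g)$, and that has to be proved. The same is true of your pointwise $\tr(h_2-h_1^2)=0$ after a non-volume-preserving $Y$. The cleanest way is to trace \eqref{mainES} directly at order two. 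There $\mathbf{D}_2$, $\mathbf{p}_2$ and $\tr(h_2-h_1^2)=0$ combine to give exactly $\di^{\star}\mathbf{X}_2(g_t)=0$. Then $\mathbf{X}_2(g_t)=\di\varphi_2$ forces $\Delta\varphi_2=0$.
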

Following \cite{N1} we note that for deformations $g_t \in \mathscr{M}_1$ we automatically have that the tensors 
$h_1,h_2-h_1^2$ are trace free. In fact the gauge normalisation used in the above result is such 
that the $1$-forms dual to the vector fields $\delta^gh_1$ and $\mathbf{X}_2(g_t)$(see \eqref{norm-3i}) are exact; imposing 
the Einstein equations to first and second order actually forces their vanishing due to having 
$h_1,h_2-h_1^2$ trace free. An extension of this gauge normalisation argument to third order 
is proved later on in Proposition \ref{norm-2nd}.  
\subsection{Facts from K\"ahler geometry} \label{K-g}
Assume that $(M^{2m},g,J)$ is K\"ahler Einstein, with Einstein constant $E$. We briefly review in this section some notation and main facts needed in what follows. The action of $J$ extends to $\Omega^{\star}M$ according to $J\alpha:=\alpha(J \cdot ,\ldots, J \cdot)$.
The bundle of symmetric 2-tensors splits as $\Sym^2 TM=\Sym^{2,+}TM \oplus \Sym^{2,-}TM$ where the summands
$$\Sym^{2,\pm }TM=\{h \in \Sym^2TM : hJ=\pm Jh \}.$$
This allows defining spaces of $\TT$-tensors according to 
$$\TT^{\pm}(M,g):=\TT(M,g) \cap \Gamma \left (\Sym^{2,\pm}TM \right ).$$ 
Furthermore, the operator $\delta^{\star_g}$ splits as 
$$ \delta^{\star_g}X=\delta^{\star_g,+}X+\delta^{\star_g,-}X
$$
according to $\Sym^2TM=\Sym^{2,+}TM \oplus \Sym^{2,-}TM $. The components are explicitly given by 
\begin{equation} \label{delta-K}
4\left (\delta^{\star_g,+}X \right )\circ J =(1+J)\di(JX)^{\flat} \ \ \mathrm{and} \ -2\delta^{\star_g,-}X=\L_{JX}J+\tfrac{1}{2}(1-J)\di\!X^{\flat}
\end{equation}
whenever $X \in TM$. 

Indicate with $\lambda^2M=\{\alpha \in \Lambda^{2}M : \alpha(J \cdot, J \cdot)=-\alpha\}$, so that $\Lambda^2M=\Lambda^{1,1}M \oplus \lambda^2M$. Accordingly, the exterior derivative $\di : \Omega^1M \to \Omega^2M$ splits into complex types according to $\di=\di^{+}+\di^{-}$ where the 
summands are given by 
$$ 2\di^{+}\! \alpha=\di\!\alpha+\di\!\alpha(J \cdot, J \cdot) \in \Omega^{1,1}M \ \mathrm{and} \ 2\di^{-}\! \alpha=\di\!\alpha-\di\!\alpha(J \cdot, J \cdot) \in \lambda^2M 
$$
whenever $\alpha$ belongs to $\Omega^1M$. Similarly, we consider the splitting of real bundles given by 
$$\Lambda^{2}(M,TM)=\Lambda^{1,1}(M,TM) \oplus \lambda^2(M,TM)$$ 
where in analogy with the case of $2$-forms we define 
$\lambda^2(M,TM):=\{\alpha \in \Lambda^{2}(M,TM) : \alpha(J \cdot, J \cdot)=-\alpha\}$. Accordingly, 
whenever $H$ belongs to $\Gamma \left (\Sym^{2,+}TM \right )$ we may consider the splitting $\di_{\nabla^g}H=\di^{+}_{\nabla^g}H+\di^{-}_{\nabla^g}H$ where 
$$\di^{\pm}_{\nabla^g}H:=\tfrac{1}{2}\left (\di_{\nabla^g}H\pm\di_{\nabla^g}H(J \cdot, J \cdot)\right ). 
$$
Then $\di^{+}_{\nabla^g}H \in \Omega^{1,1}(M,TM)$ respectively $\di^{-}_{\nabla^g}H \in \lambda^2(M,TM)$.

In addition we consider the vector bundle splitting 
\begin{equation} \label{split2}
\begin{split}
\lambda^2(M,TM)=\lambda^2_{+}(M,TM)\oplus \lambda^2_{-}(M,TM)
\end{split}
\end{equation}
where 
\begin{equation*} 
\begin{split}
&\lambda^2_{-}(M,TM):= \{\gamma \in \lambda^2(M,TM): \gamma(X,JY)=-J\gamma(X,Y)\} \ \mathrm{and} \\
&\lambda^2_{+}(M,TM):= \{\gamma \in \lambda^2(M,TM): \gamma(X,JY)=J\gamma(X,Y)\}.
\end{split}
\end{equation*}
Note that $\di^{-}_{\nabla^g}H \in \lambda^2(M,TM)$ is a section of $\lambda^2_{+}(M,TM)$ whenever 
$H$ belongs to $\Gamma \left (\Sym^{2,+}TM \right )$. For elements $h \in \Gamma \left (\Sym^{2,-}TM \right )$ we define 
the complex differential 
$$ \bdel h:=\tfrac{1}{2} \left (\di_{\nabla^g}h-\di_{\nabla^g}h(J \cdot ,J \cdot) \right ) 
$$
together with the Kodaira-Spencer bracket 
$$ [h,h]^c:=\tfrac{1}{2}  \left ( [h,h]^{\FN}-[Jh,Jh]^{\FN} \right ).$$
Both $\bdel h$ and $ [h,h]^c$ are sections of the bundle $\lambda^2_{-}(M,TM)$. These algebraic preliminaries allow recalling the comparaison 
formula between the Fr\"olicher-Nijenhuis respectively the Kodaira-Spencer bracket obtained in \cite{N1}.
In fact the type decomposition of the Fr\"olicher-Nijenhuis bracket below will be systematically used in this paper.
\begin{pro}\cite{N1}[Propositions 4.8 and 4.13] \label{t-FNK}
Assume that $h \in \Gamma \left (\Sym^{2,-}TM\right )$. Then 
\begin{equation} \label{t-FNKS}
[h,h]^{\FN}=[h,h]^c+h \circ \bdel h+ \left (\di_{\nabla^g}^{+}h^2-h \sharp \bdel h \right )
\end{equation}
according to $\Lambda^2(M,TM)=\lambda_{-}(M,TM) \oplus \lambda_{+}(M,TM) \oplus \Lambda^{1,1}(M,TM)$. If, in addition, we assume that 
$\bdel h=0$ and $\delta^gh=0$ then 
\begin{equation} \label{t-FNKS1}
\delta^g[h,h]^{\FN}=(\tfrac{1}{2}\Delta_E+\ring{R})h^2+\left (\delta^g[h,h]^c-\delta^{\star_g,-}\delta^gh^2 \right )-
\di^{-}\delta^gh^2
\end{equation}
according to the splitting $\End(TM)=\Sym^{2,+}TM \oplus \Sym^{2,-}TM \oplus \Lambda^2M$.
\end{pro}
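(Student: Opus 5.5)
The plan is to split the proof into an algebraic part, giving the type decomposition \eqref{t-FNKS}, and a differential part, giving \eqref{t-FNKS1}. Both rest on the K\"ahler condition $\nabla^gJ=0$.

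\emph{Algebraic part.} Fix $h\in\Gamma(\Sym^{2,-}TM)$. First record the elementary facts.
\begin{itemize}
\item[(a)] $Jh$ is again symmetric and $J$-anti-invariant, since $(Jh)^T=-hJ=Jh$.
\item[(b)] $(Jh)^2=h^2$, and this tensor lies in $\Sym^{2,+}TM$.
\item[(c)] Because $\nabla^gJ=0$, each $\nabla^g_Xh$ also anticommutes with $J$.
\end{itemize}
From (c), $\di_{\nabla^g}h$ has no $\lambda^2_+$-component. Hence $\di_{\nabla^g}h=\bdel h+(\di_{\nabla^g}h)^{1,1}$, and the $(1,1)$-part is $J$-anti-linear in the sense $\alpha(X,JY)=-J\alpha(X,Y)$.

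Next, start from \eqref{bra-nac}, $[h,h]^{\FN}=-h\sharp\di_{\nabla^g}h+\di_{\nabla^g}h^2$. Since $h^2$ is $J$-invariant, $\di_{\nabla^g}h^2=\di^+_{\nabla^g}h^2+\di^-_{\nabla^g}h^2$, with the second summand in $\lambda^2_+$.

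Then take the type components of $h\sharp\bdel h$ and $h\sharp(\di_{\nabla^g}h)^{1,1}$ directly, using only that $h$ anticommutes with $J$. A precomposition by $h$ in one slot flips $(J\cdot,J\cdot)$-parity and switches $\lambda_\pm$-linearity. This should give two things:
\begin{itemize}
\item the $\lambda^2_+$-projection of $-h\sharp\di_{\nabla^g}h+\di^-_{\nabla^g}h^2$ equals $h\circ\bdel h$;
\item the $\Lambda^{1,1}$-projection reduces to $\di^+_{\nabla^g}h^2-h\sharp\bdel h$.
\end{itemize}

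For the $\lambda^2_-$-projection, apply the same expansion to $Jh$. Because $(Jh)^2=h^2$ and $\bdel(Jh)=J\circ\bdel h$-type identities hold, every summand except the $\lambda^2_-$ one is invariant under $h\mapsto Jh$. The $\lambda^2_-$ summand flips sign. Hence the $\lambda^2_-$-part is $\tfrac12([h,h]^{\FN}-[Jh,Jh]^{\FN})=[h,h]^c$.

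\emph{Differential part.} Now assume $\bdel h=0$ and $\delta^gh=0$. The decomposition just proved reduces to
\[
[h,h]^{\FN}=[h,h]^c+\di^+_{\nabla^g}h^2 .
\]
Write $\di^+_{\nabla^g}h^2=\di_{\nabla^g}h^2-\di^-_{\nabla^g}h^2$ and apply the Weitzenb\"ock formula \eqref{wz1} to $H=h^2$:
\[
\delta^g\di_{\nabla^g}h^2=-(\delta^{\star_g}+\tfrac12\di)\delta^gh^2+\Delta_Eh^2+Eh^2+\ring{R}h^2 .
\]
The remaining work is to compute $\delta^g\di^-_{\nabla^g}h^2$. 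I would write it as $\tfrac12\big(\delta^g\di_{\nabla^g}h^2-\delta^g(\di_{\nabla^g}h^2(J\cdot,J\cdot))\big)$. Using $\nabla^gJ=0$, the second term should become $J$-conjugated first-order terms, namely $-J\circ\nabla^g_{Je_i}$-type expressions. The aim is that $\delta^g\di_{\nabla^g}h^2$ and this conjugated divergence share the same $\Sym^{2,+}$ Laplacian part modulo a computable multiple, and differ by $\delta^{\star_g,\pm}\delta^gh^2$ and $\di^{\pm}\delta^gh^2$ terms.

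Separately, $\delta^g[h,h]^c$ should contribute only in $\Sym^{2,-}\oplus\Lambda^2$. Its $\Sym^{2,+}$-part has to be shown to vanish, or absorbed. For this I would use Lemma \ref{sym-bra}(i) to control the skew part, together with $\Delta_Eh=0$. The latter holds since $\bdel h=0$, $\delta^gh=0$ and \eqref{wz0} force $h\in\varepsilon(g)$.

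Sorting the resulting expression by the splitting $\Sym^{2,+}\oplus\Sym^{2,-}\oplus\Lambda^2$ then yields the three components of \eqref{t-FNKS1}:
\begin{itemize}
\item $(\tfrac12\Delta_E+\ring{R})h^2$ in $\Sym^{2,+}$;
\item $\delta^g[h,h]^c-\delta^{\star_g,-}\delta^gh^2$ in $\Sym^{2,-}$;
\item $-\di^-\delta^gh^2$ in $\Lambda^2$.
\end{itemize}

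The main obstacle is the $\Sym^{2,+}$-component. Obtaining exactly $\tfrac12\Delta_E+\ring{R}$, with the $E$-terms and the $\delta^{\star_g,+}\delta^gh^2$ terms cancelling, requires a careful Weitzenb\"ock computation for $\delta^g(\di_{\nabla^g}h^2(J\cdot,J\cdot))$. It also requires using the curvature identity $\ring{R}J=J\ring{R}$ on $\Sym^{2,+}$ together with $\Ric=E\id$. I would first try to verify this component by pairing with an arbitrary $H\in\Gamma(\Sym^{2,+}TM)$ and using Lemma \ref{sym-bra}(ii).
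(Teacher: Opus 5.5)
\paragraph{The differential part has a genuine gap.} The paper does not prove this proposition; it cites \cite{N1}. Inside the paper, though, \eqref{t-FNKS1} follows in two lines from \eqref{t-FNKS}, and your proposal is missing exactly those two lines.
\begin{itemize}
\item With $\bdel h=0$ one has $[h,h]^{\FN}=[h,h]^c+\di^{+}_{\nabla^g}h^2$, and Proposition~\ref{wz_K}(i) with $H=h^2$ gives $\delta^g\di^{+}_{\nabla^g}h^2=(\tfrac12\Delta_E+\ring{R})h^2-\delta^{\star_g,-}\delta^gh^2-\tfrac12\di^{-}\delta^gh^2$.
\item Since $\delta^gh=0$ gives $[h,h]^c=[h,h]^c_{\TT}$, Proposition~\ref{del-TT}(i) makes $\delta^g[h,h]^c$ a section of $\TT^{-}(M,g)$.
\end{itemize}
This is how the result is used in the proof of Lemma~\ref{sum-N}(iv).

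You leave the decisive computation at ``should become'', and you put the difficulty in the wrong place. The $\Sym^{2,+}$-component is routine: for $H\in\Gamma(\Sym^{2,+}TM)$ one gets directly $\delta^g\di^{+}_{\nabla^g}H=\tfrac12(\nabla^g)^{\star}\nabla^gH-(\nabla^g\delta^gH)^{-}$, where $(\cdot)^{-}$ is the part anticommuting with $J$. The terms $\mp(\ring{R}-E)H$ from the two halves of $\di^{+}_{\nabla^g}$ cancel, and $\sum_iR(Je_i,e_i)$ acts trivially on $J$-invariant $H$. So no $E$-terms or $\delta^{\star_g,+}$-terms ever arise.

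The $\Sym^{2,+}$-part of $\delta^g[h,h]^c$ also vanishes for free, because the divergence of any section of $\lambda^2_{-}(M,TM)$ anticommutes with $J$. What really has to be shown, and what you never address, is that the \emph{skew} part of $\delta^g[h,h]^c$ vanishes. That needs $\delta^gh=0$, and Lemma~\ref{sym-bra}(i) together with $\Delta_Eh=0$ does not give it by itself.

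Finally, carrying the computation out gives the $\Lambda^2M$-component $-\tfrac12\di^{-}\delta^gh^2$, not $-\di^{-}\delta^gh^2$. This is forced by Propositions~\ref{wz_K}(i) and~\ref{del-TT}(i). It is also the coefficient the paper itself uses in the proof of Lemma~\ref{sum-N}(iv). So your closing claim that sorting reproduces the displayed $\Lambda^2$-term cannot hold as written; the factor in \eqref{t-FNKS1} looks like a misprint.

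\paragraph{The algebraic part is sound in outline,} but one stated fact is false and two steps are only asserted.
\begin{itemize}
\item The $(1,1)$-part of $\di_{\nabla^g}h$ is not $J$-antilinear in the second slot, and no nonzero section of $\Lambda^{1,1}(M,TM)$ is. Skew-symmetry would make it antilinear in the first slot too, forcing $\alpha(J\cdot,J\cdot)=-\alpha$. What you need instead is that $h\sharp$ interchanges $\Lambda^{1,1}(M,TM)$ and $\lambda^2(M,TM)$. That already gives the $\Lambda^{1,1}$-component $\di^{+}_{\nabla^g}h^2-h\sharp\bdel h$ from \eqref{bra-nac}.
\item For the $h\mapsto Jh$ trick you must prove that the $\lambda^2_{-}$-summand changes sign. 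It does: for $\gamma\in\Lambda^{1,1}(M,TM)$ one has $(Jh)\sharp(J\gamma)=-J\,(h\sharp\gamma)(\cdot,J\cdot)$, and $\alpha\mapsto-J\alpha(\cdot,J\cdot)$ is $-1$ on $\lambda^2_{-}$ and $+1$ on $\lambda^2_{+}$.
\item The $\lambda^2_{+}$-part is cleanest from \eqref{bra-na}. First, $2[h,h]^c(X,Y)=-(\nabla^g_{hX}h)Y+(\nabla^g_{hY}h)X+(\nabla^g_{hJX}h)JY-(\nabla^g_{hJY}h)JX$ lies in $\lambda^2_{-}(M,TM)$, since $(X,Y)\mapsto(\nabla^g_{hX}h)Y-J(\nabla^g_{JhX}h)Y$ is $J$-antilinear in both slots. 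Second, $[h,h]^{\FN}-[h,h]^c=\tfrac12([h,h]^{\FN}+[Jh,Jh]^{\FN})$ equals $h\circ\di_{\nabla^g}h$ plus a $(1,1)$-term, since $(X,Y)\mapsto(\nabla^g_{hX}h)Y+J(\nabla^g_{JhX}h)Y$ is $J$-linear in $X$ and $J$-antilinear in $Y$. Splitting $h\circ\di_{\nabla^g}h=h\circ\bdel h+h\circ(\di_{\nabla^g}h)^{1,1}$ then gives \eqref{t-FNKS}.
\end{itemize}
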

We also recall the definition of the perturbed Kodaira-Spencer bracket which explains how the divergence 
of the latter relates to the space of $J$-anti-invariant $\TT$-tensors. Following \cite{N1}[Definition 4.10] define 
\begin{equation} \label{TT-defn}
[h,h]^c_{\TT}:=[h,h]^c +\tfrac{1}{2} \left ( \delta^g h \wedge h-\delta^g(Jh) \wedge Jh \right )
\end{equation}
whenever $h$ belongs to $\Gamma \left (\Sym^{2,-}TM \right )$. The perturbed Kodaira-Spencer bracket $[h,h]^c_{\TT}$ is a section 
of $\lambda^2_{-}(M,TM)$ and extends to a symmetric bracket on the space $\Gamma \left (\Sym^{2,-}TM \right )$ via the usual formula $2[h_1,h_2]_{\TT}^c=
[h_1+h_2,h_1+h_2]_{\TT}^c-[h_1,h_1]_{\TT}^c-[h_2,h_2]_{\TT}^c$. Its main properties are outlined below.
\begin{pro}\cite{N1}[Proposition 4.11] \label{del-TT}Assuming that $h$ is in $\Gamma \left (\Sym^{2,-}TM \right )$ the following hold
\begin{itemize}
\item[(i)] the divergence 
$$ \delta^g [h,h]^c_{\TT} \ \mathrm{belongs \ to} \ \TT^{-}(M,g)
$$
\item[(ii)] we have  $\mathrm{a} [h,h]^c=0$ and $\mathrm{a} [h,h]_{\TT}^c=0$.
\end{itemize}
\end{pro}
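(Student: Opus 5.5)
The plan is to treat part (ii) first, since it is purely algebraic and is then used in part (i). Throughout, $\mathrm{a}:\Lambda^2(M,TM)\to\Lambda^3M$ denotes the alternation map $\mathrm{a}(\gamma)(X,Y,Z)=\mathfrak{S}_{XYZ}\,g(\gamma(X,Y),Z)$.

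\emph{Part (ii).} Write $[h,h]^{\FN}=-h\sharp\di_{\nabla^g}h+\di_{\nabla^g}h^2$ as in \eqref{bra-nac}. For any symmetric $H$ we have $\mathrm{a}(\di_{\nabla^g}H)=0$. Indeed, $\nabla^g_XH$ is symmetric, so in the cyclic sum of $g((\nabla^g_XH)Y-(\nabla^g_YH)X,Z)$ the terms cancel in pairs.

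It therefore suffices to compute $\mathrm{a}(h\sharp\di_{\nabla^g}h)$ in a local frame. I expect it to equal a cyclic sum of terms of the form $g((\nabla^g_{hX}h)Y,Z)$ and $g((\nabla^g_Xh)hY,Z)$. I will then replace $h$ by $Jh$ and use three facts: $Jh=-hJ$, $\nabla^gJ=0$, and $J$ is $g$-orthogonal. These should show that the corresponding expression for $Jh$ coincides with the one for $h$, so the difference $\tfrac12\big(\mathrm{a}[h,h]^{\FN}-\mathrm{a}[Jh,Jh]^{\FN}\big)$ vanishes.

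For the perturbation term, $(\alpha\wedge h)(X,Y)=\alpha(X)hY-\alpha(Y)hX$. Its alternation is $\mathfrak{S}\big(\alpha(X)g(hY,Z)-\alpha(Y)g(hX,Z)\big)$, which vanishes by the symmetry of $h$. The same holds with $Jh$ in place of $h$, since $Jh$ is also symmetric. Hence $\mathrm{a}[h,h]^c_{\TT}=\mathrm{a}[h,h]^c=0$.

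\emph{Part (i).} Set $\gamma:=[h,h]^c_{\TT}\in\Gamma(\lambda^2_-(M,TM))$. There are four things to check, in this order.

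\begin{itemize}
\item[(a)] $J$-anti-invariance. This is formal: the defining relations $\gamma(X,JY)=-J\gamma(X,Y)$ and $\gamma(JX,JY)=-\gamma(X,Y)$ are preserved by $\nabla^g$. It follows that $\delta^g\gamma=-\gamma(e_i,\cdot)$-contracted with $\nabla_{e_i}$ anticommutes with $J$.
\item[(b)] Symmetry. I will use the general identity that the skew part of $\delta^g\gamma$ equals, up to a constant, $\di^{\star}(\mathrm{a}\gamma)$ plus an algebraic term built from the Ricci contraction; on an Einstein manifold the latter is harmless. By (ii), $\mathrm{a}\gamma=0$, so $\delta^g\gamma$ is symmetric.
\item[(c)] Trace. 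Since $\tr\delta^g\gamma=\di^{\star}c(\gamma)$ with $c(\gamma)=\sum_i g(\gamma(e_i,\cdot),e_i)$, I will compute $c([h,h]^c)$ directly from \eqref{bra-na}. The aim is to show it equals a multiple of $h(\delta^gh)$ minus the corresponding $Jh$ term. The contraction of $\tfrac12(\delta^gh\wedge h-\delta^g(Jh)\wedge Jh)$ should cancel exactly this, using $\tr h=\tr(Jh)=0$ for $h\in\Sym^{2,-}$. This explains the choice of the perturbation.
\item[(d)] Divergence-free. I will use that $\delta^g\delta^g$ on $\Omega^2(M,TM)$ is, after using $\mathrm{a}\gamma=0$ and the Ricci identity, a zeroth order curvature contraction $\la R^g(e_i,e_j),\gamma(e_i,e_j)\ra$. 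Since $R^g$ takes values in $\Lambda^{1,1}$ and $\gamma(\cdot,\cdot)$ is of type $\lambda^2$ in the form slots, this contraction vanishes by type, and the Einstein condition removes the Ricci terms.
\end{itemize}

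\emph{Main obstacle.} I expect steps (b) and (c) to be the hardest. Step (c) is the place where the exact coefficient $\tfrac12$ in \eqref{TT-defn} must be matched, so it requires careful frame bookkeeping of the $\nabla_{hX}h$ terms together with the identity $\sum_i(\nabla_{he_i}h)e_i$ versus $h\delta^gh$. Step (b) relies on the same identity for the skew part of $\delta^g\gamma$, with its constant and Ricci-contraction term pinned down. I would handle both by working in a $J$-adapted orthonormal frame, where the type conditions reduce the number of terms that need to be tracked.
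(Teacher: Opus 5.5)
Part (ii) and steps (a) and (d) of your plan are sound. In (d) you do not even need $\mathrm{a}\gamma=0$, since $\delta^g\circ\delta^g$ on $\Omega^2(M,TM)$ is always an algebraic curvature contraction.

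\emph{The gap is step (b): the identity you intend to use does not exist.} With $\gamma(X,Y,Z):=g(\gamma(X,Y),Z)$ and $(\delta^g\gamma)(X,Z):=g((\delta^g\gamma)X,Z)$, a direct computation gives, for every $\gamma\in\Omega^2(M,TM)$,
\begin{equation*}
(\delta^g\gamma)(X,Z)-(\delta^g\gamma)(Z,X)=(\di^{\star}\mathrm{a}\gamma)(X,Z)+\sum_i g\big((\nabla^g_{e_i}\gamma)(X,Z),e_i\big).
\end{equation*}
The last term is a divergence taken in the $TM$-slot. It is a first order operator in $\gamma$, not an algebraic Ricci term, and under $\mathrm{a}\gamma=0$ the identity is a tautology. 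So $\mathrm{a}\gamma=0$ gives no control on the skew part. Your argument would equally show that $\delta^g[h,h]^c$ is symmetric (recall $\mathrm{a}[h,h]^c=0$ too), which fails as soon as $\delta^gh\neq 0$.

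Relatedly, step (c) is not where the coefficient $\tfrac12$ in \eqref{TT-defn} matters. Any endomorphism anticommuting with $J$ is trace-free, so the trace condition already follows from (a). Indeed the contraction $c(\gamma)$ vanishes for every section of $\lambda^2_-(M,TM)$.

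What is missing is a proof that the $TM$-slot divergence of $\gamma=[h,h]^c_{\TT}$ vanishes; this is exactly where the K\"ahler condition and the perturbation enter. In complex notation, write $h\partial_{\bar a}=\phi_{\bar a}{}^{k}\partial_k$ with $\phi_{\bar a\bar b}$ symmetric, and set $v_{\bar b}:=g(\delta^gh,\partial_{\bar b})=-\nabla_k\phi_{\bar b}{}^{k}$. The $T^{1,0}$-valued $(0,2)$-parts of $[h,h]^c$ and of $\tfrac12(\delta^gh\wedge h-\delta^g(Jh)\wedge Jh)$ have $\partial_k$-components
\begin{equation*}
-(\phi_{\bar a}{}^{l}\nabla_l\phi_{\bar b}{}^{k}-\phi_{\bar b}{}^{l}\nabla_l\phi_{\bar a}{}^{k})
\quad\text{and}\quad
v_{\bar a}\phi_{\bar b}{}^{k}-v_{\bar b}\phi_{\bar a}{}^{k}
\end{equation*}
respectively. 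Now contract $\nabla_k$ against the upper index.
\begin{itemize}
\item The terms quadratic in $\nabla\phi$ cancel by symmetry.
\item The curvature of a K\"ahler metric has type $(1,1)$, so $\nabla_k$ and $\nabla_l$ commute. The bracket therefore contributes $\phi_{\bar a}{}^{l}\nabla_lv_{\bar b}-\phi_{\bar b}{}^{l}\nabla_lv_{\bar a}$.
\item The perturbation contributes exactly the negative of this, since its $v\otimes v$ terms cancel.
\end{itemize}
Combined with $\mathrm{a}\gamma=0$ and the identity above, this yields the symmetry of $\delta^g[h,h]^c_{\TT}$. This computation is also where the coefficient $\tfrac12$ is actually forced. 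Without it, part (i) is not proved.
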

In the above statement $\mathrm{a}:\Omega^2(M,TM) \to \Omega^3M$ indicates the total antisymmetrisation map. 
We finish this section section by recalling a first set of Weitzenb\"ock formulas which will be used in this paper are 
\begin{pro} \label{wz_K}
The following hold
\begin{itemize}
\item[(i)]we have
\begin{equation*}
\delta^g\di^{+}_{\nabla^g}H=(\tfrac{1}{2}\Delta_E+\ring{R})H-\delta^{\star_g,-}\delta^gH
-\tfrac{1}{2}\di^{-} \delta^gH
\end{equation*}
for all $H \in \Gamma \left (\Sym^{2,+}TM \right )$
\item[(ii)] we have 
$$ \delta^g \bdel h=\tfrac{1}{2}\Delta_Eh-\delta^{\star_g,-}\delta^gh-\tfrac{1}{2}\di^{-} \delta^g h
$$
for all $h \in \Gamma \left (\Sym^{2,-}TM \right )$. 
\end{itemize}
\end{pro}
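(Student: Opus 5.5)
Both identities follow from the Weitzenb\"ock formula \eqref{wz1}, once we compute how the type projections commute with $\delta^g$ on a K\"ahler manifold. For (ii), write $\bdel h=\tfrac12(\di_{\nabla^g}h-\di_{\nabla^g}h(J\cdot,J\cdot))$. Then
\[
\delta^g\bdel h=\tfrac12\delta^g\di_{\nabla^g}h-\tfrac12\delta^g\bigl(\di_{\nabla^g}h(J\cdot,J\cdot)\bigr).
\]
The first term is given by \eqref{wz1}:
\[
\delta^g\di_{\nabla^g}h=-(\delta^{\star_g}+\tfrac12\di)\delta^gh+\Delta_Eh+Eh+\ring{R}h .
\]
For the second term, $\nabla^gJ=0$ lets $J$ pass through $\nabla^g$. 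In a local frame,
\[
-\delta^g\bigl(\di_{\nabla^g}h(J\cdot,J\cdot)\bigr)(X)=(\nabla^g_{Je_i}h)JX-(\nabla^g_{JX}h)Je_i .
\]
Since $\{Je_i\}$ is again orthonormal, the first piece is $(\nabla^g_{e_i}h)X$ after reindexing $e_i\mapsto Je_i$ (which introduces a sign). The second piece equals $-J(\nabla^g_{JX}h)e_i$, because $h$ is $J$-anti-invariant and so is each $\nabla^g h$.

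Next, rewrite $\nabla^g_{JX}h$ via $\di_{\nabla^g}h$ and $\nabla^gh$ at $X$. This should give the identity
\[
\delta^g\bigl(\di_{\nabla^g}h(J\cdot,J\cdot)\bigr)=-J\circ(\delta^g\di_{\nabla^g}h)\circ J+(\text{first-order terms in }\delta^gh)+(\text{curvature terms}),
\]
where the curvature terms arise from commuting derivatives. Because $g$ is K\"ahler, the curvature commutes with $J$, so $\ring{R}$ preserves $\Sym^{2,-}TM$. This lets us pair the curvature terms with $Eh+\ring{R}h$ from \eqref{wz1}.

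The zero-order terms must cancel: $Eh+\ring{R}h$ has to be exactly compensated by the Ricci contraction (equal to $E$) and by the $\ring{R}$ term produced in the commutator. This cancellation is the main obstacle and is where K\"ahler--Einstein is used. It leaves $\tfrac12\Delta_Eh$ with coefficient $\tfrac12$, as claimed.

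For the divergence terms, decompose $\nabla^g\delta^gh=\delta^{\star_g}\delta^gh+\tfrac12\di\delta^gh$ into the types $\Sym^{2,\pm}$, $\Lambda^{1,1}$ and $\lambda^2$. The averaging $\tfrac12(A-JAJ)$ retains exactly the $\Sym^{2,-}$ and $\lambda^2$ parts. These are $\delta^{\star_g,-}\delta^gh$ and $\tfrac12\di^-\delta^gh$, which produces $-\delta^{\star_g,-}\delta^gh-\tfrac12\di^-\delta^gh$. As a consistency check, for $X\in TM$ formula \eqref{delta-K} confirms that the $J$-anti-invariant symmetric part of $\nabla^gX^\flat$ is $\delta^{\star_g,-}X$.

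Part (i) runs the same way with $H\in\Sym^{2,+}$ and $\di^+_{\nabla^g}H=\tfrac12(\di_{\nabla^g}H+\di_{\nabla^g}H(J\cdot,J\cdot))$. Now $\tfrac12(A+JAJ)$ keeps the $J$-anti-commuting part of the first-order term, again giving $-\delta^{\star_g,-}\delta^gH-\tfrac12\di^-\delta^gH$. In the zero-order terms the sign change means $E+\ring{R}$ is not cancelled: the commutator contributes a further $\ring{R}H$ and removes the $E$, leaving $\ring{R}H$. As an alternative check for (i), take the trace of the asserted identity and compare with Lemma \ref{tr-00}(i); this pins down the coefficient of $\ring{R}$.

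The main difficulty throughout is the careful commutation of $\nabla^g_{JX}$ and the resulting Ricci and curvature terms. I would verify the zero-order coefficients on a flat torus (where $\ring{R}=0$ and $E=0$, so only the divergence terms survive) and on a locally symmetric model before trusting the general computation.
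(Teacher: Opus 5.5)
Your overall plan is a workable route. Split $\delta^g\bdel h=\tfrac12\delta^g\di_{\nabla^g}h-\tfrac12\delta^g\beta$ with $\beta:=\di_{\nabla^g}h(J\cdot,J\cdot)$, treat the first half by \eqref{wz1}, and compute $\delta^g\beta$ using $\nabla^gJ=0$ and the Ricci identity. The paper itself gives no argument here and simply cites \cite{N1}. However, the computation of $\delta^g\beta$, which is the whole content of the proposition, is left undone and described incorrectly.

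The identity you anticipate, $\delta^g\beta=-J\circ(\delta^g\di_{\nabla^g}h)\circ J+(\text{first order in }\delta^gh)+(\text{curvature})$, is false. $-J(\cdot)J$ is minus the identity on $J$-anti-commuting endomorphisms, so its right-hand side contains $-\Delta_Eh$, whereas $\delta^g\beta$ contains no rough Laplacian at all. Feeding your identity into the splitting gives $\Delta_Eh$ with coefficient $1$, not $\tfrac12$. The coefficient you announce does not follow from what you wrote.

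Likewise, restore the outer derivative that is missing from your frame formula. The ``first piece'' is then $\sum_i(\nabla^g)^2_{e_i,Je_i}h$, and reindexing $e_i\mapsto Je_i$ does not turn it into $(\nabla^g_{e_i}h)X$. It shows instead that this sum is half an antisymmetrisation, i.e. the zero-order term $\tfrac12\bigl(\sum_iR^g(Je_i,e_i)\bigr)\cdot h$. Here $\sum_iR^g(Je_i,e_i)=-2EJ$ is the Ricci form of the K\"ahler--Einstein metric in the paper's sign convention. This is exactly the contribution needed for the zero-order cancellation you flag as ``the main obstacle'' without resolving.

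Carried out correctly, one gets
\begin{equation*}
(\delta^g\beta)(Y)=-\sum_i\bigl((\nabla^g)^2_{e_i,Je_i}h\bigr)JY+\sum_i\bigl((\nabla^g)^2_{e_i,JY}h\bigr)Je_i=2EhY+\bigl(J\nabla^g_{JY}\delta^gh+(\ring{R}h)Y-EhY\bigr).
\end{equation*}
The bracketed term comes from the Ricci identity together with the facts that $R^g$ commutes with $J$ and $hJ=-Jh$. Hence $\delta^g\beta=(E+\ring{R})h+J\circ A\circ J$ with $A:=\nabla^g\delta^gh$, and \eqref{wz1} yields $\delta^g\bdel h=\tfrac12\Delta_Eh-\tfrac12(A+JAJ)$.

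For $H\in\Sym^{2,+}TM$ the Ricci-form term vanishes because $[J,H]=0$. The same steps, now with $A:=\nabla^g\delta^gH$, give $\delta^g\bigl(\di_{\nabla^g}H(J\cdot,J\cdot)\bigr)=(\ring{R}-E)H-JAJ$, and hence (i).

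In both parts the first-order term is $-\tfrac12(A+JAJ)$, and $\tfrac12(A+JAJ)$ is the $J$-anti-commuting part $\delta^{\star_g,-}\delta^g(\cdot)+\tfrac12\di^-\delta^g(\cdot)$. Your averaging $\tfrac12(A-JAJ)$ for (ii) would instead select the $J$-commuting part ($\Sym^{2,+}$ and $\Lambda^{1,1}$). So your descriptions of (i) and (ii) cannot both be right.

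Finally, the proposed checks do not test the hard part. The trace check cannot fix the coefficient of $\ring{R}$, since $\tr\ring{R}H=E\tr H$. The flat torus sees none of the zero-order terms.
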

See \cite{N1} for details and proofs. In subsequent computations we will also need the following 
\begin{coro} \label{wz-K2}
The following hold
\begin{itemize}
\item[(i)] we have $\Vert \di^{-}_{\nabla^g}H\Vert^2_{L^2}=\la (\tfrac{1}{2}\Delta_E+E)H,H\ra_{L^2}-\Vert \delta^gH \Vert^2_{L^2}$ for all $H \in \Gamma \left (\Sym^{2,+}TM \right )$
\item[(ii)] we have $\la \partial^gh^3, \partial^g h\ra_{L^2}=\la (\tfrac{1}{2}\Delta_E+E
+\ring{R})h^3,h\ra_{L^2}$  for all $h \in  \Gamma \left (\Sym^{2,-}TM \right )$.
\end{itemize}
\end{coro}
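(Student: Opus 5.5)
Both identities should follow from the Weitzenb\"ock formula \eqref{wz1} for the full coupled differential $\di_{\nabla^g}$ and its type-refined versions in Proposition \ref{wz_K}. The mechanism is the same in each case: the norm of a single type component equals the norm of $\di_{\nabla^g}$ minus the norm of the complementary component, because the splittings of $\Lambda^2(M,TM)$ into $\Lambda^{1,1}(M,TM)\oplus\lambda^2(M,TM)$ (and further into $\lambda^2_{\pm}(M,TM)$) are pointwise orthogonal. I read $\partial^g h$, for $h\in\Gamma(\Sym^{2,-}TM)$, as the $\Lambda^{1,1}(M,TM)$-component $\di_{\nabla^g}h-\bdel h$ of $\di_{\nabla^g}h$. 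With this reading, part (ii) is the mixed inner product of the $(1,1)$-parts of $\di_{\nabla^g}h^3$ and $\di_{\nabla^g}h$. Note that $h^3$ is again a section of $\Sym^{2,-}TM$.

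For (i), take $H\in\Gamma(\Sym^{2,+}TM)$. By orthogonality,
$\Vert \di^-_{\nabla^g}H\Vert^2_{L^2}=\Vert\di_{\nabla^g}H\Vert^2_{L^2}-\Vert\di^+_{\nabla^g}H\Vert^2_{L^2}$.

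\begin{itemize}
\item For the first term, integrate by parts and apply \eqref{wz1}. This gives
$\Vert\di_{\nabla^g}H\Vert^2_{L^2}=\la(\Delta_E+E+\ring{R})H,H\ra_{L^2}-\Vert\delta^gH\Vert^2_{L^2}$.
The $\tfrac12\di\delta^gH$ term drops out because it is skew and pairs to zero against the symmetric $H$.
\item For the second term, since $\di^+_{\nabla^g}H$ is the orthogonal projection, we have $\Vert\di^+_{\nabla^g}H\Vert^2_{L^2}=\la\delta^g\di^+_{\nabla^g}H,H\ra_{L^2}$. Proposition \ref{wz_K}(i) then gives $\la(\tfrac12\Delta_E+\ring{R})H,H\ra_{L^2}$. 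The $\delta^{\star_g,-}\delta^gH$ term vanishes against $H$ because it lies in $\Sym^{2,-}TM$ while $H$ lies in $\Sym^{2,+}TM$. The $\di^-\delta^gH$ term vanishes because it is skew.
\end{itemize}

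Subtracting the second expression from the first yields (i).

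For (ii), take $h\in\Gamma(\Sym^{2,-}TM)$ and proceed in the same way with the bilinear pairing. By orthogonality,
$\la\partial^gh^3,\partial^gh\ra_{L^2}=\la\di_{\nabla^g}h^3,\di_{\nabla^g}h\ra_{L^2}-\la\bdel h^3,\bdel h\ra_{L^2}$.

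\begin{itemize}
\item For the first term, \eqref{wz1} gives $\la(\Delta_E+E+\ring{R})h^3,h\ra_{L^2}-\la\delta^gh^3,\delta^gh\ra_{L^2}$.
\item For the second term, $\bdel$ is the $\lambda^2$-projection, so the pairing equals $\la\delta^g\bdel h^3,h\ra_{L^2}$. Proposition \ref{wz_K}(ii) turns this into $\tfrac12\la\Delta_Eh^3,h\ra_{L^2}-\la\delta^gh^3,\delta^gh\ra_{L^2}$. Here $\la\delta^{\star_g,-}\delta^gh^3,h\ra_{L^2}=\la\delta^gh^3,\delta^gh\ra_{L^2}$ since $h$ is of type $-$, and the $\di^-$ term is skew and drops out.
\end{itemize}

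In the difference the divergence terms cancel, leaving $\la(\tfrac12\Delta_E+E+\ring{R})h^3,h\ra_{L^2}$, as claimed.

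The only point needing care is the bookkeeping of which terms pair to zero: skew endomorphisms against symmetric tensors, and $\Sym^{2,\mp}$ against $\Sym^{2,\pm}$. One must also check that $\bdel$ and $\di^{\pm}_{\nabla^g}$ are genuinely orthogonal projections of $\di_{\nabla^g}$, so that their formal adjoints are the restrictions of $\delta^g$. Both are immediate from the pointwise orthogonality of the type decompositions with respect to the metric induced by $g$.
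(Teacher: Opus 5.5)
Your proof is correct and follows essentially the same route as the paper, whose proof is just the one-line remark that both identities follow by combining Proposition \ref{wz_K} with the Weitzenb\"ock formula \eqref{wz1}. You carry this out explicitly by splitting $\di_{\nabla^g}$ orthogonally into type components and cancelling the divergence terms, and your reading of $\partial^g h$ as the $\Lambda^{1,1}(M,TM)$-component $\di_{\nabla^g}h-\bdel h$ agrees with how the paper uses it later (e.g.\ in the proofs of Propositions \ref{Q12} and \ref{Q13}).
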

The proof follows when combining Proposition \ref{wz_K} with the Weitzenb\"ock formula in \eqref{wz1}. To finish this section we recall the structure of infinitesimal Einstein deformation space for negative K\"ahler-Einstein. 
We have 
\begin{equation} \label{eps-K}
\varepsilon(g)=\varepsilon^{-}(g) \ \mathrm{where} \ \varepsilon^{-}(g):=\{h \in \Gamma (\Sym^{2,-}TM) : \bdel h=0 \ \mathrm{and} \ \delta^gh=0 \}.
\end{equation}
This fact essentially follows from part (i) in Proposition \ref{wz_K} and from the equality 
$$\ker \Delta_E \cap \Gamma (\Sym^{2,+}TM)=\{0\}.$$
See \cite{Koiso-I} for details.
\subsection{Review of second order deformation theory} \label{rev-O2}
We briefly recall here the structure results on second order Einstein deformations for negative K\"ahler-Einstein metrics developed in \cite{N1}. 
\begin{teo} \label{K2-N}
Let $(M,g,J)$ be compact and K\"ahler-Einstein with $E<0$ and let $g_t \in \mathscr{M}_1$ be an Einstein deformation of $g$ with Taylor expansion $g^{-1}g_t=\id+th_1+\tfrac{t^2}{2}h_2+o(t^3)$ at $t=0$. Up to a time dependent gauge 
transformation $g_t \mapsto f_t^{\star}g_t$ where $f_t \in \mathbf{G}$ we have that 
\begin{equation} \label{alg-T1}
h_2^{+}=h_1^2
\end{equation}
together with the divergence equations 
\begin{equation} \label{gauge-1}
\delta^gh_1=0 \ \mathrm{and} \ \delta^g(h_2-h_1^2)+\tfrac{1}{4}\di\!\tr(h_1^2)=0.
\end{equation}
\end{teo}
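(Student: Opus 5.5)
The plan is to combine the general second order normalisation of Theorem \ref{norm-thm} with the K\"ahler structure, in three steps.

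\emph{Step 1: gauge normalisation.} I apply Theorem \ref{norm-thm} and assume from now on that $h_1\in\varepsilon(g)$, that \eqref{o2-new} holds, and that \eqref{o2-no2new} holds. By \eqref{eps-K} we then have $h_1\in\varepsilon^{-}(g)$, so $h_1$ is $J$-anti-invariant with $\bdel h_1=0$ and $\delta^gh_1=0$. Since $h_1$ anticommutes with $J$, $h_1^2$ commutes with $J$, so $h_1^2\in\Gamma(\Sym^{2,+}TM)$. The divergence equations \eqref{gauge-1} are then exactly $\delta^gh_1=0$ together with \eqref{o2-no2new}. It therefore remains to prove \eqref{alg-T1}, which is equivalent to $(h_2-h_1^2)^{+}=0$.

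\emph{Step 2: project the second order equation onto $\Sym^{2,+}TM$.} Set $k:=h_2-h_1^2$; it is trace free by the remark after Theorem \ref{norm-thm}. I take the $\Sym^{2,+}$-component of \eqref{o2-new}. Since $J$ is parallel, $\Delta_E$ preserves types, so the left-hand side becomes $\Delta_Ek^{+}$. On the right-hand side I need $\bfv(h_1,h_1)^{+}$. By the definition \eqref{bfv-def}, $\la\bfv(h_1,h_1),H\ra_{L^2}$ is built from $\delta^g[h_1,h_1]^{\FN}$ and the mixed terms $\la\delta^g[h_1,H]^{\FN},h_1\ra_{L^2}$. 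The first piece is handled by Proposition \ref{t-FNK}: its $\Sym^{2,+}$-part is $(\tfrac12\Delta_E+\ring{R})h_1^2$. For the mixed terms with $H\in\Gamma(\Sym^{2,+}TM)$, I would use \eqref{bfv-1} together with Proposition \ref{wz_K}(i) and Corollary \ref{wz-K2}(i).

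The expected outcome is that the $+$-component of the right-hand side of \eqref{o2-new} is a pure gradient-type term $\delta^{\star_g,+}$ of some $1$-form built from $\di\!\tr(h_1^2)$. The Einstein part should cancel, because $\widetilde{\Delta}_E=\Delta_E-\delta^{\star_g}\D^g$ and $(\tfrac12\Delta_E+\ring{R})h_1^2$ combine with $Eh_1^2$. The result should be an identity of the form $\Delta_Ek^{+}=\delta^{\star_g,+}\alpha$, the second order analogue of \eqref{E333+}. This computation is the main obstacle: tracking the mixed-type component of $\bfv$ requires the integral identities from \cite{N1}, and the cancellation is delicate.

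\emph{Step 3: conclude $k^{+}=0$.} I pair $\Delta_Ek^{+}=\delta^{\star_g,+}\alpha$ with $k^{+}$ and integrate. The right-hand side becomes $\la\alpha,\delta^g k^{+}\ra_{L^2}$. The divergence $\delta^gk^{+}$ is known from \eqref{o2-no2new} combined with the type splitting of $\delta^g k$; here Proposition \ref{wz_K} lets me relate $\delta^g k^{-}$ and $\delta^g k^{+}$. Alternatively, a further gauge change by a vector field $X$ alters $k$ by $\delta^{\star_g}X$. This lets me absorb $\alpha$, keeping \eqref{o2-no2new} valid up to a gradient; the tracelessness of $k$ then forces that gradient to vanish, as in the remark after Theorem \ref{norm-thm}.

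Once the right-hand side is removed, $\Delta_Ek^{+}=0$, and positivity of $\Delta_E$ on $\Gamma(\Sym^{2,+}TM)$ for $E<0$ (the fact $\ker\Delta_E\cap\Gamma(\Sym^{2,+}TM)=\{0\}$ recalled before \eqref{eps-K}) gives $k^{+}=0$, i.e.\ $h_2^{+}=h_1^2$.
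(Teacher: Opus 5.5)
Your architecture is the paper's: normalise the gauge, project the second order Einstein equation onto $\Gamma(\Sym^{2,+}TM)$, and finish with $\ker\Delta_E\cap\Gamma(\Sym^{2,+}TM)=\{0\}$ for $E<0$. The gap is that the whole content of the proof is Step 2, which you leave as an expected outcome. That expectation is also off. The $\Sym^{2,+}$-part of the right-hand side of \eqref{o2-new} is not some $\delta^{\star_g,+}\alpha$ to be disposed of later; it vanishes identically.

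The tool you are missing for the mixed term is Proposition \ref{hash-o}. Proposition \ref{wz_K}(i) and Corollary \ref{wz-K2}(i) cannot evaluate $\la H\sharp\di_{\nabla^g}h_1,\di_{\nabla^g}h_1\ra_{L^2}$. Since $\bdel h_1=0$ and $\delta^gh_1=0$, all $\bdel$-terms in Proposition \ref{hash-o} drop and $\mathrm{div}_1(h_1,h_1)=0$, so this pairing equals $\la\mathbf{D}_1(h_1,h_1),H\ra_{L^2}$. Now insert three things into \eqref{bfv-1}: this formula, the $\Sym^{2,+}$-part $(\tfrac12\Delta_E+\ring{R})h_1^2$ of $\delta^g[h_1,h_1]^{\FN}$ from \eqref{t-FNKS1}, and $\widetilde{\mathrm{L}}(h_1,h_1)=\{h_1,(E+\ring{R})h_1\}-\delta^g\di_{\nabla^g}h_1^2$ expanded by \eqref{wz1}. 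All curvature terms cancel and you get
\[
\la\bfv(h_1,h_1),H\ra_{L^2}=\la\tfrac12\widetilde{\Delta}_Eh_1^2+Eh_1^2,H\ra_{L^2}\quad\text{for all }H\in\Gamma(\Sym^{2,+}TM).
\]
Hence in any gauge with $\delta^gh_1=0$ the $+$-part of \eqref{xxx2} reads $\Delta_E(h_2-h_1^2)^{+}=2\delta^{\star_g,+}\mathbf{X}_2(g_t)$, which is the identity the paper imports from \cite{N1}. In the gauge of Theorem \ref{norm-thm} it becomes $\Delta_Ek^{+}=0$, and you are done.

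Step 3, as written, would not dispose of a nonzero $\alpha$. Relation \eqref{o2-no2new} only fixes the sum $\delta^gk=\delta^gk^{+}+\delta^gk^{-}$, and Proposition \ref{wz_K} does not separate the two pieces. So $\la\alpha,\delta^gk^{+}\ra_{L^2}$ is neither known nor of definite sign. A further gauge change by a divergence-free $X$ shifts $\mathbf{X}_2(g_t)$ by $2\delta^g\delta^{\star_g}X$. At best this trades $\alpha$ for a nonzero $\mathbf{X}_2(g_t)$, destroying \eqref{o2-no2new}, whereas the theorem asserts \eqref{alg-T1} and \eqref{gauge-1} in the same gauge. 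That is possible only because the right-hand side vanishes exactly.

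The mechanism that does work is the trace. We have $\tr k^{+}=\tr k=0$, since elements of $\Sym^{2,-}TM$ are trace-free, and $\tr\Delta_E=(\Delta-2E)\tr$. Tracing $\Delta_Ek^{+}=\delta^{\star_g,+}\alpha$ therefore gives $\di^{\star}\alpha=0$, so any exact $\alpha$ is zero. The paper uses exactly this with $\alpha=2\mathbf{X}_2(g_t)$: it makes $\mathbf{X}_2(g_t)$ a gradient, the trace forces $\mathbf{X}_2(g_t)=0$, and then $(h_2-h_1^2)^{+}=0$. Your shortcut of taking $\mathbf{X}_2(g_t)=0$ directly from Theorem \ref{norm-thm} is fine once the vanishing above is actually proved.
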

Because $h_1$ is divergence free we also have $h_1 \in \varepsilon^{-}(g)$, in particular 
$\bdel h_1=0$; also the tensor $h_2-h_1^2$ belongs to $\Gamma \left (\Sym^{2,-}TM \right )$, fact which will be used systematically in the rest of this paper. In addition, as showed in \cite{N1}, the Einstein equation to second order is fully determined by the divergence of the Kodaira-Spencer 
bracket, that is we have 
\begin{equation} \label{2nd-O}
h_2-h_1^2=\mathbf{h}_2-\tfrac{1}{2}\L_{J\grad \mathbf{f}}J 
\end{equation}
where the pair $(\mathbf{h}_2, \mathbf{f})$ in $\TT^{-}(M,g) \oplus C^{\infty}M$ satisfies 
\begin{equation} \label{2nd-034}
\begin{split}
&\Delta_E^g \mathbf{h}_2=-2\delta^g[h_1,h_1]^c \ \mathrm{and} \  (\Delta^g-2E)\mathbf{f}=-\tfrac{1}{2}\tr(h_1^2).
\end{split}
\end{equation}
To end this section we briefly explain the construction of the gauge in Theorem \ref{K2-N}. Consider a family of Einstein metrics $g_t$ in $\mathscr{M}_1$ parametrised as int he statement of that theorem and such that $h_1$ i normalised, up to gauge, to satisfy $\delta^gh_1=0$. Then type considerations show that the component 
on $\Gamma \left (\Sym^{2,+} \right )$ of the second order Einstein equation reads 
\begin{equation*}
\Delta_E(h_2-h_1^2)^{+}=2\delta^{\star_g,+}\mathbf{X}_2(g_t)
\end{equation*}
where the vector field $\mathbf{X}_2(g_t)=\delta^g(h_2-h_1^2)+\tfrac{1}{4}\di\! \tr(h_1^2)$; see Proposition 4.20 and proof of Theorem 4.21 in \cite{N1} for further details. The required gauge transformation is such 
that $\mathbf{X}_2(g_t)$ is a gradient; trace considerations then force the vanishing of $\mathbf{X}_2(g_t) $ and thus yield the latter part in \eqref{2nd-034}. We will show later on in the paper that this pattern re-occurs at order $3$.

\section{Normalisation of the Cayley transform to third order} \label{nos}
We consider the gauge group $\mathbf{G}$ of volume preserving diffeomorphisms of $M$ which naturally acts on the space 
$\mathscr{M}_1$; note that the action of $\mathbf{G}$ preserves the set of Einstein within $\mathscr{M}_1$. In this section we prove that 
the coefficients in the Taylor expansion of any family of Einstein metrics $g_t \in \mathscr{M}_1$ can be normalised up 
to third order by taking into account the gauge group action. To proceed with details we consider 
$$ \tilde{g}_t:=f_t^{\star}g_t
$$
where $f_t \in \mathbf{G}$ is a family of volume preserving diffeomorphisms of $M$ with $f_0=\id$. Differentiating shows that 
$$\tilde{g}_t^{\prime}=f_t^{\star} \left (g_t^{\prime}+\L_{X_t}g_t \right )
$$
where the time-dependent family of vector fields $X_t$ is determined from $f_t^{\prime}=X_t \circ f_t$. Using this formula to differentiate to third order shows that, at $t=0$, the coefficients of the Taylor series expansion $g^{-1} \tilde{g_t}=\id +t\tilde{h}_1+\tfrac{t^2}{2\!}\tilde{h}_2+\tfrac{t^3}{3!}\tilde{h}_3+o(t^4)$ satisfy
\begin{equation} \label{gauge-3}
\begin{split}
\tilde{h}_1=&h_1+2\delta^{\star_g}X_0\\
\tilde{h}_2=&h_2+2\delta^{\star_g}X_1+D(X_0,h_1)\\
\tilde{h}_3=&h_3+2\delta^{\star_g}X_2+D(X_0,X_1,h_1,h_2)
\end{split}
\end{equation}
where the vector fields $X_p:=X_t^{(p)}(0)$ for $p \geq 0$. The operators above are entirely explicit, although this fact does not play a role in what follows. Indeed 
\begin{equation*}
\begin{split}
D(X_0,h_1)=&g^{-1} \left (2 \L_{X_0}g_1+\L_{X_0}^2g \right )\\
D(X_0,X_1,h_1,h_2)=&g^{-1} \left (3\L_{X_1}g_1+3\L_{X_0}g_2+3\L_{X_0}^2g_1+2\L_{X_0}\L_{X_1}g+\L_{X_0}^3g \right )
\end{split}
\end{equation*}
where the symmetric tensors $g_k:=g_t^{(k)}(0)$ for $k \geq 1$. We also recall the following elementary 
\begin{lema} \label{ll-0}
Assume that $H \in \Gamma \left (\Sym^2TM \right )$. There exists $X \in \Gamma (TM)$ such that $\di^{\star_g}X=0$ and 
$\delta^g(H+\delta^{\star_g}X) \in \mathrm{Im} \di$.
\end{lema}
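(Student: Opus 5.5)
Given $H\in\Gamma(\Sym^2TM)$, we look for a divergence-free vector field $X$ so that $\delta^g(H+\delta^{\star_g}X)$ is exact. The idea is to use the Hodge decomposition of the $1$-form $\alpha:=\delta^gH$ on the compact manifold $M$ and to absorb its non-exact part with the operator $\delta^g\delta^{\star_g}$. Write
$$\delta^gH=\di f+\beta, \qquad \beta\in\ker\di^{\star},$$
where $\beta$ collects the coclosed part (the harmonic piece plus the $\di^{\star}$-exact piece). Then it is enough to find $X$ with $\di^{\star}X=0$ and
$$\delta^g\delta^{\star_g}X=-\beta+\di u$$
for some function $u$. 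Indeed, in that case $\delta^g(H+\delta^{\star_g}X)=\di(f+u)\in\mathrm{Im}\,\di$.

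To solve this, consider the operator $P:=\delta^g\delta^{\star_g}$ acting on the space $\ker\di^{\star}$ of coclosed $1$-forms. The operator $\delta^g\delta^{\star_g}$ is elliptic and self-adjoint (it equals $\tfrac12(\nabla^g)^{\star}\nabla^g$ plus lower order terms), with kernel the Killing fields $\mathfrak{k}$, which are automatically coclosed. Compose $P$ with the $\LL^2$-orthogonal projection $\pi$ onto $\ker\di^{\star}$. The operator $\pi\circ P$ restricted to $\ker\di^{\star}$ is self-adjoint and Fredholm there, being a zeroth-order modification of a Laplace-type operator on coclosed forms; one can also compute it directly via the Weitzenb\"ock formula \eqref{wz2} in the Einstein case, where $2\delta^g\delta^{\star_g}X=(\Delta-2E)X$ for $\di^{\star}X=0$. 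Its image is therefore $\ker\di^{\star}\cap\mathfrak{k}^{\perp}$. Solving $\pi PX=-\pi'\beta$, where $\pi'$ removes the Killing component, gives $PX=-\beta+\di u+\kappa$ with $\kappa\in\mathfrak{k}$.

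The remaining point, and the only real obstacle, is to show that the Killing component of $\beta$ vanishes. This holds automatically: for a Killing field $K$,
$$\la\delta^gH,K\ra_{L^2}=\la H,\delta^{\star_g}K\ra_{L^2}=0,$$
and $\la\di f,K\ra_{L^2}=\la f,\di^{\star}K\ra_{L^2}=0$. Hence $\beta\perp\mathfrak{k}$, so $\kappa=0$ and $X$ is the desired divergence-free field.

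In the Einstein setting relevant here, and in particular for $E<0$, there are no Killing fields at all, so this step is even simpler. In that case one just inverts $\Delta-2E>0$ on coclosed forms using \eqref{wz2}, taking $X:=-2(\Delta-2E)^{-1}\beta$. Since $\Delta$ commutes with $\di^{\star}$, this $X$ is still coclosed.
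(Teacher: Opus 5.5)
The paper does not prove this lemma itself; it only refers to \cite{N1}, so there is no in-text argument to compare against. Your proof is correct, and it is the standard one: Hodge-decompose $\delta^gH$, invert $\delta^g\delta^{\star_g}$ on divergence-free fields modulo Killing fields, and use $\la\delta^gH,K\ra_{L^2}=\la H,\delta^{\star_g}K\ra_{L^2}=0$.

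There are two small points to fix.

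\emph{The description of $\delta^g\delta^{\star_g}$.} Your parenthetical claim that $\delta^g\delta^{\star_g}$ is $\tfrac12(\nabla^g)^{\star}\nabla^g$ plus lower order terms is false on arbitrary $1$-forms. In fact $2\delta^g\delta^{\star_g}=(\nabla^g)^{\star}\nabla^g-\Ric^g+\di\di^{\star}$, and $\di\di^{\star}$ is a second-order term. The operator is still elliptic, with principal symbol $\tfrac12(|\xi|^2+\xi\otimes\xi)$. Your description only holds on $\ker\di^{\star}$, but that is all you actually use, so the argument is unaffected.

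\emph{The kernel of $\pi\circ P$.} The kernel of $\pi\circ P$ on $\ker\di^{\star}$ is not automatically the kernel of $P$. It equals $\mathfrak{k}$ because, for coclosed $X$,
\[
\la\pi PX,X\ra_{L^2}=\la PX,X\ra_{L^2}=\Vert\delta^{\star_g}X\Vert^2_{L^2}.
\]
You should add this one line.

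\emph{The Einstein setting.} Where the paper actually applies the lemma ($g$ K\"ahler--Einstein with $E<0$), your last paragraph already constitutes the whole proof. An equivalent variant works for any Einstein constant:
\begin{itemize}
\item solve $\delta^g\delta^{\star_g}Y=-\delta^gH$ on all $1$-forms, which is possible since $\delta^gH\perp\mathfrak{k}$;
\item split $Y=X+\di\phi$ with $\di^{\star}X=0$;
\item note from \eqref{wz2} that $\delta^g\delta^{\star_g}\di\phi=\di(\Delta-E)\phi$ is exact.
\end{itemize}
This gives $\delta^g(H+\delta^{\star_g}X)=-\di(\Delta-E)\phi$ without any projection.
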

See e.g. \cite{N1} for proof. These preliminaries allow proving the following normalisation result.
\begin{pro} \label{norm-2nd}
Let $g_t \in \mathscr{M}_1$ be a family of Riemannian metrics with Taylor expansion 
$$g^{-1}g_t=\id+th_1+\tfrac{t^2}{2\!}h_2+\tfrac{t^3}{3!}\!h_3+o(t^4).$$
Up to a gauge transformation i.e. $g_t \mapsto f_t^{\star}g_t$ where $f_t \in \mathbf{G}$ satisfies $f_0=\id$ we may assume that $\delta^gh_1=\grad_g \varphi_1$ as well as 
$$ \mathbf{X}_2(g_t)=\grad_g \varphi_2 \ \mathrm{and} \  \mathbf{X}_3(g_t)=\grad_g \varphi_3
$$
for smooth functions $\varphi_k, 1 \leq k \leq 3$ on $M$.
\end{pro}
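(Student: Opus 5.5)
We argue inductively on the order, choosing the vector fields $X_0,X_1,X_2$ in the gauge formulas \eqref{gauge-3} one at a time. The key structural point is the following. By \eqref{gauge-3}, $\tilde h_k=h_k+2\delta^{\star_g}X_{k-1}+(\text{terms depending only on }X_0,\ldots,X_{k-2}\text{ and }h_1,\ldots,h_{k-1})$. Moreover $\mathbf{X}_k(g_t)$ depends on $h_k$ only through the leading term $\delta^g h_k$, while all other terms involve lower order coefficients. Hence, once $X_0,\ldots,X_{k-2}$ are fixed,
$$\mathbf{X}_k(\tilde g_t)=\delta^g\bigl(H_k+2\delta^{\star_g}X_{k-1}\bigr)+\beta_k,$$
where $H_k$ is a symmetric tensor and $\beta_k$ a $1$-form, both already determined. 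We may absorb $\beta_k$ into the symmetric tensor as follows. The terms $\tfrac14\di\!\tr(h_1^2)$ and $\tfrac38\di\!\tr\{h_1,h_2-h_1^2\}$ are exact, hence gradients. They can therefore be dropped, since only membership in $\mathrm{Im}\,\di$ is required. Thus $\mathbf{X}_k(\tilde g_t)\in\mathrm{Im}\,\di$ reduces to $\delta^g(H_k+2\delta^{\star_g}X_{k-1})\in\mathrm{Im}\,\di$.

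We then apply Lemma \ref{ll-0} to $\tfrac12 H_k$. This gives $X_{k-1}$ with $\di^{\star_g}X_{k-1}=0$ and $\delta^g(H_k+2\delta^{\star_g}X_{k-1})\in\mathrm{Im}\,\di$. The order of choices is as follows:
\begin{itemize}
\item[(1)] $k=1$: take $H_1=h_1$ and choose $X_0$.
\item[(2)] $k=2$: with $X_0$ fixed, take $H_2=\tilde h_2^{(0)}-\tilde h_1^2$, where $\tilde h_2^{(0)}$ denotes $\tilde h_2$ computed with $X_1=0$. Choose $X_1$. This leaves $\tilde h_1$ unchanged.
\item[(3)] $k=3$: similarly, choose $X_2$, using the explicit formula for $\mathbf{X}_3$ in \eqref{norm-3i}.
\end{itemize}

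Finally, we must produce $f_t\in\mathbf{G}$ with $f_0=\id$ realising these jets. Take the time-dependent vector field $X_t:=X_0+tX_1+\tfrac{t^2}{2}X_2$ and integrate $f_t^{\prime}=X_t\circ f_t$, $f_0=\id$. On a compact $M$ this has solutions for all small $t$. Because each $X_p$ is divergence free, the flow preserves $\vol_g$, so $f_t\in\mathbf{G}$ and $\tilde g_t\in\mathscr{M}_1$. The derivatives $X_t^{(p)}(0)$ are exactly $X_p$, so \eqref{gauge-3} applies with the chosen fields.

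The main point to check carefully is the structural claim in the first paragraph. One must verify that adding $2\delta^{\star_g}X_{k-1}$ to $h_k$ changes $\mathbf{X}_k$ exactly by $2\delta^g\delta^{\star_g}X_{k-1}$, with no other dependence on $X_{k-1}$. This holds because $X_{k-1}$ enters $\tilde h_j$ only for $j\ge k$, and $h_k$ enters $\mathbf{X}_k$ linearly through $\delta^g h_k$. No explicit knowledge of the operator $D$ is needed.
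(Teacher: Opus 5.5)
Your proposal is correct and follows essentially the same route as the paper. Both proofs apply Lemma \ref{ll-0} successively to the already-determined tensors at orders $1,2,3$ (your $H_1,H_2,H_3$ are the paper's $H_0,H_1,H_2$). Both use that $X_{k-1}$ enters $\mathbf{X}_k(\tilde g_t)$ only through $2\delta^g\delta^{\star_g}X_{k-1}$, discard the exact $\di\!\tr$ terms, and integrate the divergence-free field $X_t=X_0+tX_1+\tfrac{t^2}{2}X_2$ to a volume-preserving family $f_t$.
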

\begin{proof}
To shorten notation during this proof we introduce the polynomial operator given by $P(H_1,H_2)=-\tfrac{3}{2}\{H_1,H_2\}-\tfrac{3}{2}H_1^3+\tfrac{3}{8}\tr(H_1^2)H_1$ whenever $H_1,H_2$ are symmetric tensors. We use Lemma \ref{ll-0} for the tensor 
$H_0:=h_1$ in order to obtain a divergence free vector field $X_0$ such that the tensor 
$\tilde{H}_0:=h_1+2\delta^{\star_g}X_0$ in $\Gamma \left (\Sym^2TM \right )$ has exact divergence, that is 
$\delta^g\tilde{H}_0=\grad_g\!\varphi_0$. Next we consider the symmetric tensor
$$H_1:=h_2-\tilde{H}_0^2+D(X_0,h_1)$$ 
and use again Lemma \ref{ll-0} which yields a divergence free vector field 
$X_1$ such that we have $\delta^g(H_1+2\delta^{\star_g}X_1)=\grad_g \varphi_2$. Finally we consider the symmetric tensor 
$$H_2:=h_3+P(\tilde{H}_0,H_1+2\delta^{\star_g}X_1)+D(X_0,X_1,h_1,h_2)$$
and use again Lemma \ref{ll-0} to see that $\delta^g(H_2+2\delta^{\star_g}X_2)=\grad_g \varphi_3$ for some 
divergence free vector field $X_2$.

Now consider the family $X_t=X_0+tX_1+\tfrac{t^2}{2}X_2$ which integrates to a family 
$\varphi_t$ in $\mathbf{G}$ with $f_0=\id$ via $f_t^{\prime}=X_t \circ f_t$. According 
to the construction above the gauge transformation rule in \eqref{gauge-3} shows the coefficients $\tilde{h}_i, 1 \leq i \leq 3$ of the Taylor expansion of $g^{-1}\varphi_t^{\star}g_t$ thus satisfy 
\begin{equation*}
\begin{split}
&\tilde{h}_1=\tilde{H}_0, \ \tilde{h}_2-\tilde{h}_1^2=H_1+2\delta^{\star_g}X_1\\
& \tilde{h}_3+P(\tilde{h}_1,\tilde{h}_2-\tilde{h}_1^2)=H_2+2\delta^{\star_g}X_2.
\end{split}
\end{equation*}
From the definition of the vector fields $\mathbf{X}_i(\tilde{g}_t)$ (see \eqref{norm-3i} ) it follows that 
\begin{equation*}
\begin{split}
\mathbf{X}_2(\tilde{g}_t)&=\grad_g \left ( \varphi_2+\tfrac{1}{4}\tr(\tilde{h}_1^2) \right )\\
\mathbf{X}_3(\tilde{g}_t)&=\grad_g \left ( \varphi_3+\tfrac{3}{8}\tr(\{\tilde{h}_1,\tilde{h}_2- \tilde{h}_1^2 \} \right )
\end{split}
\end{equation*}
and the claim is completely proved.
\end{proof}
We finish this section by explaining the effect of the gauge normalisation which is dictated by the Einstein 
equations as in part (d) in Theorem \ref{main-i1}. In fact this has a direct geometric interpretation in terms 
of the Cayley transform of the family 
$g_t$.
\begin{pro} \label{cayley-1}
Let $g_t \in \mathscr{M}_1$ be a family of Riemannian metrics with Taylor expansion 
$g^{-1}g_t=\id+th_1+\tfrac{t^2}{2!}h_2+\tfrac{t^3}{3!}h_3+o(t^4)$ at $t=0$. Assume that $\delta^gh_1=0$ as well as 
$$ \mathbf{X}_2(g_t)=\mathbf{X}_3(g_t)=0.
$$
Then the Cayley transform $\mathrm{C}_t=(1-h_t)(1+h_t)^{-1}$ of the family 
$g_t$ satisfies 
\begin{equation*}
\delta^g\mathrm{C}_t=-\tfrac{1}{2}(1+\mathrm{C}_t)\di \ln \det(1+\mathrm{C}_t)+o(t^4).
\end{equation*}
\end{pro}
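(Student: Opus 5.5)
The plan is to compare Taylor coefficients on both sides up to order $t^3$. The key inputs are the formulas \eqref{cay-co} for $\mathrm{C}_1,\mathrm{C}_2,\mathrm{C}_3$ and the definitions \eqref{norm-3i} of the vector fields $\mathbf{X}_k(g_t)$. Write $\mathrm{C}_t=t\mathrm{C}_1+\tfrac{t^2}{2}\mathrm{C}_2+\tfrac{t^3}{6}\mathrm{C}_3+o(t^4)$. On the left-hand side, \eqref{cay-co} gives directly
\begin{equation*}
\delta^g\mathrm{C}_1=-\tfrac{1}{2}\delta^gh_1, \qquad \delta^g\mathrm{C}_2=-\tfrac{1}{2}\delta^g(h_2-h_1^2), \qquad \delta^g\mathrm{C}_3=-\tfrac{1}{2}\delta^g\bigl(h_3-\tfrac{3}{2}\{h_1,h_2-h_1^2\}-\tfrac{3}{2}h_1^3\bigr).
\end{equation*}
The hypotheses $\delta^gh_1=0$ and $\mathbf{X}_2(g_t)=\mathbf{X}_3(g_t)=0$ convert these into exact terms:
\begin{equation*}
\delta^g\mathrm{C}_1=0, \qquad \delta^g\mathrm{C}_2=\tfrac{1}{8}\di\tr(h_1^2), \qquad \delta^g\mathrm{C}_3=\tfrac{3}{16}\di\tr(h_1^2)\,h_1^{\flat\text{-action}}\text{-term}+\tfrac{3}{16}\di\tr\{h_1,h_2-h_1^2\}.
\end{equation*}
More precisely, the middle term in the third identity is $-\tfrac{3}{16}\delta^g(\tr(h_1^2)h_1)$. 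Since $\delta^gh_1=0$, this equals $\tfrac{3}{16}h_1(\di\tr h_1^2)$, using the convention that a symmetric endomorphism acts on the gradient.

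For the right-hand side, I will use $\ln\det(1+\mathrm{C}_t)=\tr\ln(1+\mathrm{C}_t)=\tr\mathrm{C}_t-\tfrac{1}{2}\tr\mathrm{C}_t^2+\tfrac{1}{3}\tr\mathrm{C}_t^3+o(t^4)$. I will expand this to order three, multiply by $1+\mathrm{C}_t$, and extract the coefficients of $t$, $\tfrac{t^2}{2}$ and $\tfrac{t^3}{6}$. Up to the factor $-\tfrac12$, these are respectively
\begin{equation*}
\di\tr\mathrm{C}_1,\qquad \di\bigl(\tr\mathrm{C}_2-\tr\mathrm{C}_1^2\bigr)+2\mathrm{C}_1\di\tr\mathrm{C}_1,
\end{equation*}
and, at third order, $\di\bigl(\tr\mathrm{C}_3-3\tr(\mathrm{C}_1\mathrm{C}_2)+2\tr\mathrm{C}_1^3\bigr)+3\mathrm{C}_1\di(\tr\mathrm{C}_2-\tr\mathrm{C}_1^2)+3\mathrm{C}_2\di\tr\mathrm{C}_1$.

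Next I will substitute $\mathrm{C}_1=-\tfrac12h_1$ and $\mathrm{C}_2=-\tfrac12(h_2-h_1^2)$. This gives, for example, $\tr\mathrm{C}_1^2=\tfrac14\tr h_1^2$ and $\tr(\mathrm{C}_1\mathrm{C}_2)=\tfrac14\tr(h_1(h_2-h_1^2))$. At order two, matching the two sides then reduces to $\di\tr\mathrm{C}_2=0$ together with $\di\tr\mathrm{C}_1=0$. At order three, the mixed terms $\mathrm{C}_1\di\tr\mathrm{C}_1^2$ should reproduce the $\tfrac{3}{16}h_1\di\tr h_1^2$ term found above. Likewise, $\di\tr(\mathrm{C}_1\mathrm{C}_2)$ should reproduce the $\tfrac{3}{16}\di\tr\{h_1,h_2-h_1^2\}$ term. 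What remains is a condition on $\di\tr\mathrm{C}_3$ and $\di\tr\mathrm{C}_1^3$.

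The main obstacle is therefore the trace part: the hypotheses as stated only control divergences. I will use the trace normalisations recorded after Theorem \ref{norm-thm} for families in $\mathscr{M}_1$, namely that $h_1$ and $h_2-h_1^2$ are trace free, so that $\tr\mathrm{C}_1=\tr\mathrm{C}_2=0$. I will also need the analogous third-order statement that $\tr\mathrm{C}_3-\tfrac{2}{3}\cdot 3\,\tr\mathrm{C}_1^3$ is constant; equivalently, in the intended setting, the corresponding Taylor coefficient of $\ln\det(g^{-1}g_t)$ has vanishing differential. I would derive this by expanding the pointwise density $\det(g^{-1}g_t)$ in terms of $\mathrm{C}_t$, via $g^{-1}g_t=(1-\mathrm{C}_t)(1+\mathrm{C}_t)^{-1}$, and invoking the volume normalisation at each order. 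If it only holds after gauge, I would feed it through Proposition \ref{norm-2nd}.

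Once the traces are controlled, the remaining verification is a bookkeeping check of the numerical constants $\tfrac14$, $\tfrac38$ and $\tfrac32$ appearing in \eqref{norm-3i}. The step I expect to be delicate is precisely getting the third-order coefficient to match: the factor $(1+\mathrm{C}_t)$ produces the $h_1$-twisted gradient term, and it must cancel exactly against $\delta^g(\tr(h_1^2)h_1)$.
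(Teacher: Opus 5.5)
Your strategy is the paper's: match Taylor coefficients up to order $3$ using \eqref{cay-co}, the identity $\delta^g(f h_1)=-h_1\grad_g f$ (which holds because $\delta^g h_1=0$), and the pointwise trace identities of Lemma \ref{tr-u3}. The paper packages the result as $\bigl(\delta^g\mathrm{C}_t+\tfrac12(1+\mathrm{C}_t)\di\Lambda_t\bigr)^{(k)}(0)=-\tfrac12\mathbf{X}_k(g_t)$ for $k\le 3$, with $\Lambda_t=\ln\det(1+\mathrm{C}_t)$. As written, however, two sign slips would make your third-order check fail.

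\textbf{First slip.} Solving $\mathbf{X}_3(g_t)=0$ gives $\delta^g\mathrm{C}_3=+\tfrac{3}{16}\delta^g\bigl(\tr(h_1^2)h_1\bigr)+\tfrac{3}{16}\di\tr\{h_1,h_2-h_1^2\}$. So the twisted term is $-\tfrac{3}{16}h_1\di\tr(h_1^2)$, not $+\tfrac{3}{16}h_1\di\tr(h_1^2)$. It is the former that equals the right-hand side contribution $\tfrac32\mathrm{C}_1\di\tr\mathrm{C}_1^2$. With your sign the two sides would differ by $\tfrac38 h_1\di\tr(h_1^2)$.

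\textbf{Second slip.} From your own expression $\Lambda^{(3)}(0)=\tr\mathrm{C}_3-3\tr(\mathrm{C}_1\mathrm{C}_2)+2\tr\mathrm{C}_1^3$, what is needed is $\di(\tr\mathrm{C}_3+2\tr\mathrm{C}_1^3)=0$. The quantity you propose to control, $\tr\mathrm{C}_3-2\tr\mathrm{C}_1^3$, actually equals $\tfrac12\tr(h_1^3)$ and is not constant in general.

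The correct identity $\tr\mathrm{C}_3+2\tr\mathrm{C}_1^3=0$ holds pointwise and needs no gauge transformation. This matters because regauging is not available here anyway: the hypotheses fix the family. The identity is simply the third identity of Lemma \ref{tr-u3} rewritten via \eqref{cay-co}.

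A quick way to get all the trace facts at once is to note that $1+\mathrm{C}_t=2(1+h_t)^{-1}$ and $1-\mathrm{C}_t=2h_t(1+h_t)^{-1}$. Then $\det h_t=1$ forces $\det(1-\mathrm{C}_t)=\det(1+\mathrm{C}_t)$. Hence the odd part of $\tr\ln(1+\mathrm{C}_t)$ vanishes, and $\ln\det(1+\mathrm{C}_t)=-\tfrac12\tr\mathrm{C}_t^2+O(t^4)$. This gives $\Lambda^{(1)}(0)=0$, $\Lambda^{(2)}(0)=-\tr\mathrm{C}_1^2$ and $\Lambda^{(3)}(0)=-\tfrac32\tr\{\mathrm{C}_1,\mathrm{C}_2\}$ directly, as in the paper. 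With these two corrections your bookkeeping closes at every order.
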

\begin{proof}
Write $C_t=1+t\mathrm{C}_1+\tfrac{t^2}{2!}\mathrm{C}_2+\tfrac{t^3}{3!}\mathrm{C}_3+o(t^4)$ at $t=0$. We also consider the function $\Lambda_t:=\ln \det(1+\mathrm{C}_t)$ together with the shorthand notation $U_t:=(1+\mathrm{C}_t)^{-1}$. Note that the derivatives of $U_t$ are given by 
$$ U_t^{(1)}(0)=-\mathrm{C}_1, \ U_t^{(2)}(0)=-\mathrm{C}_2+2\mathrm{C}_1^2, \ \mathrm{and} \ 
U_t^{(3)}(0)=-\mathrm{C}_3+3\{\mathrm{C}_2,\mathrm{C}_1\}-6\mathrm{C}_1^3.
$$
Using those and the general formula $\Lambda_t^{(1)}=\tr (U_t \circ \mathrm{C}^{(1)}_t)$ yields 
\begin{equation*}
\begin{split}
&\Lambda(0)=0 , \ \Lambda^{(1)}(0)=0\\
& \Lambda^{(2)}(0)=-\tr (\mathrm{C}_1 \circ \mathrm{C}_1) \ \mathrm{and} \ \Lambda^{(3)}(0)=-\tfrac{3}{2}\tr 
\{\mathrm{C}_1,\mathrm{C}_2\}.
\end{split}
\end{equation*} 
Next we use the expression for the Taylor coefficients of the Cayley transform in \eqref{cay-co}; also record that 
since $\delta^gh_1=0$ we must have $\delta^g (\tr(h_1^2)h_1)=-h_1(\grad_g \tr(h_1^2))$. Taking these facts into account 
shows that 
\begin{equation*}
\begin{split}
\mathbf{X}_2(g_t)&=-2\delta^g\mathrm{C}_2+\di \tr (\mathrm{C}_1 \circ \mathrm{C}_1)\\
\mathbf{X}_3(g_t)&=-2\delta^g\mathrm{C}_3+3 \mathrm{C}_1 \di \tr (\mathrm{C}_1 \circ \mathrm{C}_1)+\tfrac{3}{2}
\di \tr \{\mathrm{C}_1, \mathrm{C}_2\}.
\end{split}
\end{equation*} 
By combining these formulas with the 
expression for the derivatives of $\Lambda_t$ obtained above, a straightforward computation shows 
that 
\begin{equation} \label{cay-fin}
\left (\delta^g\mathrm{C}_t+\tfrac{1}{2}(1+\mathrm{C}_t)\di \Lambda_t \right )^{(k)}(0)=-\tfrac{1}{2}\mathbf{X}_k(g_t)
\end{equation}
for $0 \leq k \leq 3$. The claim follows now from the assumption of having $\mathbf{X}_k(g_t)=0$ for $k=2,3$.
\end{proof}
We end this section with the following 
\begin{rema} \label{ggen}
Let $g_t \in \mathscr{M}_1$ be a family of Riemannian metrics with Taylor expansion at $t=0$ given by 
$g^{-1}g_t=\id+th_1+\tfrac{t^2}{2\!}h_2+\tfrac{t^3}{3!}\!h_3+o(t^4)$ and satisfying $\delta^gh_1=0$. Using \eqref{cay-fin} we see that the gauge normalisation in Proposition \ref{norm-2nd} amounts to 
$$\delta^g\mathrm{C}_t=-\tfrac{1}{2}(1+\mathrm{C}_t)\di \ln \det(1+\mathrm{C}_t)+\grad_g \varphi_t+o(t^4)
$$
for some smooth function time dependent $\varphi_t$ on $M$. 
\end{rema}

\section{The quadratic differential operators}
In this section we assume that the Riemannian manifold $(M^n,g)$ is compact and Einstein with Einstein constant $E$. The aim of this section is to derive some properties of the quadratic differential operators which feature in the definition of the operator $\mathbf{w}$ as defined in \eqref{w-def-i}. Those properties will be used systematically in the rest of the paper. We begin by recalling that the operator 
$$ \mathbf{p} : \Gamma \left (\Sym^2TM \right ) \oplus \Gamma \left (\Sym^2TM \right ) \to \Gamma \left (\End( TM )\right )
$$
is defined according to 
\begin{equation*}
g(\mathbf{p}(h_1,h_2)X,Y):=g((\nabla^g_{X}h_1)e_i,2(\nabla^g_{e_i}h_2)Y-(\nabla^g_{Y}h_2)e_i).
\end{equation*}
This operator splits as $\mathbf{p}=2\mathbf{i}-\mathbf{j}$ where 
$ \mathbf{i} : \Gamma \left (\Sym^2TM \right ) \oplus \Gamma \left (\Sym^2TM \right ) \to \Gamma \left (\End( TM )\right )
$
is the quadratic differential operator determined from 
$$ \la \mathbf{i}(h_1,h_2),H\ra_{L^2}:=\la (\nabla^g_{e_k}h_1)e_i,(\nabla^g_{e_i}h_2)He_k\ra_{L^2}$$
whenever $h_1,h_2$ belong to $\Gamma \left (\Sym^2TM \right )$ and $H$ is in $\Gamma \left (\End (TM) \right )$
. The second summand in $\mathbf{i}$ is described by the operator 
$ \mathbf{j} : \Gamma \left (\Sym^2TM \right ) \oplus \Gamma \left (\Sym^2TM \right ) \to 
\Gamma \left (\End (TM)\right )
$ given by 
\begin{equation} \label{j-d}
g(\mathbf{j}(h_1,h_2)X,Y):=g(\nabla^g_{X}h_1,\nabla^g_{Y}h_2).
\end{equation}
The operator $\mathbf{p}$ appears naturally when studying third order Einstein deformations. The aim in this section is to prove a few facts regarding this operator which will be needed in the subsequent. In doing so we also need to consider the operator 
$$ (h_1,h_2) \in \Gamma \left (\Sym^2TM \right ) \oplus \Gamma \left (\Sym^2TM \right )\mapsto \mathrm{L}(h_1,h_2):=\tfrac{1}{2}\{ \nabla^g_{e_i}h_1, \nabla^g_{e_i}h_2\} \in \Gamma \left (\Sym^2TM \right )
$$
which clearly satisfies $\mathrm{L}(h_1,h_2)=\mathrm{L}(h_2,h_1)$. We first observe that  
\begin{lema} \label{quad-1L}
Assume that the tensors $h,H$ belong to $\Gamma \left (\Sym^2TM \right )$. Then 
$$ g(\mathbf{p}(h,h),H)=-g(H \sharp \di_{\nabla^g}\!h, \di_{\nabla^g}\!h)+g(\mathrm{L}(h,h),H).
$$
\end{lema}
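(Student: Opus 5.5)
The identity is pointwise and purely algebraic in the first derivatives $\nabla^g h$, so I plan to prove it by expanding both sides in a local $g$-orthonormal frame $\{e_i\}$ and matching terms. No integration by parts or curvature should be needed. I will use the splitting $\mathbf{p}=2\mathbf{i}-\mathbf{j}$ recorded above. Pointwise, for symmetric $H$, this gives
\begin{equation*}
g(\mathbf{p}(h,h),H)=2\,g\big((\nabla^g_{e_k}h)e_i,(\nabla^g_{e_i}h)He_k\big)-g\big(\nabla^g_{e_k}h,\nabla^g_{He_k}h\big).
\end{equation*}
Equivalently, $g(\mathbf{p}(h,h),H)=2g(\mathbf{i}(h,h),H)-g(\mathbf{j}(h,h),H)$, where the $\mathbf{i}$-term is read pointwise from the integrand in its definition. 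The goal is therefore to show that $g(H\sharp\di_{\nabla^g}h,\di_{\nabla^g}h)$ equals $g(\mathbf{j}(h,h),H)-2g(\mathbf{i}(h,h),H)+g(\mathrm{L}(h,h),H)$.

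I will use the norm on $\Lambda^2(M,TM)$ given by $g(\alpha,\beta)=\tfrac12 g(\alpha(e_i,e_j),\beta(e_i,e_j))$. By the definition $H\sharp\alpha(X,Y)=\alpha(HX,Y)+\alpha(X,HY)$ and antisymmetry of $\alpha$, the two summands contribute equally. Hence $g(H\sharp\alpha,\alpha)=g(\alpha(He_i,e_j),\alpha(e_i,e_j))$. Substituting $\alpha=\di_{\nabla^g}h$ with $\alpha(X,Y)=(\nabla^g_Xh)Y-(\nabla^g_Yh)X$ produces four terms:
\begin{equation*}
T_1=g\big((\nabla^g_{He_i}h)e_j,(\nabla^g_{e_i}h)e_j\big),\quad T_2=-g\big((\nabla^g_{He_i}h)e_j,(\nabla^g_{e_j}h)e_i\big),
\end{equation*}
\begin{equation*}
T_3=-g\big((\nabla^g_{e_j}h)He_i,(\nabla^g_{e_i}h)e_j\big),\quad T_4=g\big((\nabla^g_{e_j}h)He_i,(\nabla^g_{e_j}h)e_i\big).
\end{equation*}

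Next I identify each term.
\begin{itemize}
\item[$\bullet$] $T_1=g(\nabla^g_{He_i}h,\nabla^g_{e_i}h)=g(\mathbf{j}(h,h),H)$, directly from \eqref{j-d}.
\item[$\bullet$] $T_4=g\big((\nabla^g_{e_j}h)^2,H\big)=g(\mathrm{L}(h,h),H)$. This uses that each $\nabla^g_{e_j}h$ is symmetric, so $(\nabla^g_{e_j}h)^2=\tfrac12\{\nabla^g_{e_j}h,\nabla^g_{e_j}h\}$.
\item[$\bullet$] For $T_3$, move the symmetric endomorphism $\nabla^g_{e_j}h$ across: $T_3=-g\big(He_i,(\nabla^g_{e_j}h)(\nabla^g_{e_i}h)e_j\big)$. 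After renaming indices this is exactly $-g(\mathbf{i}(h,h),H)$.
\item[$\bullet$] For $T_2$, write $He_i=H_{ki}e_k$ and use that $\nabla^g_X h$ is linear in $X$, giving $T_2=-H_{ki}\,g\big((\nabla^g_{e_k}h)e_j,(\nabla^g_{e_j}h)e_i\big)$. Using the symmetry $H_{ki}=H_{ik}$ of $H$ and again the symmetry of $\nabla^g_{e_j}h$, this also rearranges to $-g(\mathbf{i}(h,h),H)$.
\end{itemize}

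Summing the four terms gives $g(H\sharp\di_{\nabla^g}h,\di_{\nabla^g}h)=g(\mathbf{j},H)-2g(\mathbf{i},H)+g(\mathrm{L},H)=-g(\mathbf{p}(h,h),H)+g(\mathrm{L}(h,h),H)$, which is the claim.

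The main obstacle is index bookkeeping. One must check that $T_2$ and $T_3$ each reduce to the same $\mathbf{i}$-contraction, and this uses the symmetry of $H$ and of every $\nabla^g_{e_j}h$. One must also fix the normalisation of the inner product on $\Lambda^2(M,TM)$, namely the factor $\tfrac12$, consistently with the paper's conventions. With a different normalisation the coefficients would come out off by a factor of $2$.
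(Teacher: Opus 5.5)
Your proof is correct and takes essentially the same approach as the paper. The paper also expands $g(\di_{\nabla^g}h(e_k,e_i),\di_{\nabla^g}h(He_k,e_i))$ in a frame into the same four terms, identifying them with $\mathbf{j}$, twice $-\mathbf{i}$ (using the symmetry of $H$) and $\mathrm{L}(h,h)$, and then converts this contraction into $g(H\sharp\di_{\nabla^g}h,\di_{\nabla^g}h)$ via antisymmetry and the $\tfrac12$-normalised inner product on $\Lambda^2(M,TM)$; the only cosmetic difference is that the paper first expands $g(\di_{\nabla^g}h(X,e_i),\di_{\nabla^g}h(Y,e_i))$ for general $X,Y$ and substitutes $X=e_k$, $Y=He_k$ afterwards.
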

\begin{proof}
We compute 
\begin{equation*}
\begin{split}
g\left ((\di_{\nabla^g}\!h(X,e_i), \di_{\nabla^g}\!h(Y,e_i) \right )=&g\left ((\nabla^g_Xh)e_i-(\nabla^g_{e_i}h)X, (\nabla^g_Yh)e_i-(\nabla^g_{e_i}h)Y\right )\\
=&g((\nabla^g_Xh)e_i,(\nabla^g_Yh)e_i)+g(\mathrm{L}(h,h)X,Y)\\
&-g((\nabla^g_Xh)e_i,(\nabla^g_{e_i}h)Y)-g((\nabla^g_Yh)e_i,(\nabla^g_{e_i}h)X).
\end{split}
\end{equation*}
Since $H$ is symmetric it follows that 
$$ g\left (\di_{\nabla^g}\!h(e_k,e_i), \di_{\nabla^g}\!h(He_k,e_i) \right )=-g(\mathbf{p}(h,h),H)+g(\mathrm{L}(h,h),H).
$$
At the same time 
$$ g\left (\di_{\nabla^g}\!h(e_k,e_i), \di_{\nabla^g}\!h(He_k,e_i) \right )=
\tfrac{1}{2}g\left (\di_{\nabla^g}\!h(e_k,e_i), (H \sharp \di_{\nabla^g}\!h)(e_k,e_i) \right )=
g(H \sharp \di_{\nabla^g}\!h, \di_{\nabla^g}\!h)$$ 
and the claim is proved.
\end{proof}
After polarisation in Lemma \ref{quad-1L} we get 
\begin{equation} \label{pol}
g\left (\mathbf{p}(h_1,h_2)+\mathbf{p}(h_2,h_1),H \right )=-2g(H \sharp \di_{\nabla^g}\!h_1,\di_{\nabla^g}\!h_2)+2g(\mathrm{L}(h_1,h_2),H)
\end{equation}
whenever $h_1,h_2,H$ belong to $\Gamma \left (\Sym^2TM \right )$. The next aim is to gain further insight in the structure of the operator 
$\mathbf{i}$ to which extent  we also define the auxiliary operator 
$$ \mathbf{q}(H_1,H_2)=\la (\nabla^g_{e_j}H_1)e_i, (\nabla^g_{e_i}H_2)e_j\ra_{L^2}
$$
whenever $H_1,H_2$ are sections of $\End(TM)$.
\begin{lema} \label{i-1}
We have 
\begin{equation*}
\la \mathbf{i}(h_1,h_2),H\ra_{L^2}=\la \delta^{g}(h_2 \circ \di_{\nabla^g}\!h_1),H\ra_{L^2}+\mathbf{q}(h_1, h_2 \circ H)-\la 
h_2, \mathrm{L}(H,h_1)
\ra_{L^2}
\end{equation*}
whenever $h_1,h_2,H$ are sections of $\Sym^2TM$. 
\end{lema}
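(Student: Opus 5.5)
The identity is a pointwise Leibniz rule followed by one integration by parts. We move the covariant derivative off $h_2$ in the defining expression of $\mathbf{i}$ and then identify the three resulting terms.

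In a local orthonormal frame $\{e_i\}$, parallel at the point under consideration, we have
\[
\la \mathbf{i}(h_1,h_2),H\ra_{L^2}=\int_M g\big((\nabla^g_{e_k}h_1)e_i,(\nabla^g_{e_i}h_2)He_k\big)\vol_g .
\]
The first step uses $\nabla^g_{e_k}h_1=(\nabla^g_{e_i}h_1)e_k-\di_{\nabla^g}h_1(e_i,e_k)$ to rewrite $(\nabla^g_{e_k}h_1)e_i$, since $h_1$ is symmetric. This splits the integrand as
\[
g\big((\nabla^g_{e_i}h_1)e_k,(\nabla^g_{e_i}h_2)He_k\big)-g\big(\di_{\nabla^g}h_1(e_i,e_k),(\nabla^g_{e_i}h_2)He_k\big).
\]
Call these the first and second terms. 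No symmetry of $h_1$ beyond this is needed in the first step.

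For the second term, the Leibniz rule gives
\[
(\nabla^g_{e_i}h_2)He_k=\nabla^g_{e_i}(h_2He_k)-h_2(\nabla^g_{e_i}H)e_k
\]
in a frame parallel at the point. Integrating by parts the piece carrying $\nabla^g_{e_i}(h_2 H\,\cdot)$ moves the derivative onto $h_2\circ\di_{\nabla^g}h_1$. With the convention $\delta^g=-e_i\lrcorner\nabla^g_{e_i}$ and $h_2$ symmetric, this produces $\la\delta^g(h_2\circ\di_{\nabla^g}h_1),H\ra_{L^2}$, up to checking the sign and which slot of $H$ is paired. The remaining piece, which carries $\nabla^g H$, combines with the first term.

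The first term is $g\big((\nabla^g_{e_i}h_1)e_k,(\nabla^g_{e_i}h_2)He_k\big)$. Expanding $(\nabla^g_{e_i}h_2)H=\nabla^g_{e_i}(h_2H)-h_2\nabla^g_{e_i}H$ shows that this term differs from $\mathbf{q}(h_1,h_2\circ H)$ only by terms involving $\nabla^g H$ contracted against $\nabla^g h_1$ and $h_2$. I expect these leftover $\nabla^g H$ terms, together with the one from the second term, to assemble exactly into $-\la h_2,\mathrm{L}(H,h_1)\ra_{L^2}=-\tfrac12\la h_2,\{\nabla^g_{e_i}H,\nabla^g_{e_i}h_1\}\ra_{L^2}$, using the symmetry of $h_2$.

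The main obstacle is the bookkeeping in this last step. One has to decide which of the two contraction patterns, trace-type or crossed index, turns into $\mathbf{q}$ and which into $\mathrm{L}$. A wrong choice of the initial swap in $\nabla^g_{e_k}h_1$ could instead produce a term like $\mathbf{q}(h_1,h_2\circ H)$ with the derivative on the other slot. If that happens, I would apply the same swap trick to $h_2\circ H$, which is not symmetric, so only the $\di_{\nabla^g}$-type identity for the composed endomorphism is available. Curvature terms should not appear, since no second derivatives are ever commuted.
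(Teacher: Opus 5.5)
Your toolkit (Leibniz rule, swap, $\di_{\nabla^g}$--$\delta^g$ adjointness) is the right one, but the term-by-term accounting you predict is wrong. This is not a sign or slot issue, so the argument does not close as written. After your swap, the first term is simply $\la \mathrm{L}(h_1,h_2),H\ra_{L^2}$.

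In the second term, consider the piece containing $g\big(\di_{\nabla^g}h_1(e_i,e_k),(\nabla^g_{e_i}(h_2\circ H))e_k\big)$. Integrating it by parts puts the derivative on $\di_{\nabla^g}h_1$ alone, because $h_2$ sits inside the differentiated endomorphism $h_2\circ H$. The result is $\la h_2\circ\delta^g\di_{\nabla^g}h_1,H\ra_{L^2}$, which contains second derivatives of $h_1$. It differs from $\la\delta^g(h_2\circ\di_{\nabla^g}h_1),H\ra_{L^2}$ by the pairing of $\sum_i(\nabla^g_{e_i}h_2)\di_{\nabla^g}h_1(e_i,\cdot)$ with $H$, which is the very term you started from.

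The divergence term actually comes from the other piece. Since $(\nabla^g_{e_i}H)e_k$ antisymmetrises to $\di_{\nabla^g}H(e_i,e_k)$, the $\nabla^g H$-piece of the second term contributes $\la h_2\circ\di_{\nabla^g}h_1,\di_{\nabla^g}H\ra_{L^2}=\la\delta^g(h_2\circ\di_{\nabla^g}h_1),H\ra_{L^2}$. So it does not feed into $\mathrm{L}$ at all.

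Likewise, the first term does not differ from $\mathbf{q}(h_1,h_2\circ H)$ only by $\nabla^g H$-terms. The pairing $\la\nabla^g_{e_i}h_1,\nabla^g_{e_i}(h_2\circ H)\ra_{L^2}$ has the trace index pattern, not the crossed pattern defining $\mathbf{q}$. In fact $\mathbf{q}(h_1,h_2\circ H)=\la\nabla^g_{e_i}h_1,\nabla^g_{e_i}(h_2\circ H)\ra_{L^2}-\la h_2\circ\delta^g\di_{\nabla^g}h_1,H\ra_{L^2}$.

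The correct assembly along your route is therefore:
$\mathbf{q}$ comes from the $\nabla^g(h_2\circ H)$-pieces of both terms together;
$-\la h_2,\mathrm{L}(H,h_1)\ra_{L^2}$ comes from the $\nabla^g H$-piece of the first term alone;
the divergence term comes from the $\nabla^g H$-piece of the second term.

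The paper avoids all of this by reversing your order. It applies $(\nabla^g_{e_i}h_2)H=\nabla^g_{e_i}(h_2\circ H)-h_2\circ\nabla^g_{e_i}H$ before any swap. This leaves the crossed index pattern intact, so $\mathbf{q}(h_1,h_2\circ H)$ appears immediately. The swap $(\nabla^g_{e_k}h_1)e_i=\di_{\nabla^g}h_1(e_k,e_i)+(\nabla^g_{e_i}h_1)e_k$ is then made only in the leftover $\la(\nabla^g_{e_k}h_1)e_i,h_2(\nabla^g_{e_i}H)e_k\ra_{L^2}$. That leftover splits as $-\la\di_{\nabla^g}h_1,h_2\circ\di_{\nabla^g}H\ra_{L^2}+\la h_2,\mathrm{L}(h_1,H)\ra_{L^2}$, and adjointness finishes the proof. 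No integration by parts onto $\di_{\nabla^g}h_1$ is ever needed.

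If you want to keep your order, your fallback of swapping on $h_2\circ H$ does work; it only needs $\di_{\nabla^g}$ of a general endomorphism. It gives $\la\nabla^g_{e_i}h_1,\nabla^g_{e_i}(h_2\circ H)\ra_{L^2}=\mathbf{q}(h_1,h_2\circ H)+\la\di_{\nabla^g}h_1,\di_{\nabla^g}(h_2\circ H)\ra_{L^2}$. The last term equals $\la h_2\circ\delta^g\di_{\nabla^g}h_1,H\ra_{L^2}$ and cancels your integration-by-parts term. That cancellation is the step your proposal is missing.
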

\begin{proof}
We use first the Leibniz rule to arrive at 
\begin{equation*}
\begin{split}
\la \mathbf{i}(h_1,h_2),H\ra_{L^2}=&\la (\nabla^g_{e_k}h_1)e_i, \left (\nabla^g_{e_i}(h_2\circ H))e_k-h_2(\nabla^g_{e_i}H \right )e_k \ra_{L^2}\\
=&\mathbf{q}(h_1, h_2 \circ H)-\la \left ( \nabla^g_{e_k}h_1)e_i, h_2(\nabla^g_{e_i}H \right ) e_k \ra_{L^2}.
\end{split}
\end{equation*}
At the same time 
\begin{equation*}
\begin{split}
\la \left ( \nabla^g_{e_k}h_1)e_i, h_2(\nabla^g_{e_i}H \right ) e_k \ra_{L^2}=&
\la \di_{\nabla^g}\!h_1(e_k,e_i)+ (\nabla^g_{e_i}h_1)e_k,  h_2(\nabla^g_{e_i}H ) e_k \ra_{L^2}\\
=&\tfrac{1}{2}\la \di_{\nabla^g}\!h_1(e_k,e_i),h_2(\di_{\nabla^g}\! H )(e_i, e_k) \ra_{L^2}+\la \nabla^g_{e_i}h_1,  h_2 \circ \nabla^g_{e_i}H   \ra_{L^2}\\
=&-\la \di_{\nabla^g}\!h_1, h_2 \circ \di_{\nabla^g}\! H  \ra_{L^2}+\la \nabla^g_{e_i}h_1,  h_2 \circ \nabla^g_{e_i}H   \ra_{L^2}.
\end{split}
\end{equation*}
Since we are dealing with symmetric operators 
$$\la \nabla^g_{e_i}h_1,  h_2 \circ \nabla^g_{e_i}H   \ra_{L^2}=
\la h_2, \nabla^g_{e_i}h_1 \circ \nabla^g_{e_i}H   \ra_{L^2}=\tfrac{1}{2}\la h_2, \{\nabla^g_{e_i}h_1, \nabla^g_{e_i}H\}   \ra_{L^2}
=\la h_2, \mathrm{L}(h_1,H) \ra_{L^2}.$$ The claim follows now by gathering terms.
\end{proof}
Next we derive the explicit expression for $\mathbf{q}$ and a few of its properties.

\begin{lema} \label{q}
Assume that $H_1,H_2$ belong to $\Gamma \left ( \End(TM)\right )$. We have 
$$ \mathbf{q}(H_1,H_2)=\la \ring{R}H_1-EH_1,H_2 \ra_{L^2}+\la \delta^gH_1, \delta^gH_2 \ra_{L^2}.
$$
In particular $\mathbf{q}(H_1,H_2)=\mathbf{q}(H_2,H_1)$ whenever the tensor $H_2$ is symmetric.
\end{lema}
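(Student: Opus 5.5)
We have $\mathbf{q}(H_1,H_2)=\la (\nabla^g_{e_j}H_1)e_i,(\nabla^g_{e_i}H_2)e_j\ra_{L^2}$. The idea is to integrate by parts twice: first moving $\nabla^g_{e_i}$ off $H_2$ onto the other factor, then commuting covariant derivatives to produce a curvature term, and finally integrating by parts once more to recover a product of divergences. Throughout we work pointwise in a local orthonormal frame that is parallel at the given point.

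Step 1. Write the integrand as $g((\nabla^g_{e_j}H_1)e_i,(\nabla^g_{e_i}H_2)e_j)$. Moving $\nabla_{e_i}$ by the divergence theorem gives
\begin{equation*}
\mathbf{q}(H_1,H_2)=-\la (\nabla^2_{e_i,e_j}H_1)e_i,H_2e_j\ra_{L^2}.
\end{equation*}
Here $\nabla^2_{X,Y}=\nabla_X\nabla_Y-\nabla_{\nabla_XY}$.

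Step 2. Commute the second derivatives. With the paper's convention $R(X,Y)=\nabla^2_{Y,X}-\nabla^2_{X,Y}$ we have $\nabla^2_{e_i,e_j}H_1=\nabla^2_{e_j,e_i}H_1+R(e_j,e_i)H_1$, where $R$ acts on $H_1$ as a derivation. The term $-\la(\nabla^2_{e_j,e_i}H_1)e_i,H_2e_j\ra$ equals $\la \nabla_{e_j}(\delta^gH_1),H_2e_j\ra$, since $\delta^gH_1=-(\nabla_{e_i}H_1)e_i$. A further integration by parts turns this into $\la\delta^gH_1,\delta^gH_2\ra_{L^2}$, the divergence of $H_2$ being $-(\nabla_{e_j}H_2)e_j$. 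The sign and index conventions here need care (the divergence is applied to the first slot of $H_2$ regarded as an element of $\Omega^1(M,TM)$).

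Step 3. Identify the curvature term. We need
\begin{equation*}
-\big([R(e_j,e_i),H_1]e_i\big)\ \text{paired with}\ H_2e_j.
\end{equation*}
Expand $(R(e_j,e_i)H_1)e_i=R(e_j,e_i)H_1e_i-H_1R(e_j,e_i)e_i$. The second piece gives $\pm H_1\Ric(e_j)=\pm E\,H_1e_j$ and contributes $-E\la H_1,H_2\ra$. The first piece, after using the pair symmetry of $R$, matches $\sum_iR(e_i,e_j)H_1e_i=(\ring{R}H_1)e_j$, giving $\la\ring{R}H_1,H_2\ra$. Getting the signs exactly so as to land on $\ring{R}H_1-EH_1$ is the main obstacle. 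I would check this against a known case: for $H_1=H_2=\id$ all terms vanish except $\la\ring{R}\,\id-E\,\id,\id\ra$, and this is $0$ because $\ring{R}\,\id=\Ric=E$. This check confirms the relative sign of $\ring R$ and $E$.

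Step 4. For the symmetry statement, note that $\ring{R}$ is symmetric with respect to the inner product on $\End(TM)$ when restricted appropriately, and $\la\delta^gH_1,\delta^gH_2\ra$ is manifestly symmetric. Hence $\mathbf{q}(H_1,H_2)=\mathbf{q}(H_2,H_1)$ once $\la\ring{R}H_1,H_2\ra=\la H_1,\ring{R}H_2\ra$. This holds by pair symmetry of the curvature for arbitrary $H_1$ paired with symmetric $H_2$: when $H_2$ is symmetric, only the symmetric part of $H_1$, or a compatible piece, enters consistently. I would verify this directly from the index expression $R_{ijkl}$.
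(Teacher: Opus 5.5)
Your proposal is correct and follows the paper's proof essentially step for step: one integration by parts, then the Ricci identity, which gives $\la[R(e_i,e_j),H_1]e_i,H_2e_j\ra_{L^2}=\la \ring{R}H_1-EH_1,H_2\ra_{L^2}$ directly from the definition of $\ring{R}$ and from $\sum_iR(e_i,X)e_i=EX$ (so neither pair symmetry nor the $\id$-check is needed to fix signs), and then a second integration by parts producing $\la\delta^gH_1,\delta^gH_2\ra_{L^2}$. For the final claim you can replace the vague Step 4: relabelling $i\leftrightarrow j$ in the definition of $\mathbf{q}$ gives $\mathbf{q}(H_1,H_2)=\mathbf{q}(H_2,H_1)$ for all $H_1,H_2$, and this is consistent with $\ring{R}$ being self-adjoint on all of $\End(TM)$, since $R_{ijkl}=R_{jilk}$.
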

\begin{proof}
We compute, by integration by parts,
\begin{equation*}
\begin{split}
 \mathbf{q}(H_1,H_2)=&\la (\nabla^g_{e_j}H_1)e_i, (\nabla^g_{e_i}H_2)e_j\ra_{L^2}=-\la ((\nabla^g)^2_{e_i,e_j}H_1)e_i,H_2e_j\ra_{L^2}\\
 =&-\la ((\nabla^g)^2_{e_i,e_j}H_1)e_i-((\nabla^g)^2_{e_j,e_i}H_1)e_i,H_2e_j\ra_{L^2}-\la ((\nabla^g)^2_{e_j,e_i}H_1)e_i,H_2e_j \ra_{L^2}\\
 =&\la [R^g(e_i,e_j),H_1]e_i, H_2e_j \ra_{L^2}+\la \nabla^g_{e_j} \delta^gH_1, H_2e_j \ra_{L^2}\\
 =&\la \ring{R}H_1-EH_1,H_2 \ra_{L^2}-\la \delta^gH_1, (\nabla^g_{e_j}H_2)e_j \ra_{L^2}\\
 =&\la \ring{R}H_1-EH_1,H_2 \ra_{L^2}+\la \delta^gH_1, \delta^gH_2 \ra_{L^2}.
 \end{split}
\end{equation*}
Note that we have also used the Ricci identity to obtain the curvature terms.
\end{proof}
We also record the explicit expression for $\mathrm{L}(h_1,h_2)$ in the Lemma below.
\begin{lema} \label{L-exp}
Assume that $h_1,h_2$ belong to $\Gamma \left (\Sym^2TM \right )$. We have 
$$ 4\mathrm{L}(h_1,h_2)=\{(\Delta_E+2\ring{R})h_1,h_2\}+\{h_1,(\Delta_E+2\ring{R})h_2\}-(\Delta_E+2\ring{R})\{h_1,h_2\}
$$
\end{lema}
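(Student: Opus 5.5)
Since $\Delta_E=(\nabla^g)^{\star}\nabla^g-2\ring{R}$, the operator appearing in the statement is simply the rough Laplacian:
$$\Delta_E+2\ring{R}=(\nabla^g)^{\star}\nabla^g=-(\nabla^g)^2_{e_i,e_i}\qquad\text{(in a local orthonormal frame).}$$
The curvature terms therefore disappear, and the identity to prove becomes
$$4\mathrm{L}(h_1,h_2)=\{(\nabla^g)^{\star}\nabla^g h_1,h_2\}+\{h_1,(\nabla^g)^{\star}\nabla^g h_2\}-(\nabla^g)^{\star}\nabla^g\{h_1,h_2\}.$$
This is a pointwise Leibniz-rule computation, so no integration by parts or Einstein condition is needed.

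The key step is the second-order Leibniz rule for the composition of endomorphisms. Since $\nabla^g$ is a derivation with respect to composition, for $A,B\in\Gamma(\End(TM))$ we have
$$(\nabla^g)^{\star}\nabla^g(A\circ B)=((\nabla^g)^{\star}\nabla^gA)\circ B+A\circ (\nabla^g)^{\star}\nabla^gB-2\,\nabla^g_{e_i}A\circ \nabla^g_{e_i}B .$$
To verify this, I will work at a point with a frame satisfying $\nabla^g e_i=0$ there, and expand $(\nabla^g)^2_{e_i,e_i}(A\circ B)$. I will apply this rule with $(A,B)=(h_1,h_2)$ and with $(A,B)=(h_2,h_1)$, and then add the two results. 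The cross terms sum to
$$-2\left(\nabla^g_{e_i}h_1\circ\nabla^g_{e_i}h_2+\nabla^g_{e_i}h_2\circ\nabla^g_{e_i}h_1\right)=-2\{\nabla^g_{e_i}h_1,\nabla^g_{e_i}h_2\}=-4\mathrm{L}(h_1,h_2).$$
The remaining terms combine into the two anticommutators. Rearranging then gives the identity above, and hence the claim.

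I expect no real obstacle here. The only points needing care are the sign convention $(\nabla^g)^{\star}\nabla^g=-\sum_i(\nabla^g)^2_{e_i,e_i}$ and the definition of $\Delta_E$ recalled in the preliminaries, which together give the cancellation of the $2\ring{R}$ terms. The factor $4$ then comes from the factor $2$ in the Leibniz rule combined with the factor $\tfrac12$ in the definition of $\mathrm{L}$.
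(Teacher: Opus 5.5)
Your proof is correct, but it takes a different route from the paper's. The paper quotes the diagonal identity $2\mathrm{L}(h,h)=\{(\Delta_E+2\ring{R})h,h\}-(\Delta_E+2\ring{R})h^2$ from \cite{N1} (Lemma 4.15 there). It then polarises, using only that $\mathrm{L}$ is symmetric.

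You instead observe that $\Delta_E+2\ring{R}=(\nabla^g)^{\star}\nabla^g$ by definition. You then derive the bilinear identity directly from the second-order Leibniz rule for the rough Laplacian on compositions. Adding the rule for $(h_1,h_2)$ and $(h_2,h_1)$ produces exactly $-2\{\nabla^g_{e_i}h_1,\nabla^g_{e_i}h_2\}=-4\mathrm{L}(h_1,h_2)$, so no polarisation step is needed.

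Your argument is self-contained. It also makes explicit that the statement is a purely pointwise identity, independent of the Einstein condition, since the curvature terms cancel by definition. The paper's version is shorter but defers the substance to the external reference, whose proof is presumably your computation in the diagonal case.
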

\begin{proof}
We have $ 2\mathrm{L}(h,h)=\{(\Delta_E+2\ring{R})h,h\}-(\Delta_E+2\ring{R})h^2$ according to \cite{N1}[Lemma 4.15]. Since $\mathrm{L}(h_1,h_2)=\mathrm{L}(h_2,h_1)$ the claim follows now by polarisation.
\end{proof}
Next we determine the skew-symmetric component in the operator $\mathbf{p}(h,h)$ for elements $h$ in the infinitesimal deformation space of $g$ as follows; in the proposition below we use the inner product on $\End(TM)$.
\begin{pro} \label{p-skew}
Assume that the tensor $h$ belongs to $\mathscr{E}(M,g)$ and that $A$ belongs to $\Omega^2M$. Then 
\begin{equation*}
\la \mathbf{p}(h,h),A\ra_{L^2}=2\la \delta^g[h,h]^{\FN},A \ra_{L^2}.
\end{equation*}
\end{pro}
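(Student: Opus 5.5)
Here $\mathscr{E}(M,g)$ is read as $\varepsilon(g)$, so $h$ is trace free, $\delta^gh=0$ and $\Delta_Eh=0$. Since $A$ is skew, only the skew part of $\mathbf{p}(h,h)=2\mathbf{i}(h,h)-\mathbf{j}(h,h)$ enters the pairing. The tensor $\mathbf{j}(h,h)$ is symmetric by its definition \eqref{j-d}: $g(\mathbf{j}(h,h)X,Y)=g(\nabla^g_Xh,\nabla^g_Yh)$ is symmetric in $X,Y$. Hence $\la \mathbf{p}(h,h),A\ra_{L^2}=2\la \mathbf{i}(h,h),A\ra_{L^2}$, and it remains to show
$$\la \mathbf{i}(h,h),A\ra_{L^2}=\la \delta^g[h,h]^{\FN},A\ra_{L^2}.$$

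I would compute the left-hand side with Lemma \ref{i-1}, taking $h_1=h_2=h$ and $H=A$. Its proof only uses Leibniz and integration by parts, so it extends to $H\in\Gamma(\End TM)$, provided we read $\mathrm{L}(A,h)=\tfrac12\{\nabla^g_{e_i}A,\nabla^g_{e_i}h\}$ and keep the pairing $\la \nabla^g_{e_i}h, h\circ\nabla^g_{e_i}A\ra$. This gives
$$\la \mathbf{i}(h,h),A\ra_{L^2}=\la \delta^g(h\circ \di_{\nabla^g}h),A\ra_{L^2}+\mathbf{q}(h,h\circ A)-\la \nabla^g_{e_i}h,h\circ \nabla^g_{e_i}A\ra_{L^2}.$$
Lemma \ref{q} applies to arbitrary endomorphisms. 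With $\delta^gh=0$ it gives $\mathbf{q}(h,h\circ A)=\la \ring{R}h-Eh,h\circ A\ra_{L^2}=\la (\ring{R}h)\circ h-Eh^2,A\ra_{L^2}$ up to transposition conventions. The term $-Eh^2$ is symmetric and therefore drops out against $A$.

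On the other side, Lemma \ref{sym-bra}(i) says that $\delta^g([h,h]^{\FN}-h\circ\di_{\nabla^g}h)-(\ring{R}h)\circ h$ is symmetric. Pairing with the skew form $A$ therefore gives
$$\la \delta^g[h,h]^{\FN},A\ra_{L^2}=\la \delta^g(h\circ\di_{\nabla^g}h),A\ra_{L^2}+\la (\ring{R}h)\circ h,A\ra_{L^2}.$$
Comparing with the previous display, the claim reduces to
$$\la \nabla^g_{e_i}h,h\circ\nabla^g_{e_i}A\ra_{L^2}=0,$$
possibly up to extra symmetric contributions, which pair to zero with $A$.

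This reduction is where I expect the main difficulty. I would integrate by parts to write the term as $-\la \nabla^g_{e_i}(h\circ\nabla^g_{e_i}h),A\ra$ in transposed form. This splits into $\la h\circ(\nabla^g)^\star\nabla^g h,A\ra$ minus $\la \nabla^g_{e_i}h\circ\nabla^g_{e_i}h,A\ra$. The second piece is symmetric, so it vanishes against $A$. For the first piece, $\Delta_Eh=0$ gives $(\nabla^g)^\star\nabla^gh=2\ring{R}h$, which leaves $2\la h\circ\ring{R}h,A\ra$.

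This curvature term is not obviously zero, so the bookkeeping must be done carefully. The skew part of $h\circ \ring{R}h$ must combine with the $(\ring{R}h)\circ h$ contributions from $\mathbf{q}$ and from Lemma \ref{sym-bra}(i), with their precise transposition and sign conventions, into something whose skew part cancels. The skew part of $(\ring{R}h)\circ h$ equals minus that of $h\circ\ring{R}h$, so the terms can cancel once the coefficients are matched. I would fix the convention in Lemma \ref{q}, namely whether $\la \ring{R}H_1,H_2\ra$ produces $h\circ\ring{R}h$ or $(\ring{R}h)\circ h$ when $H_2=h\circ A$, and check that the coefficients combine to zero. I expect this sign and ordering verification to be the only delicate point of the proof.
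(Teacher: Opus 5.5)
Your route is the paper's: you discard $\mathbf{j}(h,h)$ by symmetry, expand $\la \mathbf{i}(h,h),A\ra_{L^2}$ as in Lemma \ref{i-1} with $H=A$, and convert $\la \delta^g(h\circ \di_{\nabla^g}h),A\ra_{L^2}$ using Lemma \ref{sym-bra}(i). Your evaluation of the last term is also correct: $\la \nabla^g_{e_i}h,h\circ\nabla^g_{e_i}A\ra_{L^2}=2\la h\circ\ring{R}h,A\ra_{L^2}=\la [h,\ring{R}h],A\ra_{L^2}$.

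\textbf{The gap.} The proposal stops exactly where the content of the proposition lies, and the one explicit value you commit to is wrong.
\begin{itemize}
\item Your ``reduction'' to the vanishing of $\la \nabla^g_{e_i}h,h\circ\nabla^g_{e_i}A\ra_{L^2}$ is false, since that term is generically nonzero. It arose only because you took $\mathbf{q}(h,h\circ A)=\la (\ring{R}h)\circ h-Eh^2,A\ra_{L^2}$.
\item The ordering is not a matter of convention. With $\la X,Y\ra=\tr(X^{T}Y)$, where $X^{T}$ is the $g$-adjoint, and $\ring{R}h$ symmetric, you get $\la \ring{R}h,h\circ A\ra=\tr(\ring{R}h\circ h\circ A)=\la h\circ\ring{R}h,A\ra$.
\item So Lemma \ref{q} together with $\delta^gh=0$ gives $\mathbf{q}(h,h\circ A)=\la h\circ\ring{R}h-Eh^2,A\ra_{L^2}=\tfrac{1}{2}\la [h,\ring{R}h],A\ra_{L^2}$.
\item With your ordering the curvature terms would sum to $-\la [h,\ring{R}h],A\ra_{L^2}$ and the identity would fail. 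Leaving this as ``check that the coefficients combine to zero'' therefore leaves the proof open.
\end{itemize}

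\textbf{The missing bookkeeping.} The skew part of $(\ring{R}h)\circ h$ is $-\tfrac{1}{2}[h,\ring{R}h]$. So Lemma \ref{sym-bra}(i) gives $\la \delta^g(h\circ \di_{\nabla^g}h),A\ra_{L^2}=\la \delta^g[h,h]^{\FN},A\ra_{L^2}+\tfrac{1}{2}\la [h,\ring{R}h],A\ra_{L^2}$. Hence
\[
\la \mathbf{i}(h,h),A\ra_{L^2}=\la \delta^g[h,h]^{\FN},A\ra_{L^2}+\left(\tfrac{1}{2}+\tfrac{1}{2}-1\right)\la [h,\ring{R}h],A\ra_{L^2}=\la \delta^g[h,h]^{\FN},A\ra_{L^2}.
\]
The three curvature contributions come, in order, from Lemma \ref{sym-bra}(i), from $\mathbf{q}(h,h\circ A)$, and from $-\la \nabla^g_{e_i}h,h\circ\nabla^g_{e_i}A\ra_{L^2}$ (where $\Delta_Eh=0$ is used). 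Multiplying by $2$ gives the claim; this is precisely the paper's proof.
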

\begin{proof}
Exactly as in the proof of Lemma \ref{i-1} we find 
$$ \la \mathbf{i}(h,h),A \ra_{L^2}=\la \delta^g(h \circ \di_{\nabla^g}\!h),A \ra_{L^2}+\mathbf{q}(h, h \circ A)-\la \nabla^g_{e_i}h, 
h \circ \nabla^g_{e_i}A \ra_{L^2}.
$$
However, since $h$ is symmetric 
\begin{equation*}
\begin{split}
\la \nabla^g_{e_i}h, 
h \circ \nabla^g_{e_i}A \ra_{L^2}=&\la h \circ \nabla^g_{e_i}h, \nabla^g_{e_i}A \ra_{L^2}=
\tfrac{1}{2} \la [h, \nabla^g_{e_i}h], \nabla^g_{e_i}A 
\ra_{L^2}\\
=&-\tfrac{1}{2} \la [h, (\nabla^g)^2_{e_i,e_i}h], A \ra_{L^2}=\tfrac{1}{2} \la [h, \nabla^{\star_g} \nabla^g h], A \ra_{L^2}
\end{split}
\end{equation*}
after integrating by parts. Since $\Delta_Eh=(\nabla^{\star_g} \nabla^g-2\ring{R})h=0$ it follows that 
\begin{equation*}
\begin{split}
\la \nabla^g_{e_i}h, 
h \circ \nabla^g_{e_i}A \ra_{L^2}= \la [h, \ring{R}h], A \ra_{L^2}.
\end{split}
\end{equation*}
Further on, since $A$ is skew-symmetric, part (i) in Lemma \ref{sym-bra} shows that 
$$ \la \delta^g(h \circ \di_{\nabla^g}\!h),A \ra_{L^2}=\la \delta^{g}[h,h]^{\FN}-\ring{R}h \circ h,A\ra_{L^2}=
\la \delta^{g}[h,h]^{\FN}+\tfrac{1}{2}[h,\ring{R}h],A\ra_{L^2}.
$$
Finally, since $h$ is divergence free, Lemma \ref{q} ensures that 
$$\mathbf{q}(h,h \circ A)=
\la h \circ \ring{R}h-Eh^2,A\ra_{L^2}=\la h \circ \ring{R}h,A\ra_{L^2}=\tfrac{1}{2}\la [h, \ring{R}h],A \ra_{L^2}.$$ The claim follows by gathering these facts and also 
using that $\la \mathbf{p}(h,h),A\ra_{L^2}=2\la \mathbf{i}(h,h),A\ra_{L^2}$. Note that the latter equality 
stems from having $\mathbf{p}=2\mathbf{i}-\mathbf{j}$ and $\la \mathbf{j}(h,h),A\ra_{L^2}=0$.
\end{proof}
We resume in this section by proving the following general cubic identity which generalises Lemma \ref{sym-bra}.
\begin{pro} \label{o3-p}
We have 
\begin{equation*}
\begin{split}
\la h_1 \sharp \di_{\nabla^g}\!h_2,\di_{\nabla^g}\!h_3\ra_{L^2}=&\la \nabla^g_{h_1e_i}h_2,\nabla^g_{e_i}h_3\ra_{L^2}-\la \di_{\nabla^g}\!h_1,h_2 \circ \di_{\nabla^g}\!h_3+
h_3 \circ \di_{\nabla^g}\!h_2 \ra_{L^2}\\
+&\la h_1, \mathrm{L}(h_2,h_3) \ra_{L^2}+\la h_2, \mathrm{L}(h_1,h_3) \ra_{L^2}+\la h_3, \mathrm{L}(h_1,h_2) \ra_{L^2}\\
-&\mathbf{q}(h_2 \circ h_1,h_3)-\mathbf{q}(h_3 \circ h_1,h_2)
\end{split}
\end{equation*}
for all $h_i \in \Gamma \left (\Sym^2TM \right )$, where $1 \leq i \leq 3$.
\end{pro}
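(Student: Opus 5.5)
The plan is to expand the left-hand side pointwise in a local orthonormal frame, exactly as in the proof of Lemma \ref{quad-1L}, and then move derivatives around with the same integration-by-parts devices used in Lemmas \ref{i-1} and \ref{q}.

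\textbf{Step 1: expand.} Write
\[
\la h_1 \sharp \di_{\nabla^g}\!h_2,\di_{\nabla^g}\!h_3\ra=\la \di_{\nabla^g}\!h_2(h_1e_k,e_i),\di_{\nabla^g}\!h_3(e_k,e_i)\ra ,
\]
up to the normalising factor $\tfrac12$ of the $\Lambda^2$ inner product, as used in Lemma \ref{quad-1L}. Then insert $\di_{\nabla^g}h(X,Y)=(\nabla^g_Xh)Y-(\nabla^g_Yh)X$. This produces four terms.

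The ``$XX$'' term is $\la (\nabla^g_{h_1e_k}h_2)e_i,(\nabla^g_{e_k}h_3)e_i\ra$, which is exactly $\la \nabla^g_{h_1e_i}h_2,\nabla^g_{e_i}h_3\ra$.

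The ``$YY$'' term is $\la (\nabla^g_{e_i}h_2)h_1e_k,(\nabla^g_{e_i}h_3)e_k\ra$. It equals $\la h_1,\nabla^g_{e_i}h_2\circ\nabla^g_{e_i}h_3\ra$, which is $\la h_1,\mathrm{L}(h_2,h_3)\ra$ after symmetrising, since $h_1$ is symmetric.

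The two mixed terms are $-\la (\nabla^g_{h_1e_k}h_2)e_i,(\nabla^g_{e_i}h_3)e_k\ra$ and $-\la (\nabla^g_{e_i}h_2)h_1e_k,(\nabla^g_{e_k}h_3)e_i\ra$. These are $\mathbf{i}$-type expressions: the first is $-\la \mathbf{i}(h_3,h_2),h_1\ra$ up to symmetry, and the second is $-\la\mathbf{i}(h_2,h_3),h_1\ra$.

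\textbf{Step 2: rewrite the mixed terms.} Apply Lemma \ref{i-1}, or rather its proof (the Leibniz rule and $\di_{\nabla^g}h_1(e_k,e_i)+(\nabla^g_{e_i}h_1)e_k$), to each mixed term with $H=h_1$.

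For instance, $\la (\nabla^g_{e_k}h_3)e_i,(\nabla^g_{e_i}h_2)h_1e_k\ra$ becomes $\mathbf{q}(h_3,h_2\circ h_1)$, minus a term containing $h_2\nabla^g h_1$. The latter splits, as in Lemma \ref{i-1}, into
\[
-\la\di_{\nabla^g}h_3,h_2\circ\di_{\nabla^g}h_1\ra+\la h_2,\mathrm{L}(h_3,h_1)\ra .
\]
The symmetric counterpart gives $\mathbf{q}(h_2,h_3\circ h_1)$, $-\la \di_{\nabla^g}h_2,h_3\circ \di_{\nabla^g}h_1\ra$ and $\la h_3,\mathrm{L}(h_2,h_1)\ra$.

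Lemma \ref{q} gives the symmetry needed to write $\mathbf{q}(h_3,h_2\circ h_1)$ as $\mathbf{q}(h_2\circ h_1,h_3)$, since $h_3$ is symmetric. The terms $\la \di_{\nabla^g}h_3,h_2\circ\di_{\nabla^g}h_1\ra$ are rearranged into $\la\di_{\nabla^g}h_1,h_2\circ\di_{\nabla^g}h_3\ra$ using the symmetry of $h_2$ and pointwise transposition.

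\textbf{Step 3: collect and check signs.} The three $\mathrm{L}$ terms then appear with coefficient $+1$. The two $\mathbf{q}$ terms and the two $\di_{\nabla^g}h_1$ terms appear with coefficient $-1$, because the mixed terms carry a minus sign from the expansion. This yields the stated identity.

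The main obstacle I expect is bookkeeping. One issue is the factor $\tfrac12$ in the $\Lambda^2$ inner product versus the full frame sums. The other is the sign in the $\mathbf{i}\leftrightarrow\mathbf{q}$ conversion, in particular which argument of $\mathbf{q}$ carries the composition with $h_1$. I would check this first against the special case $h_1=h_2=h_3=h$, with $h$ divergence free, by comparison with Lemma \ref{sym-bra}(ii) together with Lemma \ref{quad-1L}.
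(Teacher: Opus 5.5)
Your proposal is correct and is essentially the paper's proof: the same four-term frame expansion, in which the two mixed terms are handled by the Leibniz rule on $h_2\circ h_1$ (resp.\ $h_3\circ h_1$) together with the splitting $(\nabla^g_{X}h)Y=\di_{\nabla^g}h(X,Y)+(\nabla^g_Yh)X$, exactly as in Lemma \ref{i-1}. Two harmless slips: there is no leftover factor $\tfrac12$, because $\la h_1\sharp\alpha,\beta\ra=\sum_{k,i}\la\alpha(h_1e_k,e_i),\beta(e_k,e_i)\ra$ holds exactly; and your labels for the mixed terms are swapped (the first is $-\la\mathbf{i}(h_2,h_3),h_1\ra$ and the second is $-\la\mathbf{i}(h_3,h_2),h_1\ra$), which does not matter since only their sum enters.
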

\begin{proof}
According to the definition  of the action $\alpha \in \Omega^2(M,TM) \mapsto h_1 \sharp \alpha$ we have 
\begin{equation*}
\begin{split}
\la h_1 \sharp \di_{\nabla^g}\!h_2,\di_{\nabla^g}\!h_3\ra_{L^2}=&\tfrac{1}{2}\la 
\di_{\nabla^g}\!h_2(h_1e_i,e_j)+ \di_{\nabla^g}h_2(e_i,h_1e_j),\di_{\nabla^g}\!h_3(e_i,e_j)\ra_{L^2}\\
=&
\la \di_{\nabla^g}\!h_2(h_1e_i,e_j)+ \di_{\nabla^g}\!h_2(e_i,h_1e_j),(\nabla^g_{e_i}h_3)e_j \ra_{L^2}.
\end{split}
\end{equation*}
Expanding $\di_{\nabla^g}\!h_2$ leads to 
\begin{equation*}
\begin{split}
\la h_1 \sharp \di_{\nabla^g}\!h_2,\di_{\nabla^g}\!h_3\ra_{L^2}=&\la \nabla^g_{h_1e_i}h_2,\nabla^g_{e_i}h_3\ra_{L^2}+\la (\nabla^g_{e_i}h_2) \circ h_1,
(\nabla^g_{e_i}h_3)\ra_{L^2}\\
-& \la (\nabla^g_{e_j}h_2)h_1e_i, (\nabla^g_{e_i}h_3)e_j\ra_{L^2}-\la (\nabla^g_{e_j}h_2)e_i, (\nabla^g_{e_i}h_3)h_1e_j\ra_{L^2}.
\end{split}
\end{equation*}
Because we are dealing with symmetric tensors we clearly have 
$$\la (\nabla^g_{e_i}h_2) \circ h_1,
(\nabla^g_{e_i}h_3)\ra_{L^2}=\la h_1,\mathrm{L}(h_2,h_3)\ra_{L^2}.$$
Further on, using the product rule shows that 
\begin{equation*}
\begin{split}
&\la (\nabla^g_{e_j}h_2)h_1e_i, (\nabla^g_{e_i}h_3)e_j\ra_{L^2}
=\la (\nabla^g_{e_j}(h_2h_1))e_i-h_2(\nabla^g_{e_j}h_1)e_i,(\nabla^g_{e_i}h_3)e_j\ra_{L^2}\\
&=\mathbf{q}(h_2\circ h_1,h_3)-\la (\nabla^g_{e_j}h_1)e_i,h_2(\nabla^g_{e_i}h_3)e_j\ra_{L^2}\\
&=\mathbf{q}(h_2 \circ h_1,h_3)-\la \di_{\nabla^g}\!h_1(e_j,e_i)+(\nabla^g_{e_i}h_1)e_j,h_2(\nabla^g_{e_i}h_3)e_j\ra_{L^2}\\
&=\mathbf{q}(h_2 \circ h_1,h_3)-\la \di_{\nabla^g}\!h_1(e_j,e_i),h_2(\nabla^g_{e_i}h_3)e_j\ra_{L^2}-\la h_2, \mathrm{L}(h_1,h_3) \ra_{L^2}\\
&=\mathbf{q}(h_2 \circ h_1,h_3)+\la \di_{\nabla^g}\!h_1,h_2 \circ \di_{\nabla^g}\!h_3 \ra_{L^2}-\la h_2, \mathrm{L}(h_1,h_3) \ra_{L^2}.
\end{split}
\end{equation*}
Note that we have taken into account that 
$$\la (\di_{\nabla^g}\!h_1(e_j,e_i),h_2(\nabla^g_{e_i}h_3)e_j\ra_{L^2}=\tfrac{1}{2}
\la (\di_{\nabla^g}\!h_1(e_j,e_i),h_2(\di_{\nabla^g}h_3)(e_i,e_j)\ra_{L^2}=
-\la \di_{\nabla^g}\!h_1,h_2 \circ \di_{\nabla^g}\!h_3 \ra_{L^2}.$$

After operating the variable change $(h_2,h_3) \mapsto (h_3,h_2)$ in the above formula, we also get 
\begin{equation*}
\begin{split}
\la (\nabla^g_{e_i}h_3)h_1e_j, (\nabla^g_{e_j}h_2)e_i\ra_{L^2}
=&\mathbf{q}(h_3 \circ h_1,h_2)+\la \di_{\nabla^g}\!h_1,h_3 \circ \di_{\nabla^g}\!h_2 \ra_{L^2}-\la h_3, \mathrm{L}(h_1,h_2) \ra_{L^2}.
\end{split}
\end{equation*}
The claim follows now by gathering terms.
\end{proof}
Note that part (ii) in Lemma \ref{sym-bra} can be re-derived from the above proposition by taking $h_1=h_2$ and using the expressions for the operators $\mathbf{q}$ respectively $\mathrm{L}$ obtained in Lemma \ref{q} respectively 
Lemma \ref{L-exp}. To finish this section we prove yet another set of identities which will be useful in section \ref{3-rd}. 

\begin{lema} \label{diff-Ls}
Assume that the tensors $h,H$ belong to $\Gamma \left (\Sym^{2}TM \right )$. The following hold
\begin{itemize}
\item[(i)] we have 
\begin{equation*}
\la 2\delta^g[h,h ]^{\FN}+\mathbf{p}(h,h),H\ra_{L^2}=\la \bfv(h,h)+
(\mathrm{L}-\mathrm{\tilde{L}})(h,h),H \ra_{L^2}
\end{equation*}
\item[(ii)] the operator $\mathrm{L}-\mathrm{\tilde{L}}$ is determined from 
\begin{equation*}
(\mathrm{L}-\mathrm{\tilde{L}})(h,h)=\{h,\mathbf{L}h\}-\mathbf{L}h^2
\end{equation*}
where $\mathbf{L}=-\tfrac{1}{2}\left (\Delta_E-2\delta^{\star_g}\delta^g+2E \right )+\tfrac{1}{2}\di \delta^g$.
\end{itemize}
\end{lema}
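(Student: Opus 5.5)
Both parts are direct consequences of the identities already collected; the plan is to compare the two available expressions for the pairings with $H$ and then rewrite $\mathrm{L}-\tilde{\mathrm{L}}$ through the Weitzenb\"ock formulas.

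\textbf{Part (i).} Take $h_1=h_2=h$ in \eqref{bfv-1}. This gives
\[
\la \bfv(h,h),H\ra_{L^2}=2\la [h,h]^{\FN},\di_{\nabla^g}H\ra_{L^2}-\la H\sharp \di_{\nabla^g}h,\di_{\nabla^g}h\ra_{L^2}+\la \tilde{\mathrm{L}}(h,h),H\ra_{L^2}.
\]
Since $\delta^g$ is the formal adjoint of $\di_{\nabla^g}$, the first term equals $2\la \delta^g[h,h]^{\FN},H\ra_{L^2}$. Lemma \ref{quad-1L}, integrated over $M$, gives
\[
\la \mathbf{p}(h,h),H\ra_{L^2}=-\la H\sharp\di_{\nabla^g}h,\di_{\nabla^g}h\ra_{L^2}+\la \mathrm{L}(h,h),H\ra_{L^2}.
\]
Eliminating the common cubic term $\la H\sharp\di_{\nabla^g}h,\di_{\nabla^g}h\ra_{L^2}$ between the two identities yields
\[
\la 2\delta^g[h,h]^{\FN}+\mathbf{p}(h,h),H\ra_{L^2}=\la \bfv(h,h)+(\mathrm{L}-\tilde{\mathrm{L}})(h,h),H\ra_{L^2}.
\]
Note that $H$ is symmetric, so only the symmetric part of $\mathbf{p}(h,h)$ enters.

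\textbf{Part (ii).} Everything is explicit here. By Lemma \ref{L-exp} with $h_1=h_2=h$,
\[
2\mathrm{L}(h,h)=\{(\Delta_E+2\ring{R})h,h\}-(\Delta_E+2\ring{R})h^2 .
\]
By \eqref{L-t},
\[
2\tilde{\mathrm{L}}(h,h)=2\{h,\delta^g\di_{\nabla^g}h\}-\delta^g\di_{\nabla^g}h^2 .
\]
Next substitute the Weitzenb\"ock formula \eqref{wz1},
\[
\delta^g\di_{\nabla^g}=-(\delta^{\star_g}+\tfrac12\di)\delta^g+\Delta_E+E+\ring{R},
\]
which holds on all of $\End(TM)$-valued $1$-forms; in particular $\delta^g\di_{\nabla^g}h$ has a skew part $-\tfrac12\di\delta^g h$. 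Subtracting, the $\ring{R}$ terms enter in the combination $\{\ring{R}h,h\}-\ring{R}h^2$ from $2\mathrm{L}$ against $2\{h,\ring{R}h\}-\ring{R}h^2$ from $2\tilde{\mathrm{L}}$. After regrouping, everything should collapse to $2\{h,\mathbf{L}h\}-2\mathbf{L}h^2$ with $\mathbf{L}$ as stated.

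\textbf{Main obstacle.} The delicate point is the bookkeeping of coefficients. The curvature terms must cancel exactly once the factor $\tfrac12$ in front of $\{h,\cdot\}$ is matched. Two further things need checking:
\begin{itemize}
\item[(a)] the anticommutator of $h$ with the skew term $\di\delta^g h$ must be consistent with the term $\tfrac12\di\delta^g$ placed in $\mathbf{L}$;
\item[(b)] the relation $\mathbf{L}=-\tfrac12(\delta^g\di_{\nabla^g}-\ring{R}-E)+\ldots$ must produce the stated coefficients, in particular the $+2E$ inside $\mathbf{L}$.
\end{itemize}
The first attempt is to write $\mathbf{L}$ directly as $-\tfrac12\delta^g\di_{\nabla^g}+\tfrac12(\Delta_E+2\ring{R})\cdot\tfrac12$-type combinations, and to verify the identity $\{h,\mathbf{L}h\}-\mathbf{L}h^2=\mathrm{L}-\tilde{\mathrm{L}}$ term by term. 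Here the operator-level identity $(\Delta_E+2\ring{R})=\nabla^{\star}\nabla$ is used, together with $\delta^g\di_{\nabla^g}=\nabla^{\star}\nabla+E-\ring{R}-\nabla\delta^g$.
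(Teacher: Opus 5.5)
Your part (i) is correct and is exactly the paper's argument: combine \eqref{bfv-1} at $h_1=h_2=h$ with Lemma \ref{quad-1L}, and eliminate $\la H\sharp\di_{\nabla^g}h,\di_{\nabla^g}h\ra_{L^2}$.

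Part (ii) follows the paper's route (Lemma \ref{L-exp}, then \eqref{L-t}, then \eqref{wz1}), but as written it does not go through.

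\textbf{The slip.} Your formula for $\tilde{\mathrm{L}}(h,h)$ drops a factor. Since $\{h,h\}=2h^2$, \eqref{L-t} gives
\[
2\tilde{\mathrm{L}}(h,h)=2\{h,\delta^g\di_{\nabla^g}h\}-2\delta^g\di_{\nabla^g}h^2 ,
\]
not $2\{h,\delta^g\di_{\nabla^g}h\}-\delta^g\di_{\nabla^g}h^2$. With your version:
\begin{itemize}
\item the anticommutator terms and the $h^2$-terms would be governed by two different operators, so no single $\mathbf{L}$ exists and the announced collapse fails;
\item the curvature terms you list do not cancel either. The correct ones are $2\{\ring{R}h,h\}-2\ring{R}h^2$ from $2\mathrm{L}$, against $2\{h,\ring{R}h\}-2\ring{R}h^2$ from $2\tilde{\mathrm{L}}$ after \eqref{wz1}.
\end{itemize}
Your candidate forms for $\mathbf{L}$, with coefficient $-\tfrac12$ on $\delta^g\di_{\nabla^g}$, are also wrong; the coefficient is $-1$.

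\textbf{The fix.} With the correct factor, no term-by-term bookkeeping is needed. Lemma \ref{L-exp} and \eqref{L-t} give directly
\[
(\mathrm{L}-\tilde{\mathrm{L}})(h,h)=\tfrac12\{(\Delta_E+2\ring{R})h,h\}-\tfrac12(\Delta_E+2\ring{R})h^2-\{h,\delta^g\di_{\nabla^g}h\}+\delta^g\di_{\nabla^g}h^2 .
\]
This is visibly $\{h,\mathbf{L}h\}-\mathbf{L}h^2$ with $\mathbf{L}=\tfrac12(\Delta_E+2\ring{R})-\delta^g\di_{\nabla^g}$. Substituting \eqref{wz1} then gives
\[
\mathbf{L}=\tfrac12\Delta_E+\ring{R}+(\delta^{\star_g}+\tfrac12\di)\delta^g-\Delta_E-E-\ring{R}=-\tfrac12(\Delta_E-2\delta^{\star_g}\delta^g+2E)+\tfrac12\di\delta^g .
\]
The curvature cancels at the operator level. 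The skew piece $\tfrac12\di\delta^g$ settles your concern (a) automatically, since the anticommutator of the symmetric $h$ with a skew endomorphism is skew, matching the skew part of $-\tilde{\mathrm{L}}(h,h)$.
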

\begin{proof}
(i) Follows by combining Lemma \ref{quad-1L} and \eqref{bfv-1}.\\ 
(ii) Using Lemma \ref{L-exp} for the expression of $\mathrm{L}$ whilst also taking into account 
the definition of $\mathrm{\tilde{L}}$ from \eqref{L-t} yields 
$(\mathrm{L}-\mathrm{\tilde{L}})(h,h)=\{h,\mathbf{L}h\}-\mathbf{L}h^2$ where 
$\mathbf{L}=\tfrac{1}{2}(\Delta_E+2\ring{R})-\delta^g\di_{\nabla^g}$. The claim follows from the Weitzenb\"ock formula \eqref{wz1}.
\end{proof}

\section{The explicit form of the Einstein equation } \label{E-direct}
Let $(M^n,g)$ be an Einstein manifold with Einstein constant $E$. We consider a family of metrics $g_t:=g(h_t \cdot ,\cdot)$ where 
$h_t$ is symmetric and satisfies $h_0=\id$. We furthermore require that $g_t \in \mathscr{M}_1$, that is $\det h_t=1$.
Direct computation based on Koszul's formula shows that the Levi-Civita connection $\nabla^t$ of $g_t$ satisfies 
\begin{equation*}
\nabla^t_XY=\nabla^g_XY+\frac{1}{2}\eta_XY
\end{equation*}
where the tensor $\eta :TM \to \End(TM)$, denoted with $X \mapsto \eta_X$, is explicitly determined from 
\begin{equation} \label{eta1}
g_t(\eta_XY,Z)=g((\nabla^g_Xh_t)Y,Z)+g(X,(\nabla^g_Yh_t)Z-(\nabla^g_Zh_t)Y).
\end{equation}
Note that $\eta_XY=\eta_YX$ so the above formula can also be proved by checking that $\nabla^t$ preserves $g_t$ and is torsion free. The $(3,1)$ curvature tensor of $g_t$ is determined from 
\begin{equation} \label{curv-comp}
R^t(X,Y)Z=R^g(X,Y)Z-\frac{1}{2}\bigl ((\nabla^g_X\eta)_YZ-(\nabla^g_Y\eta)_XZ \bigr )-\frac{1}{4}[\eta_X,\eta_Y]Z.
\end{equation} 
The Einstein equation for the family $g_t$ reads 
$R^{t}(E_i,X)E_i=EX$ where $E_i$ is an ON basis w.r.t. $g_t$. However any such basis is of the form 
$E_i=h_t^{-\frac{1}{2}}e_i$ where $e_i$ is orthonormal w.r.t. $g$; since $h_t$ is symmetric w.r.t. 
$g$ it follows that  
\begin{equation} \label{eins-main}
\Ric^{g_t}X=R^t(e_i,X)h_t^{-1}e_i
\end{equation}
for some local orthonormal frame w.r.t. $g$. Since this equation is of trace type w.r.t. $g$ it is basis independent. The aim is to differentiate this equation up to third order, 
at $t=0$. 

First we compute directly the Ricci tensor of $g_t$; directly from \eqref{curv-comp} we derive
\begin{equation} \label{i1}
\begin{split}
R^t(e_i,X)h_t^{-1}e_i=&R^g(h_t^{-1})X-\tfrac{1}{2}(\nabla^g_{e_i}\eta)_Xh_t^{-1}e_i-\tfrac{1}{4} \left (2\eta_{e_i}(\nabla^g_Xh_t^{-1})e_i
+\eta_{e_i}\eta_Xh_t^{-1}e_i \right )\\
&+\tfrac{1}{2}\nabla^g_XA_t+\tfrac{1}{4}\eta_X A_t\\
\end{split}
\end{equation}
where the $1$-form $A_t:=\eta_{e_i}h_t^{-1}e_i$. The next Lemma shows that the third summand above is determined 
algebraically from $\di_{\nabla^g}h_t$ and $\di_{\nabla^g}h_t^{-1}$ by means of the quadratic differential operator 
$\mathbf{p}$ introduced in the previous section. In this paper we will systematically use the quantity $\mathbf{D}_t:=\di_{\nabla^g}\!h_t(h_t^{-1} \cdot, h_t^{-1} \cdot) \in \Omega^2(M,TM)$.
\begin{lema} \label{L1}
We have 
\begin{equation*}
\begin{split}
g_t(2\eta_{e_i}(\nabla_X^gh_t^{-1})e_i+\eta_{e_i}\eta_Xh_t^{-1}e_i,Y)=&g (\mathbf{p}(h_t^{-1},h_t)X,Y).
\end{split}
\end{equation*}
\end{lema}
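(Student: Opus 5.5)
The plan is a direct pointwise computation from \eqref{eta1}, exploiting the fact that $\eta$ is expressed through $g_t$. Write $h=h_t$ for brevity. The first step is to rewrite \eqref{eta1} using the identity $g_t(U,Z)=g(hU,Z)$. This gives
\[
h\,\eta_XY=(\nabla^g_Xh)Y+\bigl(\text{the }g\text{-dual in }Z\text{ of }Z\mapsto g(X,(\nabla^g_Yh)Z-(\nabla^g_Zh)Y)\bigr).
\]
We then compute $g_t(\eta_{e_i}V,Y)$ for the two vectors $V=(\nabla^g_Xh^{-1})e_i$ and $V=\eta_Xh^{-1}e_i$. For the first term we use \eqref{eta1} with $X\to e_i$, $Y\to V$, $Z\to Y$. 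Summing over $i$ and substituting $\nabla^g_Xh^{-1}=-h^{-1}(\nabla^g_Xh)h^{-1}$, we obtain three contributions:
\[
g((\nabla^g_{e_i}h)V,Y), \qquad g((\nabla^g_Vh)Y,e_i), \qquad -g((\nabla^g_Yh)V,e_i).
\]
The second of these, summed over $i$, becomes $g((\nabla^g_{V_i}h)Y,e_i)$ with $V_i=-h^{-1}(\nabla^g_Xh)h^{-1}e_i$. After a change of frame (replacing $e_i$ by $h^{-1/2}$-rotated frames, or simply using the symmetry of the trace), it is rewritten as a trace of the form $\tr(\ldots)$.

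For the quadratic term $\eta_{e_i}\eta_Xh^{-1}e_i$ we use symmetry, $\eta_{e_i}W=\eta_We_i$. Pairing with $Y$ via $g_t$ and applying \eqref{eta1} again expresses everything through $g(\eta_XW,\cdot)$ with $W=h^{-1}e_i$. Since $\eta_X$ itself is $h^{-1}$ applied to a tensor linear in $\nabla^g h$, the product is bilinear in $\nabla^g h$ with an $h^{-1}$ inserted. Using $\nabla^g h^{-1}=-h^{-1}(\nabla^gh)h^{-1}$ we then convert each occurrence of the pattern $h^{-1}(\nabla^g h)h^{-1}$ into $-\nabla^g h^{-1}$. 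The aim is to land exactly on terms of the shape
\[
g((\nabla^g_Xh^{-1})e_i,(\nabla^g_{e_i}h)Y) \qquad\text{and}\qquad g((\nabla^g_Xh^{-1})e_i,(\nabla^g_Yh)e_i),
\]
with coefficients $2$ and $-1$ respectively, which is the definition of $g(\mathbf{p}(h^{-1},h)X,Y)$.

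The main obstacle is bookkeeping. Many terms appear: roughly three from the linear part and nine from the quadratic part. These must cancel pairwise, in particular the terms involving $g(X,\cdot)$, which are traces such as $\tr(h^{-1}\nabla^g_{\cdot}h)$, and the terms with $\nabla^g_{e_i}h$ acting on $X$. To organise this, I would introduce the auxiliary endomorphism $B_X:=h^{-1}\nabla^g_Xh$ and the $1$-form $a=\tr(h^{-1}\nabla^g h)$. I expect $a=\di\ln\det h=0$ since $\det h_t=1$, which kills the trace terms outright. In terms of $B$, the quadratic part should then contribute $-g(\ldots)$ terms that exactly cancel the non-$\mathbf{p}$ pieces coming from the linear part. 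I would verify this cancellation first at the level of the full $\eta$-expansion before converting back to $\nabla^g h^{-1}$.
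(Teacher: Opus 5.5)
Your strategy of expanding both summands with \eqref{eta1} and recombining is the one the paper uses, but the decisive cancellation is only announced, and the mechanism you predict for it is wrong. No terms of the form $\tr(h_t^{-1}\nabla^g_Zh_t)$ ever occur. In the expansion each factor $h_t^{-1}$ links two \emph{different} factors of $\nabla^g h_t$, never the two slots of the same one. So $\det h_t=1$ kills nothing here; the lemma in fact holds for any family $h_t$, without volume normalisation. Nor does the linear summand contain non-$\mathbf{p}$ pieces for the quadratic one to cancel: the linear summand is exactly $2g(\mathbf{p}(h_t^{-1},h_t)X,Y)$ and the quadratic one exactly $-g(\mathbf{p}(h_t^{-1},h_t)X,Y)$. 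The terms in which $X$ sits in a vector slot, $g(X,\cdot)$, all come from the quadratic summand. They cancel among themselves for a symmetry reason that is absent from your plan, and appealing to the volume constraint would leave them untouched.

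The missing idea is the following. Putting $h_t^{-1}U$ and $h_t^{-1}Y$ in place of $Y$ and $Z$ in \eqref{eta1} gives
\[
g(\eta_Xh_t^{-1}U,Y)=-g((\nabla^g_Xh_t^{-1})U,Y)+g(X,\mathbf{D}_t(U,Y)),
\]
that is, $\eta_X\circ h_t^{-1}=-\nabla^g_Xh_t^{-1}+N_X$. Here $g(N_XU,V):=g(X,\mathbf{D}_t(U,V))$ defines a $g$-skew endomorphism, because $\mathbf{D}_t$ is skew in its two arguments. Since $\eta_UV=\eta_VU$, one has $\sum_i\eta_{e_i}Ne_i=\sum_{i,j}g(Ne_i,e_j)\eta_{e_i}e_j=0$ for every $g$-skew $N$. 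Hence $\eta_{e_i}\eta_Xh_t^{-1}e_i=-\eta_{e_i}(\nabla^g_Xh_t^{-1})e_i$, and the left-hand side collapses to $g_t(\eta_{e_i}Se_i,Y)$ with $S:=\nabla^g_Xh_t^{-1}$ symmetric. One more application of \eqref{eta1}, writing $Se_i=g(Se_i,e_j)e_j$ (no $h_t^{-1/2}$-rotated frames are needed), gives $g(Se_i,2(\nabla^g_{e_i}h_t)Y-(\nabla^g_Yh_t)e_i)$. By definition this is $g(\mathbf{p}(h_t^{-1},h_t)X,Y)$. The paper does the same computation term by term. There the $g(X,\cdot)$ contributions are $g(X,\mathbf{D}_t(e_i,(\nabla^g_{e_i}h_t)Y))+g(X,\mathbf{D}_t(\di_{\nabla^g}h_t(e_j,Y),e_j))$, and they vanish because $\mathbf{D}_t$ is skew and $\nabla^g_Yh_t$ is symmetric.
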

\begin{proof}
According to \eqref{eta1} we have 
$$ g_t(\eta_Xh_t^{-1}e_i,Y)=g((\nabla_X^gh_t)h_t^{-1}e_i,Y)+g(X,\di_{\nabla^g} \!h_t(h_t^{-1}e_i,Y)).
$$
Substituting $Y \mapsto h_t^{-1}Y$ thus leads to 
\begin{equation} \label{quad-1}
g(\eta_Xh_t^{-1}e_i,Y)=-g((\nabla_X^gh_t^{-1})e_i,Y)+g(X,\mathbf{D}_t(e_i,Y)).
\end{equation}
Next, using again \eqref{eta1} shows that 
\begin{equation*}
\begin{split}
g_t(\eta_{e_i}\eta_Xh_t^{-1}e_i,Y)=&g(\eta_Xh_t^{-1}e_i,(\nabla^g_{e_i}h_t)Y)+g(e_i, \di_{\nabla^g}\!h_t(e_j,Y))g(\eta_Xh_t^{-1}e_i,e_j).
\end{split}
\end{equation*}
After replacing the expression for $\eta_Xh_t^{-1}e_i$ found in \eqref{quad-1} above  we arrive at 
\begin{equation*}
\begin{split}
g_t(\eta_{e_i}\eta_Xh_t^{-1}e_i,Y)=&-g((\nabla
^g_{X}h_t^{-1})e_i,(\nabla^g_{e_i}h_t)Y)+g(X,\mathbf{D}_t(e_i,(\nabla^g_{e_i}h_t)Y)
)\\
&+g(e_i, \di_{\nabla^g}\!h_t(e_j,Y)) \left (-g((\nabla_{e_i}h_t^{-1})e_i,e_j)+g(X, \mathbf{D}_t(e_i,e_j)) \right )\\
=&-g((\nabla^g_{X}h_t^{-1})e_i,(\nabla^g_{e_i}h_t)Y)\\
&+g(X, \mathbf{D}_t(e_i,(\nabla^g_{e_i}h_t)Y))+g(X, \mathbf{D}_t(\di_{\nabla^g}\!h_t(e_j,Y),e_j))\\
&-g((\nabla^g_Xh_t^{-1})e_j,\di_{\nabla^g}\!h_t(e_j,Y)).
\end{split}
\end{equation*}
Because $h_t$ is symmetric and $\mathbf{D}_t$ is skew-symmetric in the first two slots the second line above vanishes identically. Thus 
\begin{equation*}
\begin{split}
g_t(\eta_{e_i}\eta_Xh_t^{-1}e_i,Y)
=-g((\nabla^g_{X}h_t^{-1})e_i,(\nabla^g_{e_i}h_t)Y)-g((\nabla^g_Xh_t^{-1})e_j,\di_{\nabla^g}\!h_t(e_j,Y)).
\end{split}
\end{equation*}
At the same time, using again \eqref{eta1} reveals that 
\begin{equation*}
\begin{split}
2g_t(\eta_{e_i}(\nabla_{X}h_t^{-1})e_i,Y)=&2g((\nabla^g_{e_i}h_t)(\nabla^g_{X}h^{-1})e_i,Y)+2g(e_i, \di_{\nabla^g}\!h_t((\nabla^g_Xh_t^{-1})e_i,Y))\\
=&2g((\nabla^g_{X}h_t^{-1})e_i,(\nabla^g_{e_i}h_t)Y)+2g((\nabla^g_Xh_t^{-1})e_j,\di_{\nabla^g}\!h_t(e_j,Y))
\end{split}
\end{equation*}
after taking into account that $h$ is symmetric. The claim follows now by collecting terms and using the definition of 
$\mathbf{p}$.
\end{proof}

We also observe that 
\begin{lema} \label{L2}
We have 
\begin{equation*}
\begin{split}
g_t \left ((\nabla_{e_i}^g\eta)_Xh_t^{-1}e_i,Y \right )=&g_t(\eta_X \delta^gh_t^{-1}+\nabla_X^g \delta^gh_t^{-1},Y)\\
+&g(X,(R^g(h_t^{-1}) \circ h_t)Y)-(\delta^g \mathbf{D}_t)_{h_tY}-EY).
\end{split}
\end{equation*}
\end{lema}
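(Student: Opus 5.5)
The plan is to obtain the identity by covariantly differentiating the algebraic description of $\eta$ that was already used in the proof of Lemma \ref{L1}, namely \eqref{quad-1}. Write that formula as an identity of vector fields:
\begin{equation*}
\eta_X h_t^{-1}Z=-(\nabla^g_X h_t^{-1})Z+\mathbf{D}_t(Z,\cdot)^{\top}X ,
\end{equation*}
where $g(\mathbf{D}_t(Z,\cdot)^{\top}X,Y):=g(X,\mathbf{D}_t(Z,Y))$. We work at a point with a frame $\{e_i\}$ and a vector field $X$ that are $\nabla^g$-parallel at that point, so all first derivatives of $e_i$ and $X$ drop out there.

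\emph{Step 1 (Leibniz rule).} By the product rule,
\begin{equation*}
(\nabla^g_{e_i}\eta)_X h_t^{-1}e_i=\nabla^g_{e_i}\bigl(\eta_X h_t^{-1}e_i\bigr)-\eta_X(\nabla^g_{e_i}h_t^{-1})e_i .
\end{equation*}
With the convention $\delta^g=-e_i\lrcorner\nabla^g_{e_i}$, the last term equals $+\eta_X\delta^g h_t^{-1}$. Pairing with $g_t(\cdot,Y)$ therefore produces the summand $g_t(\eta_X\delta^g h_t^{-1},Y)$ in the claimed formula.

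\emph{Step 2 (the $\nabla h_t^{-1}$ part).} Substituting the displayed formula for $\eta_X h_t^{-1}e_i$ and tracing gives first the term $-(\nabla^g_{e_i}\nabla^g_X h_t^{-1})e_i$. We commute the two derivatives using the Ricci identity with the convention $R^g(X,Y)=(\nabla^g)^2_{Y,X}-(\nabla^g)^2_{X,Y}$.
\begin{itemize}
\item The leading part, $-(\nabla^g_X\nabla^g_{e_i}h_t^{-1})e_i$, equals $\nabla^g_X\delta^g h_t^{-1}$.
\item The commutator is $[R^g(e_i,X),h_t^{-1}]e_i$. Its two pieces give $R^g(h_t^{-1})X$ (in the notation of \eqref{i1}) and a Ricci contraction $h_t^{-1}\Ric^g X=E\,h_t^{-1}X$, since $g$ is Einstein.
\end{itemize}

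\emph{Step 3 (the $\mathbf{D}_t$ part).} The remaining term is the trace over $i$ of $\nabla^g_{e_i}$ applied to $\mathbf{D}_t(e_i,\cdot)^{\top}X$. Because $X$ is parallel at the point, this is $-(\delta^g\mathbf{D}_t)(\cdot)^{\top}X$. Paired against a vector $W$, it gives $-g(X,(\delta^g\mathbf{D}_t)W)$.

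\emph{Step 4 (pairing with $g_t$).} Finally we pair the whole expression with $g_t(\cdot,Y)=g(h_t\cdot,Y)$, that is, we take $W=h_tY$.
\begin{itemize}
\item The terms from Steps 1 and 2 that are written via $\nabla^g_X\delta^gh_t^{-1}$ and $\eta_X\delta^g h_t^{-1}$ stay in $g_t$ form.
\item The curvature terms become $g(X,(R^g(h_t^{-1})\circ h_t)Y)$ after using the pair symmetry of $R^g$ to move the curvature onto $Y$.
\item The Einstein term becomes $-E\,g(h_t^{-1}X,h_tY)=-Eg(X,Y)$.
\item The divergence term becomes $-g(X,(\delta^g\mathbf{D}_t)_{h_tY})$.
\end{itemize}
Collecting these reproduces the statement.

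The main obstacle is Step 2: the signs and the ordering in the commutator $[R^g(e_i,X),h_t^{-1}]e_i$ under the paper's curvature convention, and checking that these terms assemble exactly into $R^g(h_t^{-1})\circ h_t$ plus the single $-EY$ term, with no stray contributions. I would first verify that bookkeeping at $t=0$, where $h_t=\id$ and $\eta=0$, so the left-hand side vanishes. There the right-hand side must reduce to $g(X,\ring{R}\,\id\,Y)-Eg(X,Y)=0$, which fixes the signs before the general computation is carried out.
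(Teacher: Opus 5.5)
Your proposal is correct and is essentially the paper's proof. The paper likewise differentiates \eqref{quad-1} along $e_i$ and traces, uses the Ricci identity to rewrite $-((\nabla^g)^2_{e_i,X}h_t^{-1})e_i$ as $(\ring{R}h_t^{-1})X-Eh_t^{-1}X+\nabla^g_X\delta^gh_t^{-1}$, and then substitutes $Y\mapsto h_tY$; your $t=0$ consistency check is a sensible safeguard for the signs but is not needed for the argument.
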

\begin{proof}
Differentiating \eqref{quad-1} in direction of $e_i$ shows that 
\begin{equation*}
\begin{split}
g\left ((\nabla_{e_i}^g\eta)_Xh_t^{-1}e_i,Y \right )=g(\eta_X \delta^gh_t^{-1},Y)-g(\left ((\nabla^g)^2_{e_i,X}h_t^{-1} \right )e_i,Y)-g(X, 
(\delta^g\mathbf{D}_t)_Y).
\end{split}
\end{equation*}
The Ricci identity leads to 
$$-\left ((\nabla^g)^2_{e_i,X}h_t^{-1}\right )e_i=[R^{g}(e_i,X),h_t^{-1}]e_i-((\nabla^g)^2_{X,e_i}h_t^{-1})e_i=(\ring{R}h_t^{-1})X-Eh_t^{-1}X+\nabla^g_X 
\delta^gh_t^{-1}.$$ 
The claim follows by collecting terms and substituting $Y \mapsto h_tY$. 
\end{proof}
To obtain the final final form of the Einstein equation we collect the terms depending on $A_t$ and $\delta^g h_t^{-1}$
which appear in \eqref{i1} in the differential operator given by 
$$ g(M_tX,Y)=g_t(\nabla^g_X \left (\delta^gh^{-1}_t-A_t \right )+\eta_X \left (\delta^gh_t^{-1}-\tfrac{1}{2}A_t \right ),Y).
$$
Below we work out the explicit expression for $M_t$ by taking account the assumption that the curve $g_t$ has fixed volume.
\begin{lema} \label{Mt}
The following hold 
\begin{itemize}
\item[(i)] we have $A_t=2\delta^gh_t^{-1}$
\item[(ii)] the tensor $M_t$ satisfies 
$ -g(M_tX,h_t^{-1}Y)=g(\nabla^g_X(\delta^gh_t^{-1}),Y)
$
for all $X,Y$ in $TM$.
\end{itemize}
\end{lema}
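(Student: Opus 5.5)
The plan is to compute the $1$-form (really the vector field) $A_t=\eta_{e_i}h_t^{-1}e_i$ directly from the identity \eqref{quad-1} established in the proof of Lemma \ref{L1}. The volume constraint $\det h_t=1$ will be used to kill one trace term. Once (i) is known, (ii) follows by substituting into the definition of $M_t$.

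For (i), I take $X=e_i$ in \eqref{quad-1} and sum over $i$, which gives
\begin{equation*}
g(A_t,Y)=-g((\nabla^g_{e_i}h_t^{-1})e_i,Y)+g(e_i,\mathbf{D}_t(e_i,Y))=g(\delta^gh_t^{-1},Y)+g(e_i,\mathbf{D}_t(e_i,Y)).
\end{equation*}
It therefore suffices to show that $g(e_i,\mathbf{D}_t(e_i,Y))=g(\delta^gh_t^{-1},Y)$. Expanding the definition of $\mathbf{D}_t$ gives
\begin{equation*}
\mathbf{D}_t(e_i,Y)=(\nabla^g_{h_t^{-1}e_i}h_t)h_t^{-1}Y-(\nabla^g_{h_t^{-1}Y}h_t)h_t^{-1}e_i .
\end{equation*}
I treat the two resulting traces separately.

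For the first trace, the summation is frame independent, so I may replace $h_t^{-1}e_i$ by $e_j$ and $e_i$ by $h_t^{-1}e_j$ (using that $h_t$ is $g$-symmetric). This gives
\begin{equation*}
g(h_t^{-1}(\nabla^g_{e_j}h_t)h_t^{-1}Y,e_j)=-g((\nabla^g_{e_j}h_t^{-1})Y,e_j)=-g(Y,(\nabla^g_{e_j}h_t^{-1})e_j)=g(\delta^gh_t^{-1},Y).
\end{equation*}
Here I use $\nabla h^{-1}=-h^{-1}(\nabla h)h^{-1}$ and the symmetry of $\nabla^g_{e_j}h_t^{-1}$. The second trace equals $\tr\bigl(h_t^{-1}\nabla^g_{h_t^{-1}Y}h_t\bigr)=\di\ln\det h_t\,(h_t^{-1}Y)$, which vanishes because $\det h_t=1$. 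Together these give $A_t=2\delta^gh_t^{-1}$.

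For (ii), I substitute (i) into the definition of $M_t$. Then $\delta^gh_t^{-1}-\tfrac12 A_t=0$, so the $\eta_X$ term disappears, and $\delta^gh_t^{-1}-A_t=-\delta^gh_t^{-1}$. Hence
\begin{equation*}
g(M_tX,Y)=-g_t(\nabla^g_X\delta^gh_t^{-1},Y)=-g(\nabla^g_X\delta^gh_t^{-1},h_tY).
\end{equation*}
Replacing $Y$ by $h_t^{-1}Y$ gives the claim. The only delicate point is the frame change in the first trace of (i) and keeping track of the $g$ versus $g_t$ pairings in \eqref{quad-1}; the rest is bookkeeping.
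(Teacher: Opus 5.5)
Your proof is correct and takes essentially the paper's route. The paper evaluates \eqref{eta1} directly to get $g_t(A_t,X)=2g((\nabla^g_{e_i}h_t)h_t^{-1}e_i,X)-\tr(h_t^{-1}\circ\nabla^g_Xh_t)$, kills the trace using $\det h_t=1$, and then substitutes $X\mapsto h_t^{-1}X$. You instead start from the already-substituted identity \eqref{quad-1}, so you do the same two trace computations in a different order, and your part (ii) is identical to the paper's.
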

\begin{proof}
(i) Taking into account \eqref{eta1} shows that 
$$ g_t(A,X)=2g((\nabla^g_{e_i}h)h_t^{-1}e_i,X)-\tr (h_t^{-1} \circ \nabla^g_X h_t).
$$
having $\vol(g_t)$ constant amounts to $\det(h_t)=1$. Differentiating in direction of $X$ shows that 
$\tr (h_t^{-1} \circ \nabla^g_X h_t)=0$. The claim follows by operating the variable change $X \mapsto h_t^{-1}X$.\\
(ii) according to (i), we have 
$-g(M_tX,Y)=g_t(\nabla^g_X(\delta^gh_t^{-1}),Y)$ hence the claim follows by operating the substitution $Y \mapsto h_t^{-1}Y$.
\end{proof}
Summarising, we can express the Einstein equation for $g_t$ as a non-linear divergence type equation 
in terms of the tensor $h_t$ only. 
\begin{pro} \label{eqn-1}
The metric $g_t \in \mathscr{M}_1$ is Einstein with Einstein constant $E$ if and only if we have 
\begin{equation*} 
\begin{split}
&g(X, (\delta^g\mathbf{D}_t+\ring{R}h_t^{-1}+Eh_t^{-1})Y)+g(\nabla^g_X(\delta^gh_t^{-1}),Y)-\tfrac{1}{2}g(\mathbf{p}(h_t^{-1},h_t)X,h_t^{-1}Y)
=2Eg(X,Y)
\end{split}
\end{equation*}
whenever $X,Y$ belong to $TM$.
\end{pro}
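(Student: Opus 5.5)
We assemble the expression for $\Ric^{g_t}$ in \eqref{eins-main} from the curvature expansion \eqref{i1}, and then substitute Lemmas \ref{L1}, \ref{L2} and \ref{Mt}. We work with the $(0,2)$-tensor $g_t(\Ric^{g_t}X,Y)$, since the Einstein condition $\Ric^{g_t}=E\id$ is equivalent to $g_t(\Ric^{g_t}X,Y)=Eg_t(X,Y)$. After the substitution $Y\mapsto h_t^{-1}Y$, the right-hand side becomes $Eg(X,Y)$.

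The first step is to pair each summand of \eqref{i1} with $Y$ via $g_t$.
\begin{itemize}
\item The curvature summand $R^g(h_t^{-1})X$ is $(\ring{R}h_t^{-1})X$ up to the sign convention of the paper. Paired against $h_t^{-1}Y$ via $g_t$, it gives $g((\ring{R}h_t^{-1})X,Y)$.
\item The summand $-\tfrac14(2\eta_{e_i}(\nabla^g_Xh_t^{-1})e_i+\eta_{e_i}\eta_Xh_t^{-1}e_i)$ is handled by Lemma \ref{L1}. It becomes $-\tfrac14 g(\mathbf{p}(h_t^{-1},h_t)X,\cdot)$, evaluated at $h_t^{-1}Y$.
\item The summand $-\tfrac12(\nabla^g_{e_i}\eta)_Xh_t^{-1}e_i$ is treated with Lemma \ref{L2}. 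After replacing $Y$ by $h_t^{-1}Y$, this produces three pieces. The first is the symmetric term $-\tfrac12 g(X,(\ring{R}h_t^{-1})Y)$. The second is $+\tfrac12 g(X,(\delta^g\mathbf{D}_t)_Y)$ together with $+\tfrac12 Eg(X,h_t^{-1}Y)$. The third is $-\tfrac12 g_t(\eta_X\delta^gh_t^{-1}+\nabla^g_X\delta^gh_t^{-1},h_t^{-1}Y)$.
\item The remaining terms $\tfrac12\nabla^g_XA_t+\tfrac14\eta_XA_t$ combine with the $\delta^gh_t^{-1}$ terms just produced into $-\tfrac12 M_t$, as in the definition of $M_t$.
\end{itemize}
For this last combination we use $A_t=2\delta^gh_t^{-1}$ from Lemma \ref{Mt}(i). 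This is exactly where $\det h_t=1$ enters.

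Lemma \ref{Mt}(ii) then turns this $M_t$-contribution into $\tfrac12 g(\nabla^g_X\delta^gh_t^{-1},Y)$. Collecting all the terms, the Einstein equation takes the form
$$\tfrac12\Big[g(X,(\delta^g\mathbf{D}_t+\ring{R}h_t^{-1}+Eh_t^{-1})Y)+g(\nabla^g_X\delta^gh_t^{-1},Y)-\tfrac12 g(\mathbf{p}(h_t^{-1},h_t)X,h_t^{-1}Y)\Big]=Eg(X,Y).$$
Multiplying by $2$ yields the stated identity. The two curvature contributions must merge into the single term $\ring{R}h_t^{-1}$ with the factor $\tfrac12$. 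This uses that $g(R^g(h_t^{-1})X,Y)$ is symmetric in $X,Y$, because $R^g(h_t^{-1})=\ring{R}h_t^{-1}$ is a symmetric endomorphism.

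The main obstacle is bookkeeping rather than ideas. The signs and the factor conventions differ among several places: the definition $R^g(X,Y)=(\nabla^g)^2_{Y,X}-(\nabla^g)^2_{X,Y}$, the definition of $\ring{R}$, the sign of the term $-(\delta^g\mathbf{D}_t)_{h_tY}$ in Lemma \ref{L2}, and the placement of the factor $h_t^{-1}$ in the $\mathbf{p}$ term. I would check all of these at $t=0$. There $h_t=\id$, $\mathbf{D}_0=0$ and $\delta^gh_0^{-1}=0$, and $\mathbf{p}(\id,\id)=0$ since $\nabla^g\id=0$. The left-hand side then reduces to $g(X,(\ring{R}\id+E\id)Y)=2Eg(X,Y)$, using $\ring{R}\id=\Ric^g=E\id$. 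This confirms the normalisation. The converse direction is immediate, since every step above is an equivalence.
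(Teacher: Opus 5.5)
Your proposal is correct and follows the same route as the paper. You expand $\Ric^{g_t}$ via \eqref{i1}, insert Lemmas~\ref{L1}, \ref{L2} and \ref{Mt}, and substitute $Y\mapsto h_t^{-1}Y$; your factors $\tfrac12$ and $-\tfrac14$, and the merging of the two curvature terms via the symmetry of $\ring{R}h_t^{-1}$, all check out. The only cosmetic difference is that the paper first writes the doubled equation, with $M_t$ and unsubstituted $Y$, and then substitutes, whereas you substitute term by term and multiply by $2$ at the end.
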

\begin{proof}
The Einstein equation for $g_t$ reads $g_{h_t}(R^{t}(e_i,X)h_t^{-1}e_i,Y)=Eg(h_tX,Y)$. Thus using Lemma \ref{L1} and Lemma \ref{L2} together with \eqref{i1} leads to 
\begin{equation*} \label{prel-E}
\begin{split}
&g(X, (\delta^g\mathbf{D_t})_{h_tY}+R^g(h_t^{-1})h_tY)-\tfrac{1}{2}g(\mathbf{p}(h_t^{-1},h_t)X,Y)-g(M_tX,Y)\\
=&2Eg(h_tX,Y)-Eg(X,Y)
\end{split}
\end{equation*}
after re-arranging terms. Operating the variable change $Y \mapsto h_t^{-1}Y$ and taking into account Lemma \ref{Mt} then yields the claim.
\end{proof}
This equation has two parts, according to the vector bundle splitting 
$$\End(TM)=\Sym^2TM \oplus \Lambda^2M.$$
The leading part in the Einstein equation is its symmetric component which reads as indicated below. 
\begin{teo} \label{E-sym}
Let $g_t \in \mathscr{M}_1$ be a curve of Riemannian metrics and let $h_t:=g^{-1}g_t$. Then $g_t$ is Einstein with Einstein constant $E$ if and only if we have 
\begin{equation} \label{mainES}
g\left (\delta^g \mathbf{D}_t+\ring{R}h_t^{-1}+\delta^{\star_g}\delta^gh_t^{-1}-\tfrac{1}{2} h_t^{-1} \circ \mathbf{p}(h_t^{-1},h_t), H\right )=
2E\tr(H)-Eg(h_t^{-1},H)
\end{equation}
for all $H \in \Gamma \left (\Sym^2TM \right )$ and also 
\begin{equation} \label{mainESkew}
g\left (\delta^g \mathbf{D}_t+\tfrac{1}{2} h_t^{-1} \circ \mathbf{p}(h_t^{-1},h_t)-\tfrac{1}{2}\di \delta^gh_t^{-1}, A \right )=0
\end{equation}
for all $A \in \Omega^2M$.
\end{teo}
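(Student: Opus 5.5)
The plan is to read Theorem \ref{E-sym} off Proposition \ref{eqn-1} by rewriting that bilinear identity as one endomorphism equation on $\End(TM)$ and splitting it along $\End(TM)=\Sym^2TM\oplus\Lambda^2M$. Since the pairing $g(K,\cdot)$ on $\End(TM)$ is nondegenerate and this splitting is orthogonal, an endomorphism vanishes if and only if it pairs to zero with every $H\in\Gamma(\Sym^2TM)$ and every $A\in\Omega^2M$. So Proposition \ref{eqn-1} is equivalent to the two families of equations \eqref{mainES} and \eqref{mainESkew} together, which is the ``if and only if'' claimed.

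First I write each term of Proposition \ref{eqn-1} as $g(KX,Y)$ for an explicit endomorphism $K$.
\begin{itemize}
\item[$\bullet$] Write $T$ for the endomorphism $Y\mapsto(\delta^g\mathbf{D}_t)_Y$. Then $g(X,(\delta^g\mathbf{D}_t)_Y)=g(T^{\top}X,Y)$.
\item[$\bullet$] The tensors $\ring{R}h_t^{-1}$ and $Eh_t^{-1}$ are $g$-symmetric, since $\ring{R}$ preserves $\Sym^2TM$. So $g(X,(\ring{R}h_t^{-1}+Eh_t^{-1})Y)=g((\ring{R}h_t^{-1}+Eh_t^{-1})X,Y)$.
\item[$\bullet$] The term $g(\nabla^g_X(\delta^gh_t^{-1}),Y)$ is the endomorphism $\nabla^g\delta^gh_t^{-1}$. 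Using $\nabla^g=\delta^{\star_g}+\tfrac12\di$ on $1$-forms, its symmetric part is $\delta^{\star_g}\delta^gh_t^{-1}$ and its skew part is $\tfrac12\di\delta^gh_t^{-1}$.
\item[$\bullet$] Since $h_t^{-1}$ is symmetric, $g(\mathbf{p}X,h_t^{-1}Y)=g(h_t^{-1}\circ\mathbf{p}\,X,Y)$.
\end{itemize}
Hence Proposition \ref{eqn-1} reads
$$T^{\top}+\ring{R}h_t^{-1}+Eh_t^{-1}+\nabla^g\delta^gh_t^{-1}-\tfrac12 h_t^{-1}\circ\mathbf{p}(h_t^{-1},h_t)=2E\,\id .$$

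Next I pair this with a symmetric $H$. Transposition does not change the pairing with a symmetric tensor, so $g(T^{\top},H)=g(\delta^g\mathbf{D}_t,H)$. Only the symmetric part $\delta^{\star_g}\delta^gh_t^{-1}$ of $\nabla^g\delta^gh_t^{-1}$ survives. The right-hand side gives $g(2E\,\id,H)=2E\tr(H)$. Moving $Eg(h_t^{-1},H)$ to the right yields exactly \eqref{mainES}.

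Then I pair with a skew $A$. Transposition now reverses the sign, so $g(T^{\top},A)=-g(\delta^g\mathbf{D}_t,A)$. The symmetric terms $\ring{R}h_t^{-1}$, $Eh_t^{-1}$ and $2E\,\id$ drop out, and only the skew part $\tfrac12\di\delta^gh_t^{-1}$ of $\nabla^g\delta^gh_t^{-1}$ remains. Multiplying the resulting identity by $-1$ gives \eqref{mainESkew}.

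The only delicate point is bookkeeping of conventions. One must check in which slot $\delta^g\mathbf{D}_t$ sits once it is viewed as an endomorphism, so that the transpose is taken on the correct side. One must also use the same identification of a $2$-form with a skew endomorphism that is implicit in writing $\nabla^g=\delta^{\star_g}+\tfrac12\di$. An inconsistency in either would flip the relative sign of the $\tfrac12\di\delta^gh_t^{-1}$ term in \eqref{mainESkew}, so I would verify both directly from the definitions before combining terms.
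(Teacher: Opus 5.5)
Your proposal is correct and follows the paper's proof: substituting $X=He_i$ (resp.\ $X=Ae_i$) and $Y=e_i$ in Proposition \ref{eqn-1} and summing is exactly pairing your endomorphism identity with $H$ and $A$. The paper likewise relies only on the symmetry of $\ring{R}h_t^{-1}$ and on $\nabla^g=\delta^{\star_g}+\tfrac{1}{2}\di$ on $1$-forms. With the identification $\omega(X,Y)=g(WX,Y)$ that the paper uses, your sign bookkeeping for the skew part does reproduce \eqref{mainESkew}. You also spell out the converse direction via the orthogonal splitting $\End(TM)=\Sym^2TM\oplus\Lambda^2M$, which the paper leaves implicit.
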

\begin{proof}
We take $X=He_i$ respectively $X=Ae_i$ and $Y=e_i$ in Proposition \ref{eqn-1} and sum over $i$.
Since the tensor $\ring{R}h_t^{-1}$ is symmetric with respect to $g$ the claim follows from having $\nabla^g=\delta^{\star_g}+\tfrac{1}{2}\di$ on $1$-forms.
\end{proof}
Thus, a priori, the Einstein equation for $g_t$ also has skew-symmetric component with respect to the metric $g$. 
Below we show that the skew-symmetric component to order $p$ is actually determined by the lower order derivatives 
in $\Ric^{g_t}$; see also \cite{NS-E}[Remark 3.8]. As a consequence the Einstein equations reduce to equations on symmetric tensors, as illustrated below.
\begin{pro}\label{red-toS}
Let $g_t \in \mathscr{M}_1$ be a curve of Riemannian metrics and assume that $g_0=g$ is Einstein with Einstein constant $E$. Then $(\Ric^{g_t})^{(k)}(0)=0$ for $1 \leq k \leq 3$ if and only if 
\begin{equation*}
g((\Ric^{g_t})^{(k)}(0),H)=0 \ \mathrm{for \ all } \ H \in \Gamma \left ( \Sym^2TM \right ).
\end{equation*}
\end{pro}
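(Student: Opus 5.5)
The plan is to exploit the fact that, although $\Ric^{g_t}$ is in general not $g$-symmetric, it is $g_t$-symmetric, and $g_t=g(h_t\cdot,\cdot)$. Hence the endomorphism $h_t\circ \Ric^{g_t}$ is $g$-symmetric for every $t$. The skew-symmetric part of $(\Ric^{g_t})^{(k)}(0)$ with respect to $g$ should therefore be controlled by the lower order derivatives, in the spirit of \cite{NS-E}[Remark 3.8]. The direction "$(\Ric^{g_t})^{(k)}(0)=0 \Rightarrow$ the pairings with all $H \in \Gamma(\Sym^2TM)$ vanish" is trivial, so only the converse needs an argument.

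Concretely, I would set $S_t:=\Ric^{g_t}-E\id$. Then $S_0=0$ because $g$ is Einstein, and $S_t^{(k)}(0)=(\Ric^{g_t})^{(k)}(0)$ for $k\ge 1$. The first step is to check that $g(h_tS_tX,Y)=\ric^{g_t}(X,Y)-Eg_t(X,Y)$, which is symmetric in $X,Y$. Thus $h_t\circ S_t \in \Gamma(\Sym^2TM)$ for all $t$, and so are all its $t$-derivatives at $t=0$. The second step is to expand these derivatives by the Leibniz rule:
$$(h_t\circ S_t)^{(k)}(0)=\sum_{j=0}^{k}\binom{k}{j}\, h_t^{(j)}(0)\circ S_t^{(k-j)}(0),$$
where $h_t^{(0)}(0)=\id$ and $h_t^{(j)}(0)=h_j$ for $j\geq 1$.

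The third step is an induction on $k=1,2,3$ (the argument works verbatim for any $k$). Assume that $S_t^{(j)}(0)=0$ for $0\le j<k$; for $k=1$ this only uses $S_0=0$. Then every term in the Leibniz sum with $j\ge 1$ vanishes, so $(h_t\circ S_t)^{(k)}(0)=S_t^{(k)}(0)$. In particular $S_t^{(k)}(0)$ is $g$-symmetric. The hypothesis $g(S_t^{(k)}(0),H)=0$ for all $H\in\Gamma(\Sym^2TM)$ says precisely that the $g$-symmetric component of $S_t^{(k)}(0)$ vanishes. Hence $S_t^{(k)}(0)=0$, which closes the induction and gives $(\Ric^{g_t})^{(k)}(0)=0$ for $1\le k\le 3$.

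There is no real obstacle here. The only point needing care is the bookkeeping of which symmetry holds: $\Ric^{g_t}$ is $g_t$-symmetric rather than $g$-symmetric. One must therefore pass through $h_t\circ\Ric^{g_t}$ and use the vanishing of lower order derivatives to strip off the correction terms $h_j\circ S_t^{(k-j)}(0)$. This is why the statement is phrased for all $1\le k\le 3$ simultaneously rather than for a single $k$. Combined with Theorem \ref{E-sym}, this shows that \eqref{mainESkew} is automatic order by order, so only \eqref{mainES} needs to be differentiated.
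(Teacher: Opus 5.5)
Your argument is correct and is essentially the paper's proof. The paper differentiates the identity $g_t(\Ric^{g_t}X,Y)=g_t(\Ric^{g_t}Y,X)$ up to third order, using the Einstein condition and the vanishing of the lower-order derivatives to conclude inductively that each $(\Ric^{g_t})^{(k)}(0)$ is $g$-symmetric. Your Leibniz expansion of $h_t\circ(\Ric^{g_t}-E\,\id)$ carries out exactly this step in a slightly more compact form.
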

\begin{proof}
We shall use the shorthand notation $\Ric_k:=(\Ric^{g_t})^{(k)}(0) \in \Gamma \left (\End(TM) \right )$. 
Differentiate with respect to $t$ in the identity $g_t(\Ric^{g_t}X,Y)=g_t(\Ric^{g_t}Y,X)$, where 
$X,Y$ belong to $TM$. It follows that 
\begin{equation*}
\begin{split}
g_1(\Ric^gX,Y)+g(\Ric_1X,Y)=&h_1(X,Y)\\
g_2(\Ric^gX,Y)+2g_1(\Ric_1X,Y)+g(\Ric_2X,Y)=&h_2(X,Y)\\
g_3(\Ric^gX,Y)+3g_2(\Ric_1X,Y)+3g_1(\Ric_2X,Y)+g(\Ric_3X,Y)=&h_3(X,Y)
\end{split}
\end{equation*}
where the bilinear forms $h_1,h_2,h_3$ are symmetric. Since $g$ is Einstein it follows that 
$\Ric_1$ is symmetric with respect to $g$. In addition, if $\Ric_1=0$ the above formulas 
show that $\Ric_2$ is symmetric with respect to $g$. Finally, the vanishing of $\Ric_1$ and $\Ric_2$ ensures 
that $\Ric_3$ is symmetric with respect to $g$ and the claim is proved.
\end{proof}
\subsection{The third order variation of the Ricci tensor } \label{3-rd}
Whenever $A=A(t),B=B(t)$ are time dependent sections of $\End(TM)$ we record that the time derivative 
of the product $A \circ B$ satisfies 
\begin{equation} \label{prod-f}
\begin{split}
(A \circ B)^{(2)}(t)=&A^{(2)}(t) \circ B(t)+2A^{(1)}(t) \circ B^{(1)}(t)+A(t) \circ B^{(2)}(t)\\
(A \circ B)^{(3)}(t)=&A^{(3)}(t) \circ B(t)+3A^{(2)}(t) \circ B^{(1)}(t)+3A^{(1)}(t) \circ B^{(2)}(t)+A(t) \circ B^{(3)}(t)
\end{split}
\end{equation}
Now consider a family of Einstein metrics $g_t:=g(h_t \cdot, \cdot)$ in $\mathscr{M}_1$ with Einstein constant $E$ and $g_0=g$; we further indicate with $h_k:=h_t^{(k)}(0)$ in $\Sym^2TM$ for $k \geq 1$ the coefficients of the Taylor expansion of $h_t$ at $t=0$. Similarly we let $I_t:=h_t^{-1}$  and denote $I_k:=I_t^{(k)}(0)$ in $\Sym^2TM$ for $k \geq 1$. 

Using \eqref{prod-f} shows that 
\begin{equation}\label{der-hinv}
\begin{split}
&I_1=-h_1\\
&I_2=-h_2+2h_1^2\\
&I_3=-h_3+3\{h_1,h_2\}-6h_1^3=-h_3+3\{h_1,h_2-h_1^{2}\}.
\end{split}
\end{equation}
We begin this section by recording the trace properties of the tensors $h_1,h_2$ and $h_3$, as entailed by working with curves of metrics with fixed volume.
\begin{lema} \label{tr-u3}
Let $g_t \in \mathscr{M}_1$ be a family of Riemannian metrics with Taylor expansion at $t=0$ given by 
$g^{-1}g_t=\id+th_1+\tfrac{t^2}{2\!}h_2+\tfrac{t^3}{3!}\!h_3+o(t^4)$. We have 
\begin{equation*}
\begin{split}
&\tr(h_1)=\tr(h_2-h_1^2)=0\\
&\tr \left (h_3-\tfrac{3}{2}\{h_1,h_2-h_1^2\}-h_1^3\right )=0.
\end{split}
\end{equation*}
\end{lema}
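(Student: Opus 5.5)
The plan is to use only the volume constraint. Since $g_t \in \mathscr{M}_1$, we have $\det h_t=1$ for $h_t:=g^{-1}g_t$, as already used in the proof of Lemma \ref{Mt}. Because $h_t$ is $g$-symmetric and positive, $\ln h_t$ is well defined for small $t$, and the constraint becomes
$$\tr \ln h_t=\ln \det h_t=0 .$$
I will expand the left hand side in powers of $t$ up to order $3$ and read off the three identities from the coefficients of $t,t^2,t^3$.

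Write $h_t=\id+A_t$ with $A_t:=th_1+\tfrac{t^2}{2}h_2+\tfrac{t^3}{6}h_3+o(t^4)$. Pointwise, $\ln(\id+A_t)=A_t-\tfrac12 A_t^2+\tfrac13 A_t^3+O(A_t^4)$, and $A_t^4=O(t^4)$. The relevant expansions are:
\begin{itemize}
\item[$\bullet$] $A_t^2=t^2h_1^2+\tfrac{t^3}{2}\{h_1,h_2\}+O(t^4)$;
\item[$\bullet$] $A_t^3=t^3h_1^3+O(t^4)$.
\end{itemize}
Substituting these, the coefficients of $\ln h_t$ are:
\begin{itemize}
\item[$\bullet$] at order $t$: $h_1$;
\item[$\bullet$] at order $t^2$: $\tfrac12(h_2-h_1^2)$;
\item[$\bullet$] at order $t^3$: $\tfrac16 h_3-\tfrac14\{h_1,h_2\}+\tfrac13 h_1^3$.
\end{itemize}
Taking traces and setting each coefficient to zero gives $\tr(h_1)=0$ and $\tr(h_2-h_1^2)=0$ at once.

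At third order, after multiplying by $6$, the condition reads
$$\tr\left(h_3-\tfrac32\{h_1,h_2\}+2h_1^3\right)=0 .$$
To bring this to the stated form, rewrite the anticommutator as $\{h_1,h_2\}=\{h_1,h_2-h_1^2\}+2h_1^3$. Then
$$-\tfrac32\{h_1,h_2\}+2h_1^3=-\tfrac32\{h_1,h_2-h_1^2\}-h_1^3,$$
which yields $\tr\left(h_3-\tfrac32\{h_1,h_2-h_1^2\}-h_1^3\right)=0$, as claimed.

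There is no genuine obstacle here; the only point requiring care is the bookkeeping of the mixed term $\{h_1,h_2\}$ in $A_t^2$. If one prefers to avoid the logarithm, the same result follows by differentiating $\det h_t=1$ via Jacobi's formula, $\tr(h_t^{-1}h_t')=0$, three times at $t=0$. One would use \eqref{prod-f} together with the inverse derivatives $I_1,I_2$ from \eqref{der-hinv}, obtaining $\tr(I_2h_1+2I_1h_2+h_3)=0$ at third order, which reduces to the same identity.
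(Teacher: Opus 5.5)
Your proof is correct, and the coefficients check out. In particular the $t^3$-coefficient $\tfrac16 h_3-\tfrac14\{h_1,h_2\}+\tfrac13 h_1^3$ and the rewriting $\{h_1,h_2\}=\{h_1,h_2-h_1^2\}+2h_1^3$ are right.

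The paper takes the route you mention only as an alternative at the end. It differentiates Jacobi's identity $\tr(h_t^{-1}\circ h_t')=0$ twice more, sets $t=0$, and substitutes $I_1=-h_1$, $I_2=-h_2+2h_1^2$ from \eqref{der-hinv}. This gives $\tr(h_3+2I_1h_2+I_2h_1)=0$, the same identity $\tr(h_3-\tfrac32\{h_1,h_2\}+2h_1^3)=0$ you reach before rewriting.

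Your logarithmic expansion packages all orders at once. The $k$-th constraint is just the vanishing trace of the $t^k$-coefficient of $\ln h_t$, so no derivatives of $h_t^{-1}$ are needed and the pattern extends mechanically to higher orders. The paper's route instead reuses the inverse derivatives $I_k$ and the product rule \eqref{prod-f}, which it needs anyway to differentiate the Einstein equation, so in that context it costs nothing extra.

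The only step in your version that merits a word of justification is $\tr\ln h_t=\ln\det h_t$. This is standard: $h_t$ is $g$-symmetric and positive definite, so one can diagonalise; alternatively, apply $\det\exp X=\exp\tr X$ to the convergent series for $\ln(\id+A_t)$ for small $t$.
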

\begin{proof}
Since $\det h_t=1$, differentiating up to third order shows that 
\begin{equation*}
\begin{split}
&\tr(I_t \circ h_t^{(1)})=0\\
& \tr(I_t \circ h_t^{(2)}+I_t^{(1)} \circ h^{(1)}_t)=0\\
&\tr (I_t \circ h_t^{(3)}+2(I_t)^{(1)} \circ h_t^{(2)}+
(I_t)^{(2)} \circ h_t^{(1)})=0.
\end{split}
\end{equation*}
The claim follows by taking $t=0$ and using \eqref{der-hinv}.
\end{proof}
Next we investigate the time derivatives of the quantity involving $\mathbf{p}$ which features in the Einstein equation 
\eqref{mainES}, that is $\mathbf{p}_t:=\mathbf{p}(I_t,h_t)$ in $\Gamma\left (\End(TM) \right )$. The first few steps in that direction are purely formal and based solely on the product rule \eqref{prod-f}; in what follows we write $\mathbf{p}_k:=\mathbf{p}_t^{(k)}(0)$ for $k \geq 0$.
Further on, since $\mathbf{p}$ is a bilinear form and $\mathbf{p}(\cdot ,\id)=\mathbf{p}(\id, \cdot)=0$ we have, by also taking 
\eqref{der-hinv} into account  
\begin{equation} \label{der-bfp}
\begin{split}
&\mathbf{p}_0=\mathbf{p}_1=0\\
&\mathbf{p}_2=-2\mathbf{p}(h_1,h_1)\\
&\mathbf{p}_3=3\mathbf{p}(-h_2+2h_1^2,h_1)-3\mathbf{p}(h_1,h_2).\\
\end{split}
\end{equation}
Using the first two equations above we see that the product rule \eqref{prod-f} grants further 
\begin{equation*}
\begin{split}
\left (I_t \circ \mathbf{p}_t \right )^{(3)}(0)=&
3I_1 \circ \mathbf{p}_2+\mathbf{p}_3\\
=&6h_1 \circ \mathbf{p}(h_1,h_1)+\mathbf{p}_3.
\end{split}
\end{equation*}
At this stage it is thus convenient to group terms and observe that the symmetric component of the Einstein equation in \eqref{mainES} reads, after differentiating to order $3$, 
\begin{equation} \label{ES2}
\begin{split}
&\la \left (\delta^g \mathbf{D}_t-\tfrac{1}{2}\mathbf{p}(h_t^{-1},h_t)+\ring{R}(h_t^{-1})+Eh_t^{-1}+\delta^{\star_g}\delta^gh_t^{-1} \right )^{(3)}(0), H \ra_{L^2}\\
=&3 \la h_1 \circ \mathbf{p}(h_1,h_1),H\ra_{L^2}.
\end{split}
\end{equation}
In order to understand this in more detail we need explicit expressions for the third order time derivatives of $\mathbf{D}_t$; for notational convenience we indicate the higher order time derivatives of 
$\mathbf{D}_t$ with $\mathbf{D}_k:=(\mathbf{D}_t)^{(k)}(0)$ for $k \ge 1 $.

We consider the differential operator $Q:\Gamma \left (\Sym^2TM \right ) \to \Omega^2(M,TM)$ defined according to 
\begin{equation} \label{defn-Q}
Q(h):=-\di_{\nabla^g}\!h^3+h^2 \sharp \di_{\nabla^g} h+\di_{\nabla^g} h(h \cdot ,h \cdot)
\end{equation}
and prove the following 
\begin{lema} \label{der-Df}
We have 
\begin{equation*}
\mathbf{D}_3=-\di_{\nabla^g} I_3+6[h_1,h_2]^{\FN}+6Q(h_1).
\end{equation*}
\end{lema}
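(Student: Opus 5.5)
The plan is to expand $\mathbf{D}_t$ as a trilinear expression and differentiate it three times with the Leibniz rule, exactly as in \eqref{prod-f}. Write $F(a,b,c):=\di_{\nabla^g}a(b\,\cdot,c\,\cdot)$ for $a,b,c\in\Gamma(\End TM)$. Then $F$ is linear in each slot and $\mathbf{D}_t=F(h_t,I_t,I_t)$. Since $\di_{\nabla^g}$ commutes with $\tfrac{d}{dt}$, the multinomial Leibniz rule gives
\begin{equation*}
\mathbf{D}_3=\sum_{i+j+k=3}\tfrac{3!}{i!\,j!\,k!}F(h_i,I_j,I_k),
\end{equation*}
with $h_0=I_0=\id$. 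Because $\di_{\nabla^g}\id=0$, every term with $i=0$ drops out.

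Next I sort the surviving terms by $i$, writing $\sharp$ for the algebraic action from section \ref{FN-br}.
\begin{itemize}
\item[$\bullet$] $i=3$: this gives $\di_{\nabla^g}h_3$.
\item[$\bullet$] $i=2$: this gives $3\bigl(\di_{\nabla^g}h_2(I_1\cdot,\cdot)+\di_{\nabla^g}h_2(\cdot,I_1\cdot)\bigr)=3I_1\sharp\di_{\nabla^g}h_2$.
\item[$\bullet$] $i=1$: this gives $3I_2\sharp\di_{\nabla^g}h_1+6\,\di_{\nabla^g}h_1(I_1\cdot,I_1\cdot)$.
\end{itemize}
Substituting $I_1=-h_1$ and $I_2=-h_2+2h_1^2$ from \eqref{der-hinv} yields
\begin{equation*}
\mathbf{D}_3=\di_{\nabla^g}h_3-3h_1\sharp\di_{\nabla^g}h_2-3h_2\sharp\di_{\nabla^g}h_1+6h_1^2\sharp\di_{\nabla^g}h_1+6\,\di_{\nabla^g}h_1(h_1\cdot,h_1\cdot).
\end{equation*}

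Finally I eliminate $h_3$ in favour of $I_3$. From \eqref{der-hinv} we have $\di_{\nabla^g}h_3=-\di_{\nabla^g}I_3+3\di_{\nabla^g}\{h_1,h_2\}-6\di_{\nabla^g}h_1^3$. The polarised bracket formula \eqref{bra-nac2} gives
\begin{equation*}
3\di_{\nabla^g}\{h_1,h_2\}-3h_1\sharp\di_{\nabla^g}h_2-3h_2\sharp\di_{\nabla^g}h_1=6[h_1,h_2]^{\FN},
\end{equation*}
which absorbs exactly the mixed terms. The remaining cubic terms, $-6\di_{\nabla^g}h_1^3+6h_1^2\sharp\di_{\nabla^g}h_1+6\di_{\nabla^g}h_1(h_1\cdot,h_1\cdot)$, are precisely $6Q(h_1)$ by \eqref{defn-Q}. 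This proves the claim.

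There is no real obstacle here. The only care needed is the bookkeeping of the multinomial coefficients, in particular the factor $6$ on the $(1,1,1)$ term. One must also check that $b\sharp\alpha=\alpha(b\cdot,\cdot)+\alpha(\cdot,b\cdot)$ is the correct way to combine the two terms $(j,k)=(1,0),(0,1)$, and likewise $(2,0),(0,2)$.
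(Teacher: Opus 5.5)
Your proposal is correct and follows the paper's proof. The paper likewise expands $\mathbf{D}_t=\di_{\nabla^g}h_t(I_t\cdot,I_t\cdot)$ by the Leibniz rule to obtain $\mathbf{D}_3=\di_{\nabla^g}h_3+3I_1\sharp\di_{\nabla^g}h_2+3I_2\sharp\di_{\nabla^g}h_1+6\,\di_{\nabla^g}h_1(h_1\cdot,h_1\cdot)$, then substitutes $I_1,I_2,I_3$, absorbs the mixed terms into $6[h_1,h_2]^{\FN}$ via \eqref{bra-nac2}, and recognises the cubic terms as $6Q(h_1)$; your multinomial bookkeeping simply spells out what the paper calls ``a purely algebraic computation.''
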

\begin{proof}
By taking into account that $I_0=\id$ a purely algebraic computation reveals 
that 
$$ \mathbf{D}_3=\di_{\nabla^g} h_3+3I_1 \sharp \di_{\nabla^g}\!h_2+3I_2 \sharp \di_{\nabla^g}\!h_1+6\di_{\nabla^g} h_1(h_1 \cdot ,h_1 \cdot).
$$
Now let 
$D_1:=\di_{\nabla^g} h_1(h_1 \cdot ,h_1 \cdot)$ and recall that $I_1=-h_1$ and $I_2=-h_2+2h_1^2$. Using 
the expression in \eqref{bra-nac2} for the bracket $[\cdot, \cdot]^{\FN}$ and also \eqref{der-hinv} we find 
\begin{equation*}
\begin{split}
\mathbf{D}_3=&\di_{\nabla^g} h_3-3(h_1 \sharp \di_{\nabla^g}\!h_2+h_2 \sharp \di_{\nabla^g} \!h_1)+6(h_1^2 \sharp \di_{\nabla^g} h_1+D_1)\\
=&\di_{\nabla^g} h_3+3(2[h_1,h_2]^{\FN}-\di_{\nabla^g}\{h_1,h_2\})+6(h_1^2 \sharp \di_{\nabla^g} h_1+D_1)\\
=&-\di_{\nabla^g} I_3+6([h_1,h_2]^{\FN}-\di_{\nabla^g}\!h_1^3+h_1^2 \sharp \di_{\nabla^g} h_1+D_1).
\end{split}
\end{equation*}
The claim is now fully proved.
\end{proof}
As far as lower order time derivatives of $\mathbf{D}_t$ are concerned the following remark is in order.
\begin{rema} \label{2nd-ord}
By an argument similar to Lemma \ref{der-Df} we also have
\begin{equation*} \label{der-bfD}
\begin{split}
& \mathbf{D}_1=\di_{\nabla^g}\! h_1\\
& \mathbf{D}_2=\di_{\nabla^g}\! h_2-2 h_1 \sharp \di_{\nabla^g}\! h_1=\di_{\nabla^g}(h_2-2h_1^2)+2[h_1,h_1]^{\FN}.\\
\end{split}
\end{equation*}
These can also be used to re-derive, as a consistency check, the deformation theory to first and second order directly from \eqref{mainES}.
\end{rema}
In particular Lemma \ref{der-Df} shows that the leading term in the Einstein equation \eqref{ES2} can be fully expressed at third order as follows.
\begin{coro} \label{st2}
We have 
\begin{equation*} 
\begin{split}
&\la \left (\delta^g \mathbf{D}_t+\ring{R}(h_t^{-1})+Eh_t^{-1}+\delta^{\star_g}\delta^gh_t^{-1} \right )^{(3)}(0), H \ra_{L^2}\\
=&-\la \left ( \widetilde{\Delta}_E+\delta^{\star_g}\di \tr \right )I_3 , H \ra_{L^2}+6\la \delta^g[h_1,h_2]^{\FN}+\delta^gQ(h_1), H \ra_{L^2}
\end{split}
\end{equation*}
for all $H \in \Gamma \left (\Sym^2TM \right )$.
\end{coro}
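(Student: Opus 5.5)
We differentiate the linear part of \eqref{mainES} term by term to third order at $t=0$. By Lemma \ref{der-Df}, $\mathbf{D}_3=-\di_{\nabla^g}I_3+6[h_1,h_2]^{\FN}+6Q(h_1)$, so
\[
(\delta^g\mathbf{D}_t)^{(3)}(0)=-\delta^g\di_{\nabla^g}I_3+6\delta^g[h_1,h_2]^{\FN}+6\delta^gQ(h_1).
\]
The remaining terms $\ring{R}(h_t^{-1})$, $Eh_t^{-1}$ and $\delta^{\star_g}\delta^gh_t^{-1}$ are linear in $h_t^{-1}=I_t$ with coefficients independent of $t$. Their third derivatives at $0$ are therefore $\ring{R}I_3+EI_3+\delta^{\star_g}\delta^gI_3$. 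It thus suffices to establish the operator identity on symmetric tensors
\[
-\delta^g\di_{\nabla^g}I_3+\ring{R}I_3+EI_3+\delta^{\star_g}\delta^gI_3=-\left(\widetilde{\Delta}_E+\delta^{\star_g}\di\tr\right)I_3 ,
\]
at least after pairing with symmetric $H$ in $L^2$.

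For this I use the Weitzenb\"ock formula in the form \eqref{wz1}:
\[
\delta^g\di_{\nabla^g}I_3=-(\delta^{\star_g}+\tfrac12\di)\delta^gI_3+\Delta_EI_3+EI_3+\ring{R}I_3 .
\]
Substituting, the $E$ and $\ring{R}$ terms cancel. The left side then becomes
\[
-\Delta_EI_3+2\delta^{\star_g}\delta^gI_3+\tfrac12\di\delta^gI_3 .
\]
The term $\tfrac12\di\delta^gI_3$ is a $2$-form, i.e. skew-symmetric, so it is $L^2$-orthogonal to every $H\in\Gamma(\Sym^2TM)$ and drops out. On the other hand, by \eqref{D-tilde},
\[
-\widetilde{\Delta}_E-\delta^{\star_g}\di\tr=-\Delta_E+\delta^{\star_g}(2\delta^g+\di\tr)-\delta^{\star_g}\di\tr=-\Delta_E+2\delta^{\star_g}\delta^g .
\]
This matches the symmetric part, so pairing with $H$ and summing the contributions gives the stated formula.

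The only delicate points are bookkeeping ones. First, the sign and normalisation conventions for $\delta^g$ on $1$-forms versus vector fields, and for $\nabla^g=\delta^{\star_g}+\tfrac12\di$, must be matched with \eqref{wz1}. Second, one must confirm that no $t$-dependence hides in $\mathbf{D}_t$ beyond what Lemma \ref{der-Df} accounts for. Given those, the argument is a direct substitution.
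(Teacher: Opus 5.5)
Your proof is correct and is the paper's argument: you substitute $\mathbf{D}_3$ from Lemma \ref{der-Df}, then combine the Weitzenb\"ock formula \eqref{wz1} with the definition \eqref{D-tilde} of $\widetilde{\Delta}_E$. You also make explicit that the skew term $\tfrac12\di\delta^gI_3$ drops out against symmetric $H$, a step the paper leaves implicit.
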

\begin{proof}
By taking Lemma \ref{der-Df} into account we obtain 
\begin{equation*} 
\begin{split}
&\la \left (\delta^g \mathbf{D}_t+\ring{R}(h_t^{-1})+Eh_t^{-1}+\delta^{\star_g}\delta^gh_t^{-1} \right )^{(3)}(0), H \ra_{L^2}\\
=&\la \left (-\delta^g \di_{\nabla^g}+\ring{R}+E+\delta^{\star_g}\delta^g \right )I_3, H \ra_{L^2}
+6\la \delta^g[h_1,h_2]^{\FN}+\delta^gQ(h_1), H \ra_{L^2}.
\end{split}
\end{equation*}
However the comparison formula \eqref{wz1} together with the definition of the modified Einstein operator 
$\widetilde{\Delta}_E$ ensure that 
\begin{equation*}
\la \left (-\delta^g \di_{\nabla^g}+\ring{R}+E+\delta^{\star_g}\delta^g \right ) I_3,H \ra_{L^2}=
-\la (\widetilde{\Delta}_E+\delta^{\star_g}\di \tr) I_3,H \ra_{L^2} 
\end{equation*}
and the claim follows.
\end{proof}
The next step is to harness the third order derivatives of the tensor $\mathbf{p}_t$ as follows.
\begin{lema} \label{step3}
We have 
\begin{equation*} 
\begin{split}
&\tfrac{1}{3}\la 6\delta^g[h_1,h_2]^{\FN}-\tfrac{1}{2}\mathbf{p}_3, H\ra_{L^2}\\
=&\la \bfv(h_1,h_2)+
(\mathrm{L}-\tilde{\mathrm{L}})(h_1,h_2)-\mathbf{p}(h_1^2,h_1),H \ra_{L^2}
\end{split}
\end{equation*}
for all $H \in \Gamma \left (\Sym^2TM \right )$.
\end{lema}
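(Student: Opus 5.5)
The plan is to expand $\mathbf{p}_3$ using \eqref{der-bfp}, isolate the part that is symmetric in $(h_1,h_2)$, and then compare with a polarised form of part (i) of Lemma \ref{diff-Ls}. No integration by parts beyond what is already contained in that lemma should be needed.

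First, from \eqref{der-bfp} and the bilinearity of $\mathbf{p}$,
\begin{equation*}
-\tfrac{1}{6}\mathbf{p}_3=\tfrac{1}{2}\left (\mathbf{p}(h_2,h_1)+\mathbf{p}(h_1,h_2)\right )-\mathbf{p}(h_1^2,h_1).
\end{equation*}
Therefore the left hand side of the claim equals
\begin{equation*}
\la 2\delta^g[h_1,h_2]^{\FN}+\tfrac{1}{2}\left (\mathbf{p}(h_1,h_2)+\mathbf{p}(h_2,h_1)\right ),H\ra_{L^2}-\la \mathbf{p}(h_1^2,h_1),H\ra_{L^2}.
\end{equation*}
The term $-\mathbf{p}(h_1^2,h_1)$ already matches the corresponding term on the right hand side. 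It thus remains to show that
\begin{equation*}
\la 2\delta^g[h_1,h_2]^{\FN}+\tfrac{1}{2}\left (\mathbf{p}(h_1,h_2)+\mathbf{p}(h_2,h_1)\right ),H\ra_{L^2}=\la \bfv(h_1,h_2)+(\mathrm{L}-\tilde{\mathrm{L}})(h_1,h_2),H\ra_{L^2}.
\end{equation*}

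Second, I will obtain this identity by polarising part (i) of Lemma \ref{diff-Ls}. That statement holds for arbitrary $h\in\Gamma\left (\Sym^2TM\right )$, since it comes only from Lemma \ref{quad-1L} and \eqref{bfv-1}. Both sides are quadratic in $h$: the left side is $h\mapsto 2\delta^g[h,h]^{\FN}+\mathbf{p}(h,h)$, and the right side is $h\mapsto \bfv(h,h)+(\mathrm{L}-\tilde{\mathrm{L}})(h,h)$. The polarisations of the right side are the symmetric bilinear forms $\bfv(h_1,h_2)$ and $(\mathrm{L}-\tilde{\mathrm{L}})(h_1,h_2)$. Indeed $\bfv$ is symmetric by its cyclic definition \eqref{bfv-def} (or by \eqref{bfv-1}), $\mathrm{L}$ is symmetric by construction, and $\tilde{\mathrm{L}}$ is symmetric by \eqref{L-t}.

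On the left side, the polarisation of $[h,h]^{\FN}$ is $[h_1,h_2]^{\FN}$ by \eqref{bra-nac2}, and that of $\mathbf{p}(h,h)$ is $\tfrac{1}{2}(\mathbf{p}(h_1,h_2)+\mathbf{p}(h_2,h_1))$. Substituting $h=h_1+h_2$, subtracting the pure $h_1$ and $h_2$ terms, and dividing by $2$ then gives exactly the displayed identity, which finishes the proof.

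As a consistency check, one can instead combine \eqref{pol} with the polarised version of \eqref{bfv-1}, in which the term $-\la H\sharp \di_{\nabla^g}h_1,\di_{\nabla^g}h_2\ra_{L^2}$ absorbs the $\sharp$-part of \eqref{pol}. This should reproduce the same formula.

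The only real point to watch is bookkeeping of the numerical factors. These are the factor $\tfrac{1}{3}$ against the coefficients $3$ in \eqref{der-bfp}, and the factor $2$ in the definition \eqref{bra-nac2} of the symmetric bracket. One should also make sure that the $\tilde{\mathrm{L}}$ appearing in \eqref{bfv-1} is the symmetric bilinear version \eqref{L-t}, so that no cross term is lost in the polarisation.
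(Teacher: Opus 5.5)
Your proposal is correct and is the paper's proof: polarise part (i) of Lemma \ref{diff-Ls}, using that $[\cdot,\cdot]^{\FN}$, $\bfv$ and $\mathrm{L}-\tilde{\mathrm{L}}$ are symmetric bilinear, and then insert the expression for $\mathbf{p}_3$ from \eqref{der-bfp}; the paper's text cites \eqref{bfv-1} at that point, which is a typo. Your bookkeeping $-\tfrac{1}{6}\mathbf{p}_3=\tfrac{1}{2}\left(\mathbf{p}(h_1,h_2)+\mathbf{p}(h_2,h_1)\right)-\mathbf{p}(h_1^2,h_1)$ is right, and so is the polarised identity with the $\tfrac{1}{2}$ and $2$ factors.
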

\begin{proof}
Polarise the identity in part (i) of Lemma \ref{diff-Ls} whilst taking into account that 
the bracket $[\cdot, \cdot]^{\FN}$, the operator $\bfv$ as well as $\mathrm{L}-\mathrm{\tilde{L}}$
are symmetric. It follows that 
\begin{equation*}
\la 2\delta^g[h_1,h_2 ]^{\FN}+\tfrac{1}{2}\mathbf{p}(h_1,h_2)+\tfrac{1}{2}\mathbf{p}(h_2,h_1),H\ra_{L^2}=\la \bfv(h_1,h_2)+
(\mathrm{L}-\mathrm{\tilde{L}})(h_1,h_2),H \ra_{L^2}.
\end{equation*}
The claim follows from the expression for $\mathbf{p}_3$ given in \eqref{bfv-1}.
\end{proof}
To be able to summarise these calculations we consider the symmetric tensor 
\begin{equation} \label{u3-d}
\mathbf{u}_3:=I_3-\tfrac{3}{4}\{h_1,h_2\}
\end{equation}
which turns out to be the perturbation of $h_3$ needed for the deformation theory. For further use we also record the following explicit expression 
\begin{equation} \label{u3-d+}
\mathbf{u}_3:=-h_3+\tfrac{9}{4}\{h_1,h_2-h_1^2\}-\tfrac{3}{2}h_1^3
\end{equation}
which follows from \eqref{der-hinv}. The new differential operator which together with $\bfv$ governs the Einstein deformation theory to third order is $ \mathbf{w} : \Gamma \left (\Sym^2TM \right ) \to \Gamma \left (\Sym^2TM \right )$ defined according to 
\begin{equation} \label{w-def}
\la \mathbf{w}(h),H \ra_{L^2}:=\la 2\delta^gQ(h)-\mathbf{p}(h^2,h)-h \circ \mathbf{p}(h,h),H \ra_{L^2}
\end{equation}
whenever $H \in \Gamma \left (\Sym^2TM \right )$.

The main result in this section is the following explicit form for the Einstein equation to third order. We also 
re-derive, for consistency, the Einstein equations to order $1$ and $2$ using our present approach.
\begin{teo}\label{thm-54} Let $g_t \in \mathscr{M}_1$ be a family of Riemannian metrics with Taylor expansion at $t=0$ given by 
$g^{-1}g_t=\id+th_1+\tfrac{t^2}{2!}h_2+\tfrac{t^3}{3!}h_3+o(t^4)$.
Assume that $g$ is Einstein with Einstein constant $E$. Then 
$$(\Ric^{g_t})^{(k)}(0)=0 \ \mathrm{ for} \ 1 \leq k \leq 3$$ if and only if the following hold 
\begin{eqnarray}
\widetilde{\Delta}_Eh_1&=&0 \label{xxx1}\\
\widetilde{\Delta}_E(h_2-h_1^2)&=&Eh_1^2-\bfv(h_1,h_1)+\tfrac{1}{2}\widetilde{\Delta}_Eh_1^2+\tfrac{1}{2}\delta^{\star_g} \di\! \tr(h_1^2) \label{xxx2}\\
(\widetilde{\Delta}_E+\delta^{\star_g}\di\tr)\mathbf{u}_3&=&3\mathbf{v}(h_1,h_2)-
\tfrac{3}{4}\{h_1,(\widetilde{\Delta}_E+2E+\delta^{\star}\di\tr )h_2\}+3\mathbf{w}(h_1). \label{xxx3}
\end{eqnarray}
%

\end{teo}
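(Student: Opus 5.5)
The plan is to differentiate the symmetric part of the Einstein equation, \eqref{mainES} of Theorem \ref{E-sym}, at $t=0$ up to order three. Proposition \ref{red-toS} guarantees that $(\Ric^{g_t})^{(k)}(0)=0$ for $k\le 3$ is equivalent to the vanishing of the pairing of $(\Ric^{g_t})^{(k)}(0)$ with every $H\in\Gamma(\Sym^2TM)$. Hence we may ignore the skew component \eqref{mainESkew} and work only with the left hand side of \eqref{mainES} minus $2E\tr(H)-Eg(h_t^{-1},H)$. Its $k$-th time derivative is, up to lower order Ricci derivatives which vanish inductively, the pairing of $(\Ric^{g_t})^{(k)}(0)$ with $H$. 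We proceed inductively in $k$, at each stage assuming the lower order equations.

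\emph{Orders one and two.} By Remark \ref{2nd-ord}, $\mathbf{D}_1=\di_{\nabla^g}h_1$, and $\mathbf{p}_1=0$ by \eqref{der-bfp}. The first derivative therefore gives
\[
(-\delta^g\di_{\nabla^g}+\ring{R}+E+\delta^{\star_g}\delta^g)I_1 .
\]
The Weitzenb\"ock formula \eqref{wz1}, exactly as in the proof of Corollary \ref{st2}, turns this into $-(\widetilde{\Delta}_E+\delta^{\star_g}\di\tr)I_1$. Since $I_1=-h_1$ and $\tr h_1=0$ by Lemma \ref{tr-u3}, this yields \eqref{xxx1}.

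At order two, $\mathbf{D}_2=\di_{\nabla^g}(h_2-2h_1^2)+2[h_1,h_1]^{\FN}$ and $\mathbf{p}_2=-2\mathbf{p}(h_1,h_1)$. We have $I_2=-(h_2-h_1^2)+h_1^2$, and $\tr(h_2-h_1^2)=0$. The same Weitzenb\"ock manipulation gives a linear part
\[
-\widetilde{\Delta}_E(h_2-h_1^2)+(\widetilde{\Delta}_E+\delta^{\star_g}\di\tr)h_1^2 .
\]
To this one adds $2\delta^g[h_1,h_1]^{\FN}+\mathbf{p}(h_1,h_1)$, which Lemma \ref{diff-Ls}(i) rewrites as $\bfv(h_1,h_1)+\{h_1,\mathbf{L}h_1\}-\mathbf{L}h_1^2$. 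The $E$-terms from the right hand side of \eqref{mainES} must also be included. Using \eqref{xxx1} together with $\delta^gh_1$ being killed appropriately, $\mathbf{L}h_1$ reduces to a multiple of $Eh_1$. The term $\mathbf{L}h_1^2$ then combines with $\widetilde{\Delta}_Eh_1^2$ to produce the coefficients $\tfrac12$ in \eqref{xxx2}. The consistency check against Theorem \ref{norm-thm} should pin down the bookkeeping here.

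\emph{Order three.} This is \eqref{ES2}, whose principal part is supplied by Corollary \ref{st2}. We subtract $\tfrac12$ of $\mathbf{p}_3$, split as in Lemma \ref{step3}, and add the term $3h_1\circ\mathbf{p}(h_1,h_1)$ from \eqref{ES2}. Collecting terms gives
\[
-(\widetilde{\Delta}_E+\delta^{\star_g}\di\tr)I_3+3\bfv(h_1,h_2)+3(\mathrm{L}-\tilde{\mathrm{L}})(h_1,h_2)+3\mathbf{w}(h_1)+(E\text{-terms})=0 ,
\]
using the definition \eqref{w-def} of $\mathbf{w}$: the contributions $6\delta^gQ(h_1)$, $-3\mathbf{p}(h_1^2,h_1)$ and $-3h_1\circ\mathbf{p}(h_1,h_1)$ assemble into $3\mathbf{w}(h_1)$. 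The remaining task is to express $(\mathrm{L}-\tilde{\mathrm{L}})(h_1,h_2)$ through Lemma \ref{diff-Ls}(ii) polarised:
\[
2(\mathrm{L}-\tilde{\mathrm{L}})(h_1,h_2)=\{h_1,\mathbf{L}h_2\}+\{h_2,\mathbf{L}h_1\}-\mathbf{L}\{h_1,h_2\} .
\]
We then replace $I_3$ by $\mathbf{u}_3+\tfrac34\{h_1,h_2\}$ from \eqref{u3-d}. The term $\tfrac34(\widetilde{\Delta}_E+\delta^{\star_g}\di\tr)\{h_1,h_2\}$ should cancel against $-\tfrac32\mathbf{L}\{h_1,h_2\}$ modulo $E$-terms and $\di\delta^g$-terms. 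The latter are skew and drop out after pairing with symmetric $H$. After this cancellation what is left is $-\tfrac34\{h_1,(\widetilde{\Delta}_E+2E+\delta^{\star_g}\di\tr)h_2\}$, with $\{h_2,\mathbf{L}h_1\}$ absorbed using \eqref{xxx1}. Finally, the $E$-terms coming from $Eh_t^{-1}$ and $\ring{R}h_t^{-1}$ at third order must match the $2E$ inside the anticommutator.

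The main obstacle is precisely this last algebraic cancellation, namely verifying that the choice $\tfrac34$ in $\mathbf{u}_3$ makes the $\mathbf{L}\{h_1,h_2\}$ term disappear exactly, and that all trace and $E$ contributions land as in \eqref{xxx3}. I would carry this out by writing
\[
\mathbf{L}=-\tfrac12(\widetilde{\Delta}_E+\delta^{\star_g}\di\tr+2E)+\tfrac12\di\delta^g ,
\]
which follows from $\widetilde{\Delta}_E=\Delta_E-\delta^{\star_g}\D^g$, so that the comparison becomes transparent. The skew part $\tfrac12\di\delta^g$ is irrelevant after pairing with symmetric $H$. The converse direction, that \eqref{xxx1}–\eqref{xxx3} imply the vanishing of the Ricci derivatives, follows since each step above is an equivalence.
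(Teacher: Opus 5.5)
Your route is the paper's own: differentiate \eqref{mainES}, use Corollary \ref{st2} and Lemma \ref{step3} at third order, assemble $6\delta^gQ(h_1)-3\mathbf{p}(h_1^2,h_1)-3h_1\circ\mathbf{p}(h_1,h_1)$ into $3\mathbf{w}(h_1)$, and handle $(\mathrm{L}-\tilde{\mathrm{L}})(h_1,h_2)$ by polarising Lemma \ref{diff-Ls}(ii). Here $\mathbf{L}\equiv-\tfrac12(\widetilde{\Delta}_E+\delta^{\star_g}\di\tr+2E)$, where $\equiv$ means modulo skew terms, which pair to zero with symmetric $H$.

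\textbf{Order two has a sign error.} As written, it gives the wrong equation. Since $\mathbf{D}_2=-\di_{\nabla^g}I_2+2[h_1,h_1]^{\FN}$, the linear terms are $(-\delta^g\di_{\nabla^g}+\ring{R}+E+\delta^{\star_g}\delta^g)I_2$, exactly as at order one. Their symmetric part is
\[
-(\widetilde{\Delta}_E+\delta^{\star_g}\di\tr)I_2=\widetilde{\Delta}_E(h_2-h_1^2)-(\widetilde{\Delta}_E+\delta^{\star_g}\di\tr)h_1^2 .
\]
This is the negative of what you wrote: you applied $-(\widetilde{\Delta}_E+\delta^{\star_g}\di\tr)$ to $h_2-2h_1^2=-I_2$ instead of to $I_2$. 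Now add
\[
\bfv(h_1,h_1)+\{h_1,\mathbf{L}h_1\}-\mathbf{L}h_1^2\equiv\bfv(h_1,h_1)-2Eh_1^2+\tfrac12(\widetilde{\Delta}_E+\delta^{\star_g}\di\tr+2E)h_1^2 .
\]
With the correct sign this yields exactly \eqref{xxx2}. With your sign you would get $\widetilde{\Delta}_E(h_2-h_1^2)=\tfrac32(\widetilde{\Delta}_E+\delta^{\star_g}\di\tr)h_1^2+\bfv(h_1,h_1)-Eh_1^2$. A consistency check against Theorem \ref{norm-thm} cannot stand in for getting this step right.

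\textbf{There are no leftover $E$-terms at order two or three.} Your last sentence also misplaces where the $2E$ comes from. The term $-Eg(h_t^{-1},H)$ on the right of \eqref{mainES} is precisely the $E$ in $\ring{R}+E+\delta^{\star_g}\delta^g$. The identity \eqref{wz1} absorbs it, together with $\ring{R}I_k$, into $-(\widetilde{\Delta}_E+\delta^{\star_g}\di\tr)I_k$; this is exactly what Corollary \ref{st2} does. Re-inserting $EI_3$ or $\ring{R}I_3$ would therefore double count.

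The $2E$ in \eqref{xxx3} comes instead from $\mathbf{L}$. The three pieces of $3(\mathrm{L}-\tilde{\mathrm{L}})(h_1,h_2)$ give, using \eqref{xxx1} and $\tr h_1=0$ for the last one:
\[
\tfrac32\{h_1,\mathbf{L}h_2\}\equiv-\tfrac34\{h_1,(\widetilde{\Delta}_E+\delta^{\star_g}\di\tr+2E)h_2\},
\]
\[
-\tfrac32\mathbf{L}\{h_1,h_2\}\equiv\tfrac34(\widetilde{\Delta}_E+\delta^{\star_g}\di\tr)\{h_1,h_2\}+\tfrac32E\{h_1,h_2\},
\qquad
\tfrac32\{\mathbf{L}h_1,h_2\}\equiv-\tfrac32E\{h_1,h_2\}.
\]
The two $E\{h_1,h_2\}$ terms cancel each other. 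The remaining $\tfrac34(\widetilde{\Delta}_E+\delta^{\star_g}\di\tr)\{h_1,h_2\}$ cancels against the shift $I_3=\mathbf{u}_3+\tfrac34\{h_1,h_2\}$, leaving exactly \eqref{xxx3}. With these two corrections your argument is the paper's proof.
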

\begin{proof}
To prove the first equation differentiate \eqref{mainES} to first order with respect to $t$, then take $t=0$. Since $\mathbf{p_0}=\mathbf{p}_1=0$ it follows that $\left (I_t \circ \mathbf{p}_t \right )^{(1)}(0)=0$. Thus 
$$\la \delta^g \mathbf{D}_1+(\ring{R}+E+\delta^{\star_g}\delta^g)I_1,H\ra_{L^2}=0$$
whenver $H \in \Gamma \left ( \Sym^2TM\right )$. Since $\mathbf{D}_1=\di_{\nabla^g}\! h_1$ and 
$I_1=-h_1$ the claim follows from \eqref{wz1}, by also using that $\tr h_1=0$.

To prove \eqref{xxx2} we differentiate \eqref{mainES} to second order with respect to $t$, then take $t=0$; since $\mathbf{p_0}=\mathbf{p}_1=0$ it follows that 
$\left (I_t \circ \mathbf{p}_t \right )^{(2)}(0)=\mathbf{p}_2=-2\mathbf{p}(h_1,h_1).$
Thus the Einstein equation to second order reads 
$$ \la \delta^g \mathbf{D}_2+\left (\ring{R}+E+\delta^{\star_g}\delta^g \right )I_2+\mathbf{p}(h_1,h_1),H\ra_{L^2}=0.
$$ 
Now $\mathbf{D}_2=\di_{\nabla^g}(h_2-2h_1^2)+2[h_1,h_1]^{\FN}$ according to Remark \ref{2nd-ord}; hence 
\begin{equation*}
\begin{split}
\la \delta^g \mathbf{D}_2+\mathbf{p}(h_1,h_1),H\ra_{L^2}=\la \delta^g\di_{\nabla^g}(h_2-2h_1^2)+\bfv(h_1,h_1)
+\{h_1,\mathrm{K}h_1\}-\mathrm{K}h_1^2, H\ra_{L^2}
\end{split}
\end{equation*}
after taking into account Lemma \ref{diff-Ls}. Because $I_2=-h_2+2h_1^2$ re-arranging terms leads further to 
$\la \left (\delta^g\di_{\nabla^g}-\delta^{\star_g}\delta^g -\ring{R}-E \right ) (h_2-2h_1^2)+\bfv(h_1,h_1)
+\{h_1,\mathbf{L}h_1\}-\mathbf{L}h_1^2, H\ra_{L^2}=0.$ 
Now use the Weitzenb\"ock formula in \eqref{wz1} for the first summand above together with the fact that 
$\mathbf{L}h_1=-Eh_1$ which is granted by $\widetilde{\Delta}_Eh_1=0$ and $\tr(h_1)=0$. By also taking into account the expression for $\mathbf{L}$ we get that the symmetric component of $-\mathbf{L}h_1^2$ equals 
$\tfrac{1}{2} \left (\Delta_E-
2\delta^{\star_g}\delta^g \right )h_1^2+Eh_1^2$. Gathering these facts yields 
$$\la \left (\Delta_E-
2\delta^{\star_g}\delta^g \right )(h_2-2h_1^2)+\bfv(h_1,h_1)-Eh_1^2+\tfrac{1}{2} \left (\Delta_E-
2\delta^{\star_g}\delta^g \right )h_1^2 , H\ra_{L^2}=0 .$$
Since the curve of Riemannian metrics $g_t$ has fixed volume we have that $\tr(h_2-h_1^2)=0$ and the claim follows.

We now prove \eqref{xxx3} by differentiating \eqref{mainES} to third order with respect to $t$, then taking $t=0$. Using Corollary \ref{st2} and Lemma \ref{step3} in equation \eqref{ES2} leads to 
 \begin{equation*}
 \begin{split}
(\widetilde{\Delta}_E+\delta^{\star_g}\di\tr)I_3=&3\mathbf{v}(h_1,h_2)+3(\mathrm{L}-\tilde{\mathrm{L}})(h_1,h_2)+
3\mathbf{w}(h_1).\\
\end{split}
\end{equation*}
Thus in order to prove the claim we only need to deal with the term $(\mathrm{L}-\tilde{\mathrm{L}})(h_1,h_2)$, which is done as follows. Polarising part (ii) in Lemma \ref{diff-Ls} yields 
\begin{equation} \label{diff-L}
2(\mathrm{L}-\tilde{\mathrm{L}})(h_1,h_2)=\{\mathbf{L}h_1,h_2\}+\{h_1,\mathbf{L}h_2\}-\mathbf{L}\{h_1,h_2\}.
\end{equation}
By the same Lemma we see that the component $\mathbf{L}_{sym}$ of $\mathbf{L}$ on $\Gamma \left (\Sym^2TM \right )$ is given by $-2\mathbf{L}_{sym}=\widetilde{\Delta}_E+\delta^{\star_g}\di\tr+2E$. After taking into account that $h_1$ satisfies $\widetilde{\Delta}_Eh_1=0$ and $\tr(h_1)=0$ 
it follows that 
\begin{equation*}
\la (\mathrm{L}-\tilde{\mathrm{L}})(h_1,h_2),H \ra_{L^2}=-\tfrac{1}{4}\la 
\{h_1,(\widetilde{\Delta}_E+2E+\delta^{\star}\di\tr )h_2\},H \ra_{L^2}+\tfrac{1}{4}\la (\widetilde{\Delta}_E+\delta^{\star_g}\di\tr)\{h_1,h_2\},H \ra_{L^2}.
\end{equation*}
The claim follows now easily from the definition of $\mathbf{u}_3$ in \eqref{u3-d}.
\end{proof}
\begin{rema} 
Part (ii) in Theorem \ref{thm-54} corrects an error made in the terms containing 
the divergence $\delta^gh_1$ in \cite{NS-E}[Theorem 3.13]. As showed in Theorem \ref{thm-54} when 
$\tr(h_1)=0$
the rest of the terms in the second order Einstein equation as stated in \cite{NS-E}[Theorem 3.13]
are unaffected; hence all ulterior applications of \cite{NS-E}[Theorem 3.13], including 
those in \cite{N1} are correct, since they assume the normalisation $\delta^gh_1=0$. 
\end{rema}
Record that the Einstein equations in Theorem \ref{thm-54} do not depend on the choice of a specific gauge in $\mathbf{G}$; particular gauge choices will be made only later on in the paper.



\section{Deformation theory for K\"ahler Einstein metrics} \label{K-E}
\subsection{Multiplicative properties of the complex bracket} \label{mult-c}
We start from the following identity which plays a fundamental role in this paper. It allows using type considerations to describe the obstruction to deformation in the K\"ahler case.
\begin{pro} \label{id-main}
Assume that $h \in \Gamma \left (\Sym^{2}TM \right )$. Then 
$$ \di_{\nabla^g}\!h(h \cdot ,h \cdot)=h \sharp \di_{\nabla^g}\!h^2-\di_{\nabla^g}\!h^3+h \circ [h,h]^{\FN}.
$$
\end{pro}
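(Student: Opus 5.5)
The identity is pointwise and tensorial, so the plan is a direct expansion in terms of $\nabla^g h$ at a point, after first eliminating the Fr\"olicher-Nijenhuis bracket with \eqref{bra-nac}. By \eqref{bra-nac} we have $[h,h]^{\FN}=-h\sharp \di_{\nabla^g}h+\di_{\nabla^g}h^2$. Composing with $h$, the right-hand side of the claim becomes
\begin{equation*}
h \sharp \di_{\nabla^g}h^2-\di_{\nabla^g}h^3-h\circ (h \sharp \di_{\nabla^g}h)+h \circ \di_{\nabla^g}h^2 .
\end{equation*}
No symmetry of $h$ or curvature input should be needed; only the Leibniz rule for $\nabla^g$ enters.

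Next I expand each of the four terms at tangent vectors $X,Y$. I use $\di_{\nabla^g}H(X,Y)=(\nabla^g_XH)Y-(\nabla^g_YH)X$ together with
\begin{equation*}
\nabla^g_X h^2=(\nabla^g_Xh)h+h(\nabla^g_Xh), \qquad \nabla^g_X h^3=(\nabla^g_Xh)h^2+h(\nabla^g_Xh)h+h^2(\nabla^g_Xh).
\end{equation*}
For instance, $h\sharp \di_{\nabla^g}h^2(X,Y)=\di_{\nabla^g}h^2(hX,Y)+\di_{\nabla^g}h^2(X,hY)$ produces eight terms. These are of the form $(\nabla^g_{hX}h)hY$, $h(\nabla^g_{hX}h)Y$, $(\nabla^g_Xh)h^2Y$, $h(\nabla^g_Xh)hY$ and their $X\leftrightarrow Y$ counterparts with minus signs. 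Similarly $h\circ(h\sharp\di_{\nabla^g}h)(X,Y)$ equals $h$ applied to $(\nabla^g_{hX}h)Y+(\nabla^g_Xh)hY$, minus the swap.

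Then I cancel terms in three groups. First, $h\sharp\di_{\nabla^g}h^2-h\circ(h\sharp \di_{\nabla^g}h)$ removes every term with a leading $h$. It leaves
\begin{equation*}
(\nabla^g_{hX}h)hY+(\nabla^g_Xh)h^2Y-(X\leftrightarrow Y).
\end{equation*}
Second, subtracting $\di_{\nabla^g}h^3$ removes the $(\nabla^g_Xh)h^2Y$ terms. It also introduces $-h(\nabla^g_Xh)hY-h^2(\nabla^g_Xh)Y$ and their swaps with opposite sign. Third, these are exactly cancelled by $h\circ\di_{\nabla^g}h^2$. What survives is $(\nabla^g_{hX}h)hY-(\nabla^g_{hY}h)hX=\di_{\nabla^g}h(hX,hY)$, which is the left-hand side.

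The only real obstacle is bookkeeping: keeping the $X\leftrightarrow Y$ antisymmetrisation and the ordering of non-commuting factors $h$ and $\nabla^g h$ straight. I would organise the terms by how many factors of $h$ sit to the left and to the right of $\nabla^g h$, and by whether the derivative direction is $X$ or $hX$. This makes each cancellation visible.
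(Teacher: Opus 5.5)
Your proof is correct and takes essentially the same approach as the paper: a pointwise verification that uses only the Leibniz rule, and your three groups of cancellations check out. The only difference is the starting point. The paper begins from \eqref{bra-na} and regroups $h\circ[h,h]^{\FN}$ via $h(\nabla^g_Xh)=\nabla^g_Xh^2-(\nabla^g_Xh)h$, whereas you begin from the equivalent form \eqref{bra-nac} and expand every term into monomials in $h$ and $\nabla^g h$.
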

\begin{proof}
This is a purely tensorial computation based on \eqref{bra-na}. Indeed, using the product rule  $h(\nabla^g_Xh)=\nabla^g_Xh^2-(\nabla^g_X h)h$ we obtain 
\begin{equation*}
\begin{split}
h \circ [h,h]^{\FN}(X,Y)=&-h(\nabla^g_{hX}h)Y+h(\nabla^g_{hY}h)X+(h^2 \circ \di_{\nabla^g}\!h)(X,Y)\\
=&-(\nabla^g_{hX}h^2)Y+(\nabla^g_{hX}h)hY+(\nabla^g_{hY}h^2)X-(\nabla^g_{hY}h)hX+(h^2 \circ \di_{\nabla^g}\!h)(X,Y)\\
=&\di_{\nabla^g}\!h(hX,hY)-(\nabla^g_{hX}h^2)Y+(\nabla^g_{hY}h^2)X+(h^2 \circ \di_{\nabla^g}\!h)(X,Y)\\
=&\di_{\nabla^g}\!h(hX,hY)-\di_{\nabla^g}\!h^2(hX,Y)-(\nabla^g_Yh^2)hX\\
+&\di_{\nabla^g}\!h^2(hY,X)+(\nabla^g_Xh^2)hY+(h^2 \circ \di_{\nabla^g}\!h)(X,Y)\\
=&\di_{\nabla^g}\!h(hX,hY)-(h \sharp\di_{\nabla^g}\!h^2)(X,Y)\\
+&(\nabla^g_Xh^2)hY-(\nabla^g_Yh^2)hX+h^2\left ((\nabla^g_Xh)Y-(\nabla^g_Yh)X\right )\\
=&\di_{\nabla^g}\!h(hX,hY)-(h \sharp\di_{\nabla^g}\!h^2)(X,Y)+\di_{\nabla^g}\!h^3(X,Y)
\end{split}
\end{equation*}
which proves the claim.
\end{proof}
Note the above identity works on arbitrary Riemannian manifolds, not necessarily K\"ahler.
Returning to the K\"ahler set-up we derive below a few useful first consequences of Proposition \ref{id-main}.
\begin{coro} \label{ids-2}
Assume that $(M^{2m},g,J)$ is K\"ahler and that $h \in \Gamma \left (\Sym^{2,-}TM \right )$. Then 
$$ \la h \sharp \di_{\nabla^g}^{+}h^2, \bdel H\ra_{L^2}+\la h^2 \circ \bdel h,\bdel H\ra_{L^2}=\la \bdel h^3,\bdel H\ra_{L^2}$$
for all $H \in \Gamma \left (\Sym^{2,-}TM \right )$.
\end{coro}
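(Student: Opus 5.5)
The plan is to pair the identity of Proposition \ref{id-main} pointwise with $\bdel H$ and sort the terms by type. With $h\in\Gamma(\Sym^{2,-}TM)$, Proposition \ref{id-main} gives
$$ h \circ [h,h]^{\FN}=\di_{\nabla^g}\!h(h \cdot ,h \cdot)-h \sharp \di_{\nabla^g}\!h^2+\di_{\nabla^g}\!h^3 .$$
The test form $\bdel H$ is a section of $\lambda^2_{-}(M,TM)$. So only the $\lambda^2_{-}(M,TM)$-components of the terms above contribute to $\la \cdot ,\bdel H\ra_{L^2}$, and the main work is to determine each of these components.

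First the types. Since $h^2,h^{\pm}$ commute appropriately with $J$, we have $h^2\in\Sym^{2,+}TM$ and $h^3\in\Sym^{2,-}TM$. Hence $\di_{\nabla^g}\!h^3$ has $\lambda^2_{-}$-component $\bdel h^3$; its remaining part $\di_{\nabla^g}\!h^3(J\cdot,J\cdot)$-symmetrised lies in $\Lambda^{1,1}$, and by Proposition \ref{t-FNK}-type reasoning there is no $\lambda^2_+$ part. For $\di_{\nabla^g}\!h^2$, split it as $\di^{+}_{\nabla^g}h^2+\di^{-}_{\nabla^g}h^2$, with $\di^{-}_{\nabla^g}h^2\in\lambda^2_{+}(M,TM)$.

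Next I would check the algebraic rule for how $h\sharp$ and $h\circ$, with $h$ $J$-anti-commuting, move between types. If $\alpha\in\Lambda^{1,1}(M,TM)$, then $(h\sharp\alpha)(J\cdot,J\cdot)=-h\sharp\alpha$, so $h\sharp\alpha$ lies in $\lambda^2$. Also $(h\sharp\alpha)(X,JY)$ produces $-J$, so $h\sharp\alpha\in\lambda^2_-$. If $\alpha\in\lambda^2_{+}$, then $h\sharp\alpha\in\Lambda^{1,1}$. Similarly, $h\circ$ maps $\lambda^2_{\pm}\to\lambda^2_{\mp}$ and preserves $\Lambda^{1,1}$. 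Therefore the $\lambda^2_-$-component of $h\sharp\di_{\nabla^g}h^2$ is $h\sharp\di^{+}_{\nabla^g}h^2$. Using \eqref{t-FNKS}, the $\lambda^2_-$-component of $h\circ[h,h]^{\FN}$ is $h\circ(h\circ\bdel h)=h^2\circ\bdel h$, because $[h,h]^c\in\lambda^2_-$ goes to $\lambda^2_+$ under $h\circ$ and the $\Lambda^{1,1}$ part stays in $\Lambda^{1,1}$.

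The remaining term is $\di_{\nabla^g}\!h(h\cdot,h\cdot)$. Write $\di_{\nabla^g}h=\bdel h+\partial h$ with $\partial h\in\Lambda^{1,1}(M,TM)$. Since $h$ anticommutes with $J$, $\bdel h(h\cdot,h\cdot)$ stays in $\lambda^2$, and checking $(X,JY)$ gives the $\lambda^2_-$ condition up to sign. This term must therefore be controlled. Precomposition with $h$ in both slots changes the $JY$ behaviour twice, so it preserves the $\lambda^2_-$ condition, and $\bdel h(h\cdot,h\cdot)\in\lambda^2_-$. The $\Lambda^{1,1}$ part stays $\Lambda^{1,1}$. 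I expect this to be the main obstacle: the statement suggests the $\lambda^2_-$ contribution of $\di_{\nabla^g}h(h\cdot,h\cdot)$ is exactly what gets absorbed. Assembling, the $\lambda^2_-$-projection of Proposition \ref{id-main} reads
$$ h^2\circ\bdel h=\bdel h(h\cdot,h\cdot)-h\sharp\di^+_{\nabla^g}h^2+\bdel h^3 .$$
To reach the claimed identity I then need $\la\bdel h(h\cdot,h\cdot),\bdel H\ra_{L^2}=2\la h^2\circ\bdel h,\bdel H\ra_{L^2}$, or some equivalent pointwise algebraic relation on $\lambda^2_-(M,TM)$. I would try first to prove that on $\lambda^2_-$ one has $\gamma(h\cdot,h\cdot)=h^2\circ\gamma$-type identities, using $hJ=-Jh$ and $\gamma(X,JY)=-J\gamma(X,Y)$ in a complex frame where $\lambda^2_-\simeq\Lambda^{0,2}\otimes T^{0,1}$. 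If the coefficients do not match, the fallback is to redo the projection more carefully, keeping track of which sign conventions in $\bdel$ versus $\di_{\nabla^g}$ produce the factor, and to possibly use the pointwise inner-product adjointness of $h\sharp$ and $h\circ$.
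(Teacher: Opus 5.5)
You set the argument up exactly as the paper does: project the identity of Proposition \ref{id-main} by type, then pair with $\bdel H\in\lambda^2_{-}(M,TM)$. Most of your type bookkeeping is right. The gap is the type you assign to $\bdel h(h\cdot,h\cdot)$.

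\textbf{The type of $\bdel h(h\cdot,h\cdot)$.} The condition defining $\lambda^2_{\pm}(M,TM)$ involves $J$ only in the second slot. So the $h$ in the first slot plays no role, and $hJ=-Jh$ in the second slot produces exactly one sign change:
$$\bdel h(hX,hJY)=-\bdel h(hX,JhY)=J\,\bdel h(hX,hY).$$
Two sign changes cancel only in the test $(JX,JY)$, which correctly shows the form lies in $\lambda^2(M,TM)$. Hence $\bdel h(h\cdot,h\cdot)\in\lambda^2_{+}(M,TM)$, not $\lambda^2_{-}$.

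It is therefore pointwise orthogonal to $\bdel H$ and drops out. The $\lambda^2_-$-projection then reads $h^2\circ\bdel h=-\bigl(h\sharp\di^{+}_{\nabla^g}h^2\bigr)_{\lambda^2_-}+\bdel h^3$. Pairing with $\bdel H$ gives the claim directly; this is exactly the paper's proof.

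\textbf{Why your stated obstacle cannot be resolved.} You would need $\la \bdel h(h\cdot,h\cdot),\bdel H\ra_{L^2}=2\la h^2\circ\bdel h,\bdel H\ra_{L^2}$. The left side vanishes identically. The right side does not: for $H=h$ it equals $2\Vert h\circ\bdel h\Vert^2_{L^2}$. So no pointwise identity on $\lambda^2_-$ can rescue that step; the fix is the sign correction above.

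\textbf{A smaller inaccuracy.} You claim $h\sharp\alpha\in\lambda^2_{-}(M,TM)$ for every $\alpha\in\Lambda^{1,1}(M,TM)$. This is false in general: one only gets $h\sharp\alpha\in\lambda^2(M,TM)$. For instance, $\alpha=\beta\otimes V$ with $\beta\in\Lambda^{1,1}M$ gives a decomposable element of $\lambda^2M\otimes TM$ with nonzero components in both $\lambda^2_{\pm}(M,TM)$ whenever $h\sharp\beta\neq 0$.

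This is harmless here, because pairing with $\bdel H$ only sees the $\lambda^2_-$-part. State the step instead as: the $\lambda^2_-$-components of $h\sharp\di_{\nabla^g}h^2$ and $h\sharp\di^{+}_{\nabla^g}h^2$ agree. Alternatively, as the paper does, first project onto all of $\lambda^2(M,TM)$.
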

\begin{proof}
According to the comparison formula \eqref{t-FNKS} the component of $[h,h]^{\FN}$ on $\Gamma \left (\lambda^2(M,TM) \right )$ is given by $[h,h]^c+h \circ \bdel h$. Projecting the identity in Proposition \ref{id-main} onto $\lambda^2(M,TM)$ thus yields 
\begin{equation*}
\bdel h(h \cdot, h \cdot)=h \sharp \di_{\nabla^g}^{+}h^2-\bdel h^3+h \circ [h,h]^c+h^2 \circ \bdel h.
\end{equation*}
Now take into account that $\bdel h(h \cdot, h \cdot)$ and $h \circ [h,h]^c$ belong to $\lambda^2_{+}(M,TM)$. Thus taking the scalar 
product with $\bdel H$, which belongs to $\lambda^2_{-}(M,TM)$ leads to the claim.
\end{proof}
In fact a more general version of the above Corollary can be proved directly and reads as follows. Below we indicate 
with $\widetilde{\Delta}_E^{-}$ the component on $\Gamma \left (\Sym^{2,-}TM \right )$ of the restriction of $\widetilde{\Delta}_E$ to 
$\Gamma \left (\Sym^{2,-}TM \right )$; since the Einstein operator preserves 
$\Gamma \left (\Sym^{2,-}TM \right )$ and elements of the latter space are trace-free we have 
$$\widetilde{\Delta}_E^{-}=\Delta_E-2\delta^{\star_g,-} \circ \delta^g.
$$
\begin{pro} \label{ids-3}
Assume that $(M^{2m},g,J)$ is K\"ahler and that $H_{\pm} \in \Gamma \left (\Sym^{2,\pm}TM \right )$. Then 
\begin{equation} \label{ids-K5}
\left (H_{-} \sharp \di_{\nabla^g}^{+}H_{+} \right )_{\lambda^2_{-}(M,TM)}=\bdel (H_{+} \circ H_{-})-H_{+} \circ \bdel H_{-}.
\end{equation}
In particular we have 
\begin{equation} \label{ids4-K}
\la H_{-} \sharp \bdel H_{-},\di_{\nabla^g}H_{+}\ra_{L^2}+\la H_{+} \circ \bdel H_{-},\bdel H_{-} \ra_{L^2}=\tfrac{1}{4}
\la \{H_{-}, \widetilde{\Delta}_E^{-} H_{-}\}+[H_{-},\di^{-} \delta^gH_{-}],H_{+} \ra_{L^2}.
\end{equation}
\end{pro}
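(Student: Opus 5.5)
We first prove the pointwise identity \eqref{ids-K5} by a direct tensorial computation, in the spirit of Proposition \ref{id-main}. Expand $\di_{\nabla^g}(H_{+}\circ H_{-})$ with the Leibniz rule:
\[
\di_{\nabla^g}(H_{+}H_{-})(X,Y)=(\nabla^g_XH_{+})H_{-}Y-(\nabla^g_YH_{+})H_{-}X+(H_{+}\circ \di_{\nabla^g}H_{-})(X,Y).
\]
Since $H_{+}H_{-}$ anticommutes with $J$, its $\bdel$ is defined by the same formula as for elements of $\Sym^{2,-}TM$; this holds even though $H_{+}H_{-}$ need not be symmetric, because the definition of $\bdel$ is purely in terms of $\di_{\nabla^g}$ and $J$.

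Next, project onto $\lambda^2(M,TM)$ by averaging with $(J\cdot,J\cdot)$. The term $H_{+}\circ\di_{\nabla^g}H_{-}$ projects to $H_{+}\circ \bdel H_{-}$, and this already lies in $\lambda^2_{-}$ since $H_{+}$ commutes with $J$. For the remaining terms we rewrite $(\nabla^g_XH_{+})H_{-}Y-(\nabla^g_YH_{+})H_{-}X$ as
\[
\di_{\nabla^g}H_{+}(X,H_{-}Y)+\di_{\nabla^g}H_{+}(H_{-}X,Y)+(\nabla^g_{H_{-}Y}H_{+})X-(\nabla^g_{H_{-}X}H_{+})Y,
\]
which equals $(H_{-}\sharp \di_{\nabla^g}H_{+})(X,Y)$ plus correction terms. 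We then show that, once we project to $\lambda^2_{-}$, the correction terms are exactly what is needed to pass from $\di_{\nabla^g}H_{+}$ to $\di^{+}_{\nabla^g}H_{+}$. The tools are $\nabla^gJ=0$ (so $\nabla^g_ZH_{+}$ commutes with $J$) and $H_{-}J=-JH_{-}$.

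Concretely, we decompose $\di_{\nabla^g}H_{+}=\di^{+}_{\nabla^g}H_{+}+\di^{-}_{\nabla^g}H_{+}$. We check type by type:
\begin{itemize}
\item[(a)] $H_{-}\sharp\di^{-}_{\nabla^g}H_{+}$ has no $\lambda^2_{-}$ component. Since $\di^{-}_{\nabla^g}H_{+}\in\lambda^2_{+}$ and $H_{-}$ anticommutes with $J$, the $\sharp$-action lands in $\Lambda^{1,1}$ or $\lambda^2_{+}$.
\item[(b)] The terms $(\nabla^g_{H_{-}Y}H_{+})X-(\nabla^g_{H_{-}X}H_{+})Y$ project onto $\lambda^2_{-}$ with the correct sign to convert $\Lambda^{1,1}$ pieces.
\end{itemize}
This is a finite, if tedious, verification. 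To keep it manageable we will work with complex frames of $T^{1,0}\oplus T^{0,1}$, where the types are transparent.

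For the integral identity \eqref{ids4-K}, we take the $L^2$ product of \eqref{ids-K5} with $\bdel H_{-}\in\lambda^2_{-}(M,TM)$. Because $\bdel H_{-}$ is of type $\lambda^2_{-}$, the pairing $\la H_{-}\sharp \bdel H_{-},\di_{\nabla^g}H_{+}\ra_{L^2}$ only sees $\di^{+}_{\nabla^g}H_{+}$, and the adjoint of $\sharp$ turns this into $\la \bdel H_{-},(H_{-}\sharp\di^{+}_{\nabla^g}H_{+})_{\lambda^2_{-}}\ra_{L^2}$. Substituting \eqref{ids-K5}, the left side of \eqref{ids4-K} becomes
\[
\la \bdel H_{-},\bdel(H_{+}H_{-})\ra_{L^2}=\la \delta^g\bdel H_{-},H_{+}H_{-}\ra_{L^2}.
\]
By Proposition \ref{wz_K}(ii), $\delta^g\bdel H_{-}=\tfrac12\widetilde{\Delta}^{-}_EH_{-}-\tfrac12\di^{-}\delta^gH_{-}$, using $\widetilde{\Delta}^{-}_E=\Delta_E-2\delta^{\star_g,-}\delta^g$. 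Pairing against $H_{+}H_{-}$ and symmetrising (since $H_{+}$ is symmetric, only the symmetric or skew parts of $H_{-}\cdot(\text{term})$ survive) produces
\[
\tfrac14\la\{H_{-},\widetilde{\Delta}^{-}_EH_{-}\},H_{+}\ra_{L^2}+\tfrac14\la[H_{-},\di^{-}\delta^gH_{-}],H_{+}\ra_{L^2},
\]
where the 2-form $\di^{-}\delta^gH_{-}$ is viewed as a skew endomorphism. Sign conventions for the commutator may need adjustment.

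We expect the main obstacle to be the type bookkeeping in \eqref{ids-K5}, specifically confirming that no stray $\lambda^2_{-}$ contribution arises from $\di^{-}_{\nabla^g}H_{+}$ and that the correction terms cancel exactly. We would verify this first in complex indices before writing it out.
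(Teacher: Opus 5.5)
Your plan follows the paper's proof. The pointwise identity \eqref{ids-K5} is proved by a tensorial computation; you then pair with $\bdel H_-$, integrate by parts to $\la \delta^g\bdel H_-,H_+\circ H_-\ra_{L^2}$, and split $H_+\circ H_-=\tfrac12\{H_+,H_-\}+\tfrac12[H_+,H_-]$ against Proposition \ref{wz_K}(ii). That second half is essentially the paper's argument. The signs come out as stated, since $\la A,[H_+,H_-]\ra=\la [A,H_-],H_+\ra$ for skew $A$. One small correction: the first pairing only sees $\di^+_{\nabla^g}H_+$ because $H_-\sharp$ maps $\lambda^2(M,TM)$ into $\Lambda^{1,1}(M,TM)$, so $H_-\sharp\bdel H_-\in\Omega^{1,1}(M,TM)$.

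The gap is step (b). You leave it undone, and as phrased it describes the wrong mechanism. Put $B(X,Y)=(\nabla^g_XH_+)H_-Y-(\nabla^g_YH_+)H_-X$ and $C(X,Y)=(\nabla^g_{H_-Y}H_+)X-(\nabla^g_{H_-X}H_+)Y$. Your rewriting gives $B=H_-\sharp\di^+_{\nabla^g}H_++H_-\sharp\di^-_{\nabla^g}H_++C$. The right-hand side of \eqref{ids-K5} is the $\lambda^2$-part of $B$, and it already lies in $\lambda^2_-(M,TM)$: $\bdel$ of any endomorphism anticommuting with $J$ lies there, as does $H_+\circ\bdel H_-$. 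By your own (a), $H_-\sharp\di^-_{\nabla^g}H_+\in\Omega^{1,1}(M,TM)$, so replacing $\di_{\nabla^g}H_+$ by $\di^+_{\nabla^g}H_+$ costs nothing, and there is nothing for $C$ to convert. What must be shown is precisely that $C$ has \emph{zero} $\lambda^2_-$-component; if it contributed there, as (b) suggests, the identity would be false. This is true and short. The $\lambda^2$-part of $C$ is
\[
\beta(X,Y)=\tfrac12\bigl((\nabla^g_{H_-Y}H_+)X-(\nabla^g_{H_-X}H_+)Y+(\nabla^g_{JH_-Y}H_+)JX-(\nabla^g_{JH_-X}H_+)JY\bigr).
\]
Using $(\nabla^g_ZH_+)J=J(\nabla^g_ZH_+)$ and $JH_-J=H_-$, one checks directly that $\beta(X,JY)=J\beta(X,Y)$, i.e.\ $\beta\in\lambda^2_+(M,TM)$. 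With that line the proof is complete, and no complex frames are needed. It is the same computation as the paper's direct evaluation of $2(H_-\sharp\di^+_{\nabla^g}H_+)_{\lambda^2_-}$. There, the terms differentiated along $H_-X,H_-Y,JH_-X,JH_-Y$ (together equal to $-2\beta$) drop out, and the remaining terms give twice the $\lambda^2$-part of $B$.
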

\begin{proof}
Whenever $\gamma$ belongs to $\lambda^2(M,TM)$ we indicate with $\gamma_{-}$ its component on $\lambda^2_{-}(M,TM)$; explicitely 
$2\gamma_{-}(X,Y)=\gamma(X,Y)+J\gamma(X,JY)$. Now let $\gamma:=H_{-} \sharp \di_{\nabla^g}^{+}H_{+}$ and recall that 
$2\di_{\nabla^g}^{+}H_{+}=\di_{\nabla^g}H_{+}+\di_{\nabla^g}H_{+}(J \cdot ,J\cdot)$. A straightforward tensorial computation which only uses that $H_{\pm}J=\pm J H_{\pm}$ and the expansion of $\di_{\nabla^g}$ in terms of $\nabla^g$ reveals that 
\begin{equation*}
2 \gamma_{-}(X,Y)=(\nabla^g_XH_{+})H_{-}Y-(\nabla^g_YH_{+})H_{-}X+(\nabla^g_{JX}H_{+})JH_{-}Y-(\nabla^g_{JY}H_{+})JH_{-}X.
\end{equation*}
The product rule for $\nabla^g$ combined with the definition of $\bdel$ thus leads to having the equality 
$\gamma_{-}=\bdel (H_{+} \circ H_{-})-H_{+} \circ \bdel H_{-}$ and the first claim is proved. To prove \eqref{ids4-K} we first observe that 
$H_{-} \sharp \bdel H_{-}$ belongs to $\Omega^{1,1}(M,TM)$; hence 
\begin{equation*}
\la H_{-} \sharp \bdel H_{-},\di_{\nabla^g}H_{+}\ra_{L^2}=\la H_{-} \sharp \bdel H_{-},\di^{+}_{\nabla^g}H_{+}\ra_{L^2}=
\la \bdel H_{-},H_{-} \sharp \di^{+}_{\nabla^g}H_{+}\ra_{L^2}.
\end{equation*}
Because $\bdel H_{-}$ is a section of  $\lambda^2_{-}(M,TM)$ using \eqref{ids-K5} shows that 
\begin{equation*} 
\begin{split}
\la H_{-} \sharp \bdel H_{-},\di_{\nabla^g}H_{+}\ra_{L^2}+\la H_{+} \circ \bdel H_{-},\bdel H_{-} \ra_{L^2}=&\la  \bdel H_{-},
\bdel (H_{+} \circ H_{-})\\
=&\la \delta^g \bdel H_{-},H_{+} \circ H_{-}\ra_{L^2}.
\end{split}
\end{equation*}
By part (ii) in Proposition \ref{wz_K} we have the type decomposition 
$\delta^g \bdel H_{-}=\tfrac{1}{2}(\widetilde{\Delta}^{-}_E-\di^{-} \circ \delta^g)H_{-}$ in 
$\Gamma \left ( \Sym^{2,-}TM \right ) \oplus \Gamma \left (\lambda^2M \right )$; furthermore the composition $H_{+} \circ H_{-}$ is $J$-anti-invariant and splits as $H_{+} \circ H_{-}=\tfrac{1}{2}\{H_{+},H_{-}\}+
\tfrac{1}{2}[H_{+},H_{-}]$ into symmetric respectively skew-symmetric components. It follows that 
\begin{equation*} 
\begin{split}
\la \delta^g \bdel H_{-},H_{+} \circ H_{-}\ra_{L^2}=&\tfrac{1}{2}\la  \widetilde{\Delta}^{-}_E H_{-}-\di^{-} \delta^g H_{-},H_{+} \circ H_{-} \ra_{L^2}\\
=&
\tfrac{1}{4}\la  \{H_{-},\widetilde{\Delta}^{-}_E H_{-} \}+[H_{-},\di^{-} \delta^gH_{-}],H_{+} \ra_{L^2}
\end{split}
\end{equation*}
after also taking into account that $H_{\pm}$ are symmetric tensors. This finishes the proof of the claim.
\end{proof}
\subsection{Computation of $\bfv(h,h^2)$} \label{v12}
We begin by recalling that the restriction of $\bfv$ to $\Gamma \left (\Sym^{2,-}TM \right )$ can be determined solely in terms of the complex bracket.
\begin{lema}\cite{N1}[Proposition 4.14]\label{3-}
Assume that $h_i \in \Gamma \left (\Sym^{2,-}TM \right )$ for $1 \leq i \leq 3$. Then 
$$ \bfv(h_1,h_2,h_3)=2 \mathfrak{S}_{abc} \la [h_a,h_b]^c, \bdel h_c\ra_{L^2}-\tfrac{1}{2} \mathfrak{S}_{abc} \la \delta^{g}\{h_a,h_b \}, \delta^gh_c\ra_{L^2}
$$
with cyclic permutations on the indices $abc$.
\end{lema}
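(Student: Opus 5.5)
The plan is to compute the trilinear form $\bfv(h_1,h_2,h_3):=\la \bfv(h_1,h_2),h_3\ra_{L^2}$ from its definition \eqref{bfv-def} as a cyclic sum of terms $\la \delta^g[h_a,h_b]^{\FN},h_c\ra_{L^2}=\la [h_a,h_b]^{\FN},\di_{\nabla^g}h_c\ra_{L^2}$. Since the expression is cyclically symmetric and the bracket is symmetric, it suffices to treat the diagonal case $h_1=h_2=h$, $h_3=H$, and recover the general statement by polarisation. Thus the first step is to compute $\la [h,h]^{\FN},\di_{\nabla^g}H\ra_{L^2}$ for $h,H\in\Gamma(\Sym^{2,-}TM)$ via type decomposition.

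Next we use the comparison formula \eqref{t-FNKS} to split $[h,h]^{\FN}=[h,h]^c+h\circ\bdel h+(\di^{+}_{\nabla^g}h^2-h\sharp\bdel h)$ along $\lambda^2_{-}\oplus\lambda^2_{+}\oplus\Lambda^{1,1}$. We also decompose $\di_{\nabla^g}H$ for $H$ anti-invariant. Its $\lambda^2$ part is $\bdel H\in\lambda^2_{-}(M,TM)$, so it has no $\lambda^2_{+}$ component, and its $(1,1)$ part is $\di_{\nabla^g}H-\bdel H$. Pairing component by component gives $\la [h,h]^c,\bdel H\ra_{L^2}$ plus a $(1,1)$ cross term $\la \di^{+}_{\nabla^g}h^2-h\sharp\bdel h,\di_{\nabla^g}H-\bdel H\ra_{L^2}$. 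The $\lambda^2_{+}$ term $h\circ\bdel h$ drops out.

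The main obstacle is showing that, after cyclic symmetrisation, these $(1,1)$ terms reduce to the divergence terms $-\tfrac12\la\delta^g\{h_a,h_b\},\delta^g h_c\ra_{L^2}$. I would rewrite $\la \di_{\nabla^g}h^2,\di_{\nabla^g}H\ra$ through \eqref{wz1} as $\la \delta^g\di_{\nabla^g}h^2,H\ra$. I would also use the identity from Lemma \ref{sym-bra}(ii) and Proposition \ref{o3-p} applied with all three tensors anti-invariant. In that setting the $\mathrm{L}$ and $\mathbf{q}$ terms combine via Lemmas \ref{q} and \ref{L-exp}: since $h^2$ and $\{h_a,h_b\}$ are $J$-invariant while $H$ is anti-invariant, inner products such as $\la h^2,\ring{R}H\ra$ vanish by type. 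What survives is $\mathbf{q}$ terms producing $\la\delta^g(h_ah_b),\delta^g h_c\ra$, and symmetrising these yields the $\delta^g\{h_a,h_b\}$ term with coefficient $-\tfrac12$.

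Finally, the pieces are assembled. The $\lambda^2_{-}$ contributions summed cyclically give $2\mathfrak{S}\la[h_a,h_b]^c,\bdel h_c\ra_{L^2}$; the factor $2$ comes from the factor in \eqref{bfv-1} together with polarisation. Checking this numerical constant carefully is where I expect the bookkeeping to be delicate, and I would verify it against the known second-order formula \eqref{t-FNKS1}.
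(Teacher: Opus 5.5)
Your set-up is sound: expand $\bfv$ via \eqref{bfv-def} as a cyclic sum of $\la [h_a,h_b]^{\FN},\di_{\nabla^g}h_c\ra_{L^2}$, split with the polarised \eqref{t-FNKS}, and drop the $\lambda^2_{+}$ term. The gap is at the step you call the main obstacle, and it changes the answer. The $\lambda^2_{-}$-pairing gives $\la [h_a,h_b]^c,\bdel h_c\ra_{L^2}$ with coefficient exactly $1$. After cyclic summation it therefore accounts only for $\mathfrak{S}_{abc}\la [h_a,h_b]^c,\bdel h_c\ra_{L^2}$.

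The factor $2$ cannot come from \eqref{bfv-1} or from polarisation. You compute from \eqref{bfv-def}, and in \eqref{bfv-1} the factor $2$ multiplies only one of the three cyclic bracket terms. So your claim that the $\Lambda^{1,1}$ cross terms reduce to the divergence terms alone is false: they must supply a second copy of $\mathfrak{S}_{abc}\la [h_a,h_b]^c,\bdel h_c\ra_{L^2}$.

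The check you propose already shows this. For $\bdel h=0$, $\delta^gh=0$ and $H\in\Gamma(\Sym^{2,-}TM)$, \eqref{t-FNKS1} gives $\la \delta^g[h,h]^{\FN},H\ra_{L^2}=\la [h,h]^c,\bdel H\ra_{L^2}-\la \delta^gh^2,\delta^gH\ra_{L^2}$, which has a single copy of the bracket. The statement instead predicts $\bfv(h,h,H)=2\la [h,h]^c,\bdel H\ra_{L^2}-\la \delta^gh^2,\delta^gH\ra_{L^2}$. The second copy comes from the other cyclic term $2\la [h,H]^{\FN},\di_{\nabla^g}h\ra_{L^2}$, through its $h\sharp\bdel H$ part.

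The missing idea is as follows. Put $\partial^gh_c:=\di_{\nabla^g}h_c-\bdel h_c$. The $\Lambda^{1,1}$-component of $[h_a,h_b]^{\FN}$ is $\tfrac12\di^{+}_{\nabla^g}\{h_a,h_b\}-\tfrac12(h_a\sharp\bdel h_b+h_b\sharp\bdel h_a)$, and its two pieces do different jobs.

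The first piece, paired with $\partial^gh_c$, equals $\tfrac12\la \di_{\nabla^g}\{h_a,h_b\},\di_{\nabla^g}h_c\ra_{L^2}=-\tfrac12\la \delta^g\{h_a,h_b\},\delta^gh_c\ra_{L^2}$. This follows from \eqref{wz1} and type: the $\lambda^2$-part of $\di_{\nabla^g}\{h_a,h_b\}$ lies in $\lambda^2_{+}$, and $(\Delta_E+E+\ring{R})h_c$ is $J$-anti-invariant. This piece gives all of the divergence term.

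For the second piece, use two facts. First, $h\sharp$ is symmetric. Second, for $J$-anti-invariant $h$ it interchanges $\lambda^2(M,TM)$ and $\Lambda^{1,1}(M,TM)$, so $\partial^g$ may be replaced by $\di_{\nabla^g}$. Then regroup the cyclic sum by which entry carries $\bdel$, and apply \eqref{bra-nac2} together with type once more:
\begin{equation*}
-\tfrac12\,\mathfrak{S}_{abc}\la \bdel h_c, h_a\sharp\di_{\nabla^g}h_b+h_b\sharp\di_{\nabla^g}h_a\ra_{L^2}=-\tfrac12\,\mathfrak{S}_{abc}\la \bdel h_c,\di_{\nabla^g}\{h_a,h_b\}-2[h_a,h_b]^{\FN}\ra_{L^2}=\mathfrak{S}_{abc}\la [h_a,h_b]^c,\bdel h_c\ra_{L^2}.
\end{equation*}
This is the second copy. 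The tools you list instead do not fit here. Lemma \ref{sym-bra} assumes $\delta^gh=0$, which the statement does not. Proposition \ref{o3-p} produces terms such as $\la \nabla^g_{h_ae_i}h_b,\nabla^g_{e_i}h_c\ra_{L^2}$, which do not vanish by type for three $J$-anti-invariant tensors, and your sketch does not account for them.
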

Furthermore we define the operator 
\begin{equation} \label{div-1}
\begin{split}
 \mathrm{div}_1(h_1,h_2):=&\tfrac{1}{2} \{\delta^{\star_g,-}\delta^g h_1,h_2\}+\tfrac{1}{2}\{h_1,\delta^{\star_g,-}\delta^gh_2\}-\delta^{\star_g,+}
(h_1 \delta^gh_2+h_2\delta^gh_1)\\
+&\tfrac{1}{4}[h_1,\di^{-} \delta^gh_2]+\tfrac{1}{4}[h_2,\di^{-} \delta^gh_1]
\end{split}
\end{equation}
for all $h_1,h_2$ in $\Gamma \left (\Sym^{2,-}TM \right )$. We also define 
\begin{equation*}
2\mathbf{D}_1(h_1,h_2)=\{\ring{R}h_1,h_2\}+\{h_1,\ring{R}h_2\}-(\tfrac{1}{2}\Delta_E-\ring{R})\{h_1,h_2\}+
\delta^{\star_g}(2\delta^g+\tfrac{1}{2}\di\!\tr)\{h_1,h_2\}
\end{equation*}
and $\mathbf{D}(h_1,h_2)=\mathbf{D}_1(h_1,h_2)+\mathrm{div}_1(h_1,h_2)$. We will frequently use the shorthand notation $\mathbf{D}(h)=\mathbf{D}(h,h)$ in what follows. Record that the operator $\mathbf{D}$ enters the following 
\begin{pro} \label{hash-o}\cite{N1}[Proposition 4.19]
Assume that $H_{\pm}$ belong to $\Gamma \left (\Sym^{2,\pm}TM \right )$. Then 
\begin{equation*}
\begin{split}
\la H_{+} \sharp \di_{\nabla^g}H_{-}, \di_{\nabla^g}H_{-} \ra_{L^2}=&\la \mathbf{D}(H_{-},H_{-}) ,H_{+}\ra_{L^2}+2 \la \delta^g (H_{-} \circ \bdel H_{-} ), H_{+}\ra_{L^2}\\
+&2\la H_{+} \sharp \bdel H_{-}, \bdel H_{-} \ra_{L^2}-2 \la \bdel H_{-}, \delta^gH_{+} \wedge H_{-}\ra_{L^2}
\end{split}
\end{equation*}
\end{pro}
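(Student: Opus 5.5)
The identity is to be verified pointwise up to divergence terms, organised through the general cubic identity of Proposition \ref{o3-p}. Specialise that proposition to $h_1=H_{+}$ and $h_2=h_3=H_{-}$. This expresses $\la H_{+} \sharp \di_{\nabla^g}H_{-}, \di_{\nabla^g}H_{-} \ra_{L^2}$ through four kinds of terms:
\begin{itemize}
\item[(a)] the term $\la \nabla^g_{H_{+}e_i}H_{-},\nabla^g_{e_i}H_{-}\ra_{L^2}$;
\item[(b)] the term $-2\la \di_{\nabla^g}H_{+},H_{-}\circ \di_{\nabla^g}H_{-}\ra_{L^2}$;
\item[(c)] the $\mathrm{L}$-terms $\la H_{+},\mathrm{L}(H_{-},H_{-})\ra_{L^2}+2\la H_{-},\mathrm{L}(H_{+},H_{-})\ra_{L^2}$;
\item[(d)] the term $-2\mathbf{q}(H_{-}\circ H_{+},H_{-})$.
\end{itemize}

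The terms (c) and (d) are purely of zeroth order in the curvature plus divergences. Use Lemma \ref{L-exp} for $\mathrm{L}$ and Lemma \ref{q} for $\mathbf{q}$. Then, after moving everything onto $H_{+}$, these become the expressions $\{\ring{R}H_{-},H_{-}\}$, $(\tfrac12\Delta_E-\ring{R})H_{-}^2$ and $\delta^{\star_g}\delta^g$-type contributions that make up $\mathbf{D}_1(H_{-},H_{-})$. Part of $\mathrm{div}_1$ also arises from $\la\delta^g(H_{-}H_{+}),\delta^gH_{-}\ra$, expanded with the product rule and split into types via \eqref{delta-K}.

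For (b), decompose $\di_{\nabla^g}H_{-}$ into $\bdel H_{-}$ plus its $\Lambda^{1,1}$/$\lambda^2_{+}$ remainder, and decompose $\di_{\nabla^g}H_{+}=\di^{+}_{\nabla^g}H_{+}+\di^{-}_{\nabla^g}H_{-}$. Pairing is type-preserving, and $H_{-}\circ\bdel H_{-}\in\lambda^2_{+}$, so integrating by parts gives $2\la\delta^g(H_{-}\circ\bdel H_{-}),H_{+}\ra$. The remaining $\Lambda^{1,1}$ part of $\di_{\nabla^g}H_{-}$ has to be controlled by $\delta^gH_{-}$, since $\di_{\nabla^g}H_{-}-\bdel H_{-}$ is a $\partial$-type term. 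That $\partial$-term is identified with $\delta^gH_{-}$ and $\bdel H_{-}$ via the Kähler identities behind Proposition \ref{wz_K}(ii), and this produces the $\delta^gH_{+}\wedge H_{-}$ term and further $\mathrm{div}_1$ pieces.

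Term (a) is treated by splitting $\nabla^g_{e_i}H_{-}$ into $(1,0)$ and $(0,1)$ parts. The antilinear part is recognised as $\tfrac12\la H_{+}\sharp\bdel H_{-},\bdel H_{-}\ra$-type contributions, which after symmetrisation give the coefficient $2$. The complementary part is integrated by parts, using $[\nabla^g,J]=0$, to give curvature and divergence contributions absorbed into $\mathbf{D}$.

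The main obstacle is the bookkeeping in (a) and (b). One has to show that all first-order-in-$\partial$ contributions of $H_{-}$ recombine into divergence terms, namely $\delta^{\star_g,\pm}$ and $\di^{-}\delta^g$ applied to $H_{-}$, with exactly the coefficients in $\mathrm{div}_1$. I would first verify the formula when $\delta^gH_{-}=0$ and $\delta^gH_{+}=0$, where only $\mathbf{D}_1$ and the $\bdel$-terms survive. Then I would track the divergence corrections separately by polarisation, using Proposition \ref{ids-3} to convert $\lambda^2_{-}$ projections of $H_{-}\sharp\di^{+}_{\nabla^g}H_{+}$ into $\bdel(H_{+}H_{-})$.
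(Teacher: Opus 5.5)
The paper gives no proof of this statement; it quotes it from \cite{N1}. So I am judging your outline on its own. Starting from Proposition~\ref{o3-p} is legitimate, but the reductions you then make are incorrect, and the step that actually carries the content is missing.

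First, (c) and (d) do not assemble into $\mathbf{D}_1(H_-,H_-)$. Lemma~\ref{L-exp} gives $2\la H_-,\mathrm{L}(H_+,H_-)\ra_{L^2}=\la(\Delta_E+2\ring{R})H_-^2,H_+\ra_{L^2}$. Combining this with Lemma~\ref{q} yields
\begin{equation*}
\mathrm{(c)}+\mathrm{(d)}=\la \tfrac12\{\Delta_EH_-,H_-\}+\tfrac12\Delta_EH_-^2+\ring{R}H_-^2+2EH_-^2,H_+\ra_{L^2}-2\la\delta^g(H_-H_+),\delta^gH_-\ra_{L^2}.
\end{equation*}
Compared with $\mathbf{D}_1$, three things go wrong:
\begin{itemize}
\item the $\{\ring{R}H_-,H_-\}$ contributions of (c) and (d) cancel;
\item $\Delta_EH_-^2$ comes with the sign opposite to the one in $\mathbf{D}_1$;
\item a second order term $\{\Delta_EH_-,H_-\}$ appears, which is absent from the right hand side.
\end{itemize}
So everything has to come out of (a) and (b) after integration by parts, and that is exactly what you leave as bookkeeping.

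Second, the mechanisms you propose for (a) and (b) fail. Put $\nabla''_XH_-:=\tfrac12(\nabla^g_XH_-+J\nabla^g_{JX}H_-)$, so that $\bdel H_-(X,Y)=(\nabla''_XH_-)Y-(\nabla''_YH_-)X$. Then $\nabla''H_-$ is not determined by $\bdel H_-$: it has a totally symmetric component that the skew-symmetrisation kills.

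A concrete counterexample lives in complex dimension one. There $\lambda^2(M,TM)=0$, so $\bdel H_-\equiv0$. Take a hyperbolic surface of genus $\geq2$ and let $H_-$ be the real part of a non-zero holomorphic quadratic differential. Then $\delta^gH_-=0$, and $\nabla^gH_-=\nabla''H_-\neq0$. With $H_+=\id$, term (a) equals $\Vert\nabla^gH_-\Vert^2_{L^2}\neq0$, which is not a ``$\bdel$-type'' quantity.

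Likewise, for $m\geq2$, $\partial^gH_-$ is not determined by $\delta^gH_-$ and $\bdel H_-$. The formulas of Proposition~\ref{wz_K} are second order identities. They do not express $\partial^gH_-$ in terms of $\delta^g H_-$ and $\bdel H_-$. Inserting them into a pairing weighted by $H_+$ forces an integration by parts that produces derivatives of $H_+$.

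Two further problems:
\begin{itemize}
\item The $\bdel$-part of (b) is $-2\la\delta^g(H_-\circ\bdel H_-),H_+\ra_{L^2}$, not $+2$.
\item Verifying the formula for $\delta^gH_\pm=0$ and then ``polarising'' cannot recover terms that vanish on divergence free tensors.
\end{itemize}

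The missing ingredient is a weighted Weitzenb\"ock identity. Since $H_+\sharp$ preserves the orthogonal splitting $\lambda^2_-(M,TM)\oplus\lambda^2_+(M,TM)\oplus\Lambda^{1,1}(M,TM)$, the left hand side equals
\begin{equation*}
\la H_+\sharp\bdel H_-,\bdel H_-\ra_{L^2}+\la H_+\sharp\partial^gH_-,\partial^gH_-\ra_{L^2}.
\end{equation*}
Hence the claim is equivalent to
\begin{equation*}
\la H_+\sharp\di_{\nabla^g}H_-,\di_{\nabla^g}H_-(J\cdot,J\cdot)\ra_{L^2}=\la\mathbf{D}(H_-,H_-),H_+\ra_{L^2}+2\la\delta^g(H_-\circ\bdel H_-),H_+\ra_{L^2}-2\la\bdel H_-,\delta^gH_+\wedge H_-\ra_{L^2}.
\end{equation*}
For $H_+=\id$ this reduces to $\Vert\partial^gh\Vert^2_{L^2}-\Vert\bdel h\Vert^2_{L^2}=\la(E+\ring{R})h,h\ra_{L^2}+\Vert\delta^gh\Vert^2_{L^2}$. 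That special case follows from \eqref{wz1} and Proposition~\ref{wz_K}(ii).

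The natural way to handle general $H_+$ is to integrate this $J$-twisted pairing by parts, using $\nabla^gJ=0$ and the Ricci identity for $\sum_i(\nabla^g)^2_{e_i,Je_i}$. The derivatives that fall on $H_+$ are what generate the last two terms. For example, take $H_+=f\,\id$ and define $V$ by $g(V,X)=\sum_jg(\bdel H_-(X,e_j),H_-e_j)$. Those two terms then add up to $4\int_M\di\!f(V)\,\vol_g$, whereas the $\bdel$-part of (b) is only $-2\int_M\di\!f(V)\,\vol_g$. Nothing in your scheme accounts for the difference.
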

Based on this we show below how to determine the component of $\bfv$ acting on the space $2\Gamma (\Sym^{2,-}TM) \oplus \Gamma (\Sym^{2,+}TM)$.
\begin{pro} \label{vm-7}
Assume that $H_{\pm}$ belong to $\Gamma \left (\Sym^{2,\pm}TM \right )$. We have 
\begin{equation*}
\begin{split}
\tfrac{1}{2}\bfv(H_{-},H_{-},H_{+})=&\tfrac{1}{4} \la \{H_{-}, \widetilde{\Delta}_E^{-}H_{-}\}+(\widetilde{\Delta}_E+2E)H_{-}^2-2[H_{-},\di^{-} \delta^gH_{-}],H_{+}\ra_{L^2}\\
+&\la \delta^{\star_g,+}(H_{-}\delta^gH_{-}),H_{+} \ra_{L^2}+
\la  \bdel H_{-}, \delta^gH_{+} \wedge H_{-} \ra_{L^2}\\
+&\la H_{+} \circ \bdel H_{-}, \bdel H_{-} \ra_{L^2}-\la H_{+} \sharp \bdel H_{-},  \bdel H_{-} \ra_{L^2}.
\end{split}
\end{equation*}
\end{pro}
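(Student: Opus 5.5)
Here $\bfv(H_-,H_-,H_+)$ denotes the trilinear form $\la \bfv(H_-,H_-),H_+\ra_{L^2}$ from \eqref{bfv-def}. I would start from formula \eqref{bfv-1} with $h_1=h_2=H_{-}$ and $H=H_{+}$. This gives
$$\la \bfv(H_-,H_-),H_+\ra_{L^2}=2\la [H_-,H_-]^{\FN},\di_{\nabla^g}H_+\ra_{L^2}-\la H_+\sharp \di_{\nabla^g}H_-,\di_{\nabla^g}H_-\ra_{L^2}+\la \tilde{\mathrm{L}}(H_-,H_-),H_+\ra_{L^2}.$$
Each of the three summands is then expressed through the K\"ahler type decompositions already at hand.

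\textbf{Step 1 (the $\sharp$-term).} Replace $\la H_+\sharp \di_{\nabla^g}H_-,\di_{\nabla^g}H_-\ra_{L^2}$ using Proposition \ref{hash-o}. This produces $\mathbf{D}(H_-)$, the term $2\delta^g(H_-\circ\bdel H_-)$, the term $2\la H_+\sharp\bdel H_-,\bdel H_-\ra_{L^2}$ and the wedge term $-2\la\bdel H_-,\delta^gH_+\wedge H_-\ra_{L^2}$. After halving, the last two already have the shape of the corresponding terms in the statement.

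\textbf{Step 2 (the bracket term).} Use the type decomposition \eqref{t-FNKS}:
$$[H_-,H_-]^{\FN}=[H_-,H_-]^c+H_-\circ\bdel H_-+(\di^+_{\nabla^g}H_-^2-H_-\sharp\bdel H_-).$$
Now $\di_{\nabla^g}H_+$ has components only in $\Lambda^{1,1}(M,TM)$ and $\lambda^2_+(M,TM)$, so the $\lambda^2_-$-piece $[H_-,H_-]^c$ drops out. The $\lambda^2_+$-piece gives $\la H_-\circ\bdel H_-,\di_{\nabla^g}H_+\ra_{L^2}=\la \delta^g(H_-\circ\bdel H_-),H_+\ra_{L^2}$, and this should cancel against the corresponding term from Step 1 (here one keeps track of the factor $2$). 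The $(1,1)$-piece gives two contributions. The first is $\la \di^+_{\nabla^g}H_-^2,\di_{\nabla^g}H_+\ra_{L^2}=\la \delta^g\di^+_{\nabla^g}H_-^2,H_+\ra_{L^2}$, which is evaluated with Proposition \ref{wz_K}(i) since $H_-^2\in\Sym^{2,+}$. The second is $-\la H_-\sharp\bdel H_-,\di_{\nabla^g}H_+\ra_{L^2}$, which is rewritten with \eqref{ids4-K} from Proposition \ref{ids-3}. That rewriting yields $-\la H_+\circ\bdel H_-,\bdel H_-\ra_{L^2}$ together with $\tfrac14\la\{H_-,\widetilde{\Delta}^-_EH_-\}+[H_-,\di^-\delta^gH_-],H_+\ra_{L^2}$. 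These are the source of the $\{H_-,\widetilde{\Delta}_E^-H_-\}$ and $H_+\circ\bdel H_-$ terms in the claim.

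\textbf{Step 3 (the $\tilde{\mathrm{L}}$-term).} Write $\tilde{\mathrm{L}}(H_-,H_-)=\{H_-,\delta^g\di_{\nabla^g}H_-\}-\tfrac12\delta^g\di_{\nabla^g}H_-^2$ and expand both pieces with \eqref{wz1}. After projecting onto $\Sym^{2,+}$ (only that part pairs with $H_+$), we use that $H_-$ is trace free and that $\tr H_-^2$ contributes via $\delta^{\star_g}\di\tr$.

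\textbf{Step 4 (collecting terms).} Substitute the explicit $\mathbf{D}=\mathbf{D}_1+\mathrm{div}_1$. The curvature terms $\{\ring{R}H_-,H_-\}$ and $\ring{R}H_-^2$ should cancel between $\mathbf{D}_1$, Step 3 and Proposition \ref{wz_K}(i), leaving $\tfrac14(\widetilde{\Delta}_E+2E)H_-^2$. The divergence terms of $\mathrm{div}_1$ should combine with the $\delta^{\star_g,\pm}$ terms into $\delta^{\star_g,+}(H_-\delta^gH_-)$ and $-\tfrac12[H_-,\di^-\delta^gH_-]$. For this one uses $\la\{\delta^{\star_g,-}\cdot,H_-\},H_+\ra$-type identities, and the fact that $\widetilde{\Delta}_E$ restricted to $\Sym^{2,+}$ has $\delta^{\star_g,+}$ parts.

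I expect this final bookkeeping to be the main obstacle: verifying that all $\ring{R}$ terms cancel exactly and that the divergence terms assemble into precisely $\delta^{\star_g,+}(H_-\delta^gH_-)$ and the stated multiple of $[H_-,\di^-\delta^gH_-]$. I would organise it by first treating the case $\delta^gH_-=0$, where most terms vanish, and then adding the divergence corrections separately.
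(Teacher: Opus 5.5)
Your route is the paper's: start from \eqref{bfv-1}, use Proposition \ref{hash-o} for the $\sharp$-term, use \eqref{t-FNKS} for the bracket term, and expand $\tilde{\mathrm{L}}$ with \eqref{wz1}. In the bracket term the two $\delta^g(H_-\circ\bdel H_-)$ contributions cancel, and Proposition \ref{wz_K}(i) and \eqref{ids4-K} are used exactly as in the paper. However, Step 3 contains an error that would make your Step 4 bookkeeping fail. Take \eqref{L-t} with $h_1=h_2=H_-$; since $\{H_-,H_-\}=2H_-^2$, it gives
\[
\tilde{\mathrm{L}}(H_-,H_-)=\{H_-,\delta^g\di_{\nabla^g}H_-\}-\delta^g\di_{\nabla^g}H_-^2 ,
\]
not $-\tfrac12\delta^g\di_{\nabla^g}H_-^2$. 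With your coefficient the $\ring{R}H_-^2$ terms do not cancel: a residual $\tfrac14\ring{R}H_-^2$ survives. Also $\Delta_EH_-^2$ ends up with coefficient $\tfrac12$ instead of $\tfrac14$, so the identity cannot close.

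With the correct coefficient the computation closes. Note that \eqref{wz1} produces no trace terms; the $\delta^{\star_g,+}\di\tr H_-^2$ piece comes only from $\mathbf{D}_1$, not from $\tilde{\mathrm{L}}$ as your Step 3 suggests. One finds that the $\Sym^{2,+}$-part of $\delta^g\di^+_{\nabla^g}H_-^2-\tfrac12\mathbf{D}(H_-)+\tfrac12\tilde{\mathrm{L}}(H_-,H_-)$ equals
\[
\tfrac12\{H_-,\widetilde{\Delta}_E^{-}H_-\}+\tfrac14(\widetilde{\Delta}_E+2E)H_-^2-\tfrac14[H_-,\di^{-}\delta^gH_-]+\delta^{\star_g,+}(H_-\delta^gH_-),
\]
which is the paper's intermediate identity.

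Also watch the sign in Step 2. Because the $\sharp$-term enters with a minus,
\[
-\la H_-\sharp\bdel H_-,\di_{\nabla^g}H_+\ra_{L^2}=\la H_+\circ\bdel H_-,\bdel H_-\ra_{L^2}-\tfrac14\la\{H_-,\widetilde{\Delta}_E^{-}H_-\}+[H_-,\di^{-}\delta^gH_-],H_+\ra_{L^2}.
\]
So the $\{H_-,\widetilde{\Delta}_E^{-}H_-\}$ term comes mainly from $\tilde{\mathrm{L}}$ together with $\mathrm{div}_1$, with net coefficient $\tfrac12-\tfrac14=\tfrac14$. The term $[H_-,\di^{-}\delta^gH_-]$ gets $-\tfrac14-\tfrac14=-\tfrac12$, which is the $-2[\cdot,\cdot]$ inside the factor $\tfrac14$ in the statement. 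With these two corrections your argument coincides with the paper's proof.
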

\begin{proof}
By \eqref{bfv-1} we have 
\begin{equation*}
\begin{split}
\bfv(H_{-},H_{-},H_{+})=&2\la [H_{-},H_{-}]^{\FN}, \di_{\nabla^g}H_{+} \ra_{L^2}-\la H_{+} \sharp \di_{\nabla^g}\!H_{-},
\di_{\nabla^g} \!H_{-}\ra_{L^2}+
\la \tilde{\mathrm{L}}(H_{-},H_{-}),H_{+}\ra_{L^2}.
\end{split}
\end{equation*}
Due to the comparison formula $[H_{-},H_{-}]^{\FN}=[H_{-},H_{-}]^c+H_{-} \circ \bdel H_{-}+\di^{+}_{\nabla^g}H_{-}^2-H_{-} \sharp \bdel H_{-}$(see \eqref{t-FNKS}) and since the divergence $\delta^g[H_{-},H_{-}]^c$ belongs to 
$ \Gamma \left (\Sym^{2,-}TM \right )$ we get 
\begin{equation*}
\la [H_{-},H_{-}]^{\FN}, \di_{\nabla^g}H_{+} \ra_{L^2}=\la \delta^g(H_{-} \circ \bdel H_{-}),H_{+}\ra_{L^2}+
\la \delta^g\di\!^{+}_{\nabla^g}H_{-}^2,H_{+} \ra_{L^2}-\la 
H_{-} \sharp \bdel H_{-},\di\!_{\nabla^g}H_{+}
\ra_{L^2}.
\end{equation*}
Using the expression for $\la H_{+} \sharp \di_{\nabla^g}H_{-}, \di_{\nabla^g}H_{-} \ra_{L^2}$ provided by Proposition 
\ref{hash-o}  
thus yields 
\begin{equation} \label{vhh2-New}
\begin{split}
\tfrac{1}{2}\bfv(H_{-},H_{-},H_{+})
=&\la \delta^g\di\!^{+}_{\nabla^g}H_{-}^2-\tfrac{1}{2}\mathbf{D}(H_{-},H_{-})+\tfrac{1}{2} \widetilde{\mathrm{L}}(H_{-},H_{-}) ,H_{+}\ra_{L^2}\\
-&\la H_{-} \sharp \bdel H_{-}, \di\!_{\nabla^g}H_{+} \ra_{L^2}\\
-&\la H_{+} \sharp \bdel H_{-}, \bdel H_{-} \ra_{L^2}+ \la \bdel H_{-}, \delta^gH_{+} \wedge H_{-}\ra_{L^2}.
\end{split}
\end{equation}
The operators featuring on the first displayed line above may be explicited as follows.
\begin{itemize}
\item[$\bullet$] $\widetilde{\mathrm{L}}(H_{-},H_{-})=\{H_{-}, \delta^g\di\!_{\nabla^g}H_{-}\}-\delta^g\di_{\nabla^g}H_{-}^2$ 
according to \eqref{L-t}
\item[$\bullet$] $\la \delta^g\di\!^{+}_{\nabla^g}H_{-}^2, H_{+}\ra_{L^2}=\la (\tfrac{1}{2}\Delta_E+\ring{R})H_{-}^2,H_{+} \ra_{L^2}$ by type considerations based on part (i) in Proposition \ref{wz_K}. 
\end{itemize}
Next we use the Weitzenb\"ock formula \eqref{wz1} for $H_{-}$ respectively $H_{-}^2$ in order to further expand $\widetilde{\mathrm{L}}(H_{-},H_{-})$; after also taking into account the explicit 
expression for $\mathbf{D}(H_{-},H_{-})$ a lentghy, though entirely algebraic, calculation leads to 
\begin{equation*} 
\begin{split}
&\la \delta^g\di\!^{+}_{\nabla^g}H_{-}^2-\tfrac{1}{2}\mathbf{D}(H_{-},H_{-})+\tfrac{1}{2} \widetilde{\mathrm{L}}(H_{-},H_{-}) ,H_{+}\ra_{L^2}\\
=& \la \tfrac{1}{2}\{H_{-}, \widetilde{\Delta}_E^{-}H_{-}\}+\tfrac{1}{4}(\widetilde{\Delta}_E+2E)H_{-}^2-\tfrac{1}{4}[H_{-},\di^{-} \delta^gH_{-}]+\delta^{\star_g,+}(H_{-}\delta^{g}H_{-}),H_{+}\ra_{L^2}.
\end{split}
\end{equation*}
We plug this into \eqref{vhh2-New}, together with the expression for $\la H_{-} \sharp \bdel H_{-}, \di\!_{\nabla^g}H_{+} \ra_{L^2}$ given in \eqref{ids4-K}. The claim follows by gathering terms.
\end{proof}
For our needs in this paper it suffices to polarise the general formula in Proposition \ref{vm-7} on specific elements as described below. 
\begin{pro} \label{vm-7bis}
Assume that $H_{\pm}$ belong to $\Gamma \left (\Sym^{2,\pm}TM \right )$ and that $h \in \Gamma \left (\Sym^{2,-}TM \right )$ additionally satisfies $\bdel h=0$ and $\delta^gh=0$. We have
\begin{equation*} 
\begin{split}
\bfv(h,H_{-},H_{+})=&\tfrac{1}{4} \la \{h, \widetilde{\Delta}_E^{-}H_{-}\}+(\widetilde{\Delta}_E+2E)\{h,H_{-}\}-2[h,\di^{-} \delta^gH_{-}],H_{+}\ra_{L^2}\\
+&\la \delta^{\star_g,+}(h\delta^gH_{-}),H_{+} \ra_{L^2}+
\la  \bdel H_{-}, \delta^gH_{+} \wedge h \ra_{L^2}.
\end{split}
\end{equation*}
\end{pro}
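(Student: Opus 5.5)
Since $\bfv(\cdot,\cdot,\cdot)$ is a symmetric trilinear form (it is the cyclic sum in \eqref{bfv-def}), I will obtain Proposition \ref{vm-7bis} by polarising Proposition \ref{vm-7} in its two $J$-anti-invariant slots. Concretely, I apply Proposition \ref{vm-7} with $H_{-}$ replaced by $h+H_{-}$, $h$ and $H_{-}$ in turn, and use
$$2\bfv(h,H_{-},H_{+})=\bfv(h+H_{-},h+H_{-},H_{+})-\bfv(h,h,H_{+})-\bfv(H_{-},H_{-},H_{+}).$$
For a quadratic expression $F(H_{-})=B(H_{-},H_{-})$ this combination yields $B(h,H_{-})+B(H_{-},h)$. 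I will then simplify each polarised term using $\bdel h=0$ and $\delta^gh=0$.

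I plan to go through the terms of Proposition \ref{vm-7} one at a time.

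\emph{First line.} The term $\{H_{-},\widetilde{\Delta}^{-}_EH_{-}\}$ polarises to $\{h,\widetilde{\Delta}^{-}_EH_{-}\}+\{H_{-},\widetilde{\Delta}^{-}_Eh\}$. Since $\Delta_Eh=0$ (because $h\in\varepsilon^{-}(g)$) and $\delta^gh=0$, we get $\widetilde{\Delta}^{-}_Eh=0$, so only the first summand survives. The term $(\widetilde{\Delta}_E+2E)H_{-}^2$ polarises to $(\widetilde{\Delta}_E+2E)\{h,H_{-}\}$. The term $-2[H_{-},\di^{-}\delta^gH_{-}]$ polarises to $-2[h,\di^{-}\delta^gH_{-}]$, because the companion term contains $\delta^gh=0$.

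\emph{Second line.} The term $\delta^{\star_g,+}(H_{-}\delta^gH_{-})$ polarises to $\delta^{\star_g,+}(h\delta^gH_{-})$. For $\la\bdel H_{-},\delta^gH_{+}\wedge H_{-}\ra$, the polarisation gives $\la\bdel h,\delta^gH_{+}\wedge H_{-}\ra+\la\bdel H_{-},\delta^gH_{+}\wedge h\ra$, and the first of these vanishes by $\bdel h=0$.

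\emph{Third line.} Both $\la H_{+}\circ\bdel H_{-},\bdel H_{-}\ra$ and $\la H_{+}\sharp\bdel H_{-},\bdel H_{-}\ra$ are bilinear in $\bdel H_{-}$. Their polarisations therefore always contain a factor $\bdel h=0$, and so they drop out.

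Collecting these pieces, the right-hand side of Proposition \ref{vm-7}, which equals $\tfrac12\bfv(H_{-},H_{-},H_{+})$, polarises to $\tfrac12\bigl(\bfv(h,H_{-},H_{+})+\bfv(H_{-},h,H_{+})\bigr)=\bfv(h,H_{-},H_{+})$. The surviving terms are exactly those in the statement. The only point requiring care is this bookkeeping of the factors $\tfrac12$ and $2$ in the polarisation, together with checking that the surviving cross terms each appear once with the right coefficient: $\tfrac14$ for the first line and $1$ for the rest. I expect this coefficient check to be the only real obstacle; the identity $\widetilde{\Delta}^{-}_Eh=0$ follows at once from $h\in\varepsilon(g)$.
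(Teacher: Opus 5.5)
Your proposal is correct and follows the paper's own argument: the paper also polarises Proposition \ref{vm-7} in its $J$-anti-invariant slot and uses the symmetry of $\bfv$. The only difference is that the paper substitutes $h+tH_{-}$ and reads off the coefficient of $t$, while you use the discrete identity. In both versions the cross terms are eliminated by $\widetilde{\Delta}^{-}_Eh=0$, $\delta^gh=0$ and $\bdel h=0$, and your coefficient bookkeeping ($\tfrac14$ on the first line, $1$ on the others, and the third line vanishing) is right.
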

\begin{proof}
The idea is to take $H_{-}=h+tH_{-}$ with $t \in \mathbb{R}$ in Proposition \ref{vm-7} and isolate the term in $t$ in each of the terms featuring in the equation therein. In doing so we 
proceed as follows. Since $\widetilde{\Delta}_E^{-}h=0$ the term in $t$ in 
$$\{h+tH_{-}, \widetilde{\Delta}_E^{-}(h+tH_{-})\}=t\{h+tH_{-},\widetilde{\Delta}_E^{-}H_{-}\}$$
equals $\{h,\widetilde{\Delta}_E^{-}H_{-}\}$. Having $(h+tH)^2=h^2+t\{h,H\}+t^2H^2$ shows that the coefficient of $t$ in 
$(\widetilde{\Delta}_E+2E)(h+tH_{-})^2$ is $(\widetilde{\Delta}_E+2E)\{h,H_{-}\}$. Since $\delta^gh=0$ the coefficient of $t$ in 
$$[h+tH_{-},\di^{-} \delta^g(h+tH_{-})]=t[h+tH_{-},\di^{-} \delta^gH_{-}]$$
thus equals $[h,\di^{-} \delta^gH_{-}]$. Entirely similar considerations based on linearity and having $\bdel h=0$ and $\delta^gh=0$ 
shows that the coefficient of $t$ in 
$$\la \delta^{\star_g,+}((h+tH_{-})\delta^g(h+tH_{-})),H_{+} \ra_{L^2}+
\la  \bdel (h+tH_{-}), \delta^gH_{+} \wedge (h+tH_{-}) \ra_{L^2}$$
respectively 
$$\la H_{+} \circ \bdel (h+tH_{-}), \bdel (h+tH_{-}) \ra_{L^2}-\la H_{+} \sharp \bdel (h+tH_{-}),  \bdel(h+t H_{-}) \ra_{L^2}
$$
equals $\la \delta^{\star_g,+}(h\delta^gH_{-}),H_{+} \ra_{L^2}+
\la  \bdel H_{-}, \delta^gH_{+} \wedge h \ra_{L^2}$ respectively $0$. Finally, since $\bfv$ is symmetric we may expand $\bfv(h+tH_{-},h+tH_{-})=\bfv(h,h)+2t\bfv(h,H_{-})+t^2\bfv(H_{-},H_{-})$ and the claim follows by collecting the facts above. 
\end{proof}
To end this section we use the above Proposition for determining the component of $\bfv(h,h^2)$ on $\Gamma \left ( \Sym^{2,-}TM \right )$, whenever $h$ lies in $\varepsilon^{-}(g)$. For temporary use we also consider the linear operator $\mathrm{div}_2 : \varepsilon^{-}(g) \to \Gamma \left (\Sym^{2,-}TM \right )$ determined from 
\begin{equation} \label{div-2}
\la \mathrm{div}_2(h),H \ra_{L^2}=\la \delta^{\star_g,-}(h \delta^gh^2)-\tfrac{1}{2}(\di^{\star}\delta^gh^2)h,H \ra_{L^2}+\la \bdel H, \delta^gh^2 \wedge h \ra_{L^2}.
\end{equation}

\begin{teo} \label{vm-3}
Let $h \in \Gamma \left (\Sym^{2,-}TM \right )$ satisfy $\bdel h=0$ and $\delta^gh=0$. Then
\begin{equation*}
\begin{split}
\la \bfv(h,h^2),H \ra_{L^2}=& \tfrac{1}{4}\la \{h, (\Delta_E-2\delta^{\star_g,+}\delta^g)h^2\},H \ra_{L^2}+\tfrac{1}{2} \la (\widetilde{\Delta}^{-}_E+2E)h^3,H \ra_{L^2}\\
+&\la \mathrm{div}_2(h),H \ra_{L^2}
\end{split}
\end{equation*}
for all $H \in \Gamma \left (\Sym^{2,-}TM \right )$.
\end{teo}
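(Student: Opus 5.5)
Since $h$ anticommutes with $J$, the square $h^2$ commutes with $J$, so $h^2\in\Gamma\left(\Sym^{2,+}TM\right)$. The operator $\bfv$ is fully symmetric as a trilinear form by its definition \eqref{bfv-def}. Hence, for $H\in\Gamma\left(\Sym^{2,-}TM\right)$,
$$\la \bfv(h,h^2),H\ra_{L^2}=\bfv(h,H,h^2),$$
and I will evaluate the right-hand side with Proposition \ref{vm-7bis}, taking $H_{-}=H$ and $H_{+}=h^2$. This is legitimate because $\bdel h=0$ and $\delta^gh=0$. The remaining work is to move every differential operator off $H$ by $\LL^2$-adjunction and then read off the component in $\Gamma\left(\Sym^{2,-}TM\right)$, since only that component pairs with $H$.

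I will treat the six terms of Proposition \ref{vm-7bis} one at a time.
\begin{itemize}
\item[(1)] The term $\tfrac14\la\{h,\widetilde{\Delta}_E^{-}H\},h^2\ra_{L^2}$ equals $\tfrac14\la \widetilde{\Delta}_E^{-}H,\{h,h^2\}\ra_{L^2}=\tfrac12\la\widetilde{\Delta}_E^{-}H,h^3\ra_{L^2}$. Since $\widetilde{\Delta}_E^{-}=\Delta_E-2\delta^{\star_g,-}\delta^g$ is formally self-adjoint on $\Gamma\left(\Sym^{2,-}TM\right)$ and $h^3$ lies in that space, this gives $\tfrac12\la\widetilde{\Delta}^{-}_Eh^3,H\ra_{L^2}$.
\item[(2)] The term $\tfrac14\la(\widetilde{\Delta}_E+2E)\{h,H\},h^2\ra_{L^2}$ becomes $\tfrac14\la H,\{h,(\widetilde{\Delta}_E+2E)h^2\}\ra_{L^2}$ by self-adjointness of $\widetilde{\Delta}_E$. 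Only the $J$-invariant part of $(\widetilde{\Delta}_E+2E)h^2$ survives after anticommuting with $h$ and pairing with $H$. The $2E$ piece gives $E\la h^3,H\ra_{L^2}$, which combines with item (1) to give $\tfrac12\la(\widetilde{\Delta}^{-}_E+2E)h^3,H\ra_{L^2}$. The rest gives $\tfrac14\la\{h,(\Delta_E-\delta^{\star_g,+}(2\delta^g+\di\tr))h^2\},H\ra_{L^2}$.
\item[(3)] The term $-\tfrac12\la[h,\di^{-}\delta^gH],h^2\ra_{L^2}$ vanishes, because it equals $\pm\tfrac12\la\di^{-}\delta^gH,[h,h^2]\ra_{L^2}$ and $[h,h^2]=0$.
\item[(4)] The term $\la\delta^{\star_g,+}(h\delta^gH),h^2\ra_{L^2}$ equals $\la h\delta^gH,\delta^gh^2\ra_{L^2}=\la\delta^gH,h\delta^gh^2\ra_{L^2}$. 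Integrating by parts once more gives $\la\delta^{\star_g,-}(h\delta^gh^2),H\ra_{L^2}$.
\item[(5)] The term $\la\bdel H,\delta^gh^2\wedge h\ra_{L^2}$ appears verbatim in $\mathrm{div}_2(h)$.
\end{itemize}

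After these manipulations, every term of the statement is accounted for except a single discrepancy. The computation above produces $-\tfrac14\{h,\delta^{\star_g,+}\di\tr(h^2)\}$, whereas the statement asks for $-\tfrac12(\di^{\star}\delta^gh^2)h$. The main obstacle is therefore the identity
$$\la\{h,\delta^{\star_g,+}\di\tr h^2\},H\ra_{L^2}=2\la(\di^{\star}\delta^gh^2)h,H\ra_{L^2}\quad\text{for all } H\in\Gamma\left(\Sym^{2,-}TM\right),$$
possibly after rewriting some other terms via Lemma \ref{tr-00}.

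For this step I would first use \eqref{delta-K} to write $\delta^{\star_g,+}\di f$, with $f=\tr h^2$, as $\tfrac14(1+J)\di\di^cf\circ J^{-1}$. I would then check that the anticommutator of $h$ with a $J$-invariant symmetric tensor, paired against $H$, reduces to a trace-type term. The fallback is to derive a formula for $\di^{\star}\delta^gh^2$ in terms of $\Delta\tr h^2$, using $\delta^gh=0$ and $\bdel h=0$ together with the Weitzenb\"ock formulas \eqref{wz2} and Proposition \ref{wz_K}. I expect this trace/gradient bookkeeping to be the delicate part; everything else is formal adjunction and type considerations.
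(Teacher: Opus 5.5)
Your strategy is the paper's: apply Proposition \ref{vm-7bis} with $H_{-}=H$, $H_{+}=h^2$ and move all derivatives off $H$. Your items (1), (3), (4), (5) coincide with the paper's proof.

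The error is in item (2): $\widetilde{\Delta}_E$ is \emph{not} formally self-adjoint. In $\widetilde{\Delta}_E=\Delta_E-2\delta^{\star_g}\delta^g-\delta^{\star_g}\di\tr$ only the first two pieces are self-adjoint. The $L^2$-adjoint of $\delta^{\star_g}\di\tr$ is $K\mapsto(\di^{\star}\delta^gK)\,\id$, not $\delta^{\star_g}\di\tr$ itself. Treating that piece correctly, and using $\tr\{h,H\}=2g(h,H)$, gives
\[
\la\delta^{\star_g}\di\tr\{h,H\},h^2\ra_{L^2}=\la\tr\{h,H\},\di^{\star}\delta^gh^2\ra_{L^2}=2\la(\di^{\star}\delta^gh^2)h,H\ra_{L^2}.
\]
With the prefactor $-\tfrac14$ this is exactly the summand $-\tfrac12\la(\di^{\star}\delta^gh^2)h,H\ra_{L^2}$ of $\la\mathrm{div}_2(h),H\ra_{L^2}$. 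The pieces $\Delta_E-2\delta^{\star_g}\delta^g$ and $2E$ move across as you did. With this correction every term matches and nothing further needs proving; this is precisely how the paper argues.

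Consequently the ``main obstacle'' you isolate is an artefact of the wrong adjoint, and you should not try to prove it, because it is false in general. Take $H=\varphi h$ with $\varphi$ an arbitrary function. The identity then becomes the pointwise relation $g(\delta^{\star_g}\di f,h^2)=f\,\di^{\star}\delta^gh^2$ with $f=\tr h^2$. Only its integral over $M$ (the case $\varphi=1$) follows from integration by parts.

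A flat local example shows the pointwise relation fails. On $\mathbb{C}^2$, take the block-diagonal $h$ with blocks $\left(\begin{smallmatrix}x_j&-y_j\\ -y_j&-x_j\end{smallmatrix}\right)$, $j=1,2$. It satisfies $\di_{\nabla^g}h=0$ (so $\bdel h=0$) and $\delta^gh=0$. Yet $g(\delta^{\star_g}\di f,h^2)=8(|z_1|^2+|z_2|^2)$, whereas $f\,\di^{\star}\delta^gh^2=16(|z_1|^2+|z_2|^2)$. So neither of your proposed fallbacks, via \eqref{delta-K} or via the Weitzenb\"ock formulas, can close the argument as you set it up. The fix is simply to handle the $\delta^{\star_g}\di\tr$ piece by the adjunction displayed above.
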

\begin{proof}
Taking $H_{+}=h^2$ and $H_{-}=H$ in Proposition \ref{vm-7bis} reveals that 
\begin{equation*}
\begin{split}
\bfv(h,h^2, H)=&\tfrac{1}{4} \la \{h, \widetilde{\Delta}_E^{-}H\}+(\widetilde{\Delta}_E+2E)\{h,H\},h^2 \ra_{L^2}\\
+&\la \delta^{\star_g,+}(h\delta^gH),h^2\ra_{L^2}+
\la  \bdel H, \delta^gh^2 \wedge h \ra_{L^2}
\end{split}
\end{equation*}
since the operators $h$ and $h^2$ mutually commute. Since the operator 
$\widetilde{\Delta}_E^{-}$ is self-adjoint the first term in the r.h.s above reads  $\tfrac{1}{4}\la \{h, \widetilde{\Delta}_E^{-}H\},h^2 \ra_{L^2}=\tfrac{1}{2} 
\la \widetilde{\Delta}_E^{-}H,h^3 \ra_{L^2}=\tfrac{1}{2} \la \widetilde{\Delta}_E^{-}h^3,H \ra_{L^2}$. Further on we observe that 
$$\la \delta^{\star_g}\di \tr\{h,H\},h^2 \ra_{L^2}=\la \di \tr\{h,H\}, \delta^gh^2 \ra_{L^2}=2\la (\di^{\star}\delta^gh^2)h,H \ra_{L^2}.
$$
Since the operators $\Delta_E, \delta^{\star_g}\delta^g$ respectively $\{h, \cdot\} : \Sym^2TM \to \Sym^2TM$ are self-adjoint using type considerations thus yields 
\begin{equation*}
\la (\widetilde{\Delta}_E+2E)\{h,H\},h^2 \ra_{L^2}=\la \{h, (\Delta_E-2\delta^{\star_g,+}\delta^g)h^2\}-2
(\di^{\star}\delta^gh^2)h+4Eh^3,H \ra_{L^2}.
\end{equation*}
Because $\la \delta^{\star_g,+}(h\delta^gH),h^2\ra_{L^2}=\la \delta^{\star_g,-}(h\delta^gh^2) ,H\ra_{L^2}$ by duality, the claim follows by collecting these facts after also taking into account the definition of $\mathrm{div}_2(h)$.

\end{proof}
\subsection{Description of the component of $\mathbf{w}$ on 
$\Gamma \left (\Sym^{2,-}TM \right )$} \label{w-comp}

We begin by observing that a crucial consequence of Proposition \ref{id-main} is the following 
\begin{pro} \label{cons1-K}
Assume that $h,H \in \Gamma \left (\Sym^{2,-}TM \right )$ and moreover that $\bdel h=0$ and $\delta^gh=0$. Then 
$$ \la Q(h), \di_{\nabla^g}\!H\ra_{L^2}=\la \delta^g(h \circ \di_{\nabla^g}\!h^2)-2\delta^g
[h^2,h]^{\FN}, H \ra_{L^2}-\la  h \circ \di\!_{\nabla^g}h^2, \bdel H \ra_{L^2}.
$$
\end{pro}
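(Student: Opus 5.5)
The plan is to rewrite $Q(h)$ purely in terms of Fr\"olicher--Nijenhuis brackets, using Proposition \ref{id-main}, and then to use type considerations based on the comparison formula \eqref{t-FNKS}.

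\textbf{Step 1 (purely tensorial, no hypotheses on $h$).} Substitute the identity of Proposition \ref{id-main} for the term $\di_{\nabla^g}h(h\cdot,h\cdot)$ in the definition \eqref{defn-Q}. This gives
\[
Q(h)=-2\di_{\nabla^g}h^3+h^2\sharp\di_{\nabla^g}h+h\sharp\di_{\nabla^g}h^2+h\circ[h,h]^{\FN}.
\]
Next apply the polarised formula \eqref{bra-nac2} with $(h_1,h_2)=(h^2,h)$. Since $\{h^2,h\}=2h^3$, it reads $2[h^2,h]^{\FN}=-(h^2\sharp\di_{\nabla^g}h+h\sharp\di_{\nabla^g}h^2)+2\di_{\nabla^g}h^3$. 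The first three terms of $Q(h)$ are therefore exactly $-2[h^2,h]^{\FN}$, so
\[
Q(h)=-2[h^2,h]^{\FN}+h\circ[h,h]^{\FN}.
\]
Pairing with $\di_{\nabla^g}H$ and integrating by parts produces the term $-2\la\delta^g[h^2,h]^{\FN},H\ra_{L^2}$. It remains to show
\[
\la h\circ[h,h]^{\FN},\di_{\nabla^g}H\ra_{L^2}=\la h\circ\di_{\nabla^g}h^2,\di_{\nabla^g}H\ra_{L^2}-\la h\circ\di_{\nabla^g}h^2,\bdel H\ra_{L^2}.
\]
Note that the first term on the right equals $\la\delta^g(h\circ\di_{\nabla^g}h^2),H\ra_{L^2}$.

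\textbf{Step 2 (type decomposition).} Since $H\in\Gamma(\Sym^{2,-}TM)$, the $\lambda^2(M,TM)$ component of $\di_{\nabla^g}H$ is $\bdel H$, which lies in $\lambda^2_-(M,TM)$. Hence $\di_{\nabla^g}H-\bdel H$ is the $\Omega^{1,1}(M,TM)$ component of $\di_{\nabla^g}H$. The required identity thus reduces to showing that $h\circ[h,h]^{\FN}$ and $h\circ\di_{\nabla^g}h^2$ have the same $\Omega^{1,1}(M,TM)$ component.

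Because $\bdel h=0$, the comparison formula \eqref{t-FNKS} gives $[h,h]^{\FN}=[h,h]^c+\di^+_{\nabla^g}h^2$. Post-composing with the endomorphism $h$ does not change the form-type of a vector-valued $2$-form, since it acts only on the $TM$ factor. Consequently:
\begin{itemize}
\item[] $h\circ[h,h]^c$ lies in $\lambda^2(M,TM)$; by $hJ=-Jh$ it is in fact in $\lambda^2_+(M,TM)$.
\item[] $h\circ\di^+_{\nabla^g}h^2$ lies in $\Omega^{1,1}(M,TM)$.
\item[] $h\circ\di^-_{\nabla^g}h^2$ lies in $\lambda^2(M,TM)$.
\end{itemize}
Therefore the $\Omega^{1,1}$ components of both $h\circ[h,h]^{\FN}$ and $h\circ\di_{\nabla^g}h^2$ equal $h\circ\di^+_{\nabla^g}h^2$. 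Pairing with $\di_{\nabla^g}H-\bdel H$ then gives the claim.

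\textbf{Expected obstacle.} The main point to verify is the type bookkeeping in Step 2, in particular that $\di_{\nabla^g}H$ has no $\lambda^2_+$ component when $H$ is $J$-anti-invariant. This is implicit in the definition of $\bdel$ and in the statement that $\bdel H$ is a section of $\lambda^2_-(M,TM)$. One should also confirm that all the pairings used are pointwise-orthogonal splittings of $\Lambda^2(M,TM)$. The divergence-free hypothesis $\delta^g h=0$ is not actually needed in this argument; only $\bdel h=0$ enters, through \eqref{t-FNKS}.
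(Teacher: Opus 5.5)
Your proof is correct and is essentially the paper's argument. It uses Proposition \ref{id-main} together with the polarised formula \eqref{bra-nac2} for $[h^2,h]^{\FN}$, the comparison formula \eqref{t-FNKS} with $\bdel h=0$, and type considerations: these kill $h\circ[h,h]^c$ against $\di_{\nabla^g}H$ and let you replace $h\circ\di^{+}_{\nabla^g}h^2$ by $h\circ\di_{\nabla^g}h^2$ when paired with $\di_{\nabla^g}H-\bdel H$.

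The only cosmetic difference is that you use pointwise orthogonality of $\lambda^2_{+}(M,TM)$ to $\Omega^{1,1}(M,TM)\oplus\lambda^2_{-}(M,TM)$, whereas the paper invokes the $J$-invariance of $\delta^g(h\circ[h,h]^c)$. You are also right that $\delta^gh=0$ is never used.
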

\begin{proof}
The comparison formula \eqref{t-FNKS} ensures that $[h,h]^{\FN}=[h,h]^c+\di\!_{\nabla^g}^{+}h^2$ since $\bdel h=0$. At the same time $[h,h]^c$ belongs to 
$\lambda^2_{-}(M,TM)$ and hence $h \circ [h,h]^c \in \lambda^2_{+}(M,TM)$. In particular the divergence 
$\delta^g (h \circ [h,h]^c)$ in $\End(TM)$ commutes with $J$ hence 
$$\la h \circ [h,h]^c, \di_{\nabla^g}\!H \ra_{L^2}=0.
$$
Furthermore, 
$$ \la h \circ \di\!_{\nabla^g}^{+}h^2, \di_{\nabla^g}\!H \ra_{L^2}=\la  h \circ \di\!_{\nabla^g}h^2, \di_{\nabla^g}\!H-\bdel H \ra_{L^2}=
\la \delta^g(h \circ \di_{\nabla^g}\!h^2), H \ra_{L^2}-\la  h \circ \di\!_{\nabla^g}h^2, \bdel H \ra_{L^2}.
$$
After gathering terms and using Proposition \ref{id-main} we get 
\begin{equation} \label{prod-2}
\begin{split}
\la \di_{\nabla^g}\!h(h \cdot ,h \cdot), \di_{\nabla^g}\!H\ra_{L^2}=&\la \delta^g(h \circ \di_{\nabla^g}\!h^2), H \ra_{L^2}+\la 
h \sharp \di_{\nabla^g}\!h^2-\di_{\nabla^g}\!h^3, \di_{\nabla^g}\!H \ra_{L^2}\\
-&\la  h \circ \di\!_{\nabla^g}h^2, \bdel H \ra_{L^2}.
\end{split}
\end{equation}
Recalling that $Q(h)=\di_{\nabla^g}\!h(h \cdot ,h \cdot)+h^2 \sharp \di_{\nabla^g}\!h-\di_{\nabla^g}\!h^3$ the claim follows now directly by expressing $[h,h^2]^{\FN}$ according to the definition of the Fr\"olicher-Nijenhuis bracket in 
\eqref{bra-nac2}.
\end{proof}
Now we take into account the expression for $\mathbf{p}(h^2,h)$ previously obtained. Since $h$ belongs to $\Gamma \left (\Sym^{2,-}TM \right )$ and 
$h^2 \in \Gamma \left (\Sym^{2,+}TM \right )$ the definition of $\mathbf{j}$ in \eqref{j-d} grants that 
$\mathbf{j}(h^2,h)=0$ thus 
$$ \la \mathbf{p}(h^2,h),H \ra_{L^2}=2\la \mathbf{i}(h^2,h),H \ra_{L^2}.
$$
Furthermore 
$$ \la \mathbf{i}(h^2,h),H \ra_{L^2}=\la \delta^g(h \circ \di_{\nabla^g}\!h^2),H\ra_{L^2}+\mathbf{q}(h^2,h \circ H)-
\la h, \mathrm{L}(H,h^2)\ra_{L^2}
$$
by Lemma \ref{i-1}. At the same time taking into account \eqref{bfv-1} shows that 
\begin{equation*}
\bfv(h^2,h,H)=2\la [h^2,h]^{\FN},\di_{\nabla^g}\!H\ra_{L^2}-\la H \sharp \di_{\nabla^g}\!h,\di_{\nabla^g}\!h^2 \ra_{L^2}+\la \widetilde{\mathrm{L}}(h,h^2),H\ra_{L^2}.
\end{equation*}
Putting these facts together shows that 
\begin{equation*} \label{Q11}
\begin{split}
\la \delta^g(h \circ \di_{\nabla^g}\!h^2)-2\delta^g
[h^2,h]^{\FN}, H \ra_{L^2}=&\tfrac{1}{2}\la \mathbf{p}(h^2,h), H \ra_{L^2}-\bfv(h^2,h,H)-\mathbf{q}(h^2,h \circ H)\\
+&\la h, \mathrm{L}(H,h^2) \ra_{L^2}+\la \widetilde{\mathrm{L}}(h,h^2),H \ra_{L^2}\\
-&\la H \sharp \di_{\nabla^g}\!h, \di_{\nabla^g}\!h^2 \ra_{L^2}.
\end{split}
\end{equation*}
After also using Proposition \ref{cons1-K} it follows that 
\begin{equation} \label{Q11}
\begin{split}
\la \delta^gQ(h)-\tfrac{1}{2}\mathbf{p}(h^2,h),H\ra_{L^2}=&-\bfv(h^2,h,H)
+\la h, \mathrm{L}(H,h^2) \ra_{L^2}+\la \widetilde{\mathrm{L}}(h,h^2),H \ra_{L^2}\\
&-\mathbf{q}(h^2,h \circ H)-\la H \sharp \di_{\nabla^g}\!h, \di_{\nabla^g}\!h^2 \ra_{L^2}\\
&-\la h \circ \di_{\nabla^g}h^2, \bdel H\ra_{L^2}.
\end{split}
\end{equation}
This can be further refined to obtain the following 
\begin{pro} \label{Q12} Assume that the pair $(h,H)$ belongs to $\varepsilon^{-}(g) \oplus \Gamma \left (\Sym^{2,-}TM \right )$. Then 
\begin{equation*}
\begin{split}
\la \delta^gQ(h)-\tfrac{1}{2}\mathbf{p}(h^2,h),H\ra_{L^2}=&-\bfv(h^2,h,H)+\la \partial^gH, h\circ \di_{\nabla^g}\!h^2+h^2\circ \di_{\nabla^g}\!h \ra_{L^2}\\
+&\la \widetilde{\mathrm{L}}(h^2,h)-\mathrm{L}(h^2,h),H \ra_{L^2}-\la h^2, \mathrm{L}(h,H)\ra_{L^2}+\mathbf{q}(h^2 \circ H,h).
\end{split}
\end{equation*}
\end{pro}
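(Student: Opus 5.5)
The plan is to start from identity \eqref{Q11}, which already contains the terms $-\bfv(h^2,h,H)$, $\la \widetilde{\mathrm{L}}(h,h^2),H\ra_{L^2}$ and $\la h,\mathrm{L}(H,h^2)\ra_{L^2}$, and to rewrite the two remaining non-explicit terms $-\mathbf{q}(h^2,h\circ H)$ and $-\la H\sharp \di_{\nabla^g}h,\di_{\nabla^g}h^2\ra_{L^2}$. The tool for this is the general cubic identity in Proposition \ref{o3-p}.

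Apply Proposition \ref{o3-p} with $(h_1,h_2,h_3)=(H,h,h^2)$. This expresses $\la H\sharp\di_{\nabla^g}h,\di_{\nabla^g}h^2\ra_{L^2}$ as a sum of five kinds of terms:
\begin{itemize}
\item[$\bullet$] the term $\la \nabla^g_{He_i}h,\nabla^g_{e_i}h^2\ra_{L^2}$;
\item[$\bullet$] the term $-\la \di_{\nabla^g}H, h\circ\di_{\nabla^g}h^2+h^2\circ\di_{\nabla^g}h\ra_{L^2}$;
\item[$\bullet$] the three $\mathrm{L}$-terms $\la H,\mathrm{L}(h,h^2)\ra_{L^2}+\la h,\mathrm{L}(H,h^2)\ra_{L^2}+\la h^2,\mathrm{L}(H,h)\ra_{L^2}$;
\item[$\bullet$] the term $-\mathbf{q}(h\circ H,h^2)$;
\item[$\bullet$] the term $-\mathbf{q}(h^2\circ H,h)$.
\end{itemize}

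Substituting this into \eqref{Q11} should produce the following simplifications.
\begin{itemize}
\item[$\bullet$] The first term vanishes pointwise by type. Since $H$ and $h$ anticommute with $J$, the endomorphism $\nabla^g_{He_i}h$ anticommutes with $J$. Since $h^2$ commutes with $J$, so does $\nabla^g_{e_i}h^2$. Hence their pointwise inner product is zero.
\item[$\bullet$] By Lemma \ref{q}, $\mathbf{q}$ is symmetric when the second argument is symmetric. As $h^2$ is symmetric, $\mathbf{q}(h\circ H,h^2)=\mathbf{q}(h^2,h\circ H)$. This cancels the term $-\mathbf{q}(h^2,h\circ H)$ already present in \eqref{Q11}.
\item[$\bullet$] The term $-\mathbf{q}(h^2\circ H,h)$ enters with a sign flip and gives the $+\mathbf{q}(h^2\circ H,h)$ in the statement.
\item[$\bullet$] Among the $\mathrm{L}$-terms, $\la h,\mathrm{L}(H,h^2)\ra_{L^2}$ cancels its counterpart in \eqref{Q11}.
\item[$\bullet$] Also among the $\mathrm{L}$-terms, the symmetry $\mathrm{L}(h,h^2)=\mathrm{L}(h^2,h)$ gives the contribution $-\la \mathrm{L}(h^2,h),H\ra_{L^2}$.
\item[$\bullet$] The last $\mathrm{L}$-term gives $-\la h^2,\mathrm{L}(h,H)\ra_{L^2}$.
\end{itemize}
Together with $\widetilde{\mathrm{L}}(h,h^2)=\widetilde{\mathrm{L}}(h^2,h)$, this accounts for the second line of the claimed formula.

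What remains is the first-order coupling, which I expect to be the main obstacle. The plan is to show that the combination
\[
\la \di_{\nabla^g}H, h\circ\di_{\nabla^g}h^2+h^2\circ\di_{\nabla^g}h\ra_{L^2}-\la h\circ\di_{\nabla^g}h^2,\bdel H\ra_{L^2}
\]
equals $\la \bdel H, h\circ \di_{\nabla^g}h^2+h^2\circ \di_{\nabla^g}h\ra_{L^2}$. For this, decompose $\di_{\nabla^g}H$ according to $\Lambda^2(M,TM)=\lambda^2_{-}(M,TM)\oplus\lambda^2_{+}(M,TM)\oplus\Lambda^{1,1}(M,TM)$, and project $h\circ\di_{\nabla^g}h^2$ and $h^2\circ\di_{\nabla^g}h$ onto the same pieces. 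The inputs for the projections are:
\begin{itemize}
\item[$\bullet$] Since $\bdel h=0$ and $h\in\Gamma(\Sym^{2,-}TM)$, $\di_{\nabla^g}h$ has no $\lambda^2_{-}$-part, so $h^2\circ\di_{\nabla^g}h$ has no $\lambda^2_{-}$-part either.
\item[$\bullet$] Since $h^2\in\Gamma(\Sym^{2,+}TM)$, $\di^{-}_{\nabla^g}h^2$ lies in $\lambda^2_{+}(M,TM)$ (as recorded in \S\ref{K-g}). Composition with $h$ then swaps $\lambda^2_{\pm}$ and preserves $\Lambda^{1,1}$.
\end{itemize}

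I would first try to verify the pointwise identity
\[
\la \di_{\nabla^g}H-\bdel H,\,h\circ\di_{\nabla^g}h^2+h^2\circ\di_{\nabla^g}h\ra=\la \bdel H,\,h\circ\di_{\nabla^g}h^2\ra
\]
modulo divergences. The natural route is through Proposition \ref{id-main} and Corollary \ref{ids-2}, in the same way the $\lambda^2$-projections were handled in the proof of Proposition \ref{cons1-K}. This relates the $\Lambda^{1,1}$- and $\lambda^2_{+}$-pairings of $H$ against $h\circ\di^{+}_{\nabla^g}h^2$ to $\bdel$-pairings, after integrating by parts with $\delta^gh=0$.

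The delicate point is getting the signs and factors of this type bookkeeping exactly right. If the naive cancellation fails, I would fall back on the comparison formula \eqref{t-FNKS} applied to $[h,h]^{\FN}=[h,h]^c+\di^{+}_{\nabla^g}h^2$, which rewrites $h\circ \di^{+}_{\nabla^g}h^2$ through Proposition \ref{id-main}.
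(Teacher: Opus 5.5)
Your bookkeeping up to the last step matches the paper's proof. That covers Proposition \ref{o3-p} with $(h_1,h_2,h_3)=(H,h,h^2)$, the vanishing of $\la \nabla^g_{He_i}h,\nabla^g_{e_i}h^2\ra_{L^2}$ by type, and the cancellations of the $\mathbf{q}$- and $\mathrm{L}$-terms.

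The gap is in the step you call the main obstacle: you have misread the target. The statement contains $\la \partial^gH, h\circ\di_{\nabla^g}h^2+h^2\circ\di_{\nabla^g}h\ra_{L^2}$, but you aim at $\la \bdel H, h\circ\di_{\nabla^g}h^2+h^2\circ\di_{\nabla^g}h\ra_{L^2}$. Your ``pointwise identity'' uses $\di_{\nabla^g}H-\bdel H=\partial^gH$, so it asserts $\la \partial^gH, h\circ\di_{\nabla^g}h^2+h^2\circ\di_{\nabla^g}h\ra_{L^2}=\la \bdel H, h\circ\di_{\nabla^g}h^2\ra_{L^2}$. This would give a different formula from the one stated, and it is false in general.

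To see this, take real dimension $2$, where $\lambda^2(M,TM)=0$. Your identity would then say the left-hand side vanishes for every $H$. By Lemma \ref{quad-K} with $H=h^3$, this forces $\di_{\nabla^g}h^3=0$, hence $\tr(h^2)$ constant. That is impossible for nonzero $h\in\varepsilon^{-}(g)$ on a compact hyperbolic surface, since these come from holomorphic quadratic differentials, which have zeros. So the bookkeeping you sketch via Proposition \ref{id-main} and Corollary \ref{ids-2} cannot succeed, and the fallback is not a proof.

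What is actually needed is a one-line type argument, and it is the one the paper uses.
\begin{itemize}
\item For $H\in\Gamma(\Sym^{2,-}TM)$, the $\lambda^2$-component of $\di_{\nabla^g}H$ is $\bdel H\in\lambda^2_{-}(M,TM)$. Hence $\di_{\nabla^g}H=\partial^gH+\bdel H$ with $\partial^gH\in\Omega^{1,1}(M,TM)$, and there is no $\lambda^2_{+}$-part.
\item Split $\la \di_{\nabla^g}H, h\circ\di_{\nabla^g}h^2+h^2\circ\di_{\nabla^g}h\ra_{L^2}$ accordingly.
\item The piece $\la \bdel H, h^2\circ\di_{\nabla^g}h\ra_{L^2}$ vanishes pointwise. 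Indeed $\bdel h=0$ puts $\di_{\nabla^g}h$, hence $h^2\circ\di_{\nabla^g}h$, in $\Omega^{1,1}(M,TM)$, which is orthogonal to $\lambda^2_{-}(M,TM)$. You already had this ingredient.
\item The piece $\la \bdel H, h\circ\di_{\nabla^g}h^2\ra_{L^2}$ cancels exactly against the term $-\la h\circ\di_{\nabla^g}h^2,\bdel H\ra_{L^2}$ in \eqref{Q11}.
\end{itemize}
What remains is precisely $\la \partial^gH, h\circ\di_{\nabla^g}h^2+h^2\circ\di_{\nabla^g}h\ra_{L^2}$. No integration by parts is required at this stage.
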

\begin{proof}
Observe that $\la \nabla^g_{He_i}h,  \nabla^g_{e_i}h^2\ra_{L^2}=0$ since $h$ is $J$-anti-invariant whilst $h^2$ is $J$-invariant; 
thus letting $h_1=H, h_2=h, h_3=h^2$ in Proposition \ref{o3-p} yields 
\begin{equation*}
\begin{split}
\la H \sharp \di_{\nabla^g}\!h, \di_{\nabla^g}\!h^2 \ra_{L^2}=&-\la \di_{\nabla^g}\!H, h \circ \di_{\nabla^g}\!h^2+h^2 \circ \di_{\nabla^g}\!h \ra_{L^2}\\
+& \la H, \mathrm{L}(h,h^2)\ra_{L^2}+\la h, \mathrm{L}(H,h^2)\ra_{L^2}+\la h^2, \mathrm{L}(h,H)\ra_{L^2}\\
-&\mathbf{q}(h \circ H, h^2)-\mathbf{q}(h^2 \circ H,h).
\end{split}
\end{equation*}
In addition 
$$\la \di_{\nabla^g}\!H, h \circ \di_{\nabla^g}\!h^2+h^2 \circ \di_{\nabla^g}\!h \ra_{L^2}=\la \partial^g H, h \circ \di_{\nabla^g}\!h^2+h^2 \circ \di_{\nabla^g}\! h \ra_{L^2}+
\la \bdel H, h \circ \di_{\nabla^g}\!h^2+h^2 \circ \di_{\nabla^g}\! h \ra_{L^2}$$
and $\la \bdel H, h \circ \di_{\nabla^g}\!h^2+h^2 \circ \di_{\nabla^g}\! h \ra_{L^2}=\la \bdel H, h \circ \di_{\nabla^g}\!h^2 \ra_{L^2}$ since 
$\di_{\nabla^g}\!h \in \Omega^{1,1}(M,TM)$. Now we plug this facts into \eqref{Q11}; a short computation which furthermore uses that 
the operators $\mathbf{q}, \mathrm{L}, \widetilde{\rm L}$ are symmetric (see Lemma \ref{q} and also \eqref{L-t}) yields the claim.
\end{proof}
In order to bring this to final form we need to render fully explicit the terms involving the operators $\mathbf{q}, \mathrm{L}$ and $\widetilde{\rm L}$ and also 
the determine the quantity $\la \partial^g H, h \circ \di_{\nabla^g}\!h^2+h^2 \circ \di_{\nabla^g}\! h \ra_{L^2}$. The latter step is performed in the Lemma below.
 \begin{lema} \label{quad-K}
Let $h,H$ be in $\Gamma \left (\Sym^{2,-}TM \right )$. Then 
$$ g(h^2\circ \di_{\nabla^g}h+h \circ \di_{\nabla^g}h^2,\partial^g H)=g(\di_{\nabla^g}h^3, \partial^g\!H).$$

\end{lema}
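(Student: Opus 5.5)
The identity is pointwise and purely algebraic in the first jets of $h$ and $H$, so I would prove it by a direct tensorial computation followed by a type argument, with no integration by parts. The first step is to expand both sides with the product rule for $\nabla^g$, exactly as in the proof of Proposition \ref{id-main}. Since $\nabla^g_X h^3=(\nabla^g_Xh)h^2+h(\nabla^g_Xh)h+h^2(\nabla^g_Xh)$ and $\nabla^g_X h^2=(\nabla^g_Xh)h+h(\nabla^g_Xh)$, the two sides differ by
\begin{equation*}
\mathcal{D}(X,Y):=\di_{\nabla^g}h^3(X,Y)-\bigl(h^2\circ \di_{\nabla^g}h+h\circ \di_{\nabla^g}h^2\bigr)(X,Y)=[\nabla^g_Xh,h^2]Y-[\nabla^g_Yh,h^2]X .
\end{equation*}
So the claim is equivalent to $g(\mathcal{D},\partial^gH)=0$ pointwise.

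The second step is to record the algebraic symmetries of $\mathcal{D}$. Because $h\in \Sym^{2,-}TM$ and $\nabla^g J=0$, each $\nabla^g_Xh$ is symmetric and anti-commutes with $J$. On the other hand $h^2$ is symmetric and commutes with $J$. Hence $A_X:=[\nabla^g_Xh,h^2]$ is skew-symmetric and anti-commutes with $J$, and it depends linearly on $X$. I would then determine the type of $\mathcal{D}$ in the splitting
\begin{equation*}
\Lambda^2(M,TM)=\Lambda^{1,1}(M,TM)\oplus \lambda^2_{+}(M,TM)\oplus\lambda^2_{-}(M,TM),
\end{equation*}
in the same way the paper identifies the types of $h\circ[h,h]^c$ and $\bdel h(h\cdot,h\cdot)$ in Corollary \ref{ids-2}. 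In parallel I would identify $\partial^gH$, the $\Omega^{1,1}$-part of $\di_{\nabla^g}H$, as a section of a specific sub-bundle of $\Lambda^{1,1}(M,TM)$. It satisfies $\partial^gH(JX,JY)=\partial^gH(X,Y)$, and its values are built from the $J$-anti-linear endomorphisms $\nabla^g H$.

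The third step, and the main obstacle, is to show that $\mathcal{D}$ has no component along the sub-bundle of $\Lambda^{1,1}(M,TM)$ containing $\partial^gH$. I would first try to split $\Lambda^{1,1}(M,TM)$ further according to whether $\gamma(X,JY)=\pm J\gamma(X,Y)$ after symmetrisation, mirroring the splitting \eqref{split2} of $\lambda^2(M,TM)$. I would then show that $\partial^gH$ and the $(1,1)$-part of $\mathcal{D}$ fall into different summands. The $J$-anti-linearity of $\nabla^gH$ differs from that of $X\mapsto A_X$ because of the extra commuting factor $h^2$, and this is what should separate them.

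If that bookkeeping does not close, the fallback is a direct frame computation. Write
\begin{equation*}
g(\mathcal{D},\partial^gH)=\sum_{i,j} g\bigl([\nabla^g_{e_i}h,h^2]e_j,\partial^gH(e_i,e_j)\bigr),
\end{equation*}
expand $\partial^gH$ through $\tfrac12\bigl(\di_{\nabla^g}H+\di_{\nabla^g}H(J\cdot,J\cdot)\bigr)$, and replace $e_i,e_j$ by $Je_i,Je_j$. Using $J$-anti-commutation of $\nabla^g h$ and $\nabla^gH$ and commutation of $h^2$ with $J$, each term should then be paired with its negative. Alternatively, the skew-symmetry of $A_X$ against the symmetry of $\nabla^g_XH$ should force the remaining trace terms to vanish.
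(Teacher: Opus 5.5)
Your first two steps are correct and match the paper's starting point. The paper sets $\gamma_X:=[h^2,\nabla^g_Xh]=-A_X$ and writes $h^2\circ\di_{\nabla^g}h+h\circ\di_{\nabla^g}h^2=\gamma_XY-\gamma_YX+\di_{\nabla^g}h^3$, so the claim is indeed $g(\mathcal{D},\partial^gH)=0$.

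The gap is in your third step, which is where all the content lies.

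\emph{The proposed splitting is empty.} For $\gamma\in\Lambda^{1,1}(M,TM)$ one has $\gamma(JX,Y)=-\gamma(X,JY)$. So the symmetrisation $\gamma(X,JY)+\gamma(JX,Y)$ vanishes identically. A condition $\gamma(X,JY)=\pm J\gamma(X,Y)$ forces $\gamma(JX,JY)=-\gamma(X,Y)$, hence $\gamma=0$.

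\emph{The fallback does not close.} Split $A=A^{+}+A^{-}$ with $A^{\pm}_{JX}=\pm JA^{\pm}_X$. Nothing relates $\nabla^g_{JX}h$ to $\nabla^g_Xh$, so both pieces occur.
The substitution $(e_i,e_j)\mapsto(Je_i,Je_j)$ flips the sign only of the $A^{-}$-contribution. That contribution is zero by type anyway, since $A^{-}_XY-A^{-}_YX$ is $J$-anti-invariant. The $A^{+}$-contribution is left invariant by the substitution, so nothing cancels there.
Skew versus symmetric kills only the direct traces $\sum_i\langle A_{e_i},\nabla^g_{e_i}H\rangle$ and $\sum_i\langle A_{e_i},\nabla^g_{Je_i}(HJ)\rangle$. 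In the cross terms $\sum_{i,j}g(A_{e_i}e_j,(\nabla^g_{e_j}H)e_i)$, the symmetric index pair of $\nabla^g_{e_j}H$ is contracted against the first and third slots of $(X,Y,Z)\mapsto g(A_XY,Z)$. These are not a skew pair for a general element of $\Lambda^1M\otimes\lambda^2M$.

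\emph{The $J$-linearity heuristic is also off target.} Write $\partial^gH(X,Y)=B_XY-B_YX$ with $B_X=\tfrac12\bigl(\nabla^g_XH+\nabla^g_{JX}(HJ)\bigr)$. Then $B_{JX}=JB_X$, exactly as for $A^{+}$. What distinguishes $B$ from $A^{+}$ is only that $B_X$ is symmetric while $A^{+}_X$ is skew. That difference becomes usable only after a further structural step.

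The missing idea, which is how the paper closes, is that $A^{+}$ is encoded by a $3$-form. One has $g(A^{+}_XY,Z)=T(X,Y,Z)-T(X,JY,JZ)$ for some $T\in\Omega^{(2,1)+(1,2)}M$; this is the real form of the isomorphism $\Lambda^{0,1}\otimes\Lambda^{2,0}\cong\Lambda^{2,1}$. Using $\mathfrak{J}T=JT$, one gets $A^{+}_XY-A^{+}_YX=2T^{+}(X,Y)$. Hence
$g(\mathcal{D},\partial^gH)=2g(T^{+},\partial^gH)=2g(T,\partial^gH)$, which is a multiple of $g(T,\mathrm{a}(\partial^gH))$. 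Finally, $\mathrm{a}(\partial^gH)=0$ because $H$ and $HJ$ are both symmetric.

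So the correct form of your ``different summands'' idea is this: the $(1,1)$-part of $\mathcal{D}$ lies in the image of $\Omega^{(2,1)+(1,2)}M$ under $T\mapsto T^{+}$, while $\partial^gH$ lies in the Cartan summand $\Lambda^{1,1}(M,TM)\cap\ker\mathrm{a}$. The invariant separating them is total antisymmetrisation, not a $J$-linearity condition.
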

\begin{proof}
A tensorial computation shows that 
$$ (h^2\circ \di_{\nabla^g}h+h \circ \di_{\nabla^g}h^2)(X,Y)=\gamma_XY-\gamma_YX+\di_{\nabla^g}\!h^3(X,Y)
$$
where the tensor $\gamma:TM \to \lambda^2_J$ is given by $\gamma_X=[h^2,\nabla^g_Xh]$. Now split $\gamma=\gamma^{-}+\gamma^{+}$ where 
$\gamma^{-}_{JX}JY=-\gamma^{-}_XY$ and $\gamma^{+}_{JX}JY=\gamma^{+}_XY$; thus
$g(\gamma^{+}_X \cdot, \cdot)=T(X, \cdot ,\cdot)-T(X, J\cdot ,J\cdot)$ for some form $T \in \Lambda^{(2,1)+(1,2)}M$. It follows that 
\begin{equation*}
\begin{split}
g \left (\gamma^{+}_{e_i}e_j-\gamma^{+}_{e_j}e_i, \partial^gH(e_i,e_j) \right )=&g(T, \partial^gH)
-g(T(e_i,Je_j)+T(Je_i,e_j), J \circ 
\partial^gH(e_i,e_j))\\
\end{split}
\end{equation*}
Since $T$ has real type $(1,2)+(2,1)$ we have $T(e_i,Je_j)+T(Je_i,e_j)=J \circ \left (T(e_i,e_j)-T(Je_i,Je_j) \right )$; 
the last summand above thus reads 
$$g(T(e_i,Je_j)+T(Je_i,e_j), J \circ 
\partial^gH(e_i,e_j))=g \left ( T(e_i,e_j)-T(Je_i,Je_j), \partial^gH(e_i,e_j) \right )=0$$ 
since $\partial^gH$ belongs to $\Omega^{1,1}(M,TM)$. Furthermore, since $H$ and $HJ$ are both symmetric the total alternation $\mathrm{a}(\partial^gH)=0$ thus the scalar product $g(T, \partial^g H)=0$ and therefore $g \left (\gamma^{+}_{e_i}e_j-\gamma^{+}_{e_j}e_i, \partial^gH(e_i,e_j) \right )=0$ as well. The claim follows now by taking into account that $(X,Y) \mapsto \gamma_X^{-}Y-\gamma^{-}_YX$ is $J$-anti-invariant, whilst 
$(X,Y) \mapsto \partial^gH(X,Y)$ is $J$-invariant.
\end{proof}
\begin{rema} \label{rmk-simp}
From the proof above we see that Lemma \ref{quad-K} can be formulated in a more algebraic way; namely it asserts the vanishing of the component of $h^2\circ \di_{\nabla^g}h+h \circ \di_{\nabla^g}h^2-\di_{\nabla^g}h^3$ on the Cartan summand $\Lambda^{1,1}(M,TM) \cap \ker \mathrm{a} \subseteq  \Lambda^{1,1}(M,TM)$. Additional information on the tensor  $h^2\circ \di_{\nabla^g}h+h \circ \di_{\nabla^g}h^2-\di_{\nabla^g}h^3$ will be obtained later on in Lemma \ref{quad-K+}.
\end{rema}
We are now able to record further progress towards determining the explicit form of the operator $\mathbf{w}$ as follows.
\begin{pro} \label{Q13}
Assume that $h,H \in \Gamma \left (\Sym^{2,-}TM \right )$ and moreover that we have $\bdel h=0$ and $\delta^gh=0$. Then 
\begin{equation*}
\begin{split}
\la \delta^gQ(h)-\tfrac{1}{2}\mathbf{p}(h^2,h),H\ra_{L^2}=&\la -\bfv(h^2,h)+\tfrac{1}{2}\{h,\delta^g\di_{\nabla^g}\!h^2 \}-\tfrac{1}{2} \widetilde{\Delta}_Eh^3
, H\ra_{L^2}.
\end{split}
\end{equation*}
\end{pro}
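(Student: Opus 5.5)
Starting from Proposition \ref{Q12}, the work is to make the four remaining terms explicit and show they combine to $\la \tfrac{1}{2}\{h,\delta^g\di_{\nabla^g}h^2\}-\tfrac{1}{2}\widetilde{\Delta}_Eh^3,H\ra_{L^2}$. The first step handles the term with $\partial^gH$. By Lemma \ref{quad-K} it equals $\la \partial^g H,\di_{\nabla^g}h^3\ra_{L^2}$, and since $\partial^gH$ is the $\Lambda^{1,1}$-part of $\di_{\nabla^g}H$, this is $\la \di_{\nabla^g}H,\di_{\nabla^g}h^3\ra_{L^2}-\la\bdel H,\bdel h^3\ra_{L^2}$. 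I would then integrate by parts, using \eqref{wz1} on the first piece and part (ii) of Proposition \ref{wz_K} on the second. The expected result is $\la \delta^g\di_{\nabla^g}h^3-\delta^g\bdel h^3,H\ra_{L^2}$. After projecting onto $\Sym^{2,-}TM$ this becomes $\la (\tfrac{1}{2}\Delta_E+E+\ring{R})h^3-\delta^{\star_g,-}\delta^gh^3+\ldots,H\ra_{L^2}$, which mirrors part (ii) of Corollary \ref{wz-K2}. Here $h^3\in\Sym^{2,-}$ is trace free, so the $\delta^{\star_g}\di\tr$ terms drop.

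The next step treats the algebraic second-order terms. Using Lemma \ref{L-exp} with $\Delta_Eh=0$, I would write
\begin{equation*}
4\mathrm{L}(h,H)=\{2\ring{R}h,H\}+\{h,(\Delta_E+2\ring{R})H\}-(\Delta_E+2\ring{R})\{h,H\}.
\end{equation*}
Pairing with $h^2$ and moving operators by self-adjointness turns $\la h^2,\mathrm{L}(h,H)\ra_{L^2}$ into $\la \cdot, H\ra_{L^2}$ with explicit combinations of $\{h,(\Delta_E+2\ring{R})h^2\}$, $(\Delta_E+2\ring{R})h^3$ and $\{\ring{R}h,h^2\}$. For $\mathbf{q}(h^2\circ H,h)$ I would use Lemma \ref{q} together with its symmetry and $\delta^gh=0$. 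This gives $\la \ring{R}h-Eh,h^2\circ H\ra_{L^2}=\tfrac{1}{2}\la\{h^2,\ring{R}h\}-2Eh^3,H\ra_{L^2}$, after taking the symmetric part. Finally, $\widetilde{\mathrm{L}}(h^2,h)-\mathrm{L}(h^2,h)$ is the polarisation of Lemma \ref{diff-Ls}(ii) as in \eqref{diff-L}. Since $\mathbf{L}h=-Eh$, it equals $-\tfrac{1}{2}\bigl(\{\mathbf{L}h^2,h\}-Eh^3\cdot 2-\mathbf{L}h^3\bigr)$ up to the skew parts, and only the $\Sym^{2,-}$-components survive against $H$.

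The last step is to collect all terms, projected onto $\Gamma(\Sym^{2,-}TM)$. I expect the curvature terms $\{\ring{R}h,h^2\}$ to cancel between $\mathbf{q}$ and $\mathrm{L}$, and the $E$-terms and $\ring{R}h^3$-terms to cancel as well. The $\delta^{\star_g,-}\delta^gh^3$ contributions should assemble into $\widetilde{\Delta}^{-}_E h^3=\Delta_Eh^3-2\delta^{\star_g,-}\delta^gh^3$. The $h^2$-dependent terms should recombine, via \eqref{wz1} applied to $h^2$, into $\tfrac{1}{2}\{h,\delta^g\di_{\nabla^g}h^2\}$; note that $\{h,\cdot\}$ maps the $\Sym^{2,+}$ part of $\delta^g\di_{\nabla^g}h^2$ into $\Sym^{2,-}$ and kills nothing relevant. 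The main obstacle is this bookkeeping. Keeping track of the $\di^{-}\delta^g$ skew parts and the $\delta^{\star_g,\pm}$ type components so that exactly $-\tfrac{1}{2}\widetilde{\Delta}_Eh^3$ remains is delicate. I would first verify the coefficient of $\Delta_Eh^3$ and of $\ring{R}h^3$, since the $\partial^g$-term contributes $+\tfrac{1}{2}\Delta_E$ and the $\mathrm{L}$-terms must supply $-\Delta_E$, before chasing the divergence terms.
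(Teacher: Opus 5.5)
Your outline matches the paper's proof: start from Proposition \ref{Q12}, rewrite the $\partial^gH$-term via Lemma \ref{quad-K}, and evaluate the $\mathrm{L}$-, $\widetilde{\mathrm{L}}$- and $\mathbf{q}$-terms with Lemma \ref{L-exp}, \eqref{diff-L} and Lemma \ref{q}. Your expressions for $\la h^2,\mathrm{L}(h,H)\ra_{L^2}$ and $\mathbf{q}(h^2\circ H,h)$ are correct. However, two of your explicit intermediate formulas are wrong, and with them the final collection cannot reproduce the statement.

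First, the $\Sym^{2,-}$-projection of $\delta^g\di_{\nabla^g}h^3-\delta^g\bdel h^3$ is exactly $(\tfrac12\Delta_E+E+\ring{R})h^3$. The term $-\delta^{\star_g,-}\delta^gh^3$ produced by \eqref{wz1} is removed by the identical term in Proposition \ref{wz_K}(ii) when you subtract, so no divergence term survives. This is the computation behind Corollary \ref{wz-K2}(ii), which you invoke but then contradict.

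Second, in \eqref{diff-L} with $h_1=h^2$, $h_2=h$ one has $\{h_1,h_2\}=2h^3$. Hence $(\widetilde{\mathrm{L}}-\mathrm{L})(h^2,h)=-\tfrac12\bigl(\{\mathbf{L}h^2,h\}-2Eh^3-2\mathbf{L}h^3\bigr)$. With your single $\mathbf{L}h^3$ the $\mathrm{L}$-terms contribute $-\tfrac34\Delta_Eh^3$ rather than the $-\Delta_Eh^3$ you yourself anticipate, and the wrong multiple of $\delta^{\star_g}\delta^gh^3$.

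Third, the $E$-terms do not cancel. After full expansion they net to $+Eh^3$, and this is required: the symmetric part of $\tfrac12\{h,\delta^g\di_{\nabla^g}h^2\}$ equals $\tfrac12\{h,(\Delta_E+\ring{R}-\delta^{\star_g}\delta^g)h^2\}+Eh^3$.

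With these corrections the four terms sum to $\tfrac12\{h,(\Delta_E+\ring{R}-\delta^{\star_g}\delta^g)h^2\}+Eh^3-\tfrac12\Delta_Eh^3+\delta^{\star_g}\delta^gh^3$. Paired with $H\in\Gamma(\Sym^{2,-}TM)$ and using $\tr h^3=0$, this is exactly $\tfrac12\{h,\delta^g\di_{\nabla^g}h^2\}-\tfrac12\widetilde{\Delta}_Eh^3$.

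The paper sidesteps this bookkeeping by never expanding $\delta^g\di_{\nabla^g}$ on $h^2$ and $h^3$. From \eqref{L-t} and $\delta^g\di_{\nabla^g}h=(E+\ring{R})h$ it writes $\widetilde{\mathrm{L}}(h^2,h)=\tfrac12\{h^2,(E+\ring{R})h\}+\tfrac12\{h,\delta^g\di_{\nabla^g}h^2\}-\delta^g\di_{\nabla^g}h^3$. It then observes that the two $\mathrm{L}$-terms combine to $\{h^2,\ring{R}h\}$, so all curvature and $E$-terms cancel between $\mathbf{q}$, $\mathrm{L}$ and the first summand of $\widetilde{\mathrm{L}}$. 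Finally it pairs $-\la\di_{\nabla^g}h^3,\di_{\nabla^g}H\ra_{L^2}$ with your step-one quantity $\la\di_{\nabla^g}h^3,\partial^gH\ra_{L^2}$. This gives $-\la\bdel h^3,\bdel H\ra_{L^2}=-\tfrac12\la\widetilde{\Delta}_Eh^3,H\ra_{L^2}$ in one line, with no Weitzenb\"ock expansion of $h^2$ or $h^3$.
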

\begin{proof}
Since the operator $\Delta_E
+2\ring{R}$ is symmetric, an algebraic computation based on the formula for $\mathrm{L}$ in Lemma \ref{L-exp} and also on having $\Delta_Eh=0$ leads to 
\begin{equation*}
\la \mathrm{L}(h^2,h),H \ra_{L^2}+\la h^2, \mathrm{L}(h,H)\ra_{L^2}=\la H, \{ h^2, \ring{R}h \}\ra_{L^2}.
\end{equation*}
Further on we have $\mathbf{q}(h^2 \circ H,h)=\la \tfrac{1}{2}\{h^2,\ring{R}h\}-Eh^3,H\ra_{L^2}$ by Lemma \ref{q}, since $h$ is divergence free.
In addition, by using that $\delta^g \di_{\nabla^g}\!h=Eh+\ring{R}h$ we find that (see also \eqref{L-t} for the definition of $\widetilde{\mathrm{L}}$)
$$ \widetilde{\mathrm{L}}(h^2,h)=\tfrac{1}{2}\{h^2,Eh+\ring{R}h\}+\tfrac{1}{2} \{h, \delta^g \di_{\nabla^g}\!h^2\}-\delta^g\di_{\nabla^g}\!h^3.$$ Putting these facts together in Proposition \ref{Q12} shows that 
\begin{equation*}
\begin{split}
\la \delta^gQ(h)-\tfrac{1}{2}\mathbf{p}(h^2,h),H\ra_{L^2}=&\la -\bfv(h^2,h)+\tfrac{1}{2}\{h,\delta^g\di_{\nabla^g}\!h^2 \},H\ra_{L^2}\\
+& \la \partial^gH, h\circ \di_{\nabla^g}\!h^2+h^2 \circ \di_{\nabla^g}\!h \ra_{L^2}-\la \di_{\nabla^g}\!h^3, \di\!H\ra_{L^2}.
\end{split}
\end{equation*}
Taking into account Lemma \ref{quad-K} leads to 
\begin{equation*}
\begin{split}
\la \partial^gH, h\circ \di_{\nabla^g}\!h^2+h^2\circ \di_{\nabla^g}\!h \ra_{L^2}-\la \di_{\nabla^g}\!h^3, \di\!H\ra_{L^2}=&
\la \di_{\nabla^g}\!h^3, \partial^g H\ra_{L^2}-\la \di_{\nabla^g}\!h^3, \di_{\nabla^g}\!H\ra_{L^2}\\
=&-\la \bdel h^3, \bdel H\ra_{L^2}.
\end{split}
\end{equation*}
At the same time using part (ii) in Proposition \ref{wz_K} shows that 
$$ \la \bdel h^3, \bdel H\ra_{L^2}=\la \delta^g\bdel h^3, H\ra_{L^2}=\tfrac{1}{2}\la \widetilde{\Delta}_Eh^3,H\ra_{L^2}.
$$
The claim follows now easily.
\end{proof}
The final step needed to fully determine the component of the operator $\mathbf{w}(h)$ on 
$\Gamma \left (\Sym^{2,-}TM \right )$ is the following 
\begin{lema} \label{w-step}
Assume that $h,H$ belong to $\Gamma \left (\Sym^{2,-}TM \right )$ and that $\bdel h=0$ and $\delta^gh=0$. Then 
\begin{equation*}
\la h \circ \mathbf{p}(h,h),H \ra_{L^2}=-\la \{h, \ring{R}h^2+\delta^{\star_g}(\delta^g+\tfrac{1}{4}\di\!\tr )h^2 \},H\ra_{L^2}.
\end{equation*}
\end{lema}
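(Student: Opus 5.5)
We need to compute $\la h\circ\mathbf{p}(h,h),H\ra_{L^2}$ for $h\in\varepsilon^-(g)$ and $H\in\Gamma(\Sym^{2,-}TM)$. The plan is to move $h$ onto the test tensor and reduce to the quadratic identities of the previous sections.

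First, since $h$ and $H$ are symmetric, $g(h\circ\mathbf{p}(h,h),H)=g(\mathbf{p}(h,h),h\circ H)$. We split $h\circ H=\tfrac12\{h,H\}+\tfrac12[h,H]$, where $\{h,H\}$ lies in $\Sym^{2,+}TM$ and $[h,H]$ is a $2$-form.

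For the skew part we use Proposition \ref{p-skew}, which gives $\la \mathbf{p}(h,h),[h,H]\ra_{L^2}=2\la\delta^g[h,h]^{\FN},[h,H]\ra_{L^2}$. By \eqref{t-FNKS1}, with $\bdel h=0$ and $\delta^gh=0$, the $\Lambda^2M$-component of $\delta^g[h,h]^{\FN}$ is $-\di^-\delta^gh^2$. This contributes $-\la \di^-\delta^gh^2,[h,H]\ra_{L^2}$. We then rewrite it as $\la [h,\di^-\delta^gh^2],H\ra_{L^2}$, up to sign, and the type of this term must be checked.

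For the symmetric part we use Lemma \ref{quad-1L} with the $J$-invariant tensor $H_+:=\tfrac12\{h,H\}$:
\begin{equation*}
g(\mathbf{p}(h,h),H_+)=-g(H_+\sharp\di_{\nabla^g}h,\di_{\nabla^g}h)+g(\mathrm{L}(h,h),H_+).
\end{equation*}
Two ingredients handle the right-hand side.
\begin{itemize}
\item[$\bullet$] Lemma \ref{L-exp} together with $\Delta_Eh=0$ gives $2\mathrm{L}(h,h)=\{2\ring{R}h,h\}-(\Delta_E+2\ring{R})h^2$.
\item[$\bullet$] Proposition \ref{hash-o} applies with $H_-=h$. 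Since $\bdel h=0$ and $\delta^gh=0$, only $\la \mathbf{D}(h,h),H_+\ra_{L^2}$ survives. Moreover $\mathrm{div}_1(h,h)=0$, so this reduces to $\mathbf{D}_1(h,h)$, namely
\begin{equation*}
\tfrac12\bigl(2\{\ring{R}h,h\}-(\tfrac12\Delta_E-\ring{R})h^2+\delta^{\star_g}(2\delta^g+\tfrac12\di\tr)h^2\bigr).
\end{equation*}
\end{itemize}
Combining the two, the $\{\ring{R}h,h\}$ terms and the $\Delta_E h^2$ terms cancel. What remains is $-\ring{R}h^2-\delta^{\star_g}(\delta^g+\tfrac14\di\tr)h^2$ projected onto $\Sym^{2,+}$, which is where $h^2$ lives. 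Pairing with $\tfrac12\{h,H\}$ then gives $-\la\{h,\ring{R}h^2+\delta^{\star_g,+}(\delta^g+\tfrac14\di\tr)h^2\},H\ra_{L^2}$ after using self-adjointness of $\{h,\cdot\}$.

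The remaining step is to reconcile the skew contribution with the difference between $\delta^{\star_g}$ and $\delta^{\star_g,+}$ in the target formula. Inside $\{h,\cdot\}$ with $h$ $J$-anti-invariant, pairing against $H\in\Sym^{2,-}$ only sees the $\Sym^{2,+}$-component, so $\{h,\delta^{\star_g,-}X\}$ is $J$-invariant and pairs to zero with $H$. Hence the formula with $\delta^{\star_g}$ agrees with the one with $\delta^{\star_g,+}$.

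The main obstacle I expect is the skew term $\la\di^-\delta^gh^2,[h,H]\ra_{L^2}$. Its integrand is $J$-invariant, because $[h,H]$ is in $\Lambda^{1,1}$ when both $h,H$ anticommute with $J$. Since $\di^-\delta^gh^2$ lies in $\lambda^2M$, the pointwise pairing vanishes by type. I would verify this first: if $[h,H]$ commutes with $J$ it is of type $(1,1)$, which settles it. Beyond that, the remaining work is careful bookkeeping of the factors $\tfrac12$ and of the trace term $\tfrac14\di\tr$ coming from $\mathbf{D}_1$.
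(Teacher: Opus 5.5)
This is essentially the paper's own proof. You use the same splitting $h\circ H=\tfrac12\{h,H\}+\tfrac12[h,H]$, dispose of the skew part the same way via Proposition~\ref{p-skew} and type (the $\Lambda^2M$-component $-\di^-\delta^gh^2$ of $\delta^g[h,h]^{\FN}$ lies in $\lambda^2M$, while $[h,H]\in\Omega^{1,1}M$), and evaluate the $J$-invariant part with the same tools, Lemma~\ref{quad-1L}, Lemma~\ref{L-exp} and Proposition~\ref{hash-o}.

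One bookkeeping slip needs fixing. Since $\{h,h\}=2h^2$, the correct expression is $\mathbf{D}_1(h,h)=\{\ring{R}h,h\}-(\tfrac12\Delta_E-\ring{R})h^2+\delta^{\star_g}(2\delta^g+\tfrac12\di\tr)h^2$; with your displayed version the $\Delta_Eh^2$ terms would not cancel. With the corrected formula you get $\la\mathbf{p}(h,h),H_+\ra_{L^2}=-\la 2\ring{R}h^2+\delta^{\star_g}(2\delta^g+\tfrac12\di\tr)h^2,H_+\ra_{L^2}$, and inserting $H_+=\tfrac12\{h,H\}$ gives exactly the stated formula. The paper's intermediate display has a similar slip, writing $\delta^gh^2$ where $2\delta^gh^2$ is needed.
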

\begin{proof}
We have $2\la h \circ \mathbf{p}(h,h),H \ra_{L^2}=\la \mathbf{p}(h,h), \{h,H\}\ra_{L^2}+\la \mathbf{p}(h,h), A\ra_{L^2}$
where $A:=[h,H]$ in $\Omega^{1,1}M$. 
Due to the type decomposition of $\delta^{g}[h,h]^{\FN}$ in \eqref{t-FNKS1} the skew-symmetric component in $\delta^{g}[h,h]^{\FN}$ satisfies 
$\la \delta^{g}[h,h]^{\FN},A\ra_{L^2}=0$ since $A \in \Omega^{1,1}M$. It follows that 
$$ \la \mathbf{p}(h,h), A\ra_{L^2}=0
$$
by using Proposition \ref{p-skew}. Now denote $H_{+}:=\{h,H\}$ which certainly satisfies $H_{+}J=JH_{+}$. By using Lemma 
\ref{quad-1} we get 
$\la \mathbf{p}(h,h), H_{+} \ra_{L^2}=-\la H_{+} \sharp \di\!h, \di\!h\ra_{L^2}+\la \mathrm{L}(h,h),H_{+} \ra_{L^2}$. We recall that 
\begin{equation*}
\begin{split}
&\mathrm{L}(h,h)=\{\ring{R}(h),h\}-(\tfrac{1}{2}\Delta_E+\ring{R})h^2\\
&\la H_{+} \sharp \di_{\nabla^g}\!h, \di_{\nabla^g}\!h\ra_{L^2}=\la \{\ring{R}(h),h\}-(\tfrac{1}{2}\Delta_E-\ring{R})h^2, H_{+}\ra_{L^2}+\la \delta^{\star_g}(\delta^gh^2+\tfrac{1}{2}\di\!\tr h^2),H_{+}\ra_{L^2}.
\end{split}
\end{equation*}
The first equation follows from Lemma \ref{L-exp} in which we take $h_1=h_2=h$ and use that $\Delta_Eh=0$; the second equation is entailed by Proposition \ref{hash-o} after taking into account that $\bdel h=0$ and $\delta^gh=0$.
It follows that $-\la \mathbf{p}(h,h), H_{+} \ra_{L^2}=\la 2\ring{R}h^2+\delta^{\star_g}(\delta^gh^2+\tfrac{1}{2}\di\!\tr h^2),H_{+}\ra_{L^2}$
and the claim is proved by gathering terms.
\end{proof}
\begin{lema} \label{l-step}
Let $g_t \in \mathscr{M}_1$ be an Einstein deformation with Taylor series expansion given by $g^{-1}g_t=\id+t h_1+\tfrac{t^2}{2}h_2+\tfrac{t^3}{6}h_3
+o(t^4)$, normalised according to $\delta^gh_1=0$ and \eqref{gauge-1}. Then 
$$ \la \{h_1,(\widetilde{\Delta}_E+2E+\delta^{\star}\di\tr )h_2\},H \ra_{L^2}=\la \{h_1, (\widetilde{\Delta}_E+2E+\tfrac{3}{2}\delta^{\star_g}\di\!\tr)h_1^2\}, H\ra_{L^2}$$
whenever $H$ belongs to $\Gamma \left (\Sym^{2,-}TM \right )$. 
\end{lema}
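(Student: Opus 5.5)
Write $h_2=h_1^2+(h_2-h_1^2)$. Since the operator $P:=\widetilde{\Delta}_E+2E+\delta^{\star_g}\di\tr$ is linear, it suffices to show that
\begin{equation*}
\la \{h_1,P(h_2-h_1^2)\},H\ra_{L^2}=\tfrac{1}{2}\la \{h_1,\delta^{\star_g}\di\tr h_1^2\},H\ra_{L^2}
\end{equation*}
for all $H\in\Gamma\left(\Sym^{2,-}TM\right)$. Set $k:=h_2-h_1^2$. By Theorem \ref{K2-N} and the remarks after it, $k$ lies in $\Gamma\left(\Sym^{2,-}TM\right)$, is trace free, and satisfies $\delta^gk=-\tfrac{1}{4}\di\tr(h_1^2)$. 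In particular $\delta^{\star_g}\di\tr k=0$.

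For the remaining part we use $\widetilde{\Delta}_E=\Delta_E-\delta^{\star_g}\D^g$ with $\D^g k=2\delta^gk=-\tfrac12\di\tr(h_1^2)$. This gives
\begin{equation*}
Pk=(\Delta_E+2E)k+\tfrac{1}{2}\delta^{\star_g}\di\tr(h_1^2).
\end{equation*}
The second term already produces the required right-hand side. So the claim reduces to
\begin{equation*}
\la \{h_1,(\Delta_E+2E)k\},H\ra_{L^2}=0 \quad \text{for all } H\in\Gamma\left(\Sym^{2,-}TM\right).
\end{equation*}
This is a type statement. Since $\Delta_E$ commutes with $J$ on a K\"ahler manifold, $(\Delta_E+2E)k$ lies in $\Gamma\left(\Sym^{2,-}TM\right)$. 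The anticommutator of two $J$-anti-invariant symmetric tensors is $J$-invariant, because $h_1kJ=Jh_1k$. Hence $\{h_1,(\Delta_E+2E)k\}$ lies in $\Gamma\left(\Sym^{2,+}TM\right)$, and it is $L^2$-orthogonal to $H$ pointwise.

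The same argument also gives the stated identity literally: $\{h_1,Ph_2\}$ and $\{h_1,Ph_1^2\}$ only matter through their $\Sym^{2,-}$ components. These come from the $\Sym^{2,+}$ components of $Ph_2$ and $Ph_1^2$. The $\Sym^{2,+}$ component of $Pk$ is $\tfrac12\delta^{\star_g,+}\di\tr(h_1^2)$, since $\Delta_E$ preserves types. I would present the argument in this form, projecting onto $\Sym^{2,+}$ before anticommuting with $h_1$.

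The only step I would check carefully is that $\Delta_E$ and $\delta^{\star_g}\delta^g$ interact with types as claimed. The operator $\delta^{\star_g}$ mixes types, so I would keep the exact expression $\tfrac12\delta^{\star_g}\di\tr(h_1^2)$, whose $+$ part is what survives, rather than splitting it. I would also confirm the trace-free property $\tr(h_2-h_1^2)=0$ from Lemma \ref{tr-u3}.
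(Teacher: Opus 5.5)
Your proposal is correct and follows essentially the same route as the paper: you split off $h_2-h_1^2$ and use its tracelessness together with the gauge condition $\delta^g(h_2-h_1^2)=-\tfrac14\di\tr(h_1^2)$ to rewrite $(\widetilde{\Delta}_E+2E+\delta^{\star_g}\di\tr)h_2$ as $(\Delta_E+2E)(h_2-h_1^2)+(\widetilde{\Delta}_E+2E+\tfrac32\delta^{\star_g}\di\tr)h_1^2$, and then discard the first term by type, since $h_2-h_1^2\in\Gamma(\Sym^{2,-}TM)$ and $\Delta_E$ preserves type. The only difference is cosmetic: the paper moves the anticommutator onto the test tensor and pairs with $\{h_1,H\}\in\Gamma(\Sym^{2,+}TM)$, while you note directly that $\{h_1,\cdot\}$ maps $\Sym^{2,-}$ into $\Sym^{2,+}$, which is the same observation.
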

\begin{proof}
We consider again  $H_{+}:=\{h_1,H\} \in \Gamma \left (\Sym^{2,+}TM \right )$ and compute first the quantity  
$$\la (\widetilde{\Delta}_E+2E+\delta^{\star}\di\tr )h_2,H_{+} \ra_{L^2}$$ 
by type considerations. We recall that the normalisation of $h_2$ under the gauge group $\mathbf{G}$ in \eqref{gauge-1} reads 
$\delta^g(h_2-h_1^2)=-\tfrac{1}{4}\di\!\tr h_1^2$. Thus 
\begin{equation*}
\begin{split}
(\widetilde{\Delta}_E+2E+\delta^{\star}\di\tr )h_2=&(\Delta_E+2E)(h_2-h_1^2)+(\Delta_E+2E)h_1^2-2 \delta^{\star_g}(\delta^g-\tfrac{1}{4}\di\!\tr)h_1^2\\
=&(\Delta_E+2E)(h_2-h_1^2)+(\widetilde{\Delta}_E+2E+\tfrac{3}{2}\delta^{\star_g}\di\!\tr)h_1^2.
\end{split}
\end{equation*}
Since $h_2-h_1^2$ belongs to $\Gamma \left (\Sym^{2,-}TM \right )$ by \eqref{alg-T1} and since the Einstein operator preserves complex type this leads to 
$\la (\widetilde{\Delta}_E+2E+\delta^{\star}\di\tr )h_2,H_{+} \ra_{L^2}=\la (\widetilde{\Delta}_E+2E+\tfrac{3}{2}\delta^{\star_g}\di\!\tr)h_1^2,H_{+} \ra_{L^2}$. The claim follows now easily.
\end{proof}
All ingredients are now in place in order to make explicit the component on $\Gamma \left (\Sym^{2,-}TM \right )$ of the Einstein equation to third order. We 
consider the tensor 
$$ \tilde{\mathbf{u}}_3:=\tfrac{1}{3}\mathbf{u}_3+\tfrac{3}{2}h_1^3 \ \mathrm{in} \ \Gamma \left (\Sym^{2}TM \right )
$$
and prove the following 
\begin{teo} \label{E3-1} 
Assume that the family $g_t$ is normalised according ot $\delta^gh_1=0$ and \eqref{gauge-1}. 
The component on $\Gamma \left (\Sym^{2,-}TM \right )$ of the Einstein equation to third order in \eqref{xxx3}
reads 
\begin{equation*}
\begin{split}
\la \widetilde{\Delta}_E \tilde{\mathbf{u}}_3+\tfrac{1}{3}\delta^{\star_g}\di\!\tr  \mathbf{u}_3,H\ra_{L^2}=&\bfv(h_1,h_2-h_1^2,H)\\
+&
\tfrac{1}{2} \la \{h_1, \left (\Delta_E+4\ring{R}+\delta^{\star_g,+}\circ (2\delta^g+\tfrac{1}{4}\di\!\tr ) \right )h_1^2\},H \ra_{L^2}\\
-&\la \mathrm{div}_2(h_1),H\ra_{L^2}
\end{split}
\end{equation*}
for all $H \in \Gamma \left (\Sym^{2,-}TM \right )$.
\end{teo}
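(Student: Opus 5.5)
We project equation \eqref{xxx3} onto $\Gamma\left(\Sym^{2,-}TM\right)$ by pairing it with $H\in\Gamma\left(\Sym^{2,-}TM\right)$, and then rewrite each term on the right using results already established.

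\textbf{Term $3\bfv(h_1,h_2)$.} Write $h_2=(h_2-h_1^2)+h_1^2$ and use bilinearity:
$$3\bfv(h_1,h_2,H)=3\bfv(h_1,h_2-h_1^2,H)+3\bfv(h_1,h_1^2,H).$$
By Theorem~\ref{K2-N}, both $h_2-h_1^2$ and $h_1$ lie in $\Gamma\left(\Sym^{2,-}TM\right)$. The first summand therefore stays as it is. The second summand is computed by Theorem~\ref{vm-3}.

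\textbf{Term $3\mathbf{w}(h_1)$.} From the definition \eqref{w-def},
$$\la \mathbf{w}(h_1),H\ra_{L^2}=2\la \delta^gQ(h_1)-\tfrac12\mathbf{p}(h_1^2,h_1),H\ra_{L^2}-\la h_1\circ\mathbf{p}(h_1,h_1),H\ra_{L^2}.$$
Proposition~\ref{Q13} gives the first bracket as
$$-\bfv(h_1^2,h_1)+\tfrac12\{h_1,\delta^g\di_{\nabla^g}h_1^2\}-\tfrac12\widetilde{\Delta}_Eh_1^3,$$
and Lemma~\ref{w-step} gives the second. In the combination $3\bfv(h_1,h_1^2)+3\mathbf{w}(h_1)$, the term $3\bfv(h_1,h_1^2)$ from the first step meets $-6\bfv(h_1^2,h_1)$ from $\mathbf{w}$. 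The net contribution is $-3\bfv(h_1,h_1^2)$, which we again evaluate by Theorem~\ref{vm-3}.

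\textbf{Term $-\tfrac34\{h_1,(\widetilde{\Delta}_E+2E+\delta^{\star}\di\tr)h_2\}$.} Lemma~\ref{l-step} replaces $h_2$ by $h_1^2$ inside this bracket, with the coefficient of $\delta^{\star_g}\di\tr$ changed to $\tfrac32$. After this step, everything on the right-hand side except $3\bfv(h_1,h_2-h_1^2,H)$ is an expression in $h_1$ alone.

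\textbf{Main obstacle: the bookkeeping.} Several types of term must be collected:
\begin{itemize}
\item \emph{Brackets $\{h_1,\cdot\}$.} Each $\{h_1,\cdot\}$ term is paired with $H$, so only the $J$-invariant part of its argument survives. This lets us replace $\delta^{\star_g}$ by $\delta^{\star_g,+}$ where needed.
\item \emph{The term $\{h_1,\delta^g\di_{\nabla^g}h_1^2\}$.} We expand it with the Weitzenb\"ock formula \eqref{wz1}:
$$\delta^g\di_{\nabla^g}=-(\delta^{\star_g}+\tfrac12\di)\delta^g+\Delta_E+E+\ring{R}.$$
\item \emph{Cubic terms.} We collect the terms $\widetilde{\Delta}_Eh_1^3$ and $Eh_1^3$. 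They should combine with $\mathbf{u}_3$ to produce $\widetilde{\Delta}_E\tilde{\mathbf{u}}_3$, using $\tilde{\mathbf{u}}_3=\tfrac13\mathbf{u}_3+\tfrac32h_1^3$. We also use that $\widetilde{\Delta}_E^{-}$ coincides with the $\Sym^{2,-}$-part of $\widetilde{\Delta}_E$ on trace-free $J$-anti-invariant tensors.
\item \emph{Divergence terms.} The divergence contributions from Theorem~\ref{vm-3} are exactly $\mathrm{div}_2(h_1)$.
\end{itemize}
Finally, we divide the whole equation by $3$.

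The delicate part is matching the coefficients. The $\ring{R}$ terms must combine to $4\ring{R}$ and the $\delta^{\star_g,+}$ terms to $2\delta^g+\tfrac14\di\tr$, with overall factor $\tfrac12$. The sign and multiplicity of $\mathrm{div}_2$ must also come out right: we get $+3\bfv(h_1,h_1^2)$ from the first step against $-6\bfv$ from $\mathbf{w}$, so one copy of $-\mathrm{div}_2$ should remain after dividing by $3$. We expect this to work out, but it requires a careful line-by-line tally.
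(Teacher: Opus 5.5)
Your proposal is correct and follows the paper's proof essentially step for step: the paper also splits $h_2=(h_2-h_1^2)+h_1^2$, combines Proposition~\ref{Q13} (with the Weitzenb\"ock expansion of $\delta^g\di_{\nabla^g}h_1^2$) and Lemma~\ref{w-step} into an identity for $\mathbf{w}(h_1)+\bfv(h_1,h_1^2)$, and then handles $-\bfv(h_1,h_1^2)-\tfrac14\{h_1,(\widetilde{\Delta}_E+2E+\delta^{\star_g}\di\tr)h_2\}$ via Lemma~\ref{l-step} and Theorem~\ref{vm-3}. The bookkeeping you defer does close up as you anticipate: the $E$-contributions cancel, leaving exactly $-\tfrac32\widetilde{\Delta}_Eh_1^3$, which is absorbed into $\widetilde{\Delta}_E\tilde{\mathbf{u}}_3$, together with $\tfrac12\{h_1,(\Delta_E+4\ring{R}+\delta^{\star_g,+}(2\delta^g+\tfrac14\di\tr))h_1^2\}$ and a single $-\mathrm{div}_2(h_1)$.
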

\begin{proof}This follows by collecting the facts derived in the previous sections by a direct formal computation. Since this process is rather lengthy and involves a large number of terms we clearly indicate the step needed as follows. First recall that 
$$\mathbf{w}(h_1):=2\delta^gQ(h_1)-\mathbf{p}(h_1^2,h_1)-h_1 \circ \mathbf{p}(h_1,h_1).$$
We use Proposition \ref{Q13} to express the quantity $2\delta^gQ(h_1)-\mathbf{p}(h_1^2,h_1)$. After replacing the symmetric component in 
$\delta^g\di_{\nabla^g}\!h_1^2$ by $-\delta^{\star_g}\delta^gh_1^2+(\Delta_E+E+\ring{R})h^2_1$ and also using Lemma \ref{w-step} for the term $h_1 \circ \mathbf{p}(h_1,h_1)$ we obtain 
\begin{equation} \label{aux1}
\begin{split}
\la \mathbf{w}(h_1)+\bfv(h_1,h_1^2),H \ra_{L^2}=&-\la \bfv(h_1,h_1^2)+\widetilde{\Delta}_E h_1^3,H\ra_{L^2}\\
+&\la \{h_1,\left ( \Delta_E+E+2\ring{R}+\tfrac{1}{4}\delta^{\star_g}\di\!\tr\right )h_1^2\},H\ra_{L^2}.
\end{split}
\end{equation}
Next we use Lemma \ref{l-step} and also Theorem \ref{vm-3} for the expression of the component of $\bfv(h_1,h_1^2)$ on $\Gamma \left (\Sym^{2,-}TM \right )$; a straightforward computation using only the formula for $\widetilde{\Delta}_E$ in \eqref{D-tilde} and $\la \widetilde{\Delta}^{-}_Eh_1^3,H\ra_{L^2}=\la \widetilde{\Delta}_Eh_1^3,H\ra_{L^2}$ leads to 
\begin{equation} \label{aux2}
\begin{split}
&-\bfv(h_1,h_1^2,H)-\tfrac{1}{4}\la \{h_1,(\widetilde{\Delta}_E+2E+\delta^{\star}\di\tr )h_2\},H \ra_{L^2}\\
=&-\tfrac{1}{2}
\la \{h_1,\left (\Delta_E-2\delta^{\star_g,+}\delta^g+E+\tfrac{1}{4}\delta^{\star_g,+}\di\!\tr \right )h_1^2\} ,H\ra_{L^2}\\
&-\la \tfrac{1}{2}(\widetilde{\Delta}_E+2E)h_1^3+\mathrm{div}_2(h_1),H \ra_{L^2}.
\end{split}
\end{equation}
To conclude we use Theorem \ref{thm-54} which asserts 
that the component of the third order Einstein equation on $\Gamma \left (\Sym^{2,-}TM \right )$ reads 
\begin{equation*}
\begin{split}
\la \tfrac{1}{3}(\widetilde{\Delta}_E+\delta^{\star_g}\di\!\tr)\mathbf{u}_3,H \ra_{L^2}=&\bfv(h_1,h_2-h_1^2,H)\\
+&\la  \mathbf{w}(h_1)+\bfv(h_1,h_1^2),H \ra_{L^2}-\tfrac{1}{4}\la \{h_1,(\widetilde{\Delta}_E+2E+\delta^{\star}\di\tr )h_2\},H \ra_{L^2}.
\end{split}
\end{equation*}
The claim follows by plugging \eqref{aux1} and \eqref{aux2} into the right hand side of the above equation; after a purely algebraic calculation this yields 
the right hand side in the claim corrected by the term $-\tfrac{3}{2}\la \widetilde{\Delta}_Eh_1^3,H\ra_{L^2}$, which is responsible for the appearance of the perturbation $\tilde{\mathbf{u}}_3$ in the statement. 

\end{proof}
This can be further simplified by using Lemma \ref{3-} in order to express the term $\bfv(h_1,h_2-h_1^2,H)$ in the above theorem. However in order to do so we need to derive first an explicit formula for the $\LL^2$-norm of the Kodaira-Spencer bracket.
\subsection{The $\LL^2$-norm of the Kodaira-Spencer bracket} \label{norm-bra-new}
This section is independent of the deformation theory material previously developed. In order to determine the norm 
$\Vert [h,h]^c \Vert^2_{L^2}$ where $h$ in $\Gamma \left (\Sym^{2,-}TM \right )$
the idea is to compute the quantity $\la \di_{\nabla^g}\!h(h \cdot ,h \cdot), \di_{\nabla^g}\!h \ra_{L^2}$ in two different ways. 
The first one is essentially based on the identity in Proposition \ref{id-main}; it further uses the comparison of the brackets 
$[h,h]^{\FN}$ respectively $[h,h]^c$ and also takes into account the algebraic type of the Kodaira-Spencer bracket. The result is contained in lemma below, where we have isolated the terms involving $\bdel h$. The reason for doing so is two-folded; firstly such terms vanish when $h \in \mathscr{E}(M,g)$. More generally, for elements of the type $h+tH$ where $(h,H) \in \mathscr{E}^{-}(M,g) \oplus\Gamma \left (\Sym^{2,-}TM \right )$ and
$t \in \mathbb{R}$ such terms are polynomial in $t$ and have vanishing coefficient on $t$. This property will will used in the polarisation arguments 
in the next section.

As this will be systematically used in what follows we recall(see \eqref{t-FNKS}) that the splitting of $[h,h]^{\FN}$ where 
$h \in \Gamma \left (\Sym^{2,-}TM \right )$ reads 
\begin{equation} \label{type-N0}
[h,h]^{\FN}=[h,h]^c+h \circ \bdel h+\left ( \di_{\nabla^g}^{+}h^2-h \sharp \bdel h \right )
\end{equation}
according to $\Lambda^2(M,TM)=\lambda^2_{-}(M,TM) \oplus \lambda^2_{+}(M,TM) \oplus \Lambda^{1,1}(M,TM)$.
\begin{lema} \label{N1-new}
Let $h$ belong to $\Gamma \left (\Sym^{2,-}TM \right )$. Then 
\begin{equation*}
\begin{split}
\la \di_{\nabla^g}\!h(h \cdot ,h \cdot), \di_{\nabla^g}\!h \ra_{L^2}=&\Vert \di\!_{\nabla^g}^{-}h^2 \Vert^2_{L^2}-\la \partial^g h^3,\partial^g h \ra_{L^2}\\
+& \la h^2 \circ \di_{\nabla^g}\!h-3 h \circ \di_{\nabla^g}\!h^2, \bdel h\ra_{L^2}\\
+&\la h \circ \di_{\nabla^g}\!h^2, \di_{\nabla^g}\!h \ra_{L^2}.
\end{split}
\end{equation*}
\end{lema}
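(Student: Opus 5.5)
The plan is to pair the tensorial identity of Proposition \ref{id-main} with $\di_{\nabla^g}h$ in $\LL^2$, which gives
\begin{equation*}
\la \di_{\nabla^g}\!h(h \cdot ,h \cdot), \di_{\nabla^g}\!h \ra_{L^2}=\la h \sharp \di_{\nabla^g}\!h^2, \di_{\nabla^g}\!h\ra_{L^2}-\la \di_{\nabla^g}\!h^3,\di_{\nabla^g}\!h\ra_{L^2}+\la h\circ[h,h]^{\FN},\di_{\nabla^g}\!h\ra_{L^2}.
\end{equation*}
I would then evaluate each of the three terms by type decomposition. Since $h$ and $h^3$ are $J$-anti-invariant, $\di_{\nabla^g}h=\partial^g h+\bdel h$ and $\di_{\nabla^g}h^3=\partial^g h^3+\bdel h^3$, with components in $\Lambda^{1,1}(M,TM)$ and $\lambda^2_{-}(M,TM)$ respectively. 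Since $h^2$ is $J$-invariant, $\di_{\nabla^g}h^2=\di^{+}_{\nabla^g}h^2+\di^{-}_{\nabla^g}h^2$, with components in $\Lambda^{1,1}(M,TM)$ and $\lambda^2_{+}(M,TM)$. Mutual orthogonality of these summands immediately gives
\begin{equation*}
\la \di_{\nabla^g}h^3,\di_{\nabla^g}h\ra_{L^2}=\la \partial^g h^3,\partial^g h\ra_{L^2}+\la \bdel h^3,\bdel h\ra_{L^2}.
\end{equation*}

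For the first term, I would use that $h$ anticommutes with $J$: a short check shows that $h\sharp$ interchanges $\Lambda^{1,1}(M,TM)$ and $\lambda^2(M,TM)$, and that it is self-adjoint. Hence
\begin{equation*}
\la h\sharp \di_{\nabla^g}h^2,\di_{\nabla^g}h\ra_{L^2}=\la \di^{-}_{\nabla^g}h^2, h\sharp\partial^g h\ra_{L^2}+\la h\sharp\di^{+}_{\nabla^g}h^2,\bdel h\ra_{L^2}.
\end{equation*}
To handle $h\sharp\partial^g h$ I would project \eqref{bra-nac}, namely $[h,h]^{\FN}=-h\sharp\di_{\nabla^g}h+\di_{\nabla^g}h^2$, onto $\lambda^2(M,TM)$ and compare with \eqref{type-N0}. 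This yields
\begin{equation*}
h\sharp\partial^g h=\di^{-}_{\nabla^g}h^2-[h,h]^c-h\circ\bdel h .
\end{equation*}
Since $[h,h]^c$ lies in $\lambda^2_{-}(M,TM)$ while $\di^{-}_{\nabla^g}h^2$ lies in $\lambda^2_{+}(M,TM)$, the first term becomes $\Vert \di^{-}_{\nabla^g}h^2\Vert^2_{L^2}$ plus terms that are linear in $\bdel h$. This is the origin of the norm term in the statement.

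For the third term I would write $\la h\circ[h,h]^{\FN},\di_{\nabla^g}h\ra_{L^2}=\la [h,h]^{\FN},h\circ\di_{\nabla^g}h\ra_{L^2}$ and insert $[h,h]^{\FN}=-h\sharp\di_{\nabla^g}h+\di_{\nabla^g}h^2$ once more. The piece $\la \di_{\nabla^g}h^2,h\circ\di_{\nabla^g}h\ra_{L^2}=\la h\circ\di_{\nabla^g}h^2,\di_{\nabla^g}h\ra_{L^2}$ is exactly the last term of the claim. The remaining piece $-\la h\sharp\di_{\nabla^g}h,h\circ\di_{\nabla^g}h\ra_{L^2}$ I would reduce via the types above together with \eqref{type-N0}. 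Here $h\circ\partial^g h$ is of type $\Lambda^{1,1}$ and $h\circ\bdel h$ lies in $\lambda^2_{+}$. Pairing $[h,h]^c$ against $h\circ\di_{\nabla^g}h$ gives zero by type, as in the proof of Proposition \ref{cons1-K}. After this reduction everything left should be either cancelling or linear in $\bdel h$.

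The main obstacle will be the bookkeeping of all contributions containing $\bdel h$ and arriving at exactly $\la h^2\circ\di_{\nabla^g}h-3h\circ\di_{\nabla^g}h^2,\bdel h\ra_{L^2}$. For this I would use Corollary \ref{ids-2} with $H=h$, which converts $\la \bdel h^3,\bdel h\ra_{L^2}$ into $\la h\sharp\di^{+}_{\nabla^g}h^2+h^2\circ\bdel h,\bdel h\ra_{L^2}$. I would also rewrite pairings such as $\la \di^{-}_{\nabla^g}h^2,h\circ\bdel h\ra_{L^2}$ as $\la h\circ\di_{\nabla^g}h^2,\bdel h\ra_{L^2}$, using that $h$ is symmetric and $J$-anti-invariant. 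Collecting coefficients of these $\bdel h$-pairings should produce the stated combination; if the coefficient $-3$ fails to appear, the likely source of error is the self-adjointness and type-switching properties of $h\sharp$ and $h\circ$, which I would recheck first.
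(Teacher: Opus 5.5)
Your proposal is correct and is essentially the paper's own argument. You pair Proposition \ref{id-main} with $\di_{\nabla^g}h$ and then use \eqref{type-N0}, the identity $h\sharp\di_{\nabla^g}h=\di_{\nabla^g}h^2-[h,h]^{\FN}$, Corollary \ref{ids-2} with $H=h$, and pointwise type orthogonality. The one step you leave implicit goes through as you anticipate: in $\la h\sharp\di_{\nabla^g}h,h\circ\di_{\nabla^g}h\ra_{L^2}$ the two surviving cross terms coincide, because $h\sharp$ and $h\circ$ commute and are self-adjoint, so this pairing equals $2\la h\circ\di_{\nabla^g}h^2-h^2\circ\bdel h,\bdel h\ra_{L^2}$; combined with the $-\la h\circ\di_{\nabla^g}h^2+h^2\circ\bdel h,\bdel h\ra_{L^2}$ coming from the first two terms, it yields exactly the coefficients $-3$ and $+1$.
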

\begin{proof}
Start from the type decomposition in \eqref{type-N0}. Because $[h,h]^c$ belongs to 
$\lambda^{2}_{-}(M,TM)$ it follows that $h \circ [h,h]^c$ lives in $\lambda^{2}_{+}(M,TM)$ thus its divergence is in 
$\End^{+}TM$; in particular we must have $\la h \circ [h,h]^c, \di_{\nabla^g}\!h \ra_{L^2}=0$. Applying Proposition \ref{id-main} thus leads to 
\begin{equation} \label{new-p0}
\begin{split}
\la \di_{\nabla^g}\!h(h \cdot ,h \cdot), \di_{\nabla^g}\!h \ra_{L^2}=&\la h \sharp \di_{\nabla^g}h^2-\di_{\nabla^g}h^3, \di_{\nabla^g}h\ra_{L^2}+\la h^2 \circ \bdel h,\di_{\nabla^g}h \ra_{L^2}\\
+&\la h\circ \di_{\nabla^g}^{+}h^2,\di_{\nabla^g}h \ra_{L^2}-\la h \circ \bdel h, h \sharp \di_{\nabla^g}h \ra_{L^2}.
\end{split}
\end{equation}
To obtain the last summand above we have also used symmetry considerations which show that 
$\la h \circ (h \sharp \bdel h), \di_{\nabla^g}h \ra_{L^2}=
\la h \sharp ( h \circ \bdel h), \di_{\nabla^g}h \ra_{L^2}=\la h \circ \bdel h, h \sharp \di_{\nabla^g}h \ra_{L^2}$. 
The terms containing the $\sharp$-action may be further simplified by taking into account that $h \sharp \di_{\nabla^g}h=\di_{\nabla^g}h^2-[h,h]^{\FN}$ and also the algebraic type of the complex bracket. Indeed
\begin{equation*}
\begin{split}
\la h \sharp \di_{\nabla^g}h^2, \di_{\nabla^g}h\ra_{L^2}=&\la \di_{\nabla^g}h^2, h \sharp \di_{\nabla^g}h\ra_{L^2}=
\Vert \di_{\nabla^g} h^2 \Vert^{2}_{L^2}-\la \di_{\nabla^g} h^2, [h,h]^{\FN}\ra_{L^2}.
\end{split}
\end{equation*}
Next we use once again the decomposition of the bracket $[h,h]^{\FN}$ into complex types. Since $\delta^g[h,h]^c$ belongs to 
$\Sym^{2,-}TM$ we obtain that $\la \di_{\nabla^g} h^2, [h,h]^{c}\ra_{L^2}=0$. Thus
\begin{equation*}
\begin{split}
\la \di_{\nabla^g} h^2, [h,h]^{\FN}\ra_{L^2}=&\la \di_{\nabla^g} h^2,h \circ \bdel h+\di_{\nabla^g}^{+}h^2-h \sharp \bdel h\ra_{L^2}\\
=&
\la h \circ \di_{\nabla^g} h^2, \bdel h \ra_{L^2}+\Vert \di_{\nabla^g}^{+}h^2\Vert^2_{L^2}-\la h \sharp \bdel h,\di_{\nabla^g}h^2\ra_{L^2}.
\end{split}
\end{equation*}
However, since $h$ is $J$-anti-invariant we have that $h \sharp \di^{-}_{\nabla^g}h^2$ belongs to the space 
$\Omega^{1,1}(M,TM)$ which is orthogonal to $\lambda^2_{-}(M,TM)$. It follows that 
$$\la h \sharp \bdel h,\di_{\nabla^g}h^2\ra_{L^2}=\la \bdel h,h \sharp \di_{\nabla^g}h^2\ra_{L^2}=\la \bdel h,h \sharp \di^{+}_{\nabla^g}h^2\ra_{L^2}=\la \bdel h^3,\bdel h\ra_{L^2}-\la h^2 \circ \bdel h, \bdel h \ra_{L^2}$$
after taking Corollary \ref{ids-2} with $H=h$ into account. Gathering terms we arrive at 
\begin{equation} \label{new-p1}
\begin{split}
\la h \sharp \di_{\nabla^g}h^2, \di_{\nabla^g}h\ra_{L^2}=&
\Vert \di_{\nabla^g} h^2 \Vert^{2}_{L^2}-\Vert \di^{+}_{\nabla^g} h^2 \Vert^{2}_{L^2}+\la \bdel h^3,\bdel h\ra_{L^2}\\
-&\la h \circ \di_{\nabla^g}\!h^2+h^2 \circ \di_{\nabla^g}\!h, \bdel h\ra_{L^2}.
\end{split}
\end{equation}
Similarly, we have $\la h \circ \bdel h, h \sharp \di_{\nabla^g}h \ra_{L^2}=\la h \circ \bdel h,\di_{\nabla^g}h^2 \ra_{L^2}-
\la h \circ \bdel h, [h,h]^{\FN}\ra_{L^2}$. We record that  $h \circ \bdel h$ belongs to $\lambda^2_{+}(M,TM)$ and use the type decomposition of $[h,h]^{\FN}$; namely $[h,h]^c$ belongs to $\lambda^2_{-}(M,TM)$ whilst $\di_{\nabla^g}^{+}h^2-h \sharp \bdel h$ lies in $\Omega^{1,1}(M,TM)$. Since the latter spaces are both orthogonal to $\lambda^2_{+}(M,TM)$ we end up with 
\begin{equation} \label{new-p2}
\la h \circ \bdel h, h \sharp \di_{\nabla^g}h \ra_{L^2}=\la h \circ \bdel h,\di_{\nabla^g}h^2 \ra_{L^2}-\la h^2 \circ \bdel h, \bdel h\ra_{L^2}.
\end{equation}
The claim follows now by plugging equations \eqref{new-p1} and \eqref{new-p2} into \eqref{new-p0} whilst also observing that type considerations make that 
$\la h\circ \di_{\nabla^g}^{+}h^2,\di_{\nabla^g}h \ra_{L^2}=\la h\circ \di_{\nabla^g}h^2,\di_{\nabla^g}h-\bdel h \ra_{L^2}$ as well as 
$\Vert \di_{\nabla^g}h \Vert^2_{L^2}=\Vert \di_{\nabla^g}^{+}h \Vert^2_{L^2}+\Vert \di_{\nabla^g}^{-}h \Vert^2_{L^2}$.
\end{proof}
Observe that the first two summands in the above formula come essentially from differential operators acting on $h^2$ respectively operators 
involving $\bdel h$. The only quantity in Lemma \ref{N1-new} not amenable to such an expression so far is 
$\la h \circ \di_{\nabla^g}\!h^2, \di_{\nabla^g}\!h \ra_{L^2}$.
Next we provide 
a second, direct way, of computing the quantity 
$\la \di_{\nabla^g}\!h(h \cdot ,h \cdot), \di_{\nabla^g}\!h \ra_{L^2}$ and eventually eliminate $\la h \circ \di_{\nabla^g}\!h^2, \di_{\nabla^g}\!h \ra_{L^2}$. We start with the following preliminary
\begin{lema} \label{AAL}Assume that $h \in \Gamma \left (\Sym^{2,-}TM \right )$. Then
\begin{equation*}
\begin{split}
 \la (\nabla^g_{e_k}h)he_i, h(\nabla^g_{e_i}h)e_k \ra_{L^2}=&\la \di_{\nabla^g}h,h^2 \circ \di_{\nabla^g}h-h \circ \di_{\nabla^g}h^2 \ra_{L^2}\\
+&\la \mathrm{L}(h,h^2),h\ra_{L^2}-\la \mathrm{L}(h,h),h^2\ra_{L^2}.
\end{split}
\end{equation*}
\end{lema}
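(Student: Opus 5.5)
The identity is in fact pointwise, so I will prove it at each point in a $g$-orthonormal frame $\{e_i\}$ by purely algebraic manipulations, using only that $h$ and each $\nabla^g_Xh$ are $g$-symmetric. Following Lemma \ref{i-1} and Proposition \ref{o3-p}, the idea is to trade $(\nabla^g_{e_i}h)e_k$ for $(\nabla^g_{e_k}h)e_i$ at the cost of a $\di_{\nabla^g}h$ term. Using $\di_{\nabla^g}h(e_k,e_i)=(\nabla^g_{e_k}h)e_i-(\nabla^g_{e_i}h)e_k$, I will write the left-hand side as $T_1-T_2$, where
\begin{equation*}
T_1:=g\bigl((\nabla^g_{e_k}h)he_i,h(\nabla^g_{e_k}h)e_i\bigr), \qquad T_2:=g\bigl((\nabla^g_{e_k}h)he_i,h\,\di_{\nabla^g}h(e_k,e_i)\bigr).
\end{equation*}

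\emph{The term $T_1$.} By symmetry of $h$ and $\nabla^g_{e_k}h$ we have $T_1=\tr\bigl((\nabla^g_{e_k}h)h(\nabla^g_{e_k}h)h\bigr)$. Next I expand the $\mathrm{L}$-terms. From $\nabla^g_{e_k}h^2=(\nabla^g_{e_k}h)h+h(\nabla^g_{e_k}h)$ and the definition $\mathrm{L}(h_1,h_2)=\tfrac12\{\nabla^g_{e_k}h_1,\nabla^g_{e_k}h_2\}$, cyclicity of the trace gives
\begin{equation*}
g(\mathrm{L}(h,h^2),h)=\tr\bigl((\nabla^g_{e_k}h)^2h^2\bigr)+\tr\bigl((\nabla^g_{e_k}h)h(\nabla^g_{e_k}h)h\bigr)
\end{equation*}
and $g(\mathrm{L}(h,h),h^2)=\tr\bigl((\nabla^g_{e_k}h)^2h^2\bigr)$. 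Hence $T_1=g(\mathrm{L}(h,h^2),h)-g(\mathrm{L}(h,h),h^2)$, which gives the $\mathrm{L}$-part of the claim.

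\emph{The term $T_2$.} I use the product rule $(\nabla^g_{e_k}h)h=\nabla^g_{e_k}h^2-h(\nabla^g_{e_k}h)$. Since $T_2$ pairs against $\di_{\nabla^g}h(e_k,e_i)$, which is skew in $(k,i)$, only the skew part in $(k,i)$ of the first slot survives. That skew part of $(\nabla^g_{e_k}h^2)e_i$ is $\tfrac12\di_{\nabla^g}h^2(e_k,e_i)$, and that of $h(\nabla^g_{e_k}h)e_i$ is $\tfrac12 h\,\di_{\nabla^g}h(e_k,e_i)$. Moving $h$ across by symmetry, and using the paper's convention for the inner product on $\Lambda^2(M,TM)$ (the factor $\tfrac12$ visible in the proofs of Lemma \ref{i-1} and Proposition \ref{o3-p}), I expect
\begin{equation*}
T_2=g\bigl(h\circ\di_{\nabla^g}h^2,\di_{\nabla^g}h\bigr)-g\bigl(h^2\circ\di_{\nabla^g}h,\di_{\nabla^g}h\bigr).
\end{equation*}
Then $-T_2$ is exactly the first line of the claim, and integrating the pointwise identity over $M$ gives the stated $L^2$ version.

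The only real obstacle is bookkeeping. I need the sign convention for $\di_{\nabla^g}h$ and the factor $\tfrac12$ in $\la\cdot,\cdot\ra$ on $\Omega^2(M,TM)$ to line up, so that the antisymmetrisation in $T_2$ produces coefficient exactly $1$. I will check this against the identical step in the proof of Proposition \ref{o3-p}, where $\la\di_{\nabla^g}h_1(e_j,e_i),h_2(\nabla^g_{e_i}h_3)e_j\ra=-\la\di_{\nabla^g}h_1,h_2\circ\di_{\nabla^g}h_3\ra$. Note that the hypothesis $h\in\Gamma(\Sym^{2,-}TM)$ is not needed: the identity holds for all symmetric $h$.
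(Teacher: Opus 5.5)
Your proof is correct and is essentially the paper's argument. Both use only three ingredients: the Leibniz rule $(\nabla^g_{e_k}h)h=\nabla^g_{e_k}h^2-h\nabla^g_{e_k}h$; the index swap through $\di_{\nabla^g}h(e_k,e_i)=(\nabla^g_{e_k}h)e_i-(\nabla^g_{e_i}h)e_k$, together with the factor $\tfrac12$ in the inner product on $\Omega^2(M,TM)$; and the symmetry of $h$ and $\nabla^g h$. The only difference is the order of steps: you swap indices before applying the product rule, whereas the paper applies the product rule first. The coefficient of $T_2$ does come out exactly as you expect, and you are right that the $J$-anti-invariance of $h$ plays no role in either argument.
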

\begin{proof}
By essentially using Leibniz's rule we compute 
\begin{equation*}
\begin{split}
\la (\nabla^g_{e_k}h)he_i, h(\nabla^g_{e_i}h)e_k \ra_{L^2}=&\la (\nabla^g_{e_k}h^2)e_i, h(\nabla^g_{e_i}h)e_k \ra_{L^2}-
\la (\nabla^g_{e_k}h)e_i, h^2(\nabla^g_{e_i}h)e_k \ra_{L^2}\\
=&\la \di_{\nabla^g}h^2(e_k,e_i), h(\nabla^g_{e_i}h)e_k \ra_{L^2}+\la \nabla^g_{e_i}h^2, h \nabla^g_{e_i}h \ra_{L^2}\\
-&\la \di_{\nabla^g}h(e_k,e_i), h^2(\nabla^g_{e_i}h)e_k \ra_{L^2}-\la \nabla^g_{e_i}h, h^2 \nabla^g_{e_i}h \ra_{L^2}\\
=&-\la \di_{\nabla^g}h^2,h \circ \di_{\nabla^g}h \ra_{L^2}+\la \di_{\nabla^g}h,h^2 \circ \di_{\nabla^g}h \ra_{L^2}\\
+&\la \nabla^g_{e_i}h^2, h \nabla^g_{e_i}h \ra_{L^2}-\la \nabla^g_{e_i}h, h^2 \nabla^g_{e_i}h \ra_{L^2}.
\end{split}
\end{equation*}
Because $h$(and hence $\nabla_{e_i}h$) is symmetric we have $\la \nabla^g_{e_i}h^2, h \nabla^g_{e_i}h \ra_{L^2}=\la \nabla^g_{e_i}h^2 \circ \nabla^g_{e_i}h,h \ra_{L^2}=\la \mathrm{L}(h,h^2),h\ra_{L^2}$; similarly $\la \nabla^g_{e_i}h, h^2 \nabla^g_{e_i}h \ra_{L^2}=\la \nabla^g_{e_i}h \circ \nabla^g_{e_i}h,h^2 \ra_{L^2}=\la \mathrm{L}(h,h),h^2\ra_{L^2}$ and the claim is proved.

\end{proof}
Based on this we obtain a second expression for $\la \di_{\nabla^g}\!h(h \cdot ,h \cdot), \di_{\nabla^g}\!h \ra_{L^2}$ by direct computation, as follows.
\begin{pro} \label{N2-new+}
Let $h$ belong to $\Gamma \left (\Sym^{2,-}TM \right )$. Then 
\begin{equation*}
\begin{split}
\la \di_{\nabla^g}\!h(h \cdot ,h \cdot), \di_{\nabla^g}\!h \ra_{L^2}=&\Vert \di_{\nabla^g}\!h^2 \Vert^2_{L^2}+\la \partial^gh^3,\partial^g h\ra_{L^2}-\la \mathrm{L}(h,h),h^2 \ra_{L^2}\\
+&\la h \circ \di_{\nabla^g}h^2+h^2 \circ \di_{\nabla^g}h, \bdel h\ra_{L^2}\\
-&3\la h \circ \di_{\nabla^g}h^2,\di_{\nabla^g}\!h \ra_{L^2}.
\end{split}
\end{equation*}
\end{pro}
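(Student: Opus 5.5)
Expanding $\di_{\nabla^g}\!h$ directly in the left hand side, I will write
\[
\la \di_{\nabla^g}\!h(h \cdot ,h \cdot), \di_{\nabla^g}\!h \ra_{L^2}=\la \di_{\nabla^g}\!h(he_i,he_k),(\nabla^g_{e_i}h)e_k \ra_{L^2}
\]
using the antisymmetry of $\di_{\nabla^g}\!h$, which absorbs the factor $\tfrac12$. I then expand $\di_{\nabla^g}\!h(he_i,he_k)=(\nabla^g_{he_i}h)he_k-(\nabla^g_{he_k}h)he_i$. This produces two terms. The first is $\la (\nabla^g_{he_i}h)h, \nabla^g_{e_i}h\ra_{L^2}$, which by symmetry of $h$ becomes $\la \nabla^g_{he_i}h,\nabla^g_{e_i}h \circ h\ra_{L^2}$. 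The second is the cross term $\la (\nabla^g_{he_k}h)he_i,(\nabla^g_{e_i}h)e_k\ra_{L^2}$. After the frame change $he_k\mapsto e_k$, the second term becomes exactly the quantity $\la (\nabla^g_{e_k}h)he_i, h(\nabla^g_{e_i}h)e_k \ra_{L^2}$ treated in Lemma \ref{AAL}. Substituting Lemma \ref{AAL} yields the term $-\la \mathrm{L}(h,h),h^2\ra_{L^2}$ together with $\la \mathrm{L}(h,h^2),h\ra_{L^2}$ and mixed terms $\la \di_{\nabla^g}h,h^2\circ\di_{\nabla^g}h-h\circ\di_{\nabla^g}h^2\ra_{L^2}$.

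For the first term I will use the Leibniz rule $\nabla^g_{he_i}h\circ h$ versus $\nabla^g_{he_i}h^2$, together with the $J$-anti-invariance of $h$. The latter makes $\nabla^g_X h$ anticommute with $J$, while $h^2$ and $\nabla^g h^2$ commute with $J$. The aim is to rewrite this term through $\di_{\nabla^g}h^2$ and $\di_{\nabla^g}h^3$. Concretely, I will recombine $\la \nabla^g_{he_i}h^2,\nabla^g_{e_i}h^2\ra$-type expressions into $\Vert \di_{\nabla^g}\!h^2\Vert^2_{L^2}$ plus an $\mathrm{L}(h^2,h^2)$-type piece. I will compute that piece with the same trick that gives $\Vert\di_{\nabla^g}h\Vert^2$ from the product $\la (\nabla^g_Xh)e_i,(\nabla^g_{e_i}h)Y\ra$ in Lemma \ref{quad-1L}.

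The terms of the form $\la \mathrm{L}(h,h^2),h\ra$ should be absorbed into $\la\partial^g h^3,\partial^g h\ra_{L^2}$. For this I plan to use Corollary \ref{wz-K2}(ii), which gives $\la \partial^gh^3,\partial^gh\ra=\la(\tfrac12\Delta_E+E+\ring{R})h^3,h\ra$, and Lemma \ref{L-exp}, which expresses $\mathrm{L}$ through $\Delta_E+2\ring{R}$. I would also split $\la\di_{\nabla^g}h,\cdot\ra$ into $\partial^g$ and $\bdel$ parts, using that $\di_{\nabla^g}h=\partial^gh+\bdel h$ for $h\in\Sym^{2,-}$. Lemma \ref{quad-K} converts $\la h^2\circ\di_{\nabla^g}h+h\circ\di_{\nabla^g}h^2,\partial^g h\ra$ into $\la \di_{\nabla^g}h^3,\partial^gh\ra=\la\partial^gh^3,\partial^gh\ra$, which is the source of the $+\la\partial^gh^3,\partial^gh\ra$ term. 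The leftover $\bdel h$ pieces give the term $\la h \circ \di_{\nabla^g}h^2+h^2 \circ \di_{\nabla^g}h, \bdel h\ra$. The remaining $\la h\circ\di_{\nabla^g}h^2,\di_{\nabla^g}h\ra$ contributions should then total $-3$.

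The main obstacle is the first term $\la \nabla^g_{he_i}h,\nabla^g_{e_i}h\circ h\ra$, which has a shifted covariant direction. I expect to handle it with the algebraic identity of Proposition \ref{id-main} read pointwise. Alternatively, I would relate it to the $\la\nabla^g_{h_1e_i}h_2,\nabla^g_{e_i}h_3\ra$ term of Proposition \ref{o3-p}, with $h_1=h$, $h_2=h$, $h_3=h^2$, which expresses it through $\sharp$-actions and $\mathbf{q}$ terms. The $\mathbf{q}$ terms are then evaluated by Lemma \ref{q}. Keeping track of the coefficients so that they sum exactly to $-3$ is the delicate bookkeeping step.
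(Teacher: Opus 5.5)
\textbf{There is a genuine gap: both terms of your expansion are misidentified, so the plan cannot close as written.} The opening expansion is fine.

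\emph{The first term vanishes.} You treat $\sum_i\la(\nabla^g_{he_i}h)\circ h,\nabla^g_{e_i}h\ra_{L^2}$ as the main obstacle and expect $\Vert\di_{\nabla^g}h^2\Vert^2_{L^2}$ to come out of it. In fact it is identically zero pointwise. The factor $(\nabla^g_{he_i}h)\circ h$ is a product of two endomorphisms anticommuting with $J$, so it commutes with $J$. The factor $\nabla^g_{e_i}h$ anticommutes with $J$, and such endomorphisms are orthogonal. This is exactly the paper's remark $\la\delta^g\alpha_1,h\ra_{L^2}=0$. The same type argument kills $\la\nabla^g_{he_i}h,\nabla^g_{e_i}h^2\ra_{L^2}$ if you go through Proposition \ref{o3-p}. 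So nothing, and in particular no $\Vert\di_{\nabla^g}h^2\Vert^2_{L^2}$, can be extracted from this term.

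\emph{The cross term is not the quantity of Lemma \ref{AAL}.} Write $\mathcal{A}:=\sum\la(\nabla^g_{e_k}h)he_i,h(\nabla^g_{e_i}h)e_k\ra_{L^2}$ for that quantity. Your ``frame change'' is really the symmetry $\sum_k he_k\otimes e_k=\sum_k e_k\otimes he_k$, and it gives
\[
S:=\sum\la(\nabla^g_{he_k}h)he_i,(\nabla^g_{e_i}h)e_k\ra_{L^2}=\sum\la(\nabla^g_{e_k}h)he_i,(\nabla^g_{e_i}h)he_k\ra_{L^2},
\]
with $h$ to the \emph{right} of $\nabla^g_{e_i}h$. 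Using $(\nabla^g_{e_i}h)h=\nabla^g_{e_i}h^2-h\nabla^g_{e_i}h$, the left hand side equals $-S=\mathcal{A}-X$, where
\[
X:=\sum\la(\nabla^g_{e_k}h)he_i,(\nabla^g_{e_i}h^2)e_k\ra_{L^2}.
\]
$X$ does not vanish in general, since it contains $\mathbf{q}(h^2,h^2)$. Your bookkeeping drops exactly this discrepancy, and it is precisely where $\Vert\di_{\nabla^g}h^2\Vert^2_{L^2}$ comes from. For the same reason, $\la\mathrm{L}(h,h^2),h\ra_{L^2}=\tfrac12\Vert\nabla^gh^2\Vert^2_{L^2}$ is a pure $h^2$-quantity and cannot be absorbed into $\la\partial^gh^3,\partial^gh\ra_{L^2}$; the latter comes only from Lemma \ref{quad-K}.

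\textbf{How to repair it.} Your direct route does work once the term $X$ is restored.
\begin{itemize}
\item[(a)] Expand $X$ once more by the Leibniz rule and convert as in the proof of Lemma \ref{AAL}. This gives $X=\mathbf{q}(h^2,h^2)+\la h\circ\di_{\nabla^g}h^2,\di_{\nabla^g}h\ra_{L^2}-\la\mathrm{L}(h,h^2),h\ra_{L^2}$.
\item[(b)] Combine this with Lemma \ref{AAL} and with $\Vert\di_{\nabla^g}h^2\Vert^2_{L^2}=\Vert\nabla^gh^2\Vert^2_{L^2}-\mathbf{q}(h^2,h^2)$, which already holds at the level of integrands. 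You obtain
\[
\la\di_{\nabla^g}h(h\cdot,h\cdot),\di_{\nabla^g}h\ra_{L^2}=\Vert\di_{\nabla^g}h^2\Vert^2_{L^2}-\la\mathrm{L}(h,h),h^2\ra_{L^2}+\la h^2\circ\di_{\nabla^g}h-2h\circ\di_{\nabla^g}h^2,\di_{\nabla^g}h\ra_{L^2}.
\]
\item[(c)] Apply Lemma \ref{quad-K} with $H=h$ and split $\di_{\nabla^g}h=\partial^gh+\bdel h$. This turns the identity in (b) into the statement.
\end{itemize}

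\textbf{Comparison with the paper.} The paper integrates by parts and computes $\delta^g\alpha_2$ via the Ricci identity. There the quantity $\mathcal{A}$ enters through \eqref{AA1}, from differentiating the direction $hX$, and the curvature terms recombine through \eqref{wz1}. The corrected direct route is purely pointwise and needs neither. But with the attributions in your proposal — the first term as the source of $\Vert\di_{\nabla^g}h^2\Vert^2_{L^2}$ and the cross term equal to $\mathcal{A}$ — the coefficients cannot be made to match.
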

\begin{proof}
We use the shorthand notation $D_1:=\di_{\nabla^g}\!h(h \cdot ,h \cdot)$ and split $D_1=\alpha_1-\alpha_2$; explicitly 
$\alpha_1(X,Y)=(\nabla^g_{hX}h)hY$ and $\alpha_2(X,Y)=(\nabla^g_{hY}h)hX$. Below we compute the divergence of $\alpha_2$; using the Ricci identity shows that  
\begin{equation*}
\begin{split}
(\delta^g\alpha_2)_X=&-((\nabla^g)^2_{e_i,hX}h)he_i-\left (\nabla^g_{(\nabla^g_{e_i}h)X}h \right )he_i-(\nabla^g_{hX}h)(\nabla^g_{e_i}h)e_i\\
=&[R(e_i,hX),h]he_i-((\nabla^g)^2_{hX,e_i}h)he_i-\left (\nabla^g_{(\nabla^g_{e_i}h)X}h \right )he_i+(\nabla^g_{hX}h)\delta^gh.
\end{split}
\end{equation*}
Now $((\nabla^g)^2_{hX,e_i}h)he_i=
\nabla^g_{hX} \left ((\nabla^g_{e_i}h)he_i \right )-(\nabla^g_{e_i}h)(\nabla^g_{hX}h)e_i$ and also the curvature term reads 
$[R(e_i,hX),h]he_i=(\ring{R}h^2-h \circ \ring{R}h)hX$. After re-arranging terms 
we find 
\begin{equation*}
\begin{split}
(\delta^g\alpha_2)_X=&(\ring{R}h^2-h \circ \ring{R}h)hX+(\nabla^g_{e_i}h)(\nabla^g_{hX}h)e_i-\left (\nabla^g_{(\nabla^g_{e_i}h)X}h \right )he_i\\
+&(\nabla^g_{hX}h)\delta^gh-\nabla^g_{hX}\xi
\end{split}
\end{equation*}
where $\xi:=(\nabla^g_{e_i}h)he_i$. Since $\xi=h\delta^gh-\delta^gh^2$ the divergence terms above can be computed from 
\begin{equation*}
\begin{split}
(\nabla^g_{hX}h)\delta^gh-\nabla^g_{hX}\xi=\nabla^g_{hX}(h \delta^gh-\xi)-h\nabla^g_{hX}\delta^gh=\nabla^g_{hX}\delta^gh^2-
h \nabla^g_{hX}\delta^gh.
\end{split}
\end{equation*}
Recording that $(\nabla^g_{e_i}h)(\nabla^g_{hX}h)e_i=\mathbf{i}(h,h)(hX,\cdot)$ we arrive at 
\begin{equation} \label{AA0}
\begin{split}
(\delta^g\alpha_2)_X=&(\ring{R}h^2-h \circ \ring{R}h)hX+\mathbf{i}(h,h)(hX,\cdot)-\left (\nabla^g_{(\nabla^g_{e_i}h)X}h \right )he_i\\
+&\nabla^g_{hX}\delta^gh^2-
h \nabla^g_{hX}\delta^gh.
\end{split}
\end{equation}
We now record that 
\begin{equation} \label{AA1}
\begin{split}
g( \left (\nabla^g_{(\nabla^g_{e_i}h)e_j}h \right )he_i,he_j )= g((\nabla^g_{e_i}h)e_j,e_k)g(\nabla^g_{e_k}h)he_i,he_j)=
g(\nabla^g_{e_k}h)he_i,h(\nabla^g_{e_i}h)e_k).
\end{split}
\end{equation}
This is essentially granted by the symmetry of $h$. Also 
\begin{equation} \label{AA2}
\begin{split}
\la \nabla^g_{he_i}\delta^gh^2-
h \nabla^g_{he_i}\delta^gh, he_i \ra_{L^2}=&\la \nabla^g_{e_i}\delta^gh^2, h^2e_i\ra_{L^2}-\la \nabla^g_{e_i}\delta^gh, h^3e_i\ra_{L^2}\\
=&\Vert \delta^gh^2 \Vert^2_{L^2}-\la \delta^gh, \delta^gh^3 \ra_{L^2}.
\end{split}
\end{equation}
Take the $L^2$-scalar product with $h$ in \eqref{AA0} by taking into account \eqref{AA1}, \eqref{AA2} and also 
Lemma \ref{AAL} in order to express the right hand side in \eqref{AA1}; we find 
\begin{equation*} 
\begin{split}
\la \delta^g\alpha_2,h \ra_{L^2}=&\la \ring{R}h^2-h \circ \ring{R}h,h^2 \ra_{L^2}\\
+& \la \mathbf{i}(h,h),h^2 \ra_{L^2}+\la \di_{\nabla^g}h,-h^2 \circ \di_{\nabla^g}h+h \circ \di_{\nabla^g}h^2 \ra_{L^2}\\
-&\la \mathrm{L}(h,h^2),h\ra_{L^2}+\la \mathrm{L}(h,h),h^2\ra_{L^2}+\Vert \delta^gh^2 \Vert^2_{L^2}-\la \delta^gh, \delta^gh^3 \ra_{L^2}. 
\end{split}
\end{equation*}
In order to bring this to final form we use the following facts. Firstly 
$$\la \mathbf{i}(h,h),h^2 \ra_{L^2}=\la h \circ \di_{\nabla^g}h,\di_{\nabla^g}h^2\ra_{L
^2}+\mathbf{q}(h,h^3)-\la h,\mathrm{L}(h^2,h)
\ra_{L^2}
$$ according to Lemma \ref{i-1} and also $\la h \circ \di_{\nabla^g}h,\di_{\nabla^g}h^2 \ra_{L^2}=\la  \di_{\nabla^g}h,h\circ \di_{\nabla^g}h^2 \ra_{L^2}$ since $h$ is symmetric. Secondly, $\mathbf{q}(h,h^3)=\la (\ring{R}-E)h,h^3\ra_{L^2}
+\la \delta^gh, \delta^gh^3\ra_{L^2}$ by Lemma \ref{q}. Thirdly, Lemma \ref{L-exp} ensures that 
$2\la \mathrm{L}(h^2,h),h \ra_{L^2}=\la (\Delta_E+2\ring{R})h^2,h^2 \ra_{L^2}$. A straightforward algebraic calculation based on these facts leads to 
\begin{equation*} 
\begin{split}
\la \delta^g\alpha_2,h \ra_{L^2}=&3\la h \circ \di_{\nabla^g}h^2,\di_{\nabla^g}h \ra_{L^2}-
\la h \circ \di_{\nabla^g}h^2+h^2 \circ \di_{\nabla^g}h,\di_{\nabla^g}h \ra_{L^2}\\
+&\la \mathrm{L}(h,h),h^2\ra_{L^2}-\la (\Delta_E+\ring{R}+E)h^2,h^2 \ra_{L^2}+\Vert \delta^gh^2 \Vert^2_{L^2}.
\end{split}
\end{equation*}
Since $\alpha_1$ is a section of  $\Lambda^1M \otimes \Lambda^{1,1}M$ it follows that $\delta^g\alpha_1$ is $J$-invariant, in particular $\la \delta^g\alpha_1,h \ra_{L^2}=0$. The claim follows now from Lemma \ref{quad-K}.
\end{proof}
After taking linear combinations in Lemma \ref{N1-new} respectively Proposition \ref{N2-new+} in order to eliminate 
$\la h \circ \di_{\nabla^g}h^2,\di_{\nabla^g}\!h \ra_{L^2}$ we obtain the following explicit 
\begin{coro} \label{N3-new}
Assume that $h$ belongs to $\Gamma \left (\Sym^{2,-}TM \right )$. Then 
\begin{equation*}
\begin{split}
4\la \di_{\nabla^g}\!h(h \cdot ,h \cdot), \di_{\nabla^g}\!h \ra_{L^2}=&\Vert \di_{\nabla^g}h^2\Vert^2_{L^2}+3\Vert \di^{-}_{\nabla^g}h^2\Vert^2_{L^2}-2\la \partial^gh^3,\partial^gh  \ra_{L^2}-\la \mathrm{L}(h,h),h^2\ra_{L^2}\\
+&4\la h^2 \circ \di_{\nabla^g}\!h-2 h \circ \di_{\nabla^g}\!h^2,\bdel h\ra_{L^2}
\end{split}
\end{equation*}
\end{coro}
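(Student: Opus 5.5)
The plan is to take the linear combination of Lemma \ref{N1-new} and Proposition \ref{N2-new+} that eliminates the one term not yet expressed through $h^2$ or $\bdel h$, namely $\la h \circ \di_{\nabla^g}h^2,\di_{\nabla^g}\!h \ra_{L^2}$. This term occurs with coefficient $+1$ in Lemma \ref{N1-new} and with coefficient $-3$ in Proposition \ref{N2-new+}. I will therefore multiply the identity of Lemma \ref{N1-new} by $3$ and add it to the identity of Proposition \ref{N2-new+}. The left hand side becomes $4\la \di_{\nabla^g}\!h(h \cdot ,h \cdot), \di_{\nabla^g}\!h \ra_{L^2}$, and the term $\la h \circ \di_{\nabla^g}h^2,\di_{\nabla^g}\!h \ra_{L^2}$ drops out.

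Next I will collect the remaining terms by type.
\begin{itemize}
\item The norm terms: $3\Vert \di^{-}_{\nabla^g}h^2\Vert^2_{L^2}$ comes from the lemma and $\Vert \di_{\nabla^g}h^2\Vert^2_{L^2}$ from the proposition.
\item The $\partial^g$ terms: they combine as $-3\la \partial^gh^3,\partial^gh\ra_{L^2}+\la \partial^gh^3,\partial^gh\ra_{L^2}=-2\la \partial^gh^3,\partial^gh\ra_{L^2}$.
\item The $\mathrm{L}$ term: $-\la \mathrm{L}(h,h),h^2\ra_{L^2}$ comes only from the proposition.
\item The terms paired with $\bdel h$: from the lemma we get $3\la h^2\circ \di_{\nabla^g}h-3h\circ\di_{\nabla^g}h^2,\bdel h\ra_{L^2}$, and from the proposition $\la h\circ\di_{\nabla^g}h^2+h^2\circ\di_{\nabla^g}h,\bdel h\ra_{L^2}$. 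Their sum is $4\la h^2\circ\di_{\nabla^g}h,\bdel h\ra_{L^2}-8\la h\circ\di_{\nabla^g}h^2,\bdel h\ra_{L^2}$, which is exactly the last line of the claim.
\end{itemize}

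No analytic input beyond these two earlier results is needed; the corollary is a purely formal consequence of them. The only step requiring care is the bookkeeping of coefficients. In particular, I need to confirm that the two earlier results use the same normalisation of the inner product on $\Omega^2(M,TM)$ and the same sign for the $\partial^g$ terms, so that they combine as stated. The substantive work lies in Lemma \ref{N1-new} and Proposition \ref{N2-new+} themselves, which I take as established.
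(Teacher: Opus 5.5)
Your proposal is correct and is exactly the paper's argument. Adding three times the identity of Lemma \ref{N1-new} to that of Proposition \ref{N2-new+} cancels $\la h \circ \di_{\nabla^g}h^2,\di_{\nabla^g}h\ra_{L^2}$, and the remaining coefficients combine precisely as you computed.
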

The terms on the first displayed line above can be further simplified by using the Weitzenb\"ock formulas in section \ref{K-g}. However this will be only done at the end of the section, in order to keep the presentation easy to follow. 
\begin{rema} \label{rest-id}
By combining Lemma \ref{N1-new} respectively Proposition \ref{N2-new+} we can also obtain an explicit expression for 
$\la h \circ \di_{\nabla^g}h^2,\di_{\nabla^g}\!h \ra_{L^2}$ by a straightforward algebraic computation which leads to
\begin{equation*}
4\la h \circ \di_{\nabla^g}h^2,\partial^g h \ra_{L^2}=\Vert \di^{+}_{\nabla^g}h^2 \Vert^2_{L^2}+2\la 
\partial^g h^3,\partial^g h \ra_{L^2}-\la \mathrm{L}(h,h),h^2 \ra_{L^2}
\end{equation*}
for all $h$ in $\Gamma \left (\Sym^{2,-}TM \right )$. 
However this will not be needed in this paper hence we omit additional details.
\end{rema}
The final arguments needed in order to compute the norm of the Kodaira-Spencer bracket consists in comparing $\la \di_{\nabla^g}\!h(h \cdot ,h \cdot), \di_{\nabla^g}\!h \ra_{L^2}$ and $\Vert [h,h]^c \Vert^2$. This is done in two steps in the following 
\begin{pro} \label{N4-new}
Assume that $h$ belongs to $\Gamma \left (\Sym^{2,-}TM \right )$. The following hold
\begin{itemize}
\item[(i)] the norm of the Fr\"olicher-Nijenhuis bracket compares to that of the complex bracket according to
\begin{equation} \label{normb-N}
\begin{split}
\Vert [h,h]^{\FN}\Vert^2_{L^2}=&\Vert [h,h]^c \Vert^2_{L^2}
+\Vert \di_{\nabla^g}^{+}h^2 \Vert^2_{L^2}-2\la \bdel h^3, \bdel h\ra_{L^2}\\
+&\la 3 h^2 \circ \bdel h+h^2 \sharp \bdel h,\bdel h \ra_{L^2}
\end{split}
\end{equation}
\item[(ii)] we have 
\begin{equation*}
\begin{split}
2\la \di_{\nabla^g}\!h(h \cdot ,h \cdot), \di_{\nabla^g}\!h \ra_{L^2}=&\Vert [h,h]^{\FN} \Vert^2_{L^2}-2\la \delta^g[h,h]^{\FN},h^2\ra_{L^2}+\Vert \di_{\nabla^g}\!h^2 \Vert^2_{L^2}\\
-&\la h^2 \sharp \di_{\nabla^g}\!h, \di_{\nabla^g}\!h \ra_{L^2}.
\end{split}
\end{equation*}
\end{itemize}
\end{pro}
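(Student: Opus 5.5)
For part (i) I will use the type decomposition \eqref{type-N0}, $[h,h]^{\FN}=[h,h]^c+h\circ\bdel h+(\di^{+}_{\nabla^g}h^2-h\sharp\bdel h)$, whose three summands lie in the mutually orthogonal bundles $\lambda^2_{-}(M,TM)$, $\lambda^2_{+}(M,TM)$ and $\Lambda^{1,1}(M,TM)$. Pointwise orthogonality gives
\begin{equation*}
\Vert [h,h]^{\FN}\Vert^2_{L^2}=\Vert [h,h]^c\Vert^2_{L^2}+\Vert h\circ \bdel h\Vert^2_{L^2}+\Vert \di^{+}_{\nabla^g}h^2\Vert^2_{L^2}-2\la \di^{+}_{\nabla^g}h^2,h\sharp\bdel h\ra_{L^2}+\Vert h\sharp \bdel h\Vert^2_{L^2}.
\end{equation*}
The remaining work is algebraic. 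First, $\Vert h\circ\bdel h\Vert^2=\la h^2\circ\bdel h,\bdel h\ra$ because $h$ is symmetric. Second, the cross term equals $\la \bdel h, h\sharp\di^{+}_{\nabla^g}h^2\ra$ by self-adjointness of $h\sharp$. Since $\bdel h$ is in $\lambda^2_{-}$, Corollary \ref{ids-2} with $H=h$ turns it into $\la\bdel h^3,\bdel h\ra-\la h^2\circ\bdel h,\bdel h\ra$. Third, $\Vert h\sharp\bdel h\Vert^2=\la h\sharp(h\sharp\bdel h),\bdel h\ra$ and $h\sharp(h\sharp\alpha)=h^2\sharp\alpha+2\alpha(h\cdot,h\cdot)$. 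For $\alpha=\bdel h$ I expect $\bdel h(h\cdot,h\cdot)$ to be orthogonal to $\lambda^2_{-}$; it was noted in Corollary \ref{ids-2} that it lies in $\lambda^2_{+}$, because $hJ=-Jh$ flips the relevant type. That leaves $\la h^2\sharp\bdel h,\bdel h\ra$. Summing, the coefficient of $\la h^2\circ\bdel h,\bdel h\ra$ is $1+2=3$, which gives \eqref{normb-N}.

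For part (ii) I will start from Proposition \ref{id-main}, $\di_{\nabla^g}h(h\cdot,h\cdot)=h\sharp\di_{\nabla^g}h^2-\di_{\nabla^g}h^3+h\circ[h,h]^{\FN}$. Rather than pair this directly with $\di_{\nabla^g}h$, I will use $\la \alpha(h\cdot,h\cdot),\beta\ra=\la\alpha,\beta(h\cdot,h\cdot)\ra$ together with the expansion $h\sharp(h\sharp\beta)=h^2\sharp\beta+2\beta(h\cdot,h\cdot)$. With $\beta=\di_{\nabla^g}h$, and writing $h\sharp\di_{\nabla^g}h=\di_{\nabla^g}h^2-[h,h]^{\FN}$ from \eqref{bra-nac}, this gives
\begin{equation*}
2\la \di_{\nabla^g}h(h\cdot,h\cdot),\di_{\nabla^g}h\ra_{L^2}=\Vert \di_{\nabla^g}h^2-[h,h]^{\FN}\Vert^2_{L^2}-\la h^2\sharp\di_{\nabla^g}h,\di_{\nabla^g}h\ra_{L^2}.
\end{equation*}
Expanding the square produces $\Vert[h,h]^{\FN}\Vert^2+\Vert\di_{\nabla^g}h^2\Vert^2-2\la\di_{\nabla^g}h^2,[h,h]^{\FN}\ra$. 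Integrating by parts turns the last term into $-2\la\delta^g[h,h]^{\FN},h^2\ra$, which is exactly the claim.

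The main obstacle is the pointwise bookkeeping in (i). I must check the identity $h\sharp(h\sharp\alpha)=h^2\sharp\alpha+2\alpha(h\cdot,h\cdot)$ with the paper's conventions, confirm that $\bdel h(h\cdot,h\cdot)\perp\lambda^2_{-}(M,TM)$, and verify that the normalisation of the inner product on $\Lambda^2(M,TM)$ matches the factor $\tfrac12$ used in the proof of Lemma \ref{quad-1L}, so that all the stated coefficients come out correctly.
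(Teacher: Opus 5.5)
Your proposal is correct and follows the paper's proof essentially step for step. In (i) you use the same orthogonal expansion of \eqref{type-N0}, Corollary \ref{ids-2} with $H=h$ for the cross term, and $h\sharp(h\sharp\bdel h)=h^2\sharp\bdel h+2\bdel h(h\cdot,h\cdot)$ together with $\bdel h(h\cdot,h\cdot)\in\lambda^2_{+}(M,TM)\perp\lambda^2_{-}(M,TM)$. In (ii) you use the same expansion applied to $\di_{\nabla^g}h$, followed by $h\sharp\di_{\nabla^g}h=\di_{\nabla^g}h^2-[h,h]^{\FN}$ and an integration by parts.

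The one superfluous element is your opening appeal to Proposition \ref{id-main} in (ii). Your computation never uses it: like the paper, it only needs $2\di_{\nabla^g}h(h\cdot,h\cdot)=h\sharp(h\sharp\di_{\nabla^g}h)-h^2\sharp\di_{\nabla^g}h$ and the symmetry of $h\sharp$.
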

\begin{proof}
(i) Taking the norm in \eqref{type-N0} shows that 
\begin{equation*}
\begin{split}
\Vert [h,h]^{\FN}\Vert^2_{L^2}=&\Vert [h,h]^c \Vert^2_{L^2}+\la h^2 \circ \bdel h,\bdel h \ra_{L^2}
+\Vert \di_{\nabla^g}^{+}h^2 \Vert^2_{L^2}-2\la h \sharp \bdel h, \di_{\nabla^g}^{+}h^2\ra_{L^2}
+\Vert h \sharp \bdel h \Vert^2_{L^2}.
\end{split}
\end{equation*}
Using Corollary \ref{ids-2} with $H=h$ yields $\la h \sharp \bdel h, \di_{\nabla^g}^{+}h^2\ra_{L^2}=\la \bdel h^3, \bdel h\ra_{L^2}-\la h^2 \circ \bdel h,\bdel h \ra_{L^2}$. Furthermore 
\begin{equation*}
\Vert h \sharp \bdel h \Vert^2_{L^2}=\la \bdel h, h \sharp (h \sharp \bdel h )\ra_{L^2}=\la \bdel h, h^2 \sharp \bdel h+2\bdel h(h \cdot, h \cdot)\ra_{L^2}=\la \bdel h, h^2 \sharp \bdel h\ra_{L^2}.
\end{equation*}
In the last line we have used that $\bdel h(h \cdot, h \cdot)$ and $\bdel h$ are orthogonal since 
they belong to $\lambda^2_{+}(M,TM)$ respectively $\lambda^2_{-}(M,TM)$. The claim follows by gathering these facts.
 \\
(ii)This identity does not use the fact that the metric $g$ is K\"ahler, nor that 
$h \in \Gamma \left ( \Sym^{2,-}TM \right )$. For notational convenience write again $\mathrm{D}_1:=\di_{\nabla^g}\!h(h \cdot ,h \cdot)$. Start from the algebraic identity 
$h \sharp (h \sharp \di_{\nabla^g}\!h)=h^2 \sharp \di_{\nabla^g}\!h+2\mathrm{D}_1$. Using \eqref{bra-nac} this yields further 
\begin{equation} \label{D12-n}
2D_1=h \sharp \di_{\nabla^g}\!h^2-h^2 \sharp \di_{\nabla^g}\!h-h \sharp [h,h]^{\FN}.
\end{equation}
Now use again that $h \sharp \di_{\nabla^g}\!h=\di_{\nabla^g}\!h^2 -[h,h]^{\FN}$ and that the operator $h \sharp \cdot$ is symmetric. The claim then follows from taking the scalar product with $\di_{\nabla^g}\!h$ in \eqref{D12-n}.
\end{proof}
As a consequence we can relate the quantity $\la \di_{\nabla^g}\!h(h \cdot ,h \cdot), \di_{\nabla^g}\!h \ra_{L^2}$ to the norm of the complex bracket as follows.
\begin{coro} \label{corN5-nw}Assume that $h$ belongs to $\Gamma \left (\Sym^{2,-}TM \right )$.Then 
\begin{equation*}
\begin{split}
2\la \di_{\nabla^g}\!h(h \cdot ,h \cdot), \di_{\nabla^g}\!h \ra_{L^2}=&\Vert [h,h]^c \Vert^2_{L^2}+\Vert \di_{\nabla^g}^{-}h^2 \Vert^2_{L^2}-\la \mathbf{D}h,h^2\ra_{L^2}\\
+&\la  h^2 \circ \bdel h, \bdel h \ra_{L^2}-4\la  h \circ \bdel h, \di_{\nabla^g}h^2\ra_{L^2}-\la h^2 \sharp \bdel h,\bdel h \ra_{L^2}\\
+&2 \la \bdel h, \delta^g h^2 \wedge h \ra_{L^2}.
\end{split}
\end{equation*}
\end{coro}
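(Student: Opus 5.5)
This corollary follows by substituting part (i) of Proposition \ref{N4-new} into part (ii), then rewriting the two remaining FN-type quantities, $\la \delta^g[h,h]^{\FN},h^2\ra_{L^2}$ and $\la h^2\sharp \di_{\nabla^g}h,\di_{\nabla^g}h\ra_{L^2}$, using the type decomposition \eqref{type-N0} and Proposition \ref{hash-o}.

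\textbf{Step 1: substitute the FN norm.} From part (ii) of Proposition \ref{N4-new},
\[
2\la \di_{\nabla^g}h(h\cdot,h\cdot),\di_{\nabla^g}h\ra_{L^2}=\Vert [h,h]^{\FN}\Vert^2_{L^2}-2\la \delta^g[h,h]^{\FN},h^2\ra_{L^2}+\Vert \di_{\nabla^g}h^2\Vert^2_{L^2}-\la h^2\sharp\di_{\nabla^g}h,\di_{\nabla^g}h\ra_{L^2}.
\]
Inserting \eqref{normb-N} replaces $\Vert [h,h]^{\FN}\Vert^2_{L^2}$ by $\Vert [h,h]^c\Vert^2_{L^2}+\Vert \di^{+}_{\nabla^g}h^2\Vert^2_{L^2}$ plus $\bdel h$-terms.

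\textbf{Step 2: the divergence pairing.} Write $\la \delta^g[h,h]^{\FN},h^2\ra_{L^2}=\la [h,h]^{\FN},\di_{\nabla^g}h^2\ra_{L^2}$. In the proof of Lemma \ref{N1-new} this was already computed via \eqref{type-N0} as
\[
\la h\circ\di_{\nabla^g}h^2,\bdel h\ra_{L^2}+\Vert \di^{+}_{\nabla^g}h^2\Vert^2_{L^2}-\la \bdel h^3,\bdel h\ra_{L^2}+\la h^2\circ\bdel h,\bdel h\ra_{L^2},
\]
where the last two terms come from Corollary \ref{ids-2}. Since $\Vert \di_{\nabla^g}h^2\Vert^2_{L^2}=\Vert \di^{+}_{\nabla^g}h^2\Vert^2_{L^2}+\Vert \di^{-}_{\nabla^g}h^2\Vert^2_{L^2}$, the $\di^{+}$ norms cancel in the combination $+1-2+1$. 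This leaves exactly $\Vert \di^{-}_{\nabla^g}h^2\Vert^2_{L^2}$, as required.

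\textbf{Step 3: the $\sharp$-term.} Apply Proposition \ref{hash-o} with $H_{+}=h^2$ and $H_{-}=h$:
\[
\la h^2\sharp\di_{\nabla^g}h,\di_{\nabla^g}h\ra_{L^2}=\la \mathbf{D}h,h^2\ra_{L^2}+2\la \delta^g(h\circ\bdel h),h^2\ra_{L^2}+2\la h^2\sharp\bdel h,\bdel h\ra_{L^2}-2\la \bdel h,\delta^gh^2\wedge h\ra_{L^2}.
\]
Subtracting this produces $-\la \mathbf{D}h,h^2\ra_{L^2}$ and $+2\la \bdel h,\delta^gh^2\wedge h\ra_{L^2}$. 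It also produces $-2\la h\circ\bdel h,\di_{\nabla^g}h^2\ra_{L^2}$, after rewriting $\la \delta^g(h\circ\bdel h),h^2\ra_{L^2}=\la h\circ\bdel h,\di_{\nabla^g}h^2\ra_{L^2}$.

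\textbf{Step 4: bookkeeping of the $\bdel h$-terms.} This is the only delicate point. Use $\la h\circ\di_{\nabla^g}h^2,\bdel h\ra_{L^2}=\la h\circ\bdel h,\di_{\nabla^g}h^2\ra_{L^2}$, which holds by symmetry of $h$. Also, the $\la \bdel h^3,\bdel h\ra_{L^2}$ contributions must cancel: there is $-2$ from \eqref{normb-N} and $+2$ from Step 2.

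Collecting coefficients gives:
\begin{itemize}
\item $\la h^2\circ\bdel h,\bdel h\ra_{L^2}$: $3-2=1$;
\item $\la h^2\sharp\bdel h,\bdel h\ra_{L^2}$: $1-2=-1$;
\item $\la h\circ\bdel h,\di_{\nabla^g}h^2\ra_{L^2}$: $-2-2=-4$.
\end{itemize}
These match the statement. The main risk is a sign or coefficient slip in this step, so I would check the pairing conventions $\la \delta^g\alpha,H\ra_{L^2}=\la \alpha,\di_{\nabla^g}H\ra_{L^2}$ used in Lemma \ref{N1-new} carefully.
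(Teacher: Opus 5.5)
Your proposal is correct and matches the paper's proof: substitute \eqref{normb-N}, the divergence pairing $\la \delta^g[h,h]^{\FN},h^2\ra_{L^2}$ obtained from \eqref{new-p1}, and Proposition \ref{hash-o} with $H_{+}=h^2$, $H_{-}=h$ into part (ii) of Proposition \ref{N4-new}. Your coefficient bookkeeping is also right: the $\di^{+}_{\nabla^g}h^2$-norms and the $\la \bdel h^3,\bdel h\ra_{L^2}$ terms cancel, and the remaining coefficients are exactly those in the statement.
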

\begin{proof}
Firstly, using Proposition \ref{hash-o} with $H_{+}=h^2$ and $H_{-}=h$ yields 
\begin{equation*}
\begin{split}
\la h^2 \sharp \di_{\nabla^g}h, \di_{\nabla^g}h \ra_{L^2}=&\la \mathbf{D}h ,h^2\ra_{L^2}
+2\la h^2 \sharp \bdel h, \bdel h \ra_{L^2}+2 \la \delta^g (h \circ \bdel h ), h^2\ra_{L^2}
-2 \la \bdel h, \delta^gh^2 \wedge h\ra_{L^2}.
\end{split}
\end{equation*}
Secondly, using that $h \sharp \di_{\nabla^g}h=\di_{\nabla^g}h^2-[h,h]^{\FN}$ we see that 
equation \eqref{new-p1} reads 
\begin{equation*}
\la \delta^g[h,h]^{\FN},h^2\ra_{L^2}=\Vert \di^{+}_{\nabla^g}h^2 \Vert^2_{L^2}+\la h \circ \bdel h ,\di_{\nabla^g} h^2\ra_{L^2}+
\la h^2 \circ \bdel h ,\bdel h\ra_{L^2}-\la \bdel h^3, \bdel h \ra_{L^2}.
\end{equation*}
Now we plug these facts as well as the expression for 
$\Vert [h,h]^{\FN}\Vert^2_{L^2}$ given by \eqref{normb-N} into part (ii) of Lemma \ref{N4-new}. The claim follows by a purely algebraic computation. 
\end{proof}
Assembling the preparatory material developed so far eventually leads to the proof of the main result in this section, namely the explicit expression for the $L^2$-norm of the complex bracket.
\begin{teo} \label{N5}
Assume that $h \in \Gamma \left (\Sym^{2,-}TM \right )$. Then 
\begin{equation*}
\begin{split}
2 \Vert [h,h]^c\Vert^2_{L^2}=&\la (\Delta_E+4\ring{R})h^2
+\delta^{\star_g}(2\delta^gh^2+\di\!\tr h^2),h^2\ra_{L^2}-
2\la \Delta_Eh,h^3\ra_{L^2}\\
+&2\la h^2 \circ \bdel h, \bdel h\ra_{L^2}+2\la h^2 \sharp \bdel h, \bdel h\ra_{L^2}\\
+&2\la \mathrm{div}_1(h,h),h^2 \ra_{L^2}-4 \la \bdel h, \delta^gh^2 \wedge h \ra_{L^2}.
\end{split}
\end{equation*} 
\end{teo}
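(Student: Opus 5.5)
The proof will equate the two available expressions for the same quantity, $4\la \di_{\nabla^g}\!h(h \cdot ,h \cdot), \di_{\nabla^g}\!h \ra_{L^2}$. One expression is Corollary \ref{N3-new}. The other is twice Corollary \ref{corN5-nw}. Subtracting them isolates the norm:
\begin{equation*}
\begin{split}
2\Vert [h,h]^c\Vert^2_{L^2}=&\Vert \di_{\nabla^g}h^2\Vert^2_{L^2}+\Vert \di^{-}_{\nabla^g}h^2\Vert^2_{L^2}-2\la \partial^gh^3,\partial^gh\ra_{L^2}-\la \mathrm{L}(h,h),h^2\ra_{L^2}+2\la \mathbf{D}h,h^2\ra_{L^2}\\
&+(\text{terms involving }\bdel h).
\end{split}
\end{equation*}
The main task is then to convert each first-line term into operators acting on $h^2$ or $h^3$, using the Weitzenb\"ock formulas of section \ref{K-g}.

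The conversions go term by term:
\begin{itemize}
\item[$\bullet$] For $\Vert \di_{\nabla^g}h^2\Vert^2_{L^2}$, integrate by parts to get $\la \delta^g\di_{\nabla^g}h^2,h^2\ra_{L^2}$, then apply \eqref{wz1} to obtain $\la (\Delta_E+E+\ring{R})h^2,h^2\ra_{L^2}+\Vert\delta^gh^2\Vert^2_{L^2}$.
\item[$\bullet$] For $\Vert \di^{-}_{\nabla^g}h^2\Vert^2_{L^2}$, use Corollary \ref{wz-K2}(i) with $H=h^2\in\Gamma(\Sym^{2,+}TM)$.
\item[$\bullet$] For $\la\partial^gh^3,\partial^gh\ra_{L^2}$, use Corollary \ref{wz-K2}(ii). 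Since $\Delta_E$ preserves type and is self-adjoint, rewrite $\la\Delta_Eh^3,h\ra_{L^2}=\la \Delta_Eh,h^3\ra_{L^2}$, which produces the term $-2\la\Delta_Eh,h^3\ra_{L^2}$. The $E$ and $\ring{R}$ contributions combine with $\la \ring{R}h^3,h\ra_{L^2}=\la \ring{R}h,h^3\ra_{L^2}$.
\item[$\bullet$] For $\la\mathrm{L}(h,h),h^2\ra_{L^2}$, use Lemma \ref{L-exp} in the form $2\mathrm{L}(h,h)=\{(\Delta_E+2\ring{R})h,h\}-(\Delta_E+2\ring{R})h^2$. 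This gives $\la(\Delta_E+2\ring{R})h,h^3\ra_{L^2}-\tfrac12\la(\Delta_E+2\ring{R})h^2,h^2\ra_{L^2}$.
\item[$\bullet$] For $2\la \mathbf{D}h,h^2\ra_{L^2}=2\la\mathbf{D}_1(h,h)+\mathrm{div}_1(h,h),h^2\ra_{L^2}$, expand $\mathbf{D}_1$ explicitly. Its term $\{\ring{R}h,h\}$ pairs to $2\la\ring{R}h,h^3\ra_{L^2}$. Its term $-(\tfrac12\Delta_E-\ring{R})h^2$ contributes directly, and its term $\delta^{\star_g}(2\delta^g+\tfrac12\di\tr)h^2$ supplies divergence terms. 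The $\mathrm{div}_1$ part is kept untouched; it is the $2\la\mathrm{div}_1(h,h),h^2\ra_{L^2}$ in the statement.
\end{itemize}

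Collecting these, all $\ring{R}h$ and $E$ contributions against $h^3$ or $h^2$ should cancel. What should remain is $\la(\Delta_E+4\ring{R})h^2,h^2\ra_{L^2}$, the $\delta^{\star_g}$ terms, and $-2\la\Delta_Eh,h^3\ra_{L^2}$. Getting the divergence coefficients to assemble into exactly $\delta^{\star_g}(2\delta^gh^2+\di\tr h^2)$ needs care. The $\Vert\delta^gh^2\Vert^2_{L^2}$ terms from \eqref{wz1} and from Corollary \ref{wz-K2}(i) must combine with those from $\mathbf{D}_1$ through $\la\delta^{\star_g}X,h^2\ra_{L^2}=\la X,\delta^gh^2\ra_{L^2}$.

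Next come the $\bdel h$ terms. Corollary \ref{N3-new} contributes $4\la h^2\circ\di_{\nabla^g}h-2h\circ\di_{\nabla^g}h^2,\bdel h\ra_{L^2}$. By type, $h^2\circ\di_{\nabla^g}h$ pairs with $\bdel h$ only through $h^2\circ\bdel h$. Corollary \ref{corN5-nw} (doubled) contributes $2\la h^2\circ\bdel h,\bdel h\ra_{L^2}-8\la h\circ\bdel h,\di_{\nabla^g}h^2\ra_{L^2}-2\la h^2\sharp\bdel h,\bdel h\ra_{L^2}+4\la\bdel h,\delta^gh^2\wedge h\ra_{L^2}$. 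The terms in $\la h\circ\di_{\nabla^g}h^2,\bdel h\ra_{L^2}$ should cancel exactly, since $\la h\circ\bdel h,\di_{\nabla^g}h^2\ra_{L^2}=\la h\circ\di_{\nabla^g}h^2,\bdel h\ra_{L^2}$ by symmetry of $h$. What remains should match the stated multiples of $\la h^2\circ\bdel h,\bdel h\ra_{L^2}$, $\la h^2\sharp\bdel h,\bdel h\ra_{L^2}$ and $\la\bdel h,\delta^gh^2\wedge h\ra_{L^2}$.

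I expect the main obstacle to be sign and coefficient bookkeeping, not any conceptual step. The risk is concentrated in the $\delta^{\star_g}$, $E$ and $\ring{R}h$-versus-$h^3$ terms, and in fixing the sign of the $h^2\sharp\bdel h$ and $\delta^gh^2\wedge h$ terms. For the $\bdel h$ terms I would first do a sanity check by setting $\bdel h=0$ and $\delta^gh=0$. This must reproduce Theorem \ref{NKS-I} once $\Delta_Eh=0$, which pins down the non-$\bdel$ part independently.
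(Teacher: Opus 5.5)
Your route is the paper's own. You equate Corollary \ref{N3-new} with twice Corollary \ref{corN5-nw}, isolate the auxiliary quantity $\Vert \di_{\nabla^g}h^2\Vert^2_{L^2}+\Vert \di^{-}_{\nabla^g}h^2\Vert^2_{L^2}-2\la \partial^gh^3,\partial^gh\ra_{L^2}-\la \mathrm{L}(h,h),h^2\ra_{L^2}+2\la \mathbf{D}h,h^2\ra_{L^2}$, and reduce it with the Weitzenb\"ock identities. Your handling of the $\bdel h$ terms is exactly right: the type argument for $h^2\circ\di_{\nabla^g}h$, and the cancellation of the $h\circ\di_{\nabla^g}h^2$ terms by symmetry of $h$.

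There is, however, a sign error in your first bullet, and it spoils precisely the divergence bookkeeping you flagged. For symmetric $H$ one has $\la \nabla^g\delta^gH,H\ra_{L^2}=\Vert\delta^gH\Vert^2_{L^2}$. So the $-\nabla^g\circ\delta^g$ term in \eqref{wz1} gives
\[
\Vert \di_{\nabla^g}h^2\Vert^2_{L^2}=\la (\Delta_E+E+\ring{R})h^2,h^2\ra_{L^2}-\Vert\delta^gh^2\Vert^2_{L^2},
\]
not $+\Vert\delta^gh^2\Vert^2_{L^2}$.

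With the correct sign, the $\Vert\delta^gh^2\Vert^2_{L^2}$ contributions are:
\begin{itemize}
\item[$\bullet$] $-1$ from $\Vert \di_{\nabla^g}h^2\Vert^2_{L^2}$;
\item[$\bullet$] $-1$ from Corollary \ref{wz-K2}(i);
\item[$\bullet$] $+4$ from $2\mathbf{D}_1(h,h)$, whose divergence part is $\delta^{\star_g}(4\delta^g+\di\tr)h^2$.
\end{itemize}
Together with $\la \di\tr h^2,\delta^gh^2\ra_{L^2}$, these assemble into exactly $\la\delta^{\star_g}(2\delta^gh^2+\di\tr h^2),h^2\ra_{L^2}$. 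With your sign you would instead arrive at $\delta^{\star_g}(4\delta^gh^2+\di\tr h^2)$. The remaining tallies in your plan do close up correctly: $\Delta_E h^2$ gets coefficient $1$, $\ring{R}h^2$ gets $4$, and the $E$ terms and the $\ring{R}h$-against-$h^3$ terms cancel.

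Two smaller points. First, $-2\la\partial^gh^3,\partial^gh\ra_{L^2}$ supplies only half of the term $-2\la\Delta_Eh,h^3\ra_{L^2}$; the other half comes from $-\la\mathrm{L}(h,h),h^2\ra_{L^2}$. Second, your proposed sanity check is not independent. In the paper, Theorem \ref{NKS-I} (that is, Proposition \ref{N3}) is deduced from this very theorem by setting $\bdel h=0$ and $\delta^gh=0$, which kills $\mathrm{div}_1(h,h)$. So the check only confirms consistency with the claimed formula, although it would indeed expose the sign slip above.
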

\begin{proof}
By comparing the expressions for  $\la \di_{\nabla^g}\!h(h \cdot ,h \cdot), \di_{\nabla^g}\!h \ra_{L^2}$ found in Corollary \ref{N3-new} respectively Corollary \ref{corN5-nw} we find 
\begin{equation*}
2 \Vert [h,h]^c\Vert^2_{L^2}=\mathrm{p}(h)+2\la h^2 \circ \bdel h, \bdel h\ra_{L^2}+2\la h^2 \sharp \bdel h, \bdel h\ra_{L^2}-4 \la \bdel h, \delta^gh^2 \wedge h \ra_{L^2}
\end{equation*}
where $\mathrm{p}(h)=2 \la \mathbf{D}h,h^2\ra_{L^2}+\Vert \di_{\nabla^g}h^2\Vert^2_{L^2}+\Vert \di_{\nabla^g}^{-}h^2\Vert^2_{L^2}-2
\la \partial^gh^3, \partial^g h\ra_{L^2}-\la \mathrm{L}(h,h),h^2\ra_{L^2}$. Thus in order to prove the claim there remains to 
bring the auxiliary expression $\mathrm{p(}h)$ to its final form. In doing so we first collect the main ingredients needed which are 
\begin{itemize}
\item[$\bullet$] $\Vert \di_{\nabla^g}h^2\Vert^2_{L^2}=\la (\Delta_E+E+\ring{R})h^2,h^2\ra_{L^2}-\Vert \delta^gh^2\Vert^2_{L^2}$ by \eqref{wz1}
\item[$\bullet$] $\Vert \di_{\nabla^g}^{-}h^2\Vert^2_{L^2}=\la (\tfrac{1}{2}\Delta_E+E)h^2,h^2\ra_{L^2}-\Vert \delta^g h^2 \Vert^2_{L^2}$ by using part (i) in Corollary \ref{wz-K2}, with $H=h^2$
\item[$\bullet$] $\la \partial^gh^3, \partial^g h\ra_{L^2}=\la (\tfrac{1}{2}\Delta_E+E
+\ring{R})h^3,h\ra_{L^2}$ by using part (ii) in Corollary \ref{wz-K2}
\item[$\bullet$] $\la \mathrm{L}(h,h),h^2\ra_{L^2}=\la (\Delta_E+2\ring{R})h,h^3\ra_{L^2}-\tfrac{1}{2}\la (\Delta_E+2\ring{R})h^2,h^2 \ra_{L^2}$ by Lemma \ref{L-exp}.
\end{itemize}
We also take into account the expression for $\mathbf{D}(h)$ as given in section \ref{v12} namely 
$$ 2\mathbf{D}(h)=2\{h, \ring{R}h\}-(\Delta_E-2\ring{R})h^2+2\delta^{\star_g}(2\delta^g+\tfrac{1}{2}\di\!\tr)h^2+2\mathrm{div}_1(h,h).
$$
After gathering all these facts a straightforward algebraic computation shows that 
$$ \mathrm{p}(h)=\la (\Delta_E+4\ring{R})h^2
+\delta^{\star_g}(2\delta^gh^2+\di\!\tr h^2),h^2\ra_{L^2}-
2\la \Delta_Eh,h^3\ra_{L^2}+2\la \mathrm{div}_1(h,h),h^2 \ra_{L^2}
$$
and the claim is fully proved.
\end{proof}
For instances when $\bdel h=0$ and $\delta^gh=0$ the formula for the norm of the Kodaira-Spencer simplifies considerably and we thus obtain the following 
\begin{pro} \label{N3}
Assume that $h \in \Gamma \left (\Sym^{2,-}TM \right )$ satisfies $\bdel h=0$ and $\delta^gh=0$. Then
\begin{equation*}
2 \Vert [h,h]^c\Vert^2_{L^2}=\la (\Delta_E+4\ring{R})h^2,h^2\ra_{L^2}+\la \delta^{\star_g}(2\delta^gh^2+\di\!\tr h^2),h^2\ra_{L^2}.
\end{equation*} 
\end{pro}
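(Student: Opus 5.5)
This is a direct specialisation of Theorem \ref{N5}, so I will start from its general formula for $2\Vert [h,h]^c\Vert^2_{L^2}$, valid for arbitrary $h \in \Gamma(\Sym^{2,-}TM)$, and show that every additional term vanishes once $\bdel h=0$ and $\delta^gh=0$. The terms to discard are:
\begin{itemize}
\item $-2\la \Delta_Eh,h^3\ra_{L^2}$;
\item the two quadratic terms in $\bdel h$, namely $2\la h^2\circ\bdel h,\bdel h\ra_{L^2}$ and $2\la h^2\sharp\bdel h,\bdel h\ra_{L^2}$;
\item the term $-4\la \bdel h,\delta^gh^2\wedge h\ra_{L^2}$;
\item the divergence term $2\la \mathrm{div}_1(h,h),h^2\ra_{L^2}$.
\end{itemize}

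The terms containing $\bdel h$ (the two quadratic ones and the wedge term) vanish identically under $\bdel h=0$. The divergence term also vanishes identically. Indeed, from the definition \eqref{div-1}, $\mathrm{div}_1(h,h)$ is built entirely from $\delta^g h$ via $\delta^{\star_g,-}\delta^gh$, $h\,\delta^gh$ and $\di^{-}\delta^gh$. So $\mathrm{div}_1(h,h)=0$ pointwise when $\delta^gh=0$.

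For the term $\la\Delta_Eh,h^3\ra_{L^2}$, I will use Proposition \ref{wz_K}(ii):
$$\delta^g\bdel h=\tfrac12\Delta_Eh-\delta^{\star_g,-}\delta^gh-\tfrac12\di^-\delta^gh.$$
With $\bdel h=0$ and $\delta^gh=0$ this gives $\Delta_Eh=0$, so this term drops as well. Alternatively one can cite \eqref{eps-K}, since $h$ then lies in $\varepsilon^{-}(g)$.

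What remains is exactly $\la(\Delta_E+4\ring{R})h^2,h^2\ra_{L^2}+\la\delta^{\star_g}(2\delta^gh^2+\di\tr h^2),h^2\ra_{L^2}$, which is the claim. There is no real obstacle here. The only point needing care is the Weitzenb\"ock step giving $\Delta_Eh=0$, in particular that no compactness issue arises; Proposition \ref{wz_K}(ii) is a pointwise identity, so the argument is immediate.
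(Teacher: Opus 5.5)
Your proposal is correct and follows the paper's route. The paper obtains Proposition \ref{N3} directly from Theorem \ref{N5}: under $\bdel h=0$ and $\delta^gh=0$, the $\bdel h$-terms and $\mathrm{div}_1(h,h)$ vanish identically, and $\Delta_E h=0$ by Proposition \ref{wz_K}(ii).
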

Strickingly, the $L^2$-norm of $[h,h]^c$ is expressed in terms of an operator acting solely on $h^2$. We also note that the formula 
for the norm of the complex bracket is needed in full generality since the final aim is to polarise it as indicated below.
\begin{coro} \label{N7}
Assume that $h \in \Gamma \left (\Sym^{2,-}TM \right )$ satisfies $\bdel h=0$ and $\delta^gh=0$. Then
\begin{equation*}
\begin{split}
4 \la [h,h]^c, [h,H]^c \ra_{L^2}=& \la \{h,\left (\Delta_E+4\ring{R}+\delta^{\star_g,+}(2\delta^g+\tfrac{1}{2}\di\!\tr ) \right )h^2\}-\widetilde{\Delta}_Eh^3,H\ra_{L^2}\\
-&2\la \mathrm{div}_2(h),H\ra_{L^2}
\end{split}
\end{equation*}
for all $H \in \Gamma \left (\Sym^{2,-}TM \right )$.
\end{coro}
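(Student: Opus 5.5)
The plan is to polarise the general norm formula of Theorem \ref{N5}. We apply it to $h+tH$, where $h\in\varepsilon^{-}(g)$ and $H\in\Gamma\left(\Sym^{2,-}TM\right)$ is arbitrary, and extract the coefficient of $t$. The left hand side gives
$$2\Vert [h+tH,h+tH]^c\Vert^2_{L^2}=2\Vert [h,h]^c\Vert^2_{L^2}+8t\la [h,h]^c,[h,H]^c\ra_{L^2}+o(t).$$
So the claim amounts to showing that the $t$-coefficient on the right hand side of Theorem \ref{N5}, divided by $2$, equals
$$\la \{h,(\Delta_E+4\ring{R}+\delta^{\star_g,+}(2\delta^g+\tfrac{1}{2}\di\!\tr))h^2\}-\widetilde{\Delta}_Eh^3,H\ra_{L^2}-2\la \mathrm{div}_2(h),H\ra_{L^2}.$$

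First we dispose of all terms that are at least quadratic in $\bdel(h+tH)$ or in $\delta^g(h+tH)$. Since $\bdel h=0$ and $\delta^gh=0$, their $t$-coefficients vanish. This is exactly the remark made before Lemma \ref{N1-new}, and it is the same argument already used in the proof of Proposition \ref{vm-7bis}. It kills $2\la h^2\circ\bdel h,\bdel h\ra_{L^2}$ and $2\la h^2\sharp\bdel h,\bdel h\ra_{L^2}$, as well as the quadratic-in-divergence parts of $\mathrm{div}_1$.

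Next we treat the leading term $\la \mathcal{P}h^2,h^2\ra_{L^2}$, where $\mathcal{P}:=\Delta_E+4\ring{R}+\delta^{\star_g}(2\delta^g+\di\!\tr)$ is self-adjoint. Since $(h+tH)^2=h^2+t\{h,H\}+O(t^2)$, its $t$-coefficient is $2\la \mathcal{P}h^2,\{h,H\}\ra_{L^2}=2\la\{h,\mathcal{P}h^2\},H\ra_{L^2}$, projected onto $\Sym^{2,-}$. Both $\{h,\cdot\}$ and the curvature/Einstein parts exchange types, so only $\delta^{\star_g,+}$ survives when pairing against $\{h,H\}\in\Sym^{2,+}$.

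For the term $\la\delta^{\star_g}\di\!\tr\{h,H\},h^2\ra_{L^2}$ we use the identity $\la \di\tr\{h,H\},\delta^gh^2\ra_{L^2}=2\la(\di^\star\delta^gh^2)h,H\ra_{L^2}$ from the proof of Theorem \ref{vm-3}. This produces the $-\tfrac{1}{2}(\di^\star\delta^gh^2)h$ piece of $\mathrm{div}_2$, and it converts the weight of $\di\!\tr$ from $1$ to $\tfrac{1}{2}$ in the statement. One has to check this bookkeeping carefully, since $\tr h^2$ is not constant.

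The term $-2\la\Delta_E(h+tH),(h+tH)^3\ra_{L^2}$ has $t$-coefficient $-2\la \Delta_EH,h^3\ra_{L^2}-6\la\Delta_E h,\cdot\ra_{L^2}$. The second piece vanishes because $\Delta_Eh=0$, leaving $-2\la\Delta_Eh^3,H\ra_{L^2}$. Since $\delta^gH$ may be nonzero, we rewrite this as $-2\la\widetilde{\Delta}_Eh^3,H\ra_{L^2}$ minus a term $2\la\delta^{\star_g,-}\delta^g h^3,H\ra_{L^2}$. That extra term has to be absorbed together with the $\mathrm{div}_1$ contributions.

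The main obstacle is the linear-in-$t$ part of the remaining divergence terms:
$$2\la\mathrm{div}_1(h+tH,h+tH),(h+tH)^2\ra_{L^2}-4\la\bdel(h+tH),\delta^g(h+tH)^2\wedge(h+tH)\ra_{L^2}.$$
The second term contributes $-4\la\bdel H,\delta^gh^2\wedge h\ra_{L^2}$, which matches the last term of $-2\,\mathrm{div}_2$. For the first, $\mathrm{div}_1$ is linear in $\delta^g$, so only the pieces containing $\delta^gH$ paired with $h^2$ survive. Concretely these are $\la\{\delta^{\star_g,-}\delta^gH,h\},h^2\ra_{L^2}$, $-2\la\delta^{\star_g,+}(h\delta^gH),h^2\ra_{L^2}$ and $\tfrac{1}{2}\la[h,\di^-\delta^gH],h^2\ra_{L^2}$. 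The commutator term vanishes because $h$ and $h^2$ commute. The first two we dualise as $2\la\delta^gH,\delta^g h^3\ra_{L^2}$ and $-2\la\delta^{\star_g,-}(h\delta^gh^2),H\ra_{L^2}$. I expect the first to cancel precisely the $\delta^{\star_g,-}\delta^gh^3$ term coming from $\widetilde{\Delta}_E$, and the second to supply the $\delta^{\star_g,-}(h\delta^gh^2)$ part of $-2\,\mathrm{div}_2$. If the coefficients fail to match, I would recheck the convention $\widetilde{\Delta}_E^-=\Delta_E-2\delta^{\star_g,-}\delta^g$ and the factor in $\mathrm{div}_1$.
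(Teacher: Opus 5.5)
This is essentially the paper's own argument: polarise Theorem \ref{N5} at $h+tH$, discard the terms quadratic in $\bdel(h+tH)$, and dualise the $\mathrm{div}_1$, $\delta^{\star_g}\di\!\tr$ and $\la \Delta_Eh,h^3\ra_{L^2}$ contributions; the cancellations you anticipate do occur. Two bookkeeping points need fixing. First, your $\mathcal{P}$ is not self-adjoint, because $\delta^{\star_g}\di\!\tr$ is not; only $\Delta_E+4\ring{R}+2\delta^{\star_g}\delta^g$ is, so the $\delta^{\star_g}\di\!\tr$ term must be differentiated in both slots. Your next paragraph effectively does this, and it is what yields the weight $\tfrac{1}{2}$ together with $\la (\di^{\star}\delta^gh^2)h,H\ra_{L^2}$. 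Second, after dividing by $2$ one has $-\la\Delta_Eh^3,H\ra_{L^2}=-\la\widetilde{\Delta}_Eh^3,H\ra_{L^2}-2\la\delta^{\star_g,-}\delta^gh^3,H\ra_{L^2}$, not the mixed factors you wrote, and this is cancelled exactly by the term $2\la\delta^gH,\delta^gh^3\ra_{L^2}$ coming from $\mathrm{div}_1$.
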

\begin{proof}
The idea is to polarise the equation in Theorem \ref{N5} as follows. We use the latter theorem for $h+tH$ and determine the coefficient 
of $t$ for each quantity. Because the Kodaira-Spencer bracket is symmetric we see that $\frac{\di}{\di\!t}_{\vert t=0}\Vert [h+tH,h+tH]^c\Vert^2_{L^2}=4\la [h,h]^c, [h,H]^c \ra_{L^2}.$ We record, for repeated use in the subsequent, that 
$$ \frac{\di}{\di\!t}_{\vert t=0}(h+tH)^2=\{h,H\}.$$
Since the linear operator $\Delta_E+4\ring{R}+2\delta^{\star_g}\delta^g$ is self-adjoint we thus obtain
$$\frac{\di}{\di\!t}_{\vert t=0}\la (\Delta_E+4\ring{R}+2\delta^{\star_g}\delta^g)(h+tH)^2,(h+tH)^2 \ra_{L^2}=2
\la \{h,(\Delta_E+4\ring{R}+2\delta^{\star_g}\delta^g)h^2\},H \ra_{L^2}.
$$
Further on, since $\delta^{\star_g}\di\tr$ is a linear operator, differentiating show that 
\begin{equation*}
\begin{split}
\frac{\di}{\di\!t}_{\vert t=0}\la \delta^{\star_g}\di\tr(h+tH)^2, (h+tH)^2\ra_{L^2}=&\la \delta^{\star_g}\di\tr h^2,\{h,H\} \ra_{L^2}+\la \delta^{\star_g}\di \tr \{h,H\},h^2\ra_{L^2}\\
=&\la \{h,\delta^{\star_g}\di \tr h^2\}+2(\di^{\star}\delta^g h^2)h,H \ra_{L^2}.
\end{split}
\end{equation*}
To deal with the cubic terms we use that $\Delta_Eh=0$ to arrive at 
\begin{equation*}
\begin{split}
\frac{\di}{\di\!t}_{\vert t=0}\la \Delta_E(h+tH),(h+tH)^3 \ra_{L^2}=\la \Delta_EH, h^3 \ra_{L^2}=\la \Delta_Eh^3, H \ra_{L^2}.
\end{split}
\end{equation*}
Taking into account that $\bdel h=0$ we see that the terms 
\begin{equation*}
\begin{split}
&\la (h+tH)^2 \circ \bdel (h+tH), \bdel (h+tH) \ra_{L^2}=t^2\la (h+tH)^2 \circ \bdel H, \bdel H \ra_{L^2}\\
&\la (h+tH)^2 \sharp \bdel (h+tH), \bdel (h+tH) \ra_{L^2}=t^2\la (h+tH)^2 \sharp \bdel H, \bdel H \ra_{L^2}
\end{split}
\end{equation*}
have vanishing coefficient on $t$. Finally we deal with the pure divergence terms as follows; we have 
\begin{equation*}
\begin{split}
\frac{\di}{\di\!t}_{\vert t=0}\la \mathrm{div}_1(h+tH,h+tH),(h+tH)^2 \ra_{L^2}=&\la \mathrm{div}_1(h,h), \{h,H\}\ra_{L^2}+
2\la \mathrm{div}_1(h,H),h^2 \ra_{L^2}\\
=&2\la \mathrm{div}_1(h,H),h^2 \ra_{L^2}
\end{split}
\end{equation*}
after also taking into account that $\delta^gh=0$. Similar arguments yield by expansion and using that 
$\bdel h=0$ the equality 
$$\frac{\di}{\di\!t}_{\vert t=0}\la \bdel(h+tH), \delta^g(h+tH)^2 \wedge (h+tH) \ra_{L^2}=\la \bdel H, \delta^g h^2 \wedge h \ra_{L^2}.
$$
Summarising, we have showed that
\begin{equation*}
\begin{split}
4 \la [h,h]^c, [h,H]^c \ra_{L^2}=& \la \{h,(\Delta_E+4\ring{R})h^2+\delta^{\star_g,+}(2\delta^g+\tfrac{1}{2}\di\!\tr )h^2\}-\Delta_Eh^3
,H\ra_{L^2}\\
+&2\la \mathrm{div}_1(h,H),h^2\ra_{L^2}-2 \la \bdel H, \delta^gh^2 \wedge h \ra_{L^2}+\la (\di^{\star}\delta^gh^2)h
,H\ra_{L^2}.
\end{split}
\end{equation*}
Since $h$ is divergence free we have $\la \mathrm{div}_1(h,H),h^2\ra_{L^2}=\la \delta^{\star_{g}}\delta^gh^3,H\ra_{L^2}-
\la \delta^{\star_g}(h \delta^gH),h^2\ra_{L^2}$ by \eqref{div-1} and the claim follows easily from the definition of the operators 
$\mathrm{div}_2$(see \eqref{div-2}) respectively the perturbed Einstein operator $\widetilde{\Delta}_E$.

\end{proof}


\subsection{Obstructions to third order deformation } \label{obs-3K}
In this section we will use the symmetric part of the Einstein equation to third order obtained in Theorem \ref{E3-1} in order 
to derive in an explicit way the obstruction to deformation. This is based on the elliptic theory of the perturbed Einstein operator 
$\widetilde{\Delta}_E$. 

In order to conclude we need to determine the term containing the tensor $h_2$ in Theorem \ref{thm-54}, equation \eqref{xxx3}. To this extent we use the Einstein deformation theory to second order, see section \ref{rev-O2} for definitions 
and main facts. Throughout this section we assume that the Einstein deformation $g_t \in \mathscr{M}_1$ has Taylor series expansion $g^{-1}g_t=\id+th_1+\tfrac{t^2}{2!}h_2+\tfrac{t^3}{3!}h_3+o(t^4)$ normalised according to $\delta^gh_1=0$ and \eqref{gauge-1}.
The last preparatory fact which is needed is the following 
\begin{lema} \label{v-fin}
The tensors $h_2-h_1^2$ and $h_1$ satisfy the following 
\begin{itemize}
\item[(i)] we have 
\begin{equation*}
\begin{split}
\bfv(h_1,h_2-h_1^2,H)=&2 \left ( \la [h_1,h_2-h_1^2]^c, \bdel H\ra_{L^2}+ \la [h_1,H]^c, \bdel \mathbf{h}_2 \ra_{L^2}\right )\\
-&\tfrac{1}{2} \la \delta^g \{h_1,h_2-h_1^2\}, \delta^gH \ra_{L^2}+\tfrac{1}{8} \la \{h_1, \delta^{\star_g,+}\di\tr h_1^2\},H\ra_{L^2}
\end{split}
\end{equation*}
for all $H \in \Gamma \left (\Sym^{2,-}TM \right )$
\item[(ii)]
in particular, 
$$ \la \bfv(h_1,h_1),h_2-h_1^2\ra_{L^2}=2\la \delta^g[h_1,h_1]^c, \mathbf{h}_2 \ra_{L^2}+\tfrac{1}{4}\la \delta^gh_1^2, \di\!\tr h_1^2\ra_{L^2}.
$$
\end{itemize}
\end{lema}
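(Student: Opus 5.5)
Part (i) will come from polarising Lemma \ref{3-} in the slot $h_b=h_2-h_1^2$. All three tensors $h_1$, $h_2-h_1^2$ (by \eqref{alg-T1}) and $H$ are sections of $\Sym^{2,-}TM$, so Lemma \ref{3-} applies with $(h_a,h_b,h_c)=(h_1,h_2-h_1^2,H)$. It gives
\begin{equation*}
\begin{split}
\bfv(h_1,h_2-h_1^2,H)=&2\left(\la [h_1,h_2-h_1^2]^c,\bdel H\ra_{L^2}+\la [h_2-h_1^2,H]^c,\bdel h_1\ra_{L^2}+\la [H,h_1]^c,\bdel(h_2-h_1^2)\ra_{L^2}\right)\\
&-\tfrac{1}{2}\mathfrak{S}\la \delta^g\{\cdot,\cdot\},\delta^g\cdot\ra_{L^2}.
\end{split}
\end{equation*}
The middle bracket term vanishes because $\bdel h_1=0$. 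In the last bracket term we substitute the decomposition \eqref{2nd-O}, $h_2-h_1^2=\mathbf{h}_2-\tfrac12\L_{J\grad\mathbf{f}}J$. I will then show that $\bdel(\L_{J\grad \mathbf{f}}J)=0$. Indeed, $\L_XJ$ for $X$ real holomorphic, or more generally the $\bdel$-exact piece $\bdel$ of a vector field, is $\bdel$-closed, since $\bdel^2=0$ on $T^{1,0}$-valued forms. Concretely, $-\tfrac12\L_{J\grad \mathbf{f}}J$ is, up to the identification $\Sym^{2,-}TM\simeq\Omega^{0,1}(T^{1,0})$, $\bdel$ of a vector field. Hence $\bdel(h_2-h_1^2)=\bdel\mathbf{h}_2$, which produces the term $\la [h_1,H]^c,\bdel\mathbf{h}_2\ra$.

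For the divergence cyclic sum, $\delta^gh_1=0$ kills the term $\la\delta^g\{H,h_2-h_1^2\},\delta^gh_1\ra$. What remains is $\la\delta^g\{h_1,h_2-h_1^2\},\delta^gH\ra$ plus $\la\delta^g\{H,h_1\},\delta^g(h_2-h_1^2)\ra$. Using \eqref{gauge-1}, $\delta^g(h_2-h_1^2)=-\tfrac14\di\tr h_1^2$, so the second of these equals
\[
-\tfrac14\la \{H,h_1\},\delta^{\star_g}\di\tr h_1^2\ra_{L^2}=-\tfrac14\la \{h_1,\delta^{\star_g,+}\di\tr h_1^2\},H\ra_{L^2}.
\]
The last equality uses that $\{H,h_1\}$ is $J$-invariant, so only the $+$ component survives, together with self-adjointness of $\{h_1,\cdot\}$. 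Multiplying by $-\tfrac12$ gives $+\tfrac18$, as claimed.

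Part (ii) follows by taking $H=h_2-h_1^2$ in the symmetric-in-two-slots version, or more cleanly by applying Lemma \ref{3-} directly with $(h_1,h_1,h_2-h_1^2)$. The terms containing $\bdel h_1$ drop out, leaving $2\la [h_1,h_1]^c,\bdel(h_2-h_1^2)\ra=2\la[h_1,h_1]^c,\bdel\mathbf{h}_2\ra=2\la\delta^g[h_1,h_1]^c,\mathbf{h}_2\ra$. For the last equality we integrate by parts; this is legitimate because $\bdel$ is the $\lambda^2_-$-projection of $\di_{\nabla^g}$ and the bracket lies in $\lambda^2_-$. Of the divergence terms, only $-\tfrac12\cdot 2\cdot\tfrac12\la\delta^gh_1^2,\delta^g(h_2-h_1^2)\ra$ survives, with $\delta^g\{h_1,h_1\}=2\delta^gh_1^2$. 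By \eqref{gauge-1} it becomes $\tfrac14\la\delta^gh_1^2,\di\tr h_1^2\ra$.

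The main obstacle is the claim $\bdel(\L_{J\grad\mathbf{f}}J)=0$, or at least that its pairing against $[h_1,H]^c$ vanishes. I would verify it first via \eqref{delta-K}: $-2\delta^{\star_g,-}X=\L_{JX}J+\tfrac12(1-J)\di X^\flat$, and for $X=\grad\mathbf{f}$ the second term vanishes since $\di\di\mathbf{f}=0$. So $\L_{J\grad\mathbf{f}}J=-2\delta^{\star_g,-}\grad\mathbf{f}$, which is the real form of $\bdel$ applied to the $(1,0)$ part of a vector field. The standard identity $\bdel\circ\bdel=0$ on $T^{1,0}$-valued forms then finishes it. Failing that, I would instead use Proposition \ref{del-TT} and the $\TT$-property of $\delta^g[h_1,H]^c_{\TT}$ to show that the pairing with $\delta^{\star_g,-}$ of a vector field vanishes up to the $\delta^gH$ terms.
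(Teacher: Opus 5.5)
Your proposal is correct and is essentially the paper's proof. You apply Lemma~\ref{3-} to $(h_1,h_2-h_1^2,H)$ and use $\bdel h_1=0$, $\delta^gh_1=0$, $\bdel(h_2-h_1^2)=\bdel\mathbf{h}_2$ from \eqref{2nd-O}, and the gauge condition \eqref{gauge-1}, with type considerations and duality for the $\bdel\mathbf{h}_2$ term. Your check that $\L_{J\grad\mathbf{f}}J=-2\delta^{\star_g,-}\grad\mathbf{f}$ is $\bdel$-closed supplies a step the paper takes for granted.

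There are two small slips in (ii). First, the specialisation of (i) that gives (ii) is $H=h_1$, not $H=h_2-h_1^2$; your direct use of Lemma~\ref{3-} on $(h_1,h_1,h_2-h_1^2)$ is fine. Second, the surviving divergence coefficient is $-\tfrac12\cdot 2=-1$. Together with $\delta^g(h_2-h_1^2)=-\tfrac14\di\tr h_1^2$ this gives the stated $\tfrac14$, whereas your written coefficient $-\tfrac12$ would give $\tfrac18$.
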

\begin{proof}
(i) The tensor $h_2-h_1^2$ belongs to 
$\Gamma \left (\Sym^{2,-}TM \right )$ by \eqref{alg-T1} and also satisfies the equation $\bdel (h_2-h_1^2)=\bdel \mathbf{h}_2$(see \eqref{2nd-O}) as well as $\delta^g(h_2-h_1^2)=-\tfrac{1}{4}\di\!\tr h_1^2$ according to \eqref{gauge-1}; since we also know that $\bdel h_1=0$ and $ \delta^gh_1=0$ the first part of the claim follows from Lemma \ref{3-}.\\
(ii) We have $\la [h_1,h_1]^c, \bdel \mathbf{h}_2 \ra_{L^2}=\la [h_1,h_1]^c, \di_{\nabla^g}\! \mathbf{h}_2 \ra_{L^2}=\la \delta^g[h_1,h_1]^c,  \mathbf{h}_2 \ra_{L^2}$ by type considerations and duality. The claim 
follows by taking $H=h_1$ in (i) and using again that $\bdel h_1=0$ and $ \delta^gh_1=0$.
\end{proof}
All arguments are now in place in order to prove the main result of this section. Below we identify the obstruction to third 
order deformation and also make fully explicit the component of the Einstein equation on $\Gamma \left (\Sym^{2,-}TM \right )$.
\begin{teo} \label{main1-bN}
Assume that $(M^{2m},g,J)$ is compact K\"ahler Einstein with $E<0$ and that $g_t \in \mathscr{M}_1$ is a small time Einstein deformation of 
$g$ with Taylor expansion at $t=0$ given by $g^{-1}g_t=\id+th_1+\tfrac{t^2}{2}h_2+\tfrac{t^3}{3!}h_3+o(t^4)$. After normalising $g_t$ under the action of $\mathbf{G}$ according to $\delta^gh_1=0$ and 
\eqref{gauge-1} we must have 
\begin{equation*}
\bdel \mathbf{h}_2+[h_1,h_1]^c=0.
\end{equation*}
In addition, the component on $\Gamma \left (\Sym^{2,-}TM \right )$ of the third order Einstein equation reads 
\begin{equation} \label{3-eqn}
\begin{split}
\Delta_E\mathbf{h}_3 +\delta^{\star_g,-}\delta^g \left (-2\mathbf{h}_3 +\tfrac{2}{3}h_3^{+}-\{h_1,h_2-h_1^2\} \right ) &=2\delta^g[h_1,h_2-h_1^2]^c
\end{split}
\end{equation}
where the tensor $\mathbf{h}_3 \in \Gamma \left (\Sym^{2,-}TM \right )$ is determined from $-\tfrac{1}{3}h_3^{-}+\tfrac{1}{2}h_1^3=\mathbf{h}_3$.
\end{teo}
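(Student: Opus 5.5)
The plan is to start from the $\Gamma(\Sym^{2,-}TM)$-component of the third order equation given by Theorem \ref{E3-1} and pair it with the test tensor $H=h_1$. Since $h_1\in\varepsilon^{-}(g)$, the operator $\widetilde{\Delta}_E$ is self-adjoint and $\widetilde{\Delta}_E h_1=0$. Also $\delta^{\star_g}\di\tr\mathbf{u}_3$ pairs with $h_1$ through $\delta^gh_1=0$. Hence the left hand side of Theorem \ref{E3-1} vanishes when $H=h_1$.

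On the right hand side we treat the three terms in turn.
\begin{itemize}
\item[(a)] For $\bfv(h_1,h_2-h_1^2,h_1)$ we use Lemma \ref{v-fin}(ii), which gives $2\la\delta^g[h_1,h_1]^c,\mathbf{h}_2\ra_{L^2}$ plus a pure divergence term.
\item[(b)] For $\tfrac12\la\{h_1,(\Delta_E+4\ring{R}+\ldots)h_1^2\},h_1\ra_{L^2}=\la(\Delta_E+4\ring{R}+\ldots)h_1^2,h_1^2\ra_{L^2}$ we use Proposition \ref{N3}, which converts it into $2\Vert[h_1,h_1]^c\Vert^2_{L^2}$ up to divergence terms in $\delta^gh_1^2$ and $\di\tr h_1^2$.
\item[(c)] The term $\la\mathrm{div}_2(h_1),h_1\ra_{L^2}$ reduces, via \eqref{div-2} and $\bdel h_1=\delta^gh_1=0$, to divergence terms of the same kind.
\end{itemize}
I expect all the $\delta^gh_1^2$, $\di\tr h_1^2$ terms to cancel; checking this bookkeeping is routine but delicate.

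After these substitutions the identity becomes $\la\delta^g[h_1,h_1]^c,\mathbf{h}_2\ra_{L^2}+\Vert[h_1,h_1]^c\Vert^2_{L^2}=0$. Now $\la\delta^g[h_1,h_1]^c,\mathbf{h}_2\ra_{L^2}=\la[h_1,h_1]^c,\bdel\mathbf{h}_2\ra_{L^2}$ by type. By \eqref{2nd-034}, $\Delta_E\mathbf{h}_2=-2\delta^g[h_1,h_1]^c$, and by Proposition \ref{wz_K}(ii) with $\delta^g\mathbf{h}_2=0$ this gives $\delta^g\bdel\mathbf{h}_2=\tfrac12\Delta_E\mathbf{h}_2=-\delta^g[h_1,h_1]^c$. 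Therefore $\delta^g(\bdel\mathbf{h}_2+[h_1,h_1]^c)=0$. Pairing with $\bdel\mathbf{h}_2$ gives $\Vert\bdel\mathbf{h}_2\Vert^2_{L^2}=-\la[h_1,h_1]^c,\bdel\mathbf{h}_2\ra_{L^2}$. Combining the two relations,
$$\Vert\bdel\mathbf{h}_2+[h_1,h_1]^c\Vert^2_{L^2}=\Vert[h_1,h_1]^c\Vert^2+2\la[h_1,h_1]^c,\bdel\mathbf{h}_2\ra+\Vert\bdel\mathbf{h}_2\Vert^2=\Vert[h_1,h_1]^c\Vert^2+\la[h_1,h_1]^c,\bdel\mathbf{h}_2\ra=0.$$
This is the Hodge-type argument that yields $\bdel\mathbf{h}_2+[h_1,h_1]^c=0$.

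For \eqref{3-eqn} we return to Theorem \ref{E3-1} with general $H$. We substitute Lemma \ref{v-fin}(i) for $\bfv(h_1,h_2-h_1^2,H)$ and use $\bdel\mathbf{h}_2=-[h_1,h_1]^c$ in the term $2\la[h_1,H]^c,\bdel\mathbf{h}_2\ra_{L^2}=-2\la[h_1,H]^c,[h_1,h_1]^c\ra_{L^2}$. Corollary \ref{N7} expresses this quantity through $\{h_1,(\Delta_E+4\ring{R}+\ldots)h_1^2\}$, $\widetilde{\Delta}_Eh_1^3$ and $\mathrm{div}_2(h_1)$. These should cancel exactly the corresponding terms in Theorem \ref{E3-1}, which is the content of the polarisation.

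What remains is $\widetilde{\Delta}_E$ applied to $\tilde{\mathbf{u}}_3$, together with $2\delta^g[h_1,h_2-h_1^2]^c$ coming from $2\la[h_1,h_2-h_1^2]^c,\bdel H\ra$, and leftover divergences. To finish, we rewrite the $\Sym^{2,-}$ part of $\tilde{\mathbf{u}}_3$ as $\mathbf{h}_3$ using \eqref{u3-d+}. We then expand $\widetilde{\Delta}_E^-=\Delta_E-2\delta^{\star_g,-}\delta^g$ and absorb the remaining $\delta^{\star_g,-}\delta^g$ terms. These involve $h_3^+$, $\{h_1,h_2-h_1^2\}$ and $\tr\mathbf{u}_3$, and we use Lemma \ref{tr-u3} together with the gauge \eqref{gauge-1}.

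I expect the main obstacle to be this last coefficient bookkeeping. It is also where the specific combination $\tfrac23h_3^+-\{h_1,h_2-h_1^2\}$ must emerge.
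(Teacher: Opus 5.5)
This is essentially the paper's proof. For the obstruction, you set $H=h_1$ in Theorem \ref{E3-1} first and use the unpolarised Proposition \ref{N3} with Lemma \ref{v-fin}(ii). The paper instead inserts Corollary \ref{N7} and Lemma \ref{v-fin}(i) for general $H$ and only then sets $H=h_1$. The two routes are equivalent: your divergence terms do cancel ($\tfrac14-\tfrac34+\tfrac12=0$), and your Hodge argument is the paper's, made explicit. Strictly, $\widetilde{\Delta}_E$ is not self-adjoint because of the $\delta^{\star_g}\di\tr$ part. All you need is $\la \widetilde{\Delta}_EX,h_1\ra_{L^2}=\la X,\Delta_Eh_1\ra_{L^2}-\la (2\delta^g+\di\tr)X,\delta^gh_1\ra_{L^2}=0$.

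One correction to the bookkeeping for \eqref{3-eqn}. The $\widetilde{\Delta}_Eh_1^3$ term produced by Corollary \ref{N7} does not cancel, because the right-hand side of Theorem \ref{E3-1} has no such term. A residual $\tfrac12\widetilde{\Delta}_Eh_1^3$ survives; moved to the left, it replaces $\tilde{\mathbf{u}}_3$ by $\tilde{\mathbf{u}}_3-\tfrac12h_1^3$. It is this tensor whose $\Sym^{2,-}$-component equals $\mathbf{h}_3$; the component of $\tilde{\mathbf{u}}_3$ itself is $\mathbf{h}_3+\tfrac12h_1^3$. Without the residual you would get spurious $h_1^3$ terms.

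With the residual kept, the argument closes as follows:
\begin{itemize}
\item The $\tfrac18\{h_1,\delta^{\star_g,+}\di\tr h_1^2\}$ contributions of Corollary \ref{N7} and Lemma \ref{v-fin}(i) cancel.
\item The term $-\tfrac12\la\delta^g\{h_1,h_2-h_1^2\},\delta^gH\ra_{L^2}$ moves to the left.
\item The $\delta^{\star_g,-}\di\tr$ term vanishes, since it acts on $h_1^3$, which is trace free.
\item The $\delta^{\star_g,-}\delta^g$ term acts on $2\tilde{\mathbf{u}}_3-h_1^3-\tfrac12\{h_1,h_2-h_1^2\}=2\mathbf{h}_3-\tfrac23h_3^{+}+\{h_1,h_2-h_1^2\}$.
\end{itemize}
Together these give exactly \eqref{3-eqn}.
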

\begin{proof}
Let $H$ belong to $\Gamma \left ( \Sym^{2,-}TM \right )$. We plug the identity for $\la [h_1,h_1]^c,[h_1,H]^c\ra_{L^2}$ given in Corollary \ref{N7} in the Einstein equation as described in 
Theorem \ref{E3-1}. After an easy algebraic computation we obtain 
\begin{equation*}
\begin{split}
\la \widetilde{\Delta}_E(\tilde{\mathbf{u}}_3-\tfrac{1}{2}h_1^3)+\tfrac{1}{3}\delta^{\star_g}\di\tr \mathbf{u}_3,H\ra_{L^2}=&\bfv(h_1,h_2-h_1^2,H)+2\la [h_1,h_1]^c,[h_1,H]^c\ra_{L^2}\\
-&\tfrac{1}{8}\la \{h_1,\delta^{\star_g,+}\di\tr h_1^2\},H \ra_{L^2}.
\end{split}
\end{equation*} 
Using part (i) in Lemma \ref{v-fin} thus yields further 
\begin{equation*}
\begin{split}
\la \widetilde{\Delta}_E(\tilde{\mathbf{u}}_3-\tfrac{1}{2}h_1^3)+\tfrac{1}{3}\delta^{\star_g}\di\tr \mathbf{u}_3,H\ra_{L^2}
=&2\la [h_1,h_2-h_1^2]^c,\bdel H\ra_{L^2}+2 \la [h_1,H]^c, [h_1,h_1]^c+\bdel \mathbf{h}_2 \ra_{L^2}\\
-&\tfrac{1}{2} \la \delta^g \{h_1,h_2-h_1^2\}, \delta^gH \ra_{L^2}.
\end{split}
\end{equation*} 
Since $\bdel h_1=0$ and $\delta^gh_1=0$, in particular $\widetilde{\Delta}_Eh_1=0$, 
taking $H=h_1$ shows that the $L^2$-scalar product $\la [h_1,h_1]^c, [h_1,h_1]^c+\bdel \mathbf{h}_2 \ra_{L^2}=0$. Because
$\bdel \mathbf{h}_2+[h_1,h_1]^c$ is harmonic it follows that $\Vert \bdel \mathbf{h}_2+[h_1,h_1]^c\Vert^2_{L^2}=0$ hence 
$$\bdel \mathbf{h}_2+[h_1,h_1]^c=0.$$
It follows that 
\begin{equation*}
\begin{split}
&\la \widetilde{\Delta}_E(\tilde{\mathbf{u}}_3-\tfrac{1}{2}h_1^3)+\delta^{\star_g,-}\left ( \tfrac{1}{3}\di\tr \mathbf{u}_3
+\tfrac{1}{2}\delta^g \{h_1,h_2-h_1^2\}\right ),H\ra_{L^2}
=2\la [h_1,h_2-h_1^2]^c,\bdel H\ra_{L^2}.
\end{split}
\end{equation*} 
We now clarify the remaining algebraic details as follows; using that $\widetilde{\Delta}_E=\Delta_E-
\delta^{\star_g}(2\delta^g+\di\!\tr)$  we see that that the operator on the left hand side in the above equation is given by 
\begin{equation*}
\Delta_E(\tilde{\mathbf{u}}_3-\tfrac{1}{2}h_1^3)-\delta^{\star_g,-}\delta^g \left (
2\tilde{\mathbf{u}}_3-h_1^3-\tfrac{1}{2}\{h_1,h_2-h_1^2\} \right )-\delta^{\star_g,-}\di\!\tr \left (\tilde{\mathbf{u}}_3-\tfrac{1}{2}h_1^3-\tfrac{1}{3}\mathbf{u}_3 \right ).
\end{equation*}
Since $\tilde{\mathbf{u}}_3=\tfrac{1}{3}\mathbf{u}_3 +\tfrac{3}{2}h_1^3$ it follows that 
$\tr \left (\tilde{\mathbf{u}}_3-\tfrac{1}{2}h_1^3-\tfrac{1}{3}\mathbf{u}_3 \right )=\tr (h_1^3)=0$ since 
$h_1^3$ is $J$-anti-invariant. By also using the expression for $\mathbf{u}_3$ in \eqref{u3-d+} we obtain 
$$2\tilde{\mathbf{u}}_3-h_1^3-\tfrac{1}{2}\{h_1,h_2-h_1^2\}=-\tfrac{2}{3}h_3+\{h_1,h_2-h_1^2\}+h_1^3=
2\mathbf{h}_3-\tfrac{2}{3}h_3^{+}+\{h_1,h_2-h_1^2\}.$$
Therefore the term containing the operator $\delta^{\star_g,-}\delta^g$ on the left hand side of \eqref{3-rd} reads as stated. Finally, from 
$$ \tilde{\mathbf{u}}_3-\tfrac{1}{2}h_1^3=-\tfrac{1}{3}h_3+\tfrac{3}{4}\{h_1,h_2-h_1^2\}+\tfrac{1}{2}h_1^3
$$
and having $h_2-h_1^2$ in $\Gamma \left ( \Sym^{2,-}TM \right )$ we conclude that 
$ (\tilde{\mathbf{u}}_3-\tfrac{1}{2}h_1^3)^{-}=\mathbf{h}_3$. The proof is finished by recalling that the 
Einstein operator preserves the space $\Gamma \left ( \Sym^{2,-}TM \right )$, hence 
$\la \Delta_E(\tilde{\mathbf{u}}_3-\tfrac{1}{2}h_1^3),H \ra_{L^2}=\la \Delta_E\mathbf{h}_3,H\ra_{L^2}$.
\end{proof}
There remains to demonstrate that there are no further obstructions to solving equation \eqref{3-eqn}. This will 
be done in section \ref{E+3}, Theorem \ref{main-LL}, after finding the correct gauge normalisation to third order for the tensor $h_3$.
\section{The component of the Einstein equation on $\Gamma \left (\Sym^{2,+}TM \right )$} \label{+part}
Firstly we outline a few representation theoretical facts which will be systematically relied on in the subsequent.
In this section we will use the algebraic action defined according to 
$(h,\alpha) \in \Sym^2 TM \oplus \Lambda^{p}M \mapsto [h,\alpha] \in \Lambda^pM$ where 
$$ [h,\alpha](X_1, \ldots, X_p):=\alpha(hX_1, \ldots, X_p)+\dots +\alpha(X_1,\ldots, hX_p).
$$
Note this is induced by the Lie algebra action of $\mathfrak{gl}(TM)$ on $\Lambda^{\star}M$. This action is symmetric on $\Lambda^3M$, that is 
$g([h,T_1],T_2)=g(T_1,[h,T_2])$. In what follows we view $3$-forms $T \in \Omega^3M$ as elements of 
$\Omega^2(M,TM)$ via $(X,Y) \mapsto T(X,Y)$ where $g(T(X,Y),Z)=T(X,Y,Z)$. To eliminate confusing numerical factors, the inner product we work with on $3$-forms is thus $g(T_1,T_2)=\tfrac{1}{2} g(T_1(e_i,e_j),T_2(e_i,e_j))$. 

In addition we recall that the map given by $T \in \Lambda^{(1,2)+(2,1)}M \mapsto T^{-} \in 
\lambda_{+}^2(M,TM)$ is a vector bundle isomorphism, where
$$2T^{-}(X,Y)=T(X,Y)-T(JX,JY).$$
Recall that having $T^{-}$ in $\lambda_{+}^2(M,TM)$ amounts to $g(T^{-}(X,JY),JZ)=g(T^{-}(X,Y),Z)$. This 
follows from having $T$ of real type $(1,2)+(2,1)$, property which may be equivalenty re-phrased as $\mathfrak{J}T=JT$ where the actions 
$\mathfrak{J}T(X,Y,Z)=T(JX,Y,Z)+T(X,JY,Z)+T(X,Y,JZ)$ respectively $JT(X,Y,Z)=T(JX,JY,JZ)$. We also consider the linear embedding given by 
$T \in \Lambda^{(1,2)+(2,1)}M \mapsto T^{+} \in 
\Lambda^{1,1}(M,TM)$ where 
$$2T^{+}(X,Y)=T(X,Y)+T(JX,JY).$$

Moving on to differential geometric facts we record that 
\begin{equation} \label{d+1}
g((\di_{\nabla^g} H)(X,Y),Z)=-(\nabla^g_{JZ}A)(X,Y)+\di\!A(X,Y,JZ).
\end{equation}
whenever $H$ belongs to $
\Gamma \left (\Sym^{2,+}TM \right )$ and where $A:=H \circ J \in \Omega^{1,1}M$ and $T:=\di\!A$. 
We begin by providing more information regarding the functional $\bfv$. Specifically we determine the remaining 
mixed-type components in $\bfv$ as indicated in the following 
\begin{pro} \label{v+}
Consider a pair $(H,h)$ in $\Gamma \left (\Sym^{2,+}TM \right ) \oplus \Gamma \left (\Sym^{2,-}TM \right )$. We have 
\begin{equation*}
\bfv(H,H,h)=-\tfrac{2}{3} g(T,[h,T])-\la [H, \mathrm{\di^{-}} \delta^gH]+\{H, \delta^{\star_g,-}\delta^gH\}-\delta^{\star_g,-}\delta^gH^2,h \ra_{L^2}
\end{equation*}
where the forms $A:=H \circ J$ belongs to $\Omega^{1,1}M$ and $T:=\di\! A \in \Omega^{(1,2)+(2,1)}M$. 
\end{pro}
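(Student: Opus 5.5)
Here $\bfv(H,H,h)$ denotes the trilinear form of \eqref{bfv-def} evaluated on $(H,H,h)$. The plan is to start from the representation \eqref{bfv-1} with $h_1=h_2=H$ and test tensor $h$:
\begin{equation*}
\bfv(H,H,h)=2\la [H,H]^{\FN},\di_{\nabla^g}h\ra_{L^2}-\la h\sharp \di_{\nabla^g}H,\di_{\nabla^g}H\ra_{L^2}+\la \tilde{\mathrm{L}}(H,H),h\ra_{L^2}.
\end{equation*}
Every term is then rewritten in terms of $A=H\circ J$ and $T=\di A$ by means of \eqref{d+1}. That formula gives $\di_{\nabla^g}H=-\nabla^g_{J\cdot}A+T(\cdot,\cdot,J\cdot)$, and on this expression the $(1,1)$ versus $\lambda^2$ type splitting is transparent. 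The $\lambda^2_{+}$-part $\di^{-}_{\nabla^g}H$ should correspond, via the isomorphism $T\mapsto T^{-}$, to $T$ composed with $J$. The $\Lambda^{1,1}$-part $\di^{+}_{\nabla^g}H$ should contain $T^{+}$ together with the $\nabla A$ terms.

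The first term is handled by type. Since $h$ is $J$-anti-invariant, $\di_{\nabla^g}h$ splits as $\partial^g h+\bdel h$ with components in $\Lambda^{1,1}(M,TM)$ and $\lambda^2_{-}(M,TM)$. Expanding $2[H,H]^{\FN}=-2H\sharp\di_{\nabla^g}H+2\di_{\nabla^g}H^2$, the $\di_{\nabla^g}H^2$ part pairs to $\la \delta^g\di_{\nabla^g}H^2,h\ra_{L^2}$. Its $\Sym^{2,-}$ component is read off from Proposition \ref{wz_K}(i) applied to $H^2$, which yields $-\delta^{\star_g,-}\delta^gH^2$ plus a $\di^{-}$ term that is invisible against the symmetric $h$. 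For the $H\sharp\di_{\nabla^g}H$ part, note that $H$ commutes with $J$, so $H\sharp$ preserves the type splitting. Only the pairing of $H\sharp\di^{+}_{\nabla^g}H$ with $\partial^g h$ survives; the $\lambda^2_{+}$ piece is orthogonal to $\di_{\nabla^g}h$. I then integrate by parts to move the derivative onto $H$, using $\delta^g$ of $H\circ(\cdot)$ and Proposition \ref{o3-p} with $(h_1,h_2,h_3)=(h,H,H)$ to control the cubic terms. In that identity, $\la \nabla^g_{he_i}H,\nabla^g_{e_i}H\ra_{L^2}$ vanishes after symmetrising in $J$ because $h$ anticommutes with $J$, and the $\mathbf{q}$ and $\mathrm{L}$ terms are evaluated by Lemmas \ref{q} and \ref{L-exp}. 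Since $\mathbf{q}$ brings in $\delta^g(h\circ H)$ and $\delta^gH$, this is where $\{H,\delta^{\star_g,-}\delta^gH\}$ will come from. The $\tilde{\mathrm{L}}$ term is expanded with \eqref{wz1}. Its curvature and Laplacian pieces should cancel against those from $\mathrm{L}$ and $\mathbf{q}$, because $\Delta_E$ and $\ring{R}$ preserve type and $h$ pairs trivially with $\Sym^{2,+}$ tensors such as $\{(\Delta_E+2\ring R)H,H\}$ and $(\Delta_E+2\ring R)H^2$.

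The genuinely new contribution is the quadratic term $\la h\sharp\di_{\nabla^g}H,\di_{\nabla^g}H\ra_{L^2}$, together with the leftover piece of $\la H\sharp\di^{+}_{\nabla^g}H,\partial^g h\ra_{L^2}$. Since $h$ is anti-invariant, $h\sharp$ exchanges $\Lambda^{1,1}(M,TM)$ and $\lambda^2(M,TM)$. The plan is to insert \eqref{d+1} here. The pure $\nabla A$ contributions should integrate by parts into divergence terms, giving $[H,\di^{-}\delta^gH]$ and further $\delta^{\star_g,-}\delta^g$ pieces, with the skew part producing the $\di^{-}$ bracket. What remains is purely algebraic in $T$: a cross term between $T^{+}$ and $T^{-}$ under $h\sharp$, which should reduce to $g(T,[h,T])$ via the $\mathfrak{gl}$-action on $3$-forms. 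Checking the constant $-\tfrac{2}{3}$ requires the normalisation $g(T_1,T_2)=\tfrac12 g(T_1(e_i,e_j),T_2(e_i,e_j))$ and the counting $[h,T](X,Y,Z)=T(hX,Y,Z)+T(X,hY,Z)+T(X,Y,hZ)$. The $\tfrac{2}{3}$ arises because only two of the three slots are acted on by $\sharp$, and cyclic symmetry redistributes this as $\tfrac23$ of the full action.

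I expect this last step to be the main obstacle. It requires isolating exactly the $T$–$T$ pairing and showing that all mixed $\nabla A$–$T$ terms are divergences. The tool for this is the Bianchi identity $\di T=0$ combined with the type condition $\mathfrak{J}T=JT$. I would first check the whole identity pointwise at a point where $\nabla^gA$ contributes only through $T$, that is in a parallel-frame algebraic model, and then add the divergence corrections.
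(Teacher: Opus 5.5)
Your starting point (\eqref{bfv-1} combined with \eqref{d+1}) and your count giving $\tfrac23 g(T,[h,T])$ from the $T$--$T$ cross terms are right. However, the plan contains a false step and misses the mechanism that makes the computation close.

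\textbf{The false step.} The term $\la \nabla^g_{he_i}H,\nabla^g_{e_i}H\ra_{L^2}=\la h,\mathbf{j}(H,H)\ra_{L^2}$ does not vanish. Replacing $e_i$ by $Je_i$ produces no sign, because $X\mapsto \nabla^g_XH$ is not complex linear. Concretely, for $H=f\,\id$ the integrand is a positive multiple of $g(h\,\grad f,\grad f)$, whose integral is nonzero for suitable $f$. Type orthogonality only kills terms such as $\la \nabla^g_{He_i}h,\nabla^g_{e_i}h^2\ra_{L^2}$ in Proposition \ref{Q12}, where one factor is $J$-invariant and the other $J$-anti-invariant. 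There is also a mismatch in that paragraph. Proposition \ref{o3-p} with $(h_1,h_2,h_3)=(h,H,H)$ evaluates $\la h\sharp \di_{\nabla^g}H,\di_{\nabla^g}H\ra_{L^2}$, not the $H\sharp\di_{\nabla^g}H$ part of $[H,H]^{\FN}$ you were treating there. So that paragraph never actually computes $\la H\sharp \di^{+}_{\nabla^g}H,\partial^gh\ra_{L^2}$.

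\textbf{The missing idea} concerns the terms containing $\nabla^g h$. Insert \eqref{d+1} into $\la h\sharp\di_{\nabla^g}H,\di_{\nabla^g}H\ra_{L^2}$ and look at each part.
\begin{itemize}
\item The $\nabla^gA$--$\nabla^gA$ part vanishes pointwise, since it pairs the $J$-anti-invariant $\{\nabla^g_{JZ}A,h\}$ with the $J$-invariant $\nabla^g_{JZ}A$. It is therefore not the source of $[H,\di^{-}\delta^gH]$, contrary to your expectation.
\item The mixed part is $-g(\{\nabla^g_{e_k}A,h\}e_i,e_j)T(e_i,e_j,e_k)$. By the product rule it splits into two pieces. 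The first, $-\nabla^g_{e_k}\{A,h\}(e_i,e_j)T(e_i,e_j,e_k)$, gives $\la h,[H,\di^{-}\delta^gH]\ra_{L^2}$ after integration by parts, $\di^{\star}T=\Delta^gA-\di\di^{\star}A$ and type considerations. The second, $\{A,\nabla^g_{e_k}h\}(e_i,e_j)T(e_i,e_j,e_k)$, is not a harmless divergence.
\end{itemize}
The paper identifies that second piece as $2\la h,\delta^g[H,H]^{\FN}\ra_{L^2}$. It uses the formula $g([H,H]^{\FN}(X,Y),Z)=-\alpha(JX,JY,Z)+T(X,Y,AZ)$, with $\alpha$ a $3$-form, from the proof of Lemma 5.4 in \cite{NS-E}. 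This piece then cancels the first term $2\la [H,H]^{\FN},\di_{\nabla^g}h\ra_{L^2}$ of \eqref{bfv-1} exactly, so that term never has to be computed. The remaining terms $\{H,\delta^{\star_g,-}\delta^gH\}$ and $\delta^{\star_g,-}\delta^gH^2$ come solely from $\tilde{\mathrm{L}}(H,H)$ via \eqref{wz1}.

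Your plan instead evaluates the first term separately and hopes the leftover $\nabla^gh$-terms are divergences by $\di T=0$ and $\mathfrak{J}T=JT$. Neither fact relates $\{A,\nabla^g h\}\cdot T$ to $[H,H]^{\FN}$, or to your independent treatment of $\la H\sharp\di^{+}_{\nabla^g}H,\partial^gh\ra_{L^2}$. As written, the argument does not close. A pointwise check in a parallel frame cannot replace this step either, since the identity genuinely relies on integration by parts.
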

\begin{proof}
We begin by computing the quantity $g(h \sharp \di_{\nabla^g}H,  \di_{\nabla^g}H)$ using the expression for  $\di_{\nabla^g}H$ in 
\eqref{d+1} which entails 
\begin{equation*}
g\left ((h \sharp \di_{\nabla^g}H)(X,Y),Z \right )=-g(\{\nabla^g_{JZ}A,h\}X,Y)+T(hX,Y,JZ)+T(X,hY,JZ).
\end{equation*}
Observe now that the tensors $\{\nabla^g_{JZ}A,h\}$ respectively $\nabla^g_{JZ}A$ are $g$-orthogonal since the first is $J$-anti-invariant whilst the second is $J$-invariant. After using again \eqref{d+1} to expand $\di_{\nabla^g}H$ this allows computing the scalar product 
\begin{equation*}
\begin{split}
2 g(h \sharp \di_{\nabla^g}H,  \di_{\nabla^g}H)=&g\left ((h \sharp \di_{\nabla^g}H)(e_i,e_j),e_k \right ) g((\di_{\nabla^g}H)(e_i,e_j),e_k)\\
=&-g(\{\nabla^g_{Je_k}A,h\}e_i,e_j)T(e_i,e_j,Je_k)\\
&-g((\nabla^g_{Je_k}A)e_i,e_j) \left (T(he_i,e_j,Je_k)+T(e_i,he_j,Je_k) \right )\\
&+T(e_i,e_j,Je_k) \left (T(he_i,e_j,Je_k)+T(e_i,he_j,Je_k) \right ).
\end{split}
\end{equation*}
Replacing the frame $\{e_k\}$ with $\{Je_k \}$  and taking into account that $h$ is symmetric then yields 
\begin{equation} \label{T0-N}
\begin{split}
2 g(h \sharp \di_{\nabla^g}H,  \di_{\nabla^g}H)=&-2g(\{\nabla^g_{e_k}A,h\}e_i,e_j)T(e_i,e_j,e_k)\\
&+T(e_i,e_j,e_k) \left (T(he_i,e_j,e_k)+T(e_i,he_j,e_k) \right ).
\end{split}
\end{equation}
We now compute each summand separately, starting with the second; since $T$ is a $3$-form we have 
\begin{equation} \label{T1-N}
\begin{split}
&T(e_i,e_j,e_k) \left (T(he_i,e_j,e_k)+T(e_i,he_j,e_k) \right )=2T(e_i,e_j,e_k) T(he_i,e_j,e_k)\\
=&\tfrac{2}{3}T(e_i,e_j,e_k)[h,T](e_i,e_j,e_k)=\tfrac{4}{3}g(T,[h,T]).
\end{split}
\end{equation}
Further on, the product rule grants that 
\begin{equation*}
\begin{split}
-g(\{\nabla^g_{e_k}A,h\}e_i,e_j)T(e_i,e_j,e_k)=&
\{A,\nabla^g_{e_k}h\}(e_i,e_j)T(e_i,e_j,e_k)-\nabla_{e_k}\{A,h\}(e_i,e_j)T(e_i,e_j,e_k).
\end{split}
\end{equation*}
The crucial observation for continuing the proof is that the first sum above can be expressed using the braket $[H,H]^{\FN}$ as follows. We have, since $A$ is skew-symmetric, 
\begin{equation} \label{T2-N}
\begin{split}
\{A,\nabla^g_{e_k}h\}(e_i,e_j)T(e_i,e_j,e_k)=&\left (-(\nabla^g_{e_k}h)(e_i,Ae_j)+(\nabla^g_{e_k}h)(Ae_i,e_j) \right ) T(e_i,e_j,e_k)\\
=&(\nabla^g_{e_k}h)(e_i,e_j) \left (-T(Ae_i,e_j,e_k)+T(e_i,Ae_j,e_k) \right )\\
=&(\nabla^g_{e_k}h)(e_i,e_j) \left ( g(A \circ T(e_j,e_k),e_i)+ g(A \circ T(e_i,e_k),e_j) \right ).
\end{split}
\end{equation}
Now recall that (see proof of Lemma 5.4 in \cite{NS-E})
\begin{equation*}
g([H,H]^{\FN}(X,Y),Z)=-\alpha(JX,JY,Z)+T(X,Y,AZ)=-\alpha(JX,JY,Z)-g(A \circ T(X,Y),Z)
\end{equation*}
where $\alpha$ is a $3$-form on $M$. Using this in \eqref{T2-N} leads to 
\begin{equation*} \label{T3-N}
\begin{split}
&\{A,\nabla^g_{e_k}h\}(e_i,e_j)T(e_i,e_j,e_k)\\
=&-(\nabla^g_{e_k}h)(e_i,e_j) 
\left (\alpha(Je_j,Je_k,e_i)+g([H,H]^{\FN}(e_j,e_k),e_i) \right )\\
&-(\nabla^g_{e_k}h)(e_i,e_j) \left (\alpha(Je_i,Je_k,e_j)+g([H,H]^{\FN}(e_i,e_k),e_j) \right )\\
=& -(\nabla^g_{e_k}hJ)(e_i,e_j)J\alpha(e_j,e_k,e_i)-(\nabla^g_{e_k}hJ)(e_i,e_j)J\alpha(e_i,e_k,e_j)\\
+&(\nabla^g_{e_k}h)(e_i,e_j) \left (g([H,H]^{\FN}(e_k,e_j),e_i)+g([H,H]^{\FN}(e_k,e_i),e_j) \right )
\end{split}
\end{equation*}
after using that $h$ and $hJ$ are symmetric. Now the second displayed line above vanishes since $\alpha$ is $3$-form and $hJ$ is symmetric; after integration 
by parts we thus arrive at 
\begin{equation} \label{T4-N}
\begin{split}
&\{A,\nabla^g_{e_k}h\}(e_i,e_j)T(e_i,e_j,e_k)=2\la h, \delta^g[H,H]^{\FN}\ra_{L^2}.
\end{split}
\end{equation}
Plugging \eqref{T1-N} and \eqref{T4-N} into \eqref{T0-N} shows that 
\begin{equation*}
\begin{split}
\la h \sharp \di_{\nabla^g}H,  \di_{\nabla^g}H \ra_{L^2}=&2\la h, \delta^g[H,H]^{\FN}\ra_{L^2}+\tfrac{2}{3}\la T,[h,T] \ra_{L^2}
-\nabla^g_{e_k}\{A,h\}(e_i,e_j)T(e_i,e_j,e_k).
\end{split}
\end{equation*}
Integration by parts shows that 
\begin{equation*}
\begin{split}
-\nabla^g_{e_k}\{A,h\}(e_i,e_j)T(e_i,e_j,e_k)=-\{A,h\}(e_i,e_j)\di^{\star}T(e_i,e_j)=\la h, \{A,\di^{\star}T\}\ra_{L^2}.
\end{split}
\end{equation*}
Since the Hodge Laplacian preserves $\Omega^{1,1}M$ and $T=\di\!A$ type considerations show that $\la h, \{A,\di^{\star}T\}\ra_{L^2}=
\la h, \{A, \Delta^gA-\di\di^{\star}A\} \ra_{L^2}=-\la h, \{A, \di^{-} \di^{\star}A\} \ra_{L^2}$. Since $A=H \circ J$ and 
$\di^{\star}A =(J \delta^gH)^{\flat}$ we have 
$$-\{A, \di^{-} \di^{\star}A\}=[H,\di^{-}(\di^{\star}A )\circ J]=[H,\di^{-} \delta^g H ].
$$
Here we have used the general formula $\di^{-}(JX)^{\flat} \circ J=\di^{-} X^{\flat}$ whenever $X$ is a vector field on $M$.
Summarising we get 
\begin{equation} \label{T5-N}
\begin{split}
\la h \sharp \di_{\nabla^g}H,  \di_{\nabla^g}H \ra_{L^2}=&2\la h, \delta^g[H,H]^{\FN}\ra_{L^2}+\tfrac{2}{3}\la T,[h,T] \ra_{L^2}
+\la h, [H,\di^{-} \delta^g H ]\ra_{L^2}.
\end{split}
\end{equation}
Using now \eqref{bfv-1} shows that 
\begin{equation} 
\begin{split}
\bfv(H,H,h)=-\tfrac{2}{3} \la T,[h,T] \ra_{L^2}+\la h, -[H, \di^{-} \delta^gH]+\widetilde{\mathrm{L}}(H,H)\ra_{L^2}.
\end{split}
\end{equation}
Now $\widetilde{\mathrm{L}}(H,H)=\{H,\delta^g\di_{\nabla^g}H\}-\delta^g\di_{\nabla^g}H^2$ according to \eqref{L-t}; using the Weitzenb\"ock formula \eqref{wz1} we obtain $\{H,\delta^g\di_{\nabla^g}H\}=\{H, -\delta^{\star_g}\delta^gH+(\Delta_E+E+\ring{R})H\}$.
Since $\Delta_E+E+\ring{R}$ preserves $\Gamma \left (\Sym^{2,+}TM \right )$ it follows that $\{H,\delta^g\di_{\nabla^g}H\}_{-}=
-\{H, \delta^{\star_g,-}\delta^gH\}$. A similar argument based again on \eqref{wz1} and the $J$-invariance of $H^2$ shows that the component of $-\delta^g\di_{\nabla^g}H^2$ on $\Gamma \left (\Sym^{2,-}TM \right )$ equals $\delta^{\star_g,-}\delta^gH^2$. Thus
$$ \la \widetilde{\mathrm{L}}(H,H),h\ra_{L^2}=-\la \{H, \delta^{\star_g,-}\delta^gH\},h \ra_{L^2}+\la \delta^{\star_g,-}\delta^gH^2,h \ra_{L^2}
$$
and the claim follows. 
\end{proof}
The main results in this section now reads 
\begin{pro} \label{v+2}
Consider a pair $(H,h)$ in $\Gamma \left (\Sym^{2,+}TM \right )\oplus \Gamma \left (\Sym^{2,-}TM \right )$ which additionally satisfies $\bdel h=0$ and $\delta^gh=0$. We have 
\begin{equation*}
\bfv(h,h^2,H)=-\tfrac{2}{3}\la T, [h, \di(h^2\circ J)]\ra_{L^2} -\la \tfrac{1}{2}\{h,\delta^{\star_g,-}\delta^gh^2\}+\delta^{\star_g,+}\delta^g h^3-\tfrac{1}{2}[h,\bdel \delta^gh^2],H\ra_{L^2}
\end{equation*}
where $A:=H \circ J \in \Omega^{1,1}M$ and $T=\di\!A \in \Omega^{(1,2)+(2,1)}M$.
\end{pro}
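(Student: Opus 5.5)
The plan is to obtain $\bfv(h,h^2,H)$ by polarising Proposition \ref{v+} in the $\Sym^{2,+}$-slot, in the same way Proposition \ref{vm-7bis} was derived from Proposition \ref{vm-7}. Put $H_s:=H+sh^2 \in \Gamma(\Sym^{2,+}TM)$ for $s\in\mathbb{R}$. By symmetry of $\bfv$,
\[
\bfv(H_s,H_s,h)=\bfv(H,H,h)+2s\,\bfv(H,h^2,h)+s^2\bfv(h^2,h^2,h),
\]
so $\bfv(h,h^2,H)=\tfrac12\tfrac{\di}{\di s}_{\vert s=0}\bfv(H_s,H_s,h)$. I would compute the right hand side term by term from the formula in Proposition \ref{v+}, applied with $A_s=H_s\circ J=A+s\,h^2\circ J$ and $T_s=T+s\,\di(h^2\circ J)$.

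\textbf{Cubic $T$-term.} The action $[h,\cdot]$ is symmetric on $3$-forms, so
\[
\tfrac{\di}{\di s}_{\vert s=0}\bigl(-\tfrac23 g(T_s,[h,T_s])\bigr)=-\tfrac43\,\la T,[h,\di(h^2\circ J)]\ra_{L^2}.
\]
After the factor $\tfrac12$ this gives exactly the first term of the claim.

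\textbf{Divergence terms.} I differentiate
\[
[H_s,\di^{-}\delta^gH_s]+\{H_s,\delta^{\star_g,-}\delta^gH_s\}-\delta^{\star_g,-}\delta^gH_s^2
\]
at $s=0$ and pair with $h$. This produces:
\begin{itemize}
\item[(a)] $[H,\di^{-}\delta^gh^2]+[h^2,\di^{-}\delta^gH]$;
\item[(b)] $\{H,\delta^{\star_g,-}\delta^gh^2\}+\{h^2,\delta^{\star_g,-}\delta^gH\}$;
\item[(c)] $-\delta^{\star_g,-}\delta^g\{H,h^2\}$.
\end{itemize}
Each of these is then moved onto $H$ by adjointness.
\begin{itemize}
\item In (b), $\la\{H,\delta^{\star_g,-}\delta^gh^2\},h\ra_{L^2}=\la\{h,\delta^{\star_g,-}\delta^gh^2\},H\ra_{L^2}$. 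For the other piece, $\la\{h^2,\delta^{\star_g,-}\delta^gH\},h\ra_{L^2}=2\la \delta^gH,\delta^g h^3\ra_{L^2}$, which becomes $2\la \delta^{\star_g,+}\delta^gh^3,H\ra_{L^2}$.
\item In (c), $\la\delta^g\{H,h^2\},\delta^gh\ra_{L^2}=0$ because $\delta^gh=0$.
\item In (a), use $\la [K,\beta],h\ra_{L^2}=-\la \beta,[K,h]\ra_{L^2}$ for $2$-forms $\beta$. The term $[h^2,\cdot]$ vanishes, since $h^2$ and $h$ commute. The remaining term $\la[H,\di^{-}\delta^gh^2],h\ra_{L^2}$ is rewritten as $-\la[h,\di^{-}\delta^gh^2],H\ra_{L^2}$, after adjusting signs and identifying $\di^{-}\delta^gh^2$ with the component denoted $\bdel\delta^gh^2$ in the statement.
\end{itemize}
Collecting these with the factor $\tfrac12$ should reproduce $-\la\tfrac12\{h,\delta^{\star_g,-}\delta^gh^2\}+\delta^{\star_g,+}\delta^gh^3-\tfrac12[h,\bdel\delta^gh^2],H\ra_{L^2}$.

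\textbf{Main obstacle.} The difficulty is bookkeeping of signs and factors in step (a), and making sure the mixed divergence terms collapse to the single term $\delta^{\star_g,+}\delta^gh^3$ with coefficient $1$. For this I would use $\delta^gh=0$ and the $J$-types (namely that $h^2$ and $h^3=h\circ h^2$ commute with $h$), together with the identity $\di^{-}(JX)^{\flat}\circ J=\di^{-}X^{\flat}$ from the proof of Proposition \ref{v+}. The $\bdel h=0$ hypothesis may be needed only to discard any residual terms in $\bdel h$ that appear if one instead computes via Proposition \ref{vm-7bis}. That gives a cross-check: for $H$ of the form $h\circ h^2$-type test tensors, I would compare against Proposition \ref{vm-7bis} with $H_{-}=h$, $H_{+}=H$.
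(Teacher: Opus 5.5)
Your proposal is correct and is essentially the paper's proof. The paper also polarises Proposition \ref{v+}, at $h^2+tH$ rather than your $H+sh^2$, which is equivalent. It handles the three pieces the same way: the symmetry of $[h,\cdot]$ on $3$-forms for the $T$-term, commuting of $h$ with $h^2$ and $h^3$ to move the bracket and anticommutator terms onto $H$, and $\delta^gh=0$ to discard the $\delta^{\star_g,-}\delta^gH_s^2$ contribution. As you suspected, only $\delta^gh=0$ is actually used, so the hypothesis $\bdel h=0$, the identity $\di^{-}(JX)^{\flat}\circ J=\di^{-}X^{\flat}$ and the cross-check against Proposition \ref{vm-7bis} can all be dropped.
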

\begin{proof}
Follows by polarisation from Proposition \ref{v+}; explicitly we take $H=h^2+tH$ in that proposition and identify the coeficient of $t$ of the resulting identity. Since the operator $[h, \cdot ] :\Omega^3M \to \Omega^3M$ is symmetric we essentially only need to deal with the last two summands in Proposition \ref{v+}. Indeed, the coefficient of $t$ in $\la \{h^2+tH,\delta^{\star_g,-}\delta^g(h^2+tH)\},h\ra_{L^2}$ equals 
\begin{equation*}
\la \{h^2,\delta^{\star_g,-}\delta^gH\}+\{H,\delta^{\star_g,-}\delta^gh^2\},h\ra_{L^2}=\la 2\delta^{\star_g,+}\delta^gh^3+\{h,\delta^{\star_g,-}\delta^gh^2\},H \ra_{L^2}.
\end{equation*}
Similarly, the coefficient of $t$ in $\la [h^2+tH, \di^{-} \delta^g(h^2+tH)],h \ra_{L^2}$ equals 
$$ \la [h^2,\di^{-} \delta^gH ]+[H,\di^{-} \delta^gh^2 ],h\ra_{L^2}=-\la [h,\di^{-} \delta^gh^2],H\ra_{L^2}
$$
since the operators $h$ and $h^2$ mutually commute. To conclude it is enough to to gather these facts and use the symmetry of 
$\bfv$ which entails that the coefficient of $t$ in $\bfv(h^2+tH,h^2+tH,h)$ equals $2\bfv(h,h^2,H)$.
\end{proof}
\subsection{The component of $\mathbf{w}$ on $\Gamma \left (\Sym^{2,+}TM \right ) $} \label{w+N}
The initial approach is similar to the one used for computing the component of $\mathbf{w}$ on 
$\Gamma 
\left (\Sym^{2,-}TM \right )$. However there are two major differences which are of purely algebraic nature and are contained in the two lemmas below. The first of those complements Lemma 
\ref{quad-K} as follows.
 \begin{lema} \label{quad-K+}
Let the pair $(h,H)$ belong to $\Gamma \left (\Sym^{2,-}TM \right )\oplus \Gamma \left (\Sym^{2,+}TM \right )$ and assume 
that $\overline{\partial} h=0$. Then 
$$ g(h^2\circ \di_{\nabla^g}h+h \circ \di_{\nabla^g}h^2,\di_{\nabla^g} H)=g(h \circ \di_{\nabla^g}h^2 ,J\di\!A)+g(\di_{\nabla^g} h^3,\di_{\nabla^g} H)$$
where $A:=H \circ J \in \Omega^{1,1}M$.
\end{lema}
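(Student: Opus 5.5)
The plan is to reuse the tensorial decomposition from the proof of Lemma \ref{quad-K}. For $h \in \Gamma(\Sym^{2,-}TM)$ we have
$$(h^2\circ \di_{\nabla^g}h+h \circ \di_{\nabla^g}h^2)(X,Y)=\gamma_XY-\gamma_YX+\di_{\nabla^g}h^3(X,Y), \qquad \gamma_X=[h^2,\nabla^g_Xh].$$
It therefore suffices to show
$$g(\gamma_{e_i}e_j-\gamma_{e_j}e_i,\di_{\nabla^g}H(e_i,e_j))=2g(h\circ \di_{\nabla^g}h^2,J\di A),$$
with the normalisation of the inner product on $\Omega^2(M,TM)$ fixed consistently. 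To evaluate the left side, expand $\di_{\nabla^g}H$ via \eqref{d+1}:
$$g(\di_{\nabla^g}H(X,Y),Z)=-(\nabla^g_{JZ}A)(X,Y)+T(X,Y,JZ), \qquad T=\di A.$$

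\emph{The $\nabla A$ term.} The map $(X,Y)\mapsto g(\gamma_XY-\gamma_YX,Z)$ pairs against $-(\nabla^g_{JZ}A)(X,Y)$. Here $\nabla^g_{JZ}A\in\Lambda^{1,1}M$, because $A$ is $J$-invariant and $J$ is parallel. I expect this pairing to vanish or to collapse into something expressible through $T$, by the following type count. Since $h$ anticommutes with $J$, the tensor $\nabla^g_Xh$ anticommutes with $J$, while $h^2$ commutes with $J$. Hence $\gamma_X=[h^2,\nabla^g_Xh]$ is skew-symmetric and $J$-anti-invariant, so $\gamma_X\in\lambda^2M$ pointwise for each fixed $X$. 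Split $\gamma=\gamma^{-}+\gamma^{+}$ as in the proof of Lemma \ref{quad-K}, and contract in the index $Z$. The component of the antisymmetrised expression $\gamma_XY-\gamma_YX$ of type $\Lambda^{1,1}$ in $(X,Y)$ should come only from $\gamma^{+}$. By the proof of Lemma \ref{quad-K}, $\gamma^{+}$ is encoded by a form $T'\in\Lambda^{(2,1)+(1,2)}M$. Its pairing with $\nabla A$ then reduces, after integration by parts and the symmetry of $A$-type contractions, to a pairing with $\di A$ plus a term involving $\di^{\star}A$; the latter should drop by type.

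\emph{The $T$ term.} The remaining contribution is $T(X,Y,JZ)$, which is $J\di A$ viewed in $\Omega^2(M,TM)$. Its $\lambda^2_{+}$-part is $(JT)^{-}$ and its $\Lambda^{1,1}$-part is $(JT)^{+}$. The key step is to identify the pairing of $\gamma_XY-\gamma_YX+\di_{\nabla^g}h^3$ with $J\di A$ as the pairing of $h\circ\di_{\nabla^g}h^2$ with $J\di A$. The hypothesis $\bdel h=0$ enters precisely here: $\di_{\nabla^g}h=\partial^gh$ lies in $\Omega^{1,1}(M,TM)$, so $h^2\circ\di_{\nabla^g}h$ is of type $(1,1)$. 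Concretely, I will show that $h^2\circ \di_{\nabla^g}h-\di_{\nabla^g}h^3$ is orthogonal to the image of $\Lambda^{(1,2)+(2,1)}M$ under $T\mapsto (JT)^{\pm}$, using Lemma \ref{quad-K} (the vanishing on the Cartan summand) together with $\mathrm{a}(\partial^gh)=0$.

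The main obstacle is the algebra of the $\nabla A$ term: showing that its contraction against the $\gamma$ part leaves no residual derivative of $A$ besides $\di A$. I would attack it first by rewriting $(\nabla_{JZ}A)(X,Y)=\di A(JZ,X,Y)-(\nabla_XA)(JZ,Y)-(\nabla_YA)(X,JZ)$. I would then check that the two last terms pair to zero with the $J$-anti-invariant skew tensor $\gamma$. This uses that for fixed $Y$ the endomorphism $\nabla_\cdot A(\cdot,Y)$ has the wrong $J$-type.
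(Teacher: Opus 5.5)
Your first step, the decomposition $h^2\circ\di_{\nabla^g}h+h\circ\di_{\nabla^g}h^2=(\gamma_XY-\gamma_YX)+\di_{\nabla^g}h^3$ together with the splitting $\gamma=\gamma^-+\gamma^+$, is exactly the paper's. There is a genuine gap after that: the part that produces the correction term is missing, and the mechanisms you propose for it fail.

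\textbf{The rewriting of the $\nabla A$ term is circular.} With correct signs,
$(\nabla^g_{JZ}A)(X,Y)=\di A(JZ,X,Y)+(\nabla^g_XA)(JZ,Y)+(\nabla^g_YA)(X,JZ)$.
Since $A(U,V)=g(HJU,V)$, one checks that
$(\nabla^g_XA)(JZ,Y)+(\nabla^g_YA)(X,JZ)=-g((\di_{\nabla^g}H)(X,Y),Z)$,
while $\di A(JZ,X,Y)=\di A(X,Y,JZ)$. So this is just \eqref{d+1} read backwards. The $\di A$ piece cancels your ``$T$ term'' identically, and the two leftover terms are $-\di_{\nabla^g}H$ itself. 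Showing they pair to zero with $\gamma_XY-\gamma_YX$ would mean the whole pairing vanishes, i.e.\ $g(h\circ\di_{\nabla^g}h^2,J\di A)=0$, which is not true in general.

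Your $J$-type justification does not hold either. For fixed $Y$, the form $(X,Z)\mapsto(\nabla^g_XA)(JZ,Y)$ has no definite type, because the differentiation direction $X$ is unrelated to the slot $Z$.

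\textbf{The orthogonality you want in the ``$T$ term'' also fails.} Pairing $h^2\circ\di_{\nabla^g}h-\di_{\nabla^g}h^3$ with $(JT)^{+}$ only sees the total antisymmetrisation of its $\Lambda^{1,1}(M,TM)$-component $h^2\circ\di_{\nabla^g}h-\partial^gh^3$. That antisymmetrisation is $\mathrm{a}(\gamma)$, a generally nonzero form of type $(2,1)+(1,2)$. This nonvanishing is precisely the source of the correction term. Also, $(X,Y,Z)\mapsto T(X,Y,JZ)$ is not $J\di A$; only its total antisymmetrisation is.

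Integration by parts has no place here. The identity is pointwise, and moving derivatives onto $\gamma$ only produces second derivatives of $h$.

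\textbf{The missing idea} is to identify the $3$-form $T'$ encoding $\gamma^+$, and then use that only total antisymmetrisations survive.
\begin{itemize}
\item Since $h$ and $h^2$ are symmetric and commute, $\gamma_X=[h^2,\nabla^g_Xh]=[h,\nabla^g_Xh^2]$. Hence $\mathrm{a}(\gamma)=\mathrm{a}(h^2\circ\di_{\nabla^g}h)=\mathrm{a}(h\circ\di_{\nabla^g}h^2)$.
\item This is where $\bdel h=0$ enters. It gives $\di_{\nabla^g}h\in\Omega^{1,1}(M,TM)$, which forces $\mathrm{a}(\gamma)\in\Omega^{(1,2)+(2,1)}M$.
\item Since $\mathrm{a}(\gamma^-)\in\lambda^3M$ while $\mathrm{a}(\gamma^+)=2T'$, you get $\mathrm{a}(\gamma^-)=0$ and $2T'=\mathrm{a}(h\circ\di_{\nabla^g}h^2)$.
\item Then $\gamma^+_XY-\gamma^+_YX=2T'^{+}(X,Y)$. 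The $\gamma^-$-part lies in $\lambda^2_-(M,TM)$ and is pointwise orthogonal to $\di_{\nabla^g}H$.
\item Because $T'$ is totally skew,
$2g(T'^{+},\di_{\nabla^g}H)=2g(T',\di^+_{\nabla^g}H)=\tfrac23 g(T',\mathrm{a}(\di^+_{\nabla^g}H))=g(h\circ\di_{\nabla^g}h^2,\mathrm{a}(\di^+_{\nabla^g}H))$.
\item The last step is $\mathrm{a}(\di^+_{\nabla^g}H)=J\di A$, read off from \eqref{d+1}. The $\nabla A$ term does not vanish: it contributes $-J\di A$ to the antisymmetrisation. The two $\di A$ terms contribute $2J\di A$, using $\mathfrak{J}=J$ on forms of type $(2,1)+(1,2)$.
\end{itemize}
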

\begin{proof}
As in the proof of Lemma \ref{quad-K} we have 
$$ (h^2\di_{\nabla^g}h+h \di_{\nabla^g}h^2)(X,Y)=\gamma_XY-\gamma_YX+\di_{\nabla^g}\!h^3(X,Y)
$$
where the tensor $\gamma:TM \to \lambda^2_J$ is given by $\gamma_X=[h^2,\nabla^g_Xh]$. Now split $\gamma=\gamma^{-}+\gamma^{+}$ where 
$\gamma^{-}_{JX}JY=-\gamma^{-}_XY$ and $\gamma^{+}_{JX}JY=\gamma^{+}_XY$; thus
$g(\gamma^{+}_X \cdot, \cdot)=T(X, \cdot ,\cdot)-T(X, J\cdot ,J\cdot)$ for some $T \in \Lambda^{(2,1)+(1,2)}M$. Since both $h$ and 
$h^2$ are symmetric direct computation shows that $\mathrm{a}(\gamma)=\mathrm{a}(h^2 \circ \di_{\nabla^g}h)$. Since 
$\di_{\nabla^g}h$ belongs to $\Omega^{1,1}(M,TM)$ it follows that 
$$ \mathrm{a}(\gamma) \in \Omega^{(1,2)+(2,1)}M.
$$
A the same time, since $h$ and $h^2$ commute we have $\gamma_X=[h,\nabla^g_Xh^2]$ hence 
$$ \mathrm{a}(\gamma)=\mathrm{a}(h \circ \di_{\nabla^g}h^2).
$$
This allows determining the $3$-form $T$ as follows. Firstly we have $\mathrm{a}(\gamma^{+})=3T+\mathfrak{J}(JT)=2T$; secondly, 
$\mathrm{a}\gamma=\mathrm{a}(\gamma^{-})+\mathrm{a}(\gamma^{+})$. Because $\mathrm{a}(\gamma^{-})$ belongs to $\lambda^3M$ and 
$T$ lies in $\Omega^{(1,2)+(2,1)}M$ we infer that $\mathrm{a}(\gamma^{-})=0$. Hence 
$$ 2T=\mathrm{a}(h \circ \di_{\nabla^g}h^2).
$$
Next the tensor $(X,Y) \mapsto \gamma^{-}_XY-\gamma^{-}_YX$ belongs to $\lambda^2_{-}(M,TM)$, in particular its divergence is $J$-anti-invariant. Also record that 
$$ g(\gamma^{+}_XY-\gamma^{+}_YX,Z)=2g(T^{+}(X,Y),Z)
$$
by essentially taking into account that $T \in \Omega^{(1,2)+(2,1)}M$. Therefore 
$$g(h^2\circ \di_{\nabla^g}h+h \circ \di_{\nabla^g}h^2,\di_{\nabla^g} H)=2 g(T^{+},\di_{\nabla^g} H)+g(\di_{\nabla^g} h^3,\di_{\nabla^g} H).
$$
Finally, we indicate with $\Psi=\mathrm{a}(\di^{+}_{\nabla^g} H)$ and observe that 
$$2 g(T^{+},\di_{\nabla^g} H)=2 g(T,\di_{\nabla^g}^{+} H)=\tfrac{2}{3}g(T,\Psi)=
g(h \circ \di_{\nabla^g}h^2, \Psi)$$ after also taking into account that $2T=\mathrm{a}(h \circ \di_{\nabla^g}h^2)$.
Also recall that the scalar product $g(h \circ \di_{\nabla^g}h^2, \Psi)=\tfrac{1}{2}g((h \circ \di_{\nabla^g}h^2)(e_i,e_j),\Psi(e_i,e_j))$ since we consider the $3$-form $\Psi$ as an element of 
$\Omega^2(M,TM)$. Since $A$ is $J$-invariant equation \eqref{d+1} entails 
\begin{equation*} 
g((\di^{+}_{\nabla^g} H)(X,Y),Z)=-(\nabla^g_{JZ}A)(X,Y)+\tfrac{1}{2} \left ( \di\!A(X,Y,JZ)+\di\!A(JX,JY,JZ) \right ).
\end{equation*}
It follows that 
$$\mathrm{a}(\di^{+}_{\nabla^g} H)=\tfrac{1}{2}(\mathfrak{J}\di\!A+3J\di\!A)-J\di\!A=J\di\!A$$ 
since $\di\!A$ belongs to 
$\Omega^{(1,2)+(2,1)}M$ and the action $\mathfrak{J}=J$ on the latter space. The proof of the claim is now complete.
\end{proof}
The next algebraic observation which is needed is the following 
 \begin{lema} \label{quad-K+N}
Consider a pair $(h,H)$ in $\Gamma \left (\Sym^{2,-}TM \right ) \oplus \Gamma \left (\Sym^{2,+}TM \right )$. Then 
\begin{equation*}
g(h \sharp \di^{-}_{\nabla^g}H, \di_{\nabla^g}h^2)=g(J\di\!A, h \circ \di_{\nabla^g}h^2 )+\tfrac{1}{3}g([h,T],\di (h^2J))
\end{equation*}
where $A=H \circ J$ in $\Omega^{1,1}M$ and $T=\di\!A$ in $\Omega^{(2,1)+(2,1)}M$.
\end{lema}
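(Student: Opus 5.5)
This is a pointwise algebraic identity, so I would argue with the operator $h\sharp$ and the type decompositions, with no integration. The first move is adjointness: $h\sharp$ is $g$-symmetric on $\Lambda^2(M,TM)$. Hence
$$g(h \sharp \di^{-}_{\nabla^g}H, \di_{\nabla^g}h^2)=g(\di^{-}_{\nabla^g}H, h\sharp \di_{\nabla^g}h^2).$$
Since $h$ is $J$-anti-invariant, $h\sharp$ exchanges $\Lambda^{1,1}(M,TM)$ and $\lambda^2(M,TM)$. Because $\di^{-}_{\nabla^g}H$ lies in $\lambda^2_{+}(M,TM)$, only the $\lambda^2_{+}$-component of $h\sharp\di^{+}_{\nabla^g}h^2$ contributes. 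For that component I would use the splitting \eqref{ids-K5}, or rather its $\lambda^2_{+}$-analogue, which I would derive by the same tensorial computation as in the proof of Proposition \ref{ids-3}.

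The more robust route is to express $\di^{-}_{\nabla^g}H$ through $A=H\circ J$ and $T=\di A$ using \eqref{d+1}. Antisymmetrising the $J$-twist gives
$$2g(\di^{-}_{\nabla^g}H(X,Y),Z)=-(\nabla_{JZ}A)(X,Y)+(\nabla_{JZ}A)(JX,JY)+T(X,Y,JZ)-T(JX,JY,JZ).$$
The $\nabla A$ terms cancel because $A$ is of type $(1,1)$. This leaves the purely algebraic expression $g(\di^{-}_{\nabla^g}H(X,Y),Z)=T^{-}(X,Y,JZ)$, so $\di^{-}_{\nabla^g}H$ is the image of $JT$ (up to sign) under the isomorphism $T\mapsto T^{-}$ recalled at the start of this section.

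With this in hand the left-hand side becomes $g((J T)^{-},h\sharp\di_{\nabla^g}h^2)$, and I would compute $h\sharp$ acting on a $3$-form viewed in $\Omega^2(M,TM)$. For a $3$-form $S$ one has
$$(h\sharp S)(X,Y,Z)=[h,S](X,Y,Z)-S(X,Y,hZ),$$
so $h\sharp S$ splits into the form $[h,S]$ plus the term $-\,h\circ S$ acting on the last slot. Transferring this back against $\di_{\nabla^g}h^2$ gives two pieces. The first, $g(S,h\circ\di_{\nabla^g}h^2)$ with $S=J\di A$, is the term $g(J\di\!A,h\circ\di_{\nabla^g}h^2)$ in the claim. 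The second pairs $[h,JT]$ against $\di_{\nabla^g}h^2$, and only the totally antisymmetric part $\mathrm{a}(\di_{\nabla^g}h^2)$ survives. Using $\di_{\nabla^g}h^2$ and \eqref{d+1} for the $J$-invariant tensor $h^2$, this antisymmetric part is $J\di(h^2J)$ up to normalisation, exactly as in the final step of the proof of Lemma \ref{quad-K+}. The factor $\tfrac13$ comes from the convention $g(T_1,T_2)=\tfrac12 g(T_1(e_i,e_j),T_2(e_i,e_j))$ together with $\mathrm{a}$ producing three cyclic terms.

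The main obstacle is the bookkeeping of $J$'s and signs, namely converting $g([h,JT],J\di(h^2J))$ into $g([h,T],\di(h^2J))$. I would handle this by using that $h$ anticommutes with $J$, so $J[h,S]=\pm[h,JS]$ on forms of type $(1,2)+(2,1)$ where $\mathfrak{J}=J$, and then fix the overall sign and the factor $\tfrac13$ by testing on a flat model.
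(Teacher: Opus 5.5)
Your strategy (adjointness of $h\sharp$, the type reduction, $\di^{-}_{\nabla^g}H=-(JT)^{-}$ from \eqref{d+1}, and $h\sharp S=[h,S]-h\circ S$ on $3$-forms) can be made to work, but the second half, as written, does not go through.

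Since $\di^{-}_{\nabla^g}H=-(JT)^{-}$ is not a $3$-form, the identity $h\sharp S=[h,S]-h\circ S$ does not apply to it. Silently replacing $(JT)^{-}$ by $JT$ while pairing against the full $h\sharp\di_{\nabla^g}h^2$ changes the pairing by $g((JT)^{+},h\sharp\di^{-}_{\nabla^g}h^2)$, and this is precisely the $\tfrac{1}{3}$-term of the statement. Moreover, your claim that $\mathrm{a}(\di_{\nabla^g}h^2)$ is $J\di(h^2J)$ is false. Because $h^2$ is symmetric, the cyclic sum of $g((\nabla^g_Xh^2)Y-(\nabla^g_Yh^2)X,Z)$ cancels in pairs, so $\mathrm{a}(\di_{\nabla^g}h^2)=0$. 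What the last step of Lemma \ref{quad-K+} computes is $\mathrm{a}(\di^{+}_{\nabla^g}H)=J\di A$, not $\mathrm{a}(\di_{\nabla^g}H)$. Taken literally, your computation gives $g([h,JT],\di_{\nabla^g}h^2)=0$ and hence the wrong identity $g(h\sharp\di^{-}_{\nabla^g}H,\di_{\nabla^g}h^2)=g(J\di A,h\circ\di_{\nabla^g}h^2)$. Your final formula only looks right because these two slips compensate each other.

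The repair is the observation from your first paragraph, which you then dropped: pair only against $\di^{+}_{\nabla^g}h^2$. Since $h\sharp\di^{+}_{\nabla^g}h^2$ lies in $\lambda^2(M,TM)$, it is orthogonal to $(JT)^{+}\in\Lambda^{1,1}(M,TM)$. Hence
\[
g(h\sharp\di^{-}_{\nabla^g}H,\di_{\nabla^g}h^2)=g(\di^{-}_{\nabla^g}H,h\sharp\di^{+}_{\nabla^g}h^2)=-g((JT)^{-},h\sharp\di^{+}_{\nabla^g}h^2)=-g(JT,h\sharp\di^{+}_{\nabla^g}h^2).
\]
Applying $h\sharp JT=[h,JT]-h\circ JT$ then gives
\[
-g(JT,h\sharp\di^{+}_{\nabla^g}h^2)=g(JT,h\circ\di^{+}_{\nabla^g}h^2)-\tfrac{1}{3}g([h,JT],\mathrm{a}(\di^{+}_{\nabla^g}h^2)).
\]
The first term equals $g(J\di A,h\circ\di_{\nabla^g}h^2)$, because $h\circ\di^{-}_{\nabla^g}h^2\in\lambda^2_{-}(M,TM)$ is orthogonal to $JT\in\Lambda^{1,1}(M,TM)\oplus\lambda^2_{+}(M,TM)$. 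For the second term, use $\mathrm{a}(\di^{+}_{\nabla^g}h^2)=J\di(h^2J)$ together with $[h,JS]=-J[h,S]$. The latter holds for every $3$-form $S$ simply because $hJ=-Jh$, so the sign comes out as $+\tfrac{1}{3}g([h,T],\di(h^2J))$ and no flat-model test is needed.

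Once repaired, your argument is correct and genuinely different from the paper's. The paper computes pointwise $h\sharp\di^{-}_{\nabla^g}H=-J\circ[h,T]^{+}+h\circ JT+Jh\circ T^{-}$, using $\mathfrak{J}T=JT$ and $T^{-}\in\lambda^2_{+}(M,TM)$. It then discards $Jh\circ T^{-}$ by type and concludes with $\mathrm{a}(J\circ\di^{+}_{\nabla^g}h^2)=\di(h^2J)$. Moving $h\sharp$ onto the other factor instead lets you use the cleaner identity on a genuine $3$-form. The price is the lift from $(JT)^{-}$ to $JT$, which is legitimate only against $\di^{+}_{\nabla^g}h^2$.
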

\begin{proof}
We use \eqref{d+1} to see that 
\begin{equation*} 
2g((\di^{-}_{\nabla^g} H)(X,Y),Z)=T(X,Y,JZ)-T(JX,JY,JZ).
\end{equation*}
Since $h$ is $J$-anti-invariant, it follows that 
\begin{equation*} 
2g((h \sharp \di^{-}_{\nabla^g} H)(X,Y),Z)=2g([h,T]^{+}(X,Y),JZ)-T(X,Y,hJZ)+JT(X,Y,hZ).
\end{equation*}
In addition 
\begin{equation*}
\begin{split}
-T(X,Y,hJZ)=&T(X,Y,JhZ)=(\mathfrak{J}T)(X,Y,hZ)-g(T(JX,Y)+T(X,JY),hZ)\\
=&(\mathfrak{J}T)(X,Y,hZ)-2g(T^{-}(X,JY),hZ)\\
=&(JT)(X,Y,hZ)+2g(Jh \circ T^{-}(X,Y),Z).
\end{split}
\end{equation*}
Here we have used that $g(T^{-}(X,JY),Z)=-g(T^{-}(X,Y),JZ)$, since $T^{-}$ belongs to 
$\lambda^2_{+}(M,TM)$, and also that $\mathfrak{J}T=JT$. Summarising these considerations yields
$$
h \sharp \di^{-}_{\nabla^g} H=-J \circ [h,T]^{+}+h \circ JT+Jh \circ T^{-}.
$$
The claim follows by using type considerations; indeed 
$$ g(Jh \circ T^{-},\di_{\nabla^g}h^2)=g(Jh \circ T^{-},\di^{-}_{\nabla^g}h^2)=0
$$
since $Jh \circ T^{-}$ belongs to $\lambda^2_{-}(M,TM)$, whilst $\di^{-}_{\nabla^g}h^2$ lies in $\lambda^2_{+}(M,TM)$. Thus 
\begin{equation*}
g(h \sharp \di^{-}_{\nabla^g}H, \di_{\nabla^g}h^2)=g(h \circ JT,\di_{\nabla^g}h^2)-g(J \circ [h,T]^{+}, \di_{\nabla^g}h^2).
\end{equation*}
To harness the last summand we observe that 
$$ -g(J \circ [h,T]^{+}, \di_{\nabla^g}h^2)=g([h,T], J \circ \di^{+}_{\nabla^g}h^2).
$$
Taking into account that $[h,T]$ is a $3$-form we see that 
\begin{equation*}
\begin{split}
g([h,T], J \circ \di^{+}_{\nabla^g}h^2)=&\tfrac{1}{2}g([h,T](e_i,e_j),e_k)g(J \circ \di^{+}_{\nabla^g}h^2(e_i,e_j),e_k)\\
=&
\tfrac{1}{6}g([h,T](e_i,e_j),e_k)g(\mathrm{a}(J \circ \di^{+}_{\nabla^g}h^2)(e_i,e_j),e_k)\\
=&\tfrac{1}{3}g([h,T], \di(h^2J))
\end{split}
\end{equation*}
after also taking into that $\mathrm{a}(J \circ \di^{+}_{\nabla^g}h^2)=\di(h^2J)$; the latter fact follows from \eqref{d+1}, see also end of the proof of Lemma \ref{quad-K+} for a similar argument. The claim follows now by gathering terms.
\end{proof}
Moving forward to determining the operator $\mathbf{w}$ requires one last additional step; namely we need to render 
explicit the component of $\mathbf{p}(h^2,h)$ respectively $\delta^g[h^2,h]^{\FN}$ on the space $\Gamma \left (\Sym^{2,+}TM \right )$. These considerations 
are summarised in the lemma below for the proof of which only straightforward type considerations are needed. 
\begin{lema} \label{sum-N}
Assume that $h \in \Gamma \left (\Sym^{2,-}TM \right )$ satisfies $\bdel h=0$ and $\delta^g h=0$. Whenever $H$ belongs to $\Gamma \left (\Sym^{2,+}TM \right )$ the following hold
\begin{itemize}
\item[(i)] we have $\tfrac{1}{2}\la \mathbf{p}(h^2,h),H \ra_{L^2}=\la \delta^g(h \circ \di_{\nabla^g}h^2),H\ra_{L^2}+
\la \delta^gh^2, \delta^g(h \circ H)\ra_{L^2}$
\item[(ii)] the bracket $[h^2,h]^{\FN}$ satisfies 
\begin{equation*}
\begin{split}
2\la \delta^g[h^2,h]^{\FN},H \ra_{L^2}=&\bfv(h,h^2,H)+\la H \sharp \di_{\nabla^g}h,\di_{\nabla^g}h^2\ra_{L^2}\\
-&
\la \delta^{\star_g,+}\delta^g h^3-\tfrac{1}{2}\{h,\delta^{\star_g,-}\delta^gh^2\}, H\ra_{L^2}
\end{split}
\end{equation*}
\item[(iii)] we have $-\la H \sharp \di_{\nabla^g}h, \di_{\nabla^g}h^2 \ra_{L^2}=\la \di_{\nabla^g}H, h \circ \di_{\nabla^g}h^2+
h^2 \circ \di_{\nabla^g}h \ra_{L^2}+\la \delta^g(h \circ H), \delta^gh^2 \ra_{L^2}$
\item[(iv)] the operator $\mathbf{p}$ satisfies $\la h \circ \mathbf{p}(h,h),H\ra_{L^2}=\tfrac{1}{2}\la [\di^{-} \delta^gh^2,h],H \ra_{L^2}$.
\end{itemize}
\end{lema}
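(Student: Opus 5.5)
Each of the four items follows by earlier identities restricted to $J$-invariant test tensors $H$, together with the type facts: $h$ and $h\circ H$ are $J$-anti-invariant, $h^2$ is $J$-invariant, $\di_{\nabla^g}h\in\Omega^{1,1}(M,TM)$ since $\bdel h=0$, and $\Delta_Eh=0$, $\delta^gh=0$.

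For (i), first $\mathbf{j}(h^2,h)=0$ by type, so $\la\mathbf{p}(h^2,h),H\ra_{L^2}=2\la\mathbf{i}(h^2,h),H\ra_{L^2}$. Then apply Lemma \ref{i-1} with $h_1=h^2$, $h_2=h$, which gives
$$\la\mathbf{i}(h^2,h),H\ra_{L^2}=\la\delta^g(h\circ\di_{\nabla^g}h^2),H\ra_{L^2}+\mathbf{q}(h^2,h\circ H)-\la h,\mathrm{L}(H,h^2)\ra_{L^2}.$$
By Lemma \ref{q} we have $\mathbf{q}(h^2,h\circ H)=\la(\ring{R}-E)h^2,h\circ H\ra_{L^2}+\la\delta^gh^2,\delta^g(h\circ H)\ra_{L^2}$. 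The curvature term vanishes, because $(\ring{R}-E)h^2$ is $J$-invariant while the symmetric part of $h\circ H$ is $J$-anti-invariant. The term $\la h,\mathrm{L}(H,h^2)\ra_{L^2}$ vanishes as well, since $\mathrm{L}(H,h^2)$ is built from $\nabla H$ and $\nabla h^2$, both $J$-invariant, and is therefore $J$-invariant and orthogonal to $h$.

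For (ii), use \eqref{bfv-1} with $h_1=h$, $h_2=h^2$:
$$\bfv(h,h^2,H)=2\la[h,h^2]^{\FN},\di_{\nabla^g}H\ra_{L^2}-\la H\sharp\di_{\nabla^g}h,\di_{\nabla^g}h^2\ra_{L^2}+\la\tilde{\mathrm{L}}(h,h^2),H\ra_{L^2}.$$
Expand $\tilde{\mathrm{L}}$ via \eqref{L-t} and the Weitzenb\"ock formula \eqref{wz1}. Using $\delta^g\di_{\nabla^g}h=(E+\ring{R})h$, which is $J$-anti-invariant, the term $\{h^2,\delta^g\di_{\nabla^g}h\}$ is anti-invariant and drops out against $H$. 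The $J$-invariant part of $\{h,\delta^g\di_{\nabla^g}h^2\}$ is $-\{h,\delta^{\star_g,-}\delta^gh^2\}$, because $(\Delta_E+E+\ring{R})$ preserves type. Finally, the invariant symmetric part of $-\delta^g\di_{\nabla^g}h^3$ is $\delta^{\star_g,+}\delta^gh^3$, since $h^3$ is anti-invariant. This gives
$$2\la\tilde{\mathrm{L}}(h,h^2),H\ra_{L^2}=\la 2\delta^{\star_g,+}\delta^gh^3-\{h,\delta^{\star_g,-}\delta^gh^2\},H\ra_{L^2}.$$
The factor $\tfrac12$ in the statement should then come out of the polarisation convention in \eqref{bfv-1}, namely $2[h_1,h_2]^{\FN}$; this bookkeeping has to be checked carefully.

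For (iii), apply Proposition \ref{o3-p} with $(h_1,h_2,h_3)=(H,h,h^2)$. The term $\la\nabla^g_{He_i}h,\nabla^g_{e_i}h^2\ra_{L^2}$ vanishes by type. The three $\mathrm{L}$-terms also vanish by type: $\mathrm{L}(h,h^2)$ is anti-invariant and paired with $H$, $\mathrm{L}(H,h^2)$ is invariant and paired with $h$, and $\mathrm{L}(h,H)$ is anti-invariant and paired with $h^2$. This leaves the two $\mathbf{q}$-terms, which Lemma \ref{q} converts into curvature terms and divergence terms. Because $\delta^gh=0$, the divergence terms reduce to $\la\delta^g(h\circ H),\delta^gh^2\ra_{L^2}$ with the right sign. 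The curvature terms $\la(\ring{R}-E)(h\circ H),h^2\ra_{L^2}$ and $\la(\ring{R}-E)(h^2\circ H),h\ra_{L^2}$ vanish by type.

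For (iv), split $2\la h\circ\mathbf{p}(h,h),H\ra_{L^2}=\la\mathbf{p}(h,h),\{h,H\}\ra_{L^2}+\la\mathbf{p}(h,h),[h,H]\ra_{L^2}$, as in Lemma \ref{w-step}. Here $\{h,H\}$ is anti-invariant, so by Lemma \ref{quad-1L} and type its contribution reduces to invariant quantities paired with an anti-invariant tensor, and vanishes. The skew tensor $[h,H]$ lies in $\lambda^2M$. By Proposition \ref{p-skew} its contribution is $2\la\delta^g[h,h]^{\FN},[h,H]\ra_{L^2}$, and by \eqref{t-FNKS1} only the $\lambda^2$-component $-\di^-\delta^gh^2$ survives. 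Rearranging the commutator gives $\tfrac12\la[\di^-\delta^gh^2,h],H\ra_{L^2}$, up to a sign that must be checked.

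The main obstacle is (ii), specifically tracking the numerical factors and signs through \eqref{bfv-1}.
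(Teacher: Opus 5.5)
Parts (i) and (iii) are the paper's argument, and (iv) uses the same splitting. However, the constant in (iv) does not follow from the formula you cite. Since $\la\mathbf{p}(h,h),\{h,H\}\ra_{L^2}=0$, Proposition \ref{p-skew} gives $\la h\circ\mathbf{p}(h,h),H\ra_{L^2}=\la\delta^g[h,h]^{\FN},[h,H]\ra_{L^2}$. (That pairing vanishes because $\mathrm{L}(h,h)$ is $J$-invariant and $\{h,H\}\sharp$ maps $\Omega^{1,1}(M,TM)$ into $\lambda^2(M,TM)$, which is orthogonal to $\di_{\nabla^g}h$.) If you insert the $\lambda^2M$-component $-\di^-\delta^gh^2$ as printed in \eqref{t-FNKS1}, you get $-\la\di^-\delta^gh^2,[h,H]\ra_{L^2}=\la[\di^-\delta^gh^2,h],H\ra_{L^2}$. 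That is twice the claimed value, so your assertion of the $\tfrac12$ is not supported by the ingredients you list.

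The coefficient consistent with Proposition \ref{wz_K}(i) and with \eqref{wz1} (whose skew part is $-\tfrac12\di\circ\delta^g$) is $-\tfrac12$, and the paper obtains it as follows:
\begin{itemize}
\item because $\bdel h=0$, one has $[h,h]^{\FN}=[h,h]^c+\di^+_{\nabla^g}h^2$;
\item since $\delta^gh=0$, the divergence $\delta^g[h,h]^c$ is a $\TT$-tensor by Proposition \ref{del-TT}, hence symmetric;
\item by Proposition \ref{wz_K}(i), the skew part of $\delta^g\di^+_{\nabla^g}h^2$ is $-\tfrac12\di^-\delta^gh^2$.
\end{itemize}
This gives $\la h\circ\mathbf{p}(h,h),H\ra_{L^2}=-\tfrac12\la\di^-\delta^gh^2,[h,H]\ra_{L^2}=\tfrac12\la[\di^-\delta^gh^2,h],H\ra_{L^2}$. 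The sign is exactly as stated, so no sign check remains.

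Part (ii), which you flagged as the main obstacle, is not one. Your value $2\la\tilde{\mathrm{L}}(h,h^2),H\ra_{L^2}=\la 2\delta^{\star_g,+}\delta^gh^3-\{h,\delta^{\star_g,-}\delta^gh^2\},H\ra_{L^2}$ is correct. Halving it and substituting into \eqref{bfv-1}, with $[h,h^2]^{\FN}=[h^2,h]^{\FN}$, reproduces (ii) verbatim. The $\tfrac12$ in front of $\{h,\delta^{\star_g,-}\delta^gh^2\}$ comes from that halving, not from a polarisation convention.
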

\begin{proof}
(i) As observed at several places in the paper $\tfrac{1}{2}\mathbf{p}(h^2,h)=\mathbf{i}(h^2,h)$. By Lemma \ref{i-1} we have 
$\la \mathbf{i}(h^2,h),H \ra_{L^2}=\la \delta^g(h \circ \di_{\nabla^g}h^2),H\ra_{L^2}+\mathbf{q}(h^2,h \circ H)-
\la h, \mathrm{L}(H,h^2) \ra_{L^2}.$ Since $H$ and $h^2$ are $J$-invariant the definition of $\mathrm{L}$ ensures that 
$\mathrm{L}(H,h^2)$ belongs to $\Gamma \left (\Sym^{2,+}TM \right )$ thus $\la h, \mathrm{L}(H,h^2) \ra_{L^2}=0$. Furthermore the operator $\ring{R}-E$ preserves complex tensor type; since $h^2$ is $J$-invariant and $h \circ H$ is $J$-anti-invariant Lemma \ref{q} guarantees the equality
$\mathbf{q}(h^2,h \circ H)=\la \delta^gh^2, \delta^g(h \circ H)\ra_{L^2}$. The claim follows by collecting these facts.\\
(ii) After taking into account the comparison formula \eqref{bfv-1} it turns out that proving the claim
amounts to the computation of 
$\la \widetilde{\mathrm{L}}(h,h^2),H \ra_{L^2}$. According to \eqref{L-t} we have 
$$2\widetilde{\mathrm{L}}(h,h^2)=\{h,\delta^g \di_{\nabla^g}h^2\}+\{h^2,\delta^g \di_{\nabla^g}h\}-2\delta^g \di_{\nabla^g}h^3.$$
Use the Weitzenb\"ock formula \eqref{wz1}, that $\delta^gh=0$ and also that the operator $\Delta_E+\ring{R}+E$ preserves the spaces $\Gamma \left (\Sym^{2,\pm}TM \right )$; it follows that 
$$\la \widetilde{\mathrm{L}}(h,h^2),H \ra_{L^2}=\la \delta^{\star_g,+}\delta^g h^3-\tfrac{1}{2}\{h,\delta^{\star_g,-}\delta^gh^2\}, H\ra_{L^2}$$ and the claim is proved.\\
(iii) We use Proposition \ref{o3-p} with $h_1=H,h_2=h$ respectively $h_3=h^2$. Since the operator $\mathrm{L}$ satisfies 
$\mathrm{L}(\Gamma \left (\Sym^{2,+}TM \right ), \Gamma \left (\Sym^{2,\pm}TM \right )) \subseteq 
\Gamma \left (\Sym^{2,\pm}TM \right )$ and $\nabla^g h^2$ respectively 
$\nabla^g h$ live in orthogonal spaces it follows that 
$$\la H \sharp \di_{\nabla^g}h, \di_{\nabla^g}h^2 \ra_{L^2}=-\la \di_{\nabla^g}H, h \circ \di_{\nabla^g}h^2+
h^2 \circ \di_{\nabla^g}h \ra_{L^2}-\mathbf{q}(h \circ H,h^2)-\mathbf{q}(h^2 \circ H,h).
$$
In addition, type considerations based on Lemma \ref{q}, similar to those in the proof of (i), show 
that $\mathbf{q}(h \circ H,h^2)=\la \delta^g(h \circ H), \delta^gh^2 \ra_{L^2}$ and $\mathbf{q}(h^2 \circ H,h)=0$ and the claim follows.\\
(iv) The approach is similar to the one in Lemma \ref{w-step}; however the computation is much simpler due to the systematic use of type considerations. Because $h$ is symmetric we have that 
$g(h \circ \mathbf{p}(h,h),H)=g(\mathbf{p}(h,h),H_{-})+g(\mathbf{p}(h,h),A_{-})$ 
where the tensor $H_{-}:=\tfrac{1}{2}\{h,H\}$ belongs to $\Gamma \left (\Sym^{2,-}TM \right )$ and $A_{-}:=\tfrac{1}{2}[h,H] \in \Gamma \left (\lambda^2M \right )$. Because $\di_{\nabla^g}h$ belongs to 
$\Omega^{1,1}(M,TM)$ and $\mathrm{L}(h,h) \in \Gamma \left (\Sym^{2,+}TM \right )$, as entailed by the definition of $\mathrm{L}$, using Lemma \ref{quad-1L} shows that $g(\mathbf{p}(h,h),H_{-})=0$. By Proposition \ref{p-skew} we have $\la \mathbf{p}(h,h),A_{-} \ra_{L^2}=2\la 
\delta^g[h,h]^{\FN},A_{-}\ra_{L^2}$. Furthermore we expand the divergence 
$\delta^g[h,h]^{\FN}=\delta^g[h,h]^c+\delta^g\di^{+}_{\nabla^g}h^2$ and recall that $\delta^g[h,h]^{c}$ is a $\TT$-tensor (see Proposition \ref{del-TT}) whilst 
the component of $\delta^g\di^{+}_{\nabla^g}h^2$ on $\Gamma \left (\lambda^2M \right )$ is given by $-\tfrac{1}{2}\di^{-} \delta^gh^2$ by Proposition \ref{wz_K},(i). It follows that 
$$\la h \circ \mathbf{p}(h,h),H\ra_{L^2}=-\la \di^{-} \delta^gh^2 ,A_{-}\ra_{L^2}=-\tfrac{1}{2}\la \di^{-} \delta^gh^2,[h,H]\ra_{L^2}=
\tfrac{1}{2}\la [\di^{-} \delta^gh^2,h],H\ra_{L^2}$$ and the claim is fully proved. 
\end{proof}
We may now derive the explicit expression for the component of $\mathbf{w}$ on $\Gamma \left ( \Sym^{2,+}TM \right )$ as follows.
\begin{pro} \label{w+}
Assume that $h \in \Gamma \left (\Sym^{2,-}TM \right )$ satisfies $\bdel h=0$ and $\delta^g h=0$. We have 
\begin{equation*}
\begin{split}
\la \mathbf{w}(h), H\ra_{L^2}=&-\bfv(h^2,h,H)+\la \delta^{\star_g,+}\delta^gh^3-\tfrac{1}{2}\{h,\delta^{\star_g,-}\delta^gh^2\}, H\ra_{L^2}
\end{split}
\end{equation*}
whenever $H$ belongs to $\Gamma \left ( \Sym^{2,+}TM \right )$.
\end{pro}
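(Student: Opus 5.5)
The plan is to pair $\mathbf{w}(h)=2\delta^gQ(h)-\mathbf{p}(h^2,h)-h\circ\mathbf{p}(h,h)$ with $H\in\Gamma\left(\Sym^{2,+}TM\right)$ and treat the three pieces in turn. Lemma \ref{sum-N} and Proposition \ref{v+2} supply most of the ingredients. The only real work is to make the terms containing the $3$-form $T=\di\!A$, where $A=H\circ J$, cancel.

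\textbf{Step 1: rewrite $Q(h)$.} By Proposition \ref{id-main} we have $\di_{\nabla^g}h(h\cdot,h\cdot)=h\sharp\di_{\nabla^g}h^2-\di_{\nabla^g}h^3+h\circ[h,h]^{\FN}$. Inserting this into the definition \eqref{defn-Q}, and using \eqref{bra-nac2} in the form $2[h^2,h]^{\FN}=-(h\sharp\di_{\nabla^g}h^2+h^2\sharp\di_{\nabla^g}h)+2\di_{\nabla^g}h^3$, gives
\begin{equation*}
Q(h)=-2[h^2,h]^{\FN}+h\circ[h,h]^{\FN}.
\end{equation*}
Since $\bdel h=0$, \eqref{t-FNKS} gives $[h,h]^{\FN}=[h,h]^c+\di^{+}_{\nabla^g}h^2$. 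Therefore
\begin{equation*}
\la\delta^gQ(h),H\ra_{L^2}=-2\la\delta^g[h^2,h]^{\FN},H\ra_{L^2}+\la h\circ[h,h]^c,\di_{\nabla^g}H\ra_{L^2}+\la h\circ\di^{+}_{\nabla^g}h^2,\di_{\nabla^g}H\ra_{L^2}.
\end{equation*}

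\textbf{Step 2: type considerations.} The tensor $h\circ[h,h]^c$ lies in $\lambda^2_{+}(M,TM)$, so it pairs only with $\di^{-}_{\nabla^g}H$. The tensor $h\circ\di^{+}_{\nabla^g}h^2$ is obtained from $h\circ\di_{\nabla^g}h^2$ by discarding the piece $h\circ\di^{-}_{\nabla^g}h^2$. I would handle $\la h\circ[h,h]^c,\di^{-}_{\nabla^g}H\ra_{L^2}$ by rewriting $h\circ[h,h]^c=h\circ[h,h]^{\FN}-h\circ\di^{+}_{\nabla^g}h^2$, so that everything is expressed through $h\sharp\di^{-}_{\nabla^g}H$ paired with $\di_{\nabla^g}h^2$. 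That pairing is exactly the quantity computed in Lemma \ref{quad-K+N}.

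\textbf{Step 3: the remaining terms.}
\begin{itemize}
\item[$\bullet$] For $-2\la\delta^g[h^2,h]^{\FN},H\ra_{L^2}$, use Lemma \ref{sum-N}(ii). This produces $-\bfv(h,h^2,H)$, a $\sharp$-term, and the divergence terms $\delta^{\star_g,+}\delta^gh^3-\tfrac{1}{2}\{h,\delta^{\star_g,-}\delta^gh^2\}$.
\item[$\bullet$] Rewrite the $\sharp$-term with Lemma \ref{sum-N}(iii).
\item[$\bullet$] Rewrite $\la\di_{\nabla^g}H,h\circ\di_{\nabla^g}h^2+h^2\circ\di_{\nabla^g}h\ra_{L^2}$ with Lemma \ref{quad-K+}. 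This yields $\la h\circ\di_{\nabla^g}h^2,J\di\!A\ra_{L^2}$ plus $\la\di_{\nabla^g}h^3,\di_{\nabla^g}H\ra_{L^2}$.
\item[$\bullet$] Use \eqref{wz1} on the last term: its $J$-invariant component is $(\Delta_E+E+\ring{R})h^3-\delta^{\star_g,+}\delta^gh^3$.
\item[$\bullet$] Treat $\mathbf{p}(h^2,h)$ with Lemma \ref{sum-N}(i). Its term $\delta^g(h\circ\di_{\nabla^g}h^2)$ cancels against what comes out of $\la h\circ\di^{+}_{\nabla^g}h^2,\di_{\nabla^g}H\ra_{L^2}$, and its term $\la\delta^gh^2,\delta^g(h\circ H)\ra_{L^2}$ cancels the one from (iii).
\item[$\bullet$] Lemma \ref{sum-N}(iv) gives $-h\circ\mathbf{p}(h,h)$ as $-\tfrac{1}{2}[\di^{-}\delta^gh^2,h]$. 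This should cancel the $[h,\bdel\delta^gh^2]$ term that appears when $\bfv(h,h^2,H)$ is expanded by Proposition \ref{v+2}.
\end{itemize}
The $\Delta_Eh^3$-type contributions should cancel among themselves, because $h^3$ is $J$-anti-invariant.

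\textbf{Main obstacle.} I expect the hardest part to be the bookkeeping of the $3$-form contributions. These are:
\begin{itemize}
\item[$\bullet$] $\la h\circ\di_{\nabla^g}h^2,J\di\!A\ra_{L^2}$, which arises both from Lemma \ref{quad-K+} and from Lemma \ref{quad-K+N};
\item[$\bullet$] $\tfrac{1}{3}\la[h,T],\di(h^2J)\ra_{L^2}$, from Lemma \ref{quad-K+N};
\item[$\bullet$] $-\tfrac{2}{3}\la T,[h,\di(h^2J)]\ra_{L^2}$, from Proposition \ref{v+2}.
\end{itemize}
The $J\di\!A$ terms must cancel pairwise, with the factor $2$ in front of $\delta^gQ$ taken into account. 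The two $\tfrac{1}{3}$-type terms, after the symmetry of $[h,\cdot]$ on $\Lambda^3M$ and doubling, must combine exactly with $-\tfrac{2}{3}\la T,[h,\di(h^2J)]\ra_{L^2}$ so that only $-\bfv(h^2,h,H)$ remains. I would first track the coefficients schematically, each $3$-form term up to sign, to confirm this cancellation before verifying the signs in detail. The final statement then follows by collecting the surviving divergence terms.
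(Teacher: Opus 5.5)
Your proposal is correct and is essentially the paper's proof. It uses the same ingredients (Lemma \ref{sum-N}, Lemmas \ref{quad-K+} and \ref{quad-K+N}, \eqref{wz1} and Proposition \ref{v+2}) with the same cancellations, and the leftover $-\tfrac{2}{3}\la [h,T],\di(h^2\circ J)\ra_{L^2}$ is converted through Proposition \ref{v+2} into $\bfv(h,h^2,H)$ plus divergence terms, which turns $-2\bfv(h,h^2,H)$ into $-\bfv(h^2,h,H)$.

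The only organisational difference is in how the term $-\la \di_{\nabla^g}h^2, h\sharp \di^{-}_{\nabla^g}H\ra_{L^2}$ arises. The paper projects Proposition \ref{id-main} onto $\Omega^{1,1}(M,TM)$ and pairs it with $\di^{+}_{\nabla^g}H$, which produces this term at once. Your Step 2 recovers it from $\la h\circ[h,h]^c,\di^{-}_{\nabla^g}H\ra_{L^2}$ only after invoking Proposition \ref{id-main} a second time, together with $\di_{\nabla^g}h(h\cdot,h\cdot)\in\Omega^{1,1}(M,TM)$ and $\di_{\nabla^g}h^3\perp\lambda^2_{+}(M,TM)$; you should state that step explicitly.
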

\begin{proof}
We start from Proposition \ref{id-main}; since $\di_{\nabla^g}h(h \cdot, h \cdot)$ belongs to $\Omega^{1,1}(M,TM)$ and the bracket 
$[h,h]^{\FN}=[h,h]^c+\di^{+}_{\nabla^g}h^2$ the component of the latter on $\Omega^{1,1}(M,TM)$ reads 
\begin{equation*}
\di_{\nabla^g}h(h \cdot, h \cdot)=h \sharp \di^{-}_{\nabla^g}h^2-\partial^g h^3+h \circ \di^{+}_{\nabla^g}h^2.
\end{equation*}
Taking the scalar product with $\di_{\nabla^g}H=\di_{\nabla^g}^{+}H+\di_{\nabla^g}^{-}H$ shows that 
\begin{equation*}
\begin{split}
\la \di_{\nabla^g}h(h \cdot, h \cdot), \di_{\nabla^g}H \ra_{L^2}=&\la \di_{\nabla^g}h(h \cdot, h \cdot), \di^{+}_{\nabla^g}H \ra_{L^2}\\
=&\la h \sharp  \di_{\nabla^g}h^2, \di_{\nabla^g}H-\di_{\nabla^g}^{-}H \ra_{L^2}-
\la \partial^g h^3,\di_{\nabla^g}^{+}H \ra_{L^2} \\
+&\la h \circ \di_{\nabla^g}h^2,\di_{\nabla^g}H-\di_{\nabla^g}^{-}H \ra_{L^2}.
\end{split}
\end{equation*}
Now $\la h \circ \di_{\nabla^g}h^2,\di_{\nabla^g}^{-}H \ra_{L^2}=\la h \circ \di_{\nabla^g}^{-}h^2,\di_{\nabla^g}^{-}H \ra_{L^2}=0$
since $h \circ \di_{\nabla^g}^{-}h^2$ belongs to $\lambda^2_{-}(M,TM)$ whilst $\di_{\nabla^g}^{-}H$ lies in $\lambda^2_{+}(M,TM)$. Furthermore we take this fact into account after also recalling that $Q(h)=\di_{\nabla^g}h(h \cdot, h \cdot)+h^2 \sharp \di_{\nabla^g}h-\di_{\nabla^g}h^3$ according to the definition of $Q$ in Lemma \ref{der-Df}; since the Fr\"olicher-Nijenhuis bracket may be expressed (see also \eqref{bra-nac2}) according to $2[h^2,h]^{\FN}=-(h \sharp \di_{\nabla^g}h^2+h^2 \sharp \di_{\nabla^g}h)+2\di_{\nabla^g}h^3$ we end up with 
\begin{equation*}
\begin{split}
\la \delta^gQ(h),H \ra_{L^2}=&-2\la [h,h^2]^{\FN},\di_{\nabla^g}H \ra_{L^2}+\la \delta^g(h \circ \di_{\nabla^g}h^2),H \ra_{L^2}
-\la \di_{\nabla^g}h^2, h \sharp \di_{\nabla^g}^{-}H  \ra_{L^2}\\
+&\la \di_{\nabla^g}h^3, \di_{\nabla^g}H \ra_{L^2}-\la \partial^g h^3, \di_{\nabla^g}^{+}H \ra_{L^2}.
\end{split}
\end{equation*}
Furthermore $\la \di_{\nabla^g}h^3, \di_{\nabla^g}H \ra_{L^2}-\la \partial^g h^3, \di_{\nabla^g}^{+}H \ra_{L^2}=
\la \bdel h^3, \di_{\nabla^g}^{-}H \ra_{L^2}=0$ since $\bdel h^3 \in \lambda_{-}^2(M,TM)$ and $\di_{\nabla^g}^{-}H \in 
\lambda_{+}^2(M,TM)$. Using successively parts (i) and (ii) in Lemma \ref{sum-N} thus yields 
\begin{equation*}
\begin{split}
\la \delta^g Q(h)-\tfrac{1}{2}\mathbf{p}(h^2,h), H\ra_{L^2}=&-\bfv(h^2,h,H)\\
-& \la \di_{\nabla^g}h^2, h \sharp \di_{\nabla^g}^{-}H \ra_{L^2}-\la H \sharp \di_{\nabla^g}h, \di_{\nabla^g}h^2 \ra_{L^2}-
\la\delta^gh^2, \delta^g(h \circ H) \ra_{L^2}\\
+&\la \delta^{\star_g,+}\delta^g h^3-\tfrac{1}{2}\{h,\delta^{\star_g,-}\delta^gh^2\}, H\ra_{L^2}.
\end{split}
\end{equation*}
Taking into account the expression for $-\la H \sharp \di_{\nabla^g}h, \di_{\nabla^g}h^2 \ra_{L^2}$ obtained in part (iii) in Lemma \ref{sum-N} hence leads to 
\begin{equation} \label{w+1N}
\begin{split}
\la \delta^g Q(h)-\tfrac{1}{2}\mathbf{p}(h^2,h),H\ra_{L^2}=&-\bfv(h^2,h,H)\\
+& \la h \circ \di_{\nabla^g}h^2+h^2 \circ \di_{\nabla^g}h, \di_{\nabla^g}H\ra_{L^2}-\la \di_{\nabla^g}h^2, h \sharp \di_{\nabla^g}^{-}H \ra_{L^2}\\
+&\la \delta^{\star_g,+}\delta^g h^3-\tfrac{1}{2}\{h,\delta^{\star_g,-}\delta^gh^2\}, H\ra_{L^2}.
\end{split}
\end{equation}
Now we recall that by definition $\mathbf{w}(h)=2\delta^gQ(h)-\mathbf{p}(h^2,h)-h \circ \mathbf{p}(h,h)$ together with 
the identity $\la h \circ \mathbf{p}(h,h),H\ra_{L^2}=\tfrac{1}{2}\la [\di^{-} \delta^gh^2,h],H \ra_{L^2}$ granted by Lemma \ref{sum-N}, (iv). Furthermore by combining Lemmas \ref{quad-K+} and \ref{quad-K+N} we get 
\begin{equation*}
\begin{split}
&\la h \circ \di_{\nabla^g}h^2+h^2 \circ \di_{\nabla^g}h, \di_{\nabla^g}H\ra_{L^2}-\la \di_{\nabla^g}h^2, h \sharp \di_{\nabla^g}^{-}H \ra_{L^2}\\
=&\la\di_{\nabla^g}h^3, \di_{\nabla^g}H\ra_{L^2}-\tfrac{1}{3} \la [h,T], \di(h^2 \circ J) \ra_{L^2}
\end{split}
\end{equation*}
where $T:=\di\!A$ and $A:=h \circ J$ belongs to $\Omega^{1,1}M$. In addition, type considerations based on the Weitzenb\"ock formula \eqref{wz1} show that
$\la\di_{\nabla^g}h^3, \di_{\nabla^g}H\ra_{L^2}+\la \delta^{\star_g,+}\delta^g h^3,H \ra_{L^2}=0$. Plugging all these facts into \eqref{w+1N} yields 
\begin{equation*}
\begin{split}
\la \mathbf{w}(h), H\ra_{L^2}=&-2\bfv(h^2,h,H)
-\tfrac{2}{3} \la [h,T], \di(h^2 \circ J) \ra_{L^2}-\la \{h,\delta^{\star_g,-}\delta^gh^2\}, H\ra_{L^2}\\
&-\tfrac{1}{2}\la [\di^{-} \delta^gh^2,h],H \ra_{L^2}.
\end{split}
\end{equation*}
The claim follows now by expressing $-\tfrac{2}{3} \la [h,T], \di(h^2 \circ J) \ra_{L^2}$ with the aid of Proposition \ref{v+2}.
\end{proof}
\subsection{The Einstein equation on $\Gamma \left (\Sym^{2,+}TM \right )$} \label{E+3}
This is again based on Theorem \ref{thm-54}. Before proceeding with details we establish a few preparatory results. Let the vector field $\mathbf{f}$ on $M$ be determined from 
$$g(\mathbf{f},X)=g(h_1\delta^g(h_2-h_1^2),X)+g(\bdel (h_2-h_1^2), X \wedge h_1).$$
This vector field can be computed explicitly; based on the obstruction to integrability developed before we can show that 
$\mathbf{f}$ occurs as the divergence of an explicit polynomial tensor in $h_1$. This plays a crucial role in identifying the correct 
gauge normalisation, to third order, of the Einstein equation.
\begin{lema} \label{div-old}
Assume that $h \in \Gamma\left ( \Sym^{2,-}TM \right )$ satisfies $\bdel h=0$ and $\delta^gh=0$. Then 
$$\la \nabla^g_{JX}h,hJ \ra_{L^2}=-\la X, 2\delta^gh^2+\tfrac{1}{2}\di\!\tr h^2 \ra_{L^2}
$$
whenever $X \in \Gamma \left ( TM \right )$.
\end{lema}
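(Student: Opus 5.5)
The identity should hold pointwise, before integration. The plan is to rewrite the integrand with the equation $\bdel h=0$ and then recognise the resulting terms as divergences of $h^2$.

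\emph{Step 1: unpack $\bdel h=0$.} By definition of $\bdel$, the condition $\bdel h=0$ says $\di_{\nabla^g}h(X,Y)=\di_{\nabla^g}h(JX,JY)$. Replacing $(X,Y)$ by $(JX,JY)$, this reads
$$(\nabla^g_{JX}h)JY=(\nabla^g_Xh)Y-(\nabla^g_Yh)X+(\nabla^g_{JY}h)JX .$$

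\emph{Step 2: expand the integrand.} In a local orthonormal frame, $g(\nabla^g_{JX}h,hJ)=\sum_i g((\nabla^g_{JX}h)e_i,hJe_i)$. Since $\{Je_i\}$ is again orthonormal, I replace $e_i$ by $Je_i$ and get $-\sum_i g((\nabla^g_{JX}h)Je_i,he_i)$. Inserting the identity from Step 1 with $Y=e_i$ produces three sums.
\begin{itemize}
\item[(a)] The first is $-g((\nabla^g_Xh)e_i,he_i)=-\tfrac12 X(\tr h^2)=-\tfrac12 g(X,\di\tr h^2)$.
\item[(b)] The second is $g((\nabla^g_{e_i}h)X,he_i)=g(X,\xi)$, where $\xi:=(\nabla^g_{e_i}h)he_i$. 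As in the proof of Proposition \ref{N2-new+}, $\xi=h\delta^gh-\delta^gh^2$, so $\delta^gh=0$ gives $\xi=-\delta^gh^2$.
\item[(c)] The third is $-g((\nabla^g_{Je_i}h)JX,he_i)$.
\end{itemize}

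\emph{Step 3: reduce (c) to (b).} This is the only step needing care with signs. In (c) I change frame again, $e_i\mapsto Je_i$, so that $he_i$ becomes $hJe_i=-Jhe_i$. Then I use that $\nabla^g_Yh$ anticommutes with $J$, because $\nabla^gJ=0$ and $h\in\Sym^{2,-}TM$. This gives $J(\nabla^g_{e_i}h)JX=(\nabla^g_{e_i}h)X$, and with $J$ skew the term (c) becomes $g((\nabla^g_{e_i}h)X,he_i)=-g(X,\delta^gh^2)$. The sign is the point to double-check, since the two frame changes and the skewness of $J$ each contribute one.

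\emph{Step 4: conclude.} Adding (a), (b) and (c) gives the pointwise identity
$$g(\nabla^g_{JX}h,hJ)=-g\bigl(X,2\delta^gh^2+\tfrac12\di\tr h^2\bigr).$$
Integrating over $M$ yields the claim. No integration by parts is needed, and the hypotheses are used only as $\bdel h=0$ in Step 1 and $\delta^gh=0$ in (b).
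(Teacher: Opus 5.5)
Your proof is correct, and the signs in Step 3 work out as you predicted. After the frame change $e_i\mapsto Je_i$, term (c) becomes $g((\nabla^g_{e_i}h)JX,hJe_i)$. Using $hJ=-Jh$, $(\nabla^g_{e_i}h)J=-J(\nabla^g_{e_i}h)$ and the orthogonality of $J$, this equals $g((\nabla^g_{e_i}h)X,he_i)=g(X,\xi)$. So (b) and (c) together give $-2g(X,\delta^gh^2)$.

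Your route differs from the paper's. The paper does no computation here: it quotes a more general identity, \cite[Lemma 4.13]{N1}, valid for arbitrary $h\in\Gamma\left(\Sym^{2,-}TM\right)$, and specialises it to $\bdel h=0$, $\delta^gh=0$. You instead give a self-contained frame computation that yields the identity pointwise, with no integration by parts. The pointwise form is equivalent to the stated $L^2$ form, since both sides are $C^\infty(M)$-linear in $X$; it is also the form implicitly used when the paper substitutes $X\mapsto h_1X$ in Proposition \ref{f-vf}.

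Your argument also shows where the general version's extra terms come from. Without the hypotheses, $\xi=h\delta^gh-\delta^gh^2$ contributes terms in $h\delta^gh$, and Step 1 picks up the defect $-2\bdel h(X,e_i)$, which produces a term pairing $\bdel h$ with $X\wedge h$. The citation buys brevity and the general formula. Your computation buys independence from \cite{N1} and shows exactly where each hypothesis is used.
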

\begin{proof}
This is \cite[Lemma 4.13]{N1} applied to the special case when $\bdel h=0$ and $\delta^gh=0$.
\end{proof}
\begin{pro} \label{f-vf}
We have 
$\mathbf{f}=\delta^g(h_1^3-\tfrac{1}{4}\tr(h_1^2)h_1).$
\end{pro}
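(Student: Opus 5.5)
The proof substitutes the second order information into the definition of $\mathbf{f}$, which reduces the claim to a pointwise algebraic identity in $h_1$ alone. The normalisation \eqref{gauge-1} gives $\delta^g(h_2-h_1^2)=-\tfrac{1}{4}\di\!\tr(h_1^2)$. Combined with \eqref{2nd-O} and Theorem \ref{main1-bN}, it also gives $\bdel(h_2-h_1^2)=\bdel \mathbf{h}_2=-[h_1,h_1]^c$. Writing $h:=h_1$, we therefore have
$$g(\mathbf{f},X)=-\tfrac{1}{4}g(h\,\grad_g\tr(h^2),X)-g([h,h]^c,X\wedge h).$$
Since $\delta^gh=0$, the product rule gives $\delta^g(\varphi h)=-h\,\grad_g\varphi$ for any function $\varphi$. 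This is the identity already used in the proof of Proposition \ref{cayley-1}. Hence the first summand equals $g(\delta^g(-\tfrac{1}{4}\tr(h^2)h),X)$. It therefore remains to prove
$$-g([h,h]^c,X\wedge h)=g(\delta^gh^3,X)\quad \text{for all } X\in TM,$$
where $h\in\varepsilon^{-}(g)$.

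To evaluate the left side, I would first fix the convention for $X\wedge h\in\Omega^2(M,TM)$, namely $(X\wedge h)(Y,Z)=g(X,Y)hZ-g(X,Z)hY$. With the half-trace inner product on $\Omega^2(M,TM)$, this gives $g(\gamma,X\wedge h)=g(\gamma(X,e_i),he_i)$. Since $\bdel h=0$, the comparison formula \eqref{t-FNKS} gives $[h,h]^c=[h,h]^{\FN}-\di^{+}_{\nabla^g}h^2$. I would then contract each piece with $X\wedge h$. For $[h,h]^{\FN}$ I would use the expression \eqref{bra-na},
$$[h,h]^{\FN}(X,e_i)=-(\nabla^g_{hX}h)e_i+(\nabla^g_{he_i}h)X+h\,\di_{\nabla^g}h(X,e_i),$$
and pair it with $he_i$. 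This produces terms of the form $\tr(h\nabla^g_{hX}h)=\tfrac{1}{2}\di\!\tr(h^2)(hX)$, $g((\nabla^g_{he_i}h)X,he_i)$ and $g(h^2(\nabla^g_{X}h)e_i-h^2(\nabla^g_{e_i}h)X,e_i)$. After using symmetry of $h$ and $\delta^gh=0$, each of these is a contraction of $\nabla^g h$ against $h$ and $h^2$.

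The contraction of $\di^{+}_{\nabla^g}h^2$ with $X\wedge h$ is the main obstacle. Its $J$-twisted half, $\di_{\nabla^g}h^2(JX,Je_i)$ paired with $he_i$, produces terms of the form $g((\nabla^g_{JX}h^2)e_i,hJe_i)$. Such terms are not visibly divergences. I would handle them by expanding $\nabla^g h^2=(\nabla^g h)h+h\nabla^g h$ and using $hJ=-Jh$, which turns them into $\la \nabla^g_{JX}h,hJ\ra$-type expressions weighted by $h$. These I would then convert with Lemma \ref{div-old}, which expresses them through $\delta^gh^2$ and $\di\!\tr h^2$. I expect this to be the delicate bookkeeping step. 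Lemma \ref{div-old} is an integrated identity, so a careful pointwise analogue (the underlying tensorial computation from \cite[Lemma 4.13]{N1}) may be needed.

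Finally I would collect all terms and compare them with the expansion
$$\delta^gh^3=-(\nabla^g_{e_i}h)h^2e_i-h(\nabla^g_{e_i}h)he_i-h^2(\nabla^g_{e_i}h)e_i,$$
in which the last summand vanishes because $\delta^gh=0$. I expect the $\di\!\tr(h^2)$ contributions arising from the two brackets to cancel each other. If this fails, a leftover multiple of $h\,\grad_g\tr(h^2)$ would indicate that the coefficient $\tfrac{1}{4}$ in the statement absorbs part of it, and I would recheck that coefficient against the $\tr$-term from the first paragraph.
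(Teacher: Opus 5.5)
\textbf{The gap: a sign error makes your reduced identity false.} Since $\delta^g(\varphi h)=-h\,\grad_g\varphi$, the first summand $-\tfrac{1}{4}g(h\,\grad_g\tr(h^2),X)$ equals $g(\delta^g(\tfrac{1}{4}\tr(h^2)h),X)$, not $g(\delta^g(-\tfrac{1}{4}\tr(h^2)h),X)$. What actually remains to be shown is therefore
\[
-g([h,h]^c,X\wedge h)=g(\delta^gh^3+\tfrac{1}{2}h\,\grad_g\tr(h^2),X).
\]
Your target $-g([h,h]^c,X\wedge h)=g(\delta^gh^3,X)$ differs from this by $\tfrac12 g(h\,\grad_g\tr(h^2),X)$, and the difference is not harmless. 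In real dimension $2$ one has $[h,h]^c=0$, because $\lambda^2(M,TM)=0$ there, and $h^2=\tfrac12\tr(h^2)\id$. Your target would then say $\delta^gh^3=-\tfrac12 h\,\grad_g\tr(h^2)=0$. This fails for the real part of any nonzero holomorphic quadratic differential on a hyperbolic surface. For the same reason, your expectation that the $\di\tr(h^2)$ contributions cancel is wrong. The term $-\tfrac12 g(X,h\,\grad_g\tr(h^2))$ survives, and it is exactly what the correctly signed first summand needs. Your closing caveat points the right way, but as written the reduction asserts a false identity.

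\textbf{Your route otherwise works.} Once the sign is fixed, your argument closes, and it differs from the paper's. Use $[h,h]^c=[h,h]^{\FN}-\di^{+}_{\nabla^g}h^2$ from \eqref{t-FNKS}, valid since $\bdel h=0$.

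In the $[h,h]^{\FN}$-contraction, $\tr(h^2\nabla^g_Xh)$ vanishes by type. The $(\nabla^g_{he_i}h)X$ and $h\circ\di_{\nabla^g}h$ contributions are $\pm g(X,-\delta^gh^3+h\,\delta^gh^2)$ and cancel. Only $-\tfrac12 g(X,h\,\grad_g\tr(h^2))$ remains.

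The step you call the main obstacle is immediate. The terms $\tr(h\nabla^g_Xh^2)$ and $\tr(h(\nabla^g_{JX}h^2)J)$ vanish pointwise, because $\nabla^g h^2$ commutes with $J$ and $h$ anti-commutes with it. Each of the two remaining terms gives $g(X,\delta^gh^3)$; in the twisted one, change frame $e_i\mapsto Je_i$ and use $\delta^gh=0$. Hence $g(\di^{+}_{\nabla^g}h^2,X\wedge h)=g(\delta^gh^3,X)$ pointwise, with no appeal to Lemma \ref{div-old}. That lemma would be usable pointwise anyway, since both of its sides are $C^\infty(M)$-linear in $X$.

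Altogether
\[
\mathbf{f}=\delta^gh^3+\tfrac14 h\,\grad_g\tr(h^2)=\delta^g(h^3-\tfrac14\tr(h^2)h).
\]

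\textbf{Comparison with the paper.} The paper instead expands $2[h,h]^c=[h,h]^{\FN}-[hJ,hJ]^{\FN}$ directly and converts $\la\nabla^g_{JhX}h,hJ\ra_{L^2}$ using Lemma \ref{div-old}. There the $\delta^gh^2$-terms cancel, and two $\di\tr(h^2)$-terms add up to the same result. Your decomposition brings in the hypothesis $\bdel h=0$ through the comparison formula rather than through Lemma \ref{div-old}. This makes the computation purely pointwise and type-theoretic.
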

\begin{proof}
Since $\bdel(h_2-h_1^2)+[h_1,h_1]^c=0$ by Theorem \ref{main1-bN} and also $\delta^g(h_2-h_1^2)=-\tfrac{1}{4}\di\!\tr(h_1^2)$ by the gauge normalisation in \eqref{gauge-1} we obtain 
$$ -\la \mathbf{f},X\ra_{L^2}=\tfrac{1}{4}\la h_1\di\!\tr(h_1^2),X \ra_{L^2}+\la [h_1,h_1]^c,X \wedge h_1 
\ra_{L^2}
$$
whenever $X \in \Gamma \left (TM \right )$.
Thus we only need to determine the quantity $\la [h_1,h_1]^c,X \wedge h_1 
\ra_{L^2}$. In doing so we proceed directly from the explicit expression for the bracket $[h_1,h_1]^c$ as indicated below. We have 
\begin{equation*} 
\begin{split}
2[h_1,h_1]^c(X,Y)=&[h_1,h_1]^{\FN}(X,Y)-[h_1J,h_1J]^{\FN}(X,Y)\\
=&-(\nabla^g_{h_1X}h_1)Y+(\nabla^g_{h_1Y}h_1)X+(\nabla^g_{h_1JX}h_1)JY-(\nabla^g_{h_1JY}h_1)JX.
\end{split}
\end{equation*}
At the same time, expanding the exterior product $X \wedge h_1$ yields
\begin{equation*} 
\begin{split}
\la [h_1,h_1]^c,X \wedge h_1 
\ra_{L^2}=&\tfrac{1}{2} \la [h_1,h_1]^c(e_i,e_j), g(X,e_i)he_j- g(X,e_j)he_i \ra_{L^2}\\
=&\la [h_1,h_1]^c(X,e_j),he_j \ra_{L^2}.
\end{split}
\end{equation*}
It follows that 
\begin{equation*} 
\begin{split}
2\la [h_1,h_1]^c,X \wedge h_1 
\ra_{L^2}=&-\la \nabla^g_{h_1X}h_1,h_1 \ra_{L^2}+\la (\nabla^g_{h_1e_j}h_1)X,h_1e_j\ra_{L^2}+\la \nabla^g_{h_1JX}h_1J,h_1 \ra_{L^2}\\
&-\la (\nabla^g_{h_1Je_j}h_1)JX,h_1e_j\ra_{L^2}.
\end{split}
\end{equation*}
However $-\la (\nabla^g_{h_1Je_j}h_1)JX,h_1e_j\ra_{L^2}=\la (\nabla^g_{h_1e_j}h_1)X,h_1e_j\ra_{L^2}=\la X, (\nabla^g_{e_j}h_1)h_1^2e_j\ra_{L^2}$ 
after the basis change $e_j \mapsto Je_j$ and using that $h_1$ is symmetric. But $(\nabla^g_{e_j}h_1)h_1^2e_j=-\delta^gh_1^3+h_1\delta^gh_1^2$ after using Leibniz's rule thus 
\begin{equation*} 
\begin{split}
2\la [h_1,h_1]^c,X \wedge h_1 
\ra_{L^2}=&-\la \nabla^g_{h_1X}h_1,h_1 \ra_{L^2}+2\la -\delta^gh_1^3+h_1\delta^gh_1^2,X \ra_{L^2}+\la \nabla^g_{h_1JX}h_1J,h_1 \ra_{L^2}.
\end{split}
\end{equation*}
Furthermore the terms involving the covariant derivative $\nabla^g$ can de determined as follows. We have $\la \nabla^g_{h_1JX}h_1J,h_1 \ra_{L^2}=\la \nabla^g_{Jh_1X}h_1,h_1J \ra_{L^2}$ since $h_1$ anti-commutes with $J$; in addition  
$\la \nabla^g_{Jh_1X}h_1,h_1J \ra_{L^2}=-\la h_1X, 2\delta^gh_1^2+\tfrac{1}{2}\di\!\tr h_1^2 \ra_{L^2}$ by Lemma 
\ref{div-old}. Because 
the scalar product $\la \nabla^g_{h_1X}h_1,h_1 \ra_{L^2}=\tfrac{1}{2}\la h_1X, \di\!\tr(h_1^2) \ra_{L^2}$ gathering terms leads to 
$$\la [h_1,h_1]^c,X \wedge h_1 
\ra_{L^2}=-\la \tfrac{1}{2}h_1 \di\!\tr(h_1^2)+\delta^gh_1^3,X \ra_{L^2}.
$$
Thus $\mathbf{f}=\delta^gh_1^3+\tfrac{1}{4}h_1\di\!\tr(h_1^2)$; since $h_1$ is divergence free $\delta^g(\tr(h_1^2)h_1)=-h_1\di\!\tr(h_1^2)$ and the claim is proved.
\end{proof} 
 
Before proceeding to the proof of the main result we need to establish the following preliminary 
\begin{lema} \label{simp+}
Let $g_t \in \mathscr{M}_1$ be an Einstein deformation with Taylor series expansion given by $g^{-1}g_t=\id+t h_1+\tfrac{t^2}{2}h_2+o(t^3)$, normalised according to \eqref{gauge-1}. The tensors $h_1$ and $h_2$ satisfy 
\begin{itemize}
\item[(i)]
$
\tfrac{1}{4}\{h_1,(\widetilde{\Delta}_E+2E+\delta^{\star}\di\tr )h_2\}_{+}=\tfrac{1}{4}\{h_1,\widetilde{\Delta}^{-}_E(h_2-h_1^2)\}+\tfrac{E}{2}\{h_1,h_2-h_1^2\}-\tfrac{1}{2}\{h_1,\delta^{\star_g,-}\delta^gh_1^2\}
$
\item[(ii)] we have 
\begin{equation*}
\begin{split}
&\la \mathbf{v}(h_1,h_2-h_1^2)-
\tfrac{1}{4}\{h_1,(\widetilde{\Delta}_E+2E+\delta^{\star}\di\tr )h_2\},H \ra_{L^2}\\
=&\la \tfrac{1}{4}\widetilde{\Delta}_E\{h_1,h_2-h_1^2\}+\tfrac{1}{2}\{h_1,\delta^{\star_g,-}\delta^gh_1^2\}+\delta^{\star_{g},+}\mathbf{f},H \ra_{L^2}
\end{split}
\end{equation*}
for all $H$ be in $\Gamma(\Sym^{2,+}TM)$. 
\end{itemize}
\end{lema}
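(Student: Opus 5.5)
For (i) we exploit the complex type of $h_1$. Since $h_1$ is $J$-anti-invariant, for any symmetric $X$ the $J$-invariant part of $\{h_1,X\}$ is $\{h_1,X^{-}\}$. It therefore suffices to compute the component on $\Gamma \left (\Sym^{2,-}TM \right )$ of $(\widetilde{\Delta}_E+2E+\delta^{\star_g}\di\tr)h_2$.

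As in the proof of Lemma \ref{l-step}, the gauge condition \eqref{gauge-1} gives
$$(\widetilde{\Delta}_E+2E+\delta^{\star_g}\di\tr)h_2=(\Delta_E+2E)(h_2-h_1^2)+(\widetilde{\Delta}_E+2E+\tfrac{3}{2}\delta^{\star_g}\di\tr)h_1^2 .$$
We now take $J$-anti-invariant parts, using that $\Delta_E$ preserves complex type. This is where \eqref{alg-T1} enters: $h_2-h_1^2$ is $J$-anti-invariant, whereas $h_1^2$ is $J$-invariant.
\begin{itemize}
\item[$\bullet$] In the first summand we rewrite $\Delta_E(h_2-h_1^2)=\widetilde{\Delta}^{-}_E(h_2-h_1^2)+2\delta^{\star_g,-}\delta^g(h_2-h_1^2)$ and substitute $\delta^g(h_2-h_1^2)=-\tfrac{1}{4}\di\tr h_1^2$. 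This contributes $\widetilde{\Delta}^{-}_E(h_2-h_1^2)+2E(h_2-h_1^2)-\tfrac{1}{2}\delta^{\star_g,-}\di\tr h_1^2$.
\item[$\bullet$] In the second summand only $-\delta^{\star_g,-}(2\delta^g+\di\tr)h_1^2+\tfrac{3}{2}\delta^{\star_g,-}\di\tr h_1^2$ survives.
\end{itemize}
The $\di\tr h_1^2$ terms then cancel, since $-\tfrac{1}{2}-1+\tfrac{3}{2}=0$. What remains is
$$\widetilde{\Delta}^{-}_E(h_2-h_1^2)+2E(h_2-h_1^2)-2\delta^{\star_g,-}\delta^gh_1^2 .$$
Anticommuting with $h_1$ and multiplying by $\tfrac{1}{4}$ gives (i).

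For (ii) we apply Proposition \ref{vm-7bis} with $h=h_1$ (allowed, since $\bdel h_1=0$ and $\delta^gh_1=0$), $H_{-}=h_2-h_1^2$ and $H_{+}=H$.
\begin{itemize}
\item[$\bullet$] Because $\delta^gH_{-}=-\tfrac{1}{4}\di\tr h_1^2$ is exact, $\di^{-}\delta^gH_{-}=0$, so the bracket term $[h_1,\di^{-}\delta^gH_{-}]$ drops out.
\item[$\bullet$] The two remaining first-order terms are $\la \delta^{\star_g,+}(h_1\delta^gH_{-}),H\ra_{L^2}$ and $\la \bdel H_{-},\delta^gH\wedge h_1\ra_{L^2}$. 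Using that $\delta^{\star_g}$ is the formal adjoint of $\delta^g$ and that $H$ is $J$-invariant, their sum is $\la h_1\delta^gH_{-},\delta^gH\ra_{L^2}+\la \bdel H_{-},\delta^gH\wedge h_1\ra_{L^2}$.
\item[$\bullet$] By the definition of $\mathbf{f}$ with $X=\delta^gH$, this sum equals $\la \mathbf{f},\delta^gH\ra_{L^2}=\la \delta^{\star_g,+}\mathbf{f},H\ra_{L^2}$.
\end{itemize}
Hence
$$\bfv(h_1,H_{-},H)=\la \tfrac{1}{4}\{h_1,\widetilde{\Delta}^{-}_EH_{-}\}+\tfrac{1}{4}(\widetilde{\Delta}_E+2E)\{h_1,H_{-}\}+\delta^{\star_g,+}\mathbf{f},H\ra_{L^2}.$$
Subtracting (i) paired against $H$, the $\{h_1,\widetilde{\Delta}^{-}_EH_{-}\}$ terms cancel. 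The $2E$ term combines as $\tfrac{1}{4}\cdot 2E-\tfrac{E}{2}=0$, leaving $\tfrac{1}{4}\widetilde{\Delta}_E\{h_1,h_2-h_1^2\}$. The term $+\tfrac{1}{2}\{h_1,\delta^{\star_g,-}\delta^gh_1^2\}$ appears with the sign required.

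The only delicate point is the identification of the divergence terms with $\la \delta^{\star_g,+}\mathbf{f},H\ra_{L^2}$: one has to keep careful track of the sign and normalisation conventions for $X\wedge h_1$ in the definition of $\mathbf{f}$ and in Proposition \ref{vm-7bis}. The rest is bookkeeping of complex types.
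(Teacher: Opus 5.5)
Your proposal is correct and follows the paper's proof. Part (ii) is exactly the paper's argument: Proposition \ref{vm-7bis} with $h=h_1$, $H_{-}=h_2-h_1^2$, $H_{+}=H$, then the vanishing of $\di^{-}\delta^g(h_2-h_1^2)$, then the definition of $\mathbf{f}$ with $X=\delta^gH$. Part (i) is the same complex-type bookkeeping as in the paper. The only difference is that in (i) you go through the identity from Lemma \ref{l-step}, which uses the gauge condition, and then watch the $\di\tr h_1^2$ terms cancel. The paper instead splits $\widetilde{\Delta}_E(h_2-h_1^2)=\widetilde{\Delta}^{-}_E(h_2-h_1^2)-2\delta^{\star_g,+}\delta^g(h_2-h_1^2)$ directly, so it never needs the gauge condition on $\delta^g(h_2-h_1^2)$ for that part.
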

\begin{proof}
(i) Using that $h_2-h_1^2$ is trace free, that is $\tr(h_2)=\tr(h_1^2)$ we see that the components of $(\widetilde{\Delta}_E+\delta^{\star}\di\tr )h_2$ on $\Gamma \left (\Sym^{2,\pm}TM \right )$ may be 
computed from 
\begin{equation*} 
\begin{split}
(\widetilde{\Delta}_E+\delta^{\star}\di\tr )h_2=&\widetilde{\Delta}_E(h_2-h_1^2)+\widetilde{\Delta}_Eh_1^2+\delta^{\star_g}\di\!\tr(h_1^2)\\
=&\widetilde{\Delta}^{-}_E(h_2-h_1^2)-2\delta^{\star_g,+}\delta^g(h_2-h_1^2)\\
+&\Delta_Eh_1^2-2\delta^{\star,+}\delta^gh_1^2
-2\delta^{\star,-}\delta^gh_1^2.
\end{split}
\end{equation*}
Since $h_2-h_1^2$ is $J$-anti-invariant, $h_1^2$ is $J$-invariant and the operators $\widetilde{\Delta}^{-}_E$ and $ \Delta_E$ preserve 
tensor type the claim follows by straightforward type considerations .\\
(ii) We first compute $\mathbf{v}(h_1,h_2-h_1^2,H)$ with the aid of Proposition \ref{vm-7bis} which we apply with $h=h_1, H_{-}=h_2-h_1^2$ 
and $H_{+}=H$. Since $\delta^g(h_2-h_1^2)$ is exact by \eqref{gauge-1} we have $\di^{-}\! \delta^g(h_2-h_1^2)=0$ thus 
\begin{equation*} 
\begin{split}
\bfv(h_1,h_2-h_1^2,H)=&\tfrac{1}{4} \la \{h_1, \widetilde{\Delta}_E^{-}(h_2-h_1^2)\}+(\widetilde{\Delta}_E+2E)\{h_1,h_2-h_1^2\},H \ra_{L^2}\\
+&\la \delta^{\star_g,+}(h_1\delta^g(h_2-h_1^2),H \ra_{L^2}+
\la  \bdel (h_2-h_1^2), \delta^gH \wedge h_1 \ra_{L^2}.
\end{split}
\end{equation*}
The claim follows now from (i) and the definition of the vector field $\mathbf{f}$. 
\end{proof}
The main result of this section is the complete solution of the 3-rd order Einstein equation. The latter can be fully solved by suitable normalisation under the gauge group $\mathbf{G}$ and elementary Hodge theory. We record the algebraic expression 
\begin{equation} \label{tr-3}
\tfrac{1}{3}\mathbf{u}_3-\tfrac{1}{4}\{h_1,h_2-h_1^2\}=\tfrac{1}{3} \left (-h_3+\tfrac{3}{2}\{h_1,h_2-h_1^2\}-\tfrac{3}{2}h_1^3 \right )
\end{equation}
which essentially follows from \eqref{u3-d+}. Since $h_1^3$ is $J$-anti-invariant we have $\tr(h_1^3)=0$; thus the tensor above is trace-free by Lemma \ref{tr-u3}. This fact will play a crucial role in the normalisation procedure below.
\begin{teo} \label{vmmm2}
Let $g_t \in \mathscr{M}_1$ be an Einstein deformation with Taylor series expansion given by $g^{-1}g_t=\id+t h_1+\tfrac{t^2}{2}h_2+\tfrac{t^3}{6}h_3
+o(t^4)$, normalised according to \eqref{gauge-1}, that is such that 
$\delta^gh_1=0$ and $\delta^g(h_2-h_1^2)+\tfrac{1}{4}\di\!\tr(h_1^2)=0$. The following hold 
\begin{itemize}
\item[(i)] the component of the $3$-rd order Einstein equation on $\Gamma \left (\Sym^{2,+}TM \right )$ reads
\begin{equation*}
\Delta_E\left (h_3^{+}-\tfrac{3}{2}\{h_1,h_2-h_1^2\} \right )=2\delta^{\star_g,+}\mathbf{X}_3(g_t)
\end{equation*}
where the vector field 
$$\mathbf{X}_3(g_t)=\delta^g \left (h_3-\tfrac{3}{2}\{h_1,h_2-h_1^2\}-\tfrac{3}{2}h_1^3+\tfrac{3}{8}\tr(h_1^2)h_1 \right )+\tfrac{3}{8}\di\!\tr \{h_1,h_2-h_1^2\}$$
\item[(ii)] after normalising $g_t$ by a gauge transformation in $\mathbf{G}$ such that 
in addition to \eqref{gauge-1} we also have $\di\!\mathbf{X}_3(g_t)=0$, we must have 
$\mathbf{X}_3(g_t)=0$ that is 
\begin{equation} \label{norm-3}
\delta^g \left (h_3-\tfrac{3}{2}\{h_1,h_2-h_1^2\}-\tfrac{3}{2}h_1^3+\tfrac{3}{8}\tr(h_1^2)h_1 \right )=-\tfrac{3}{8}\di\!\tr \{h_1,h_2-h_1^2\}
\end{equation} 
and also 
\begin{equation*}
h_3^{+}=\tfrac{3}{2}\{h_1,h_2-h_1^2\}.
\end{equation*}
\end{itemize}
\end{teo}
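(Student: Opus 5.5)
Part (i) comes from projecting the third order equation \eqref{xxx3} of Theorem \ref{thm-54} onto $\Gamma(\Sym^{2,+}TM)$ and substituting the formulas of Section \ref{+part}. Part (ii) then follows by combining gauge freedom, a trace identity and the vanishing of $\ker\Delta_E$ on $J$-invariant tensors.

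For part (i) I pair \eqref{xxx3} with $H\in\Gamma(\Sym^{2,+}TM)$ and write $3\bfv(h_1,h_2)=3\bfv(h_1,h_2-h_1^2)+3\bfv(h_1,h_1^2)$. The combination $\mathbf{w}(h_1)+\bfv(h_1,h_1^2)$ is handled by Proposition \ref{w+}. Since $\bfv$ is symmetric, $\bfv(h_1^2,h_1,H)=\bfv(h_1,h_1^2,H)$ cancels, and this combination reduces to $\delta^{\star_g,+}\delta^gh_1^3-\tfrac12\{h_1,\delta^{\star_g,-}\delta^gh_1^2\}$. The remaining piece $\bfv(h_1,h_2-h_1^2)-\tfrac14\{h_1,(\widetilde{\Delta}_E+2E+\delta^{\star_g}\di\tr)h_2\}$ is given by Lemma \ref{simp+}(ii). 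The two $\{h_1,\delta^{\star_g,-}\delta^gh_1^2\}$ contributions cancel, and Proposition \ref{f-vf} replaces $\mathbf{f}$ by $\delta^g(h_1^3-\tfrac14\tr(h_1^2)h_1)$.

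After multiplying by $3$, the $+$-component of \eqref{xxx3} should read
\[
(\widetilde{\Delta}_E+\delta^{\star_g}\di\tr)\mathbf{u}_3\big|_{+}=\tfrac34\widetilde{\Delta}_E\{h_1,h_2-h_1^2\}\big|_{+}+3\delta^{\star_g,+}\delta^g\big(2h_1^3-\tfrac14\tr(h_1^2)h_1\big).
\]
I then move $\tfrac34\{h_1,h_2-h_1^2\}$ to the left-hand side and use \eqref{tr-3}, so that the unknown becomes $-h_3+\tfrac32\{h_1,h_2-h_1^2\}-\tfrac32h_1^3$. Next I expand $\widetilde{\Delta}_E=\Delta_E-\delta^{\star_g}(2\delta^g+\di\tr)$, noting that on this tensor the trace equals $\tr\{h_1,h_2-h_1^2\}$ by Lemma \ref{tr-u3}. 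The $h_1^3$ and $\tr(h_1^2)h_1$ divergence terms then recombine. Because $\Delta_E$ preserves type and $h_1^3$ and $\{h_1,h_2-h_1^2\}$ are $J$-anti-invariant, the $\Delta_E$ term only sees $h_3^+-\tfrac32\{h_1,h_2-h_1^2\}$. The bookkeeping of the numerical coefficients, in particular $\tfrac38$ in both places, is the main obstacle. I expect it to produce exactly $2\delta^{\star_g,+}\mathbf{X}_3(g_t)$, and I would check it against the analogous second order computation of Section \ref{rev-O2}.

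For part (ii) I first invoke Proposition \ref{norm-2nd}. It produces a gauge with $\delta^gh_1$, $\mathbf{X}_2$ and $\mathbf{X}_3$ all gradients. The Einstein equations at orders $1$ and $2$ then restore \eqref{gauge-1}, as recalled after Theorem \ref{norm-thm}, so $\mathbf{X}_3(g_t)=\grad_g\varphi_3$. Taking the trace in (i), the $J$-invariant tensor $h_3^+-\tfrac32\{h_1,h_2-h_1^2\}$ has trace $\tr h_3^{+}=\tr h_3$, and by Lemma \ref{tr-u3} this equals $\tr(\tfrac32\{h_1,h_2-h_1^2\}+h_1^3)=\tfrac32\tr\{h_1,h_2-h_1^2\}$, since $\tr h_1^3=0$. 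So the left-hand side of (i) is trace free. On the other side, $\tr\Delta_E=\Delta\tr$ on an Einstein manifold, and $\tr\,\delta^{\star_g,+}\di\varphi_3=-\Delta\varphi_3$, since $\delta^{\star_g}\di\varphi$ is the Hessian of $\varphi$, which is $J$-invariant.

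Hence $\Delta\varphi_3=0$, so $\varphi_3$ is constant and $\mathbf{X}_3(g_t)=0$, which is \eqref{norm-3}. Then $\Delta_E(h_3^+-\tfrac32\{h_1,h_2-h_1^2\})=0$, and since $\ker\Delta_E\cap\Gamma(\Sym^{2,+}TM)=\{0\}$ for $E<0$, we conclude $h_3^+=\tfrac32\{h_1,h_2-h_1^2\}$.
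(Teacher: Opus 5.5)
Your proof follows the paper's own argument step for step. You pair \eqref{xxx3} with $H\in\Gamma(\Sym^{2,+}TM)$, use Proposition \ref{w+} and Lemma \ref{simp+}(ii), substitute $\mathbf{f}$ from Proposition \ref{f-vf}, and get (ii) from the gauge of Proposition \ref{norm-2nd}, the trace argument and $\ker\Delta_E\cap\Gamma(\Sym^{2,+}TM)=\{0\}$. The bookkeeping you left open does give exactly $2\delta^{\star_g,+}\mathbf{X}_3(g_t)$.

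Several individual justifications are wrong and should be repaired, though none changes the outcome.

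\textbf{The Hessian.} $\delta^{\star_g}\di\varphi$ is \emph{not} $J$-invariant. Its $J$-anti-invariant part is $\delta^{\star_g,-}\grad_g\varphi=-\tfrac12\L_{J\grad_g\varphi}J$, which the paper uses nontrivially (see \eqref{2nd-O}). The identity $\tr\,\delta^{\star_g,+}\di\varphi_3=-\Delta\varphi_3$ is still true, but for a different reason: $J$-anti-invariant symmetric endomorphisms are trace-free, so $\tr\delta^{\star_g,+}=\tr\delta^{\star_g}=-\di^{\star}$.

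\textbf{Trace of $\Delta_E$.} On an Einstein manifold $\tr\Delta_E=(\Delta-2E)\tr$, not $\Delta\tr$. This is harmless, since you apply it to a trace-free tensor.

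\textbf{Trace of the unknown in (i).} The tensor $-h_3+\tfrac32\{h_1,h_2-h_1^2\}-\tfrac32h_1^3$ is trace-free, as you yourself show in (ii). Your remark that its trace equals $\tr\{h_1,h_2-h_1^2\}$ is therefore wrong. Combined with $\tr\mathbf{u}_3=\tfrac34\tr\{h_1,h_2-h_1^2\}$, it would give the wrong coefficient $\tfrac18$.

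The cleanest way to do the bookkeeping avoids that trace altogether. Use $\widetilde{\Delta}_E+\delta^{\star_g}\di\tr=\Delta_E-2\delta^{\star_g}\delta^g$. After moving $\tfrac34\widetilde{\Delta}_E\{h_1,h_2-h_1^2\}$ to the left, the only trace term is $\tfrac34\delta^{\star_g}\di\tr\{h_1,h_2-h_1^2\}$. This term, together with $-\tfrac34\delta^g(\tr(h_1^2)h_1)$ coming from $3\mathbf{f}$, produces the two coefficients $\tfrac38=\tfrac12\cdot\tfrac34$ in $\mathbf{X}_3(g_t)$.
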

\begin{proof}
(i) From equation \eqref{xxx3} in Theorem \ref{thm-54} we get 
\begin{equation*}
\begin{split}
\la (\widetilde{\Delta}_E+\delta^{\star_g}\di\!\tr)(\tfrac{1}{3}\mathbf{u}_3), H \ra_{L^2}=&\la \mathbf{v}(h_1,h_2-h_1^2)-
\tfrac{1}{4}\{h_1,(\widetilde{\Delta}_E+2E+\delta^{\star}\di\tr )h_2\},H \ra_{L^2}\\
+&\la \bfv(h_1,h_1^2)+\mathbf{w}(h_1),H \ra_{L^2}
\end{split}
\end{equation*}
for all $H$ in $\Gamma \left (\Sym^{2,+}TM \right )$. Using Lemma \ref{simp+} to express the first line above and also Proposition \ref{w+} for the second line yields 
\begin{equation*}
\begin{split}
\la \widetilde{\Delta}_E \left ( \tfrac{1}{3}\mathbf{u}_3-\tfrac{1}{4}\{h_1,h_2-h_1^2\} \right ),H \ra_{L^2}=&\la 
\delta^{\star_g,+}\left ( \mathbf{f}+\delta^gh_1^3-\tfrac{1}{3}\di \tr \mathbf{u}_3 \right ), H \ra_{L^2}.
\end{split}
\end{equation*}
Now we expand $\widetilde{\Delta}_E=\Delta_E-\delta^{\star_g}(2\delta^g+\di\!\tr)$ and use \eqref{tr-3} which also grants 
that the component $\left ( \tfrac{1}{3}\mathbf{u}_3-\tfrac{1}{4}\{h_1,h_2-h_1^2\} \right )^{+}=
\tfrac{1}{3} \left (-h_3^{+}+\tfrac{3}{2}\{h_1,h_2-h_1^2\} \right )$. After taking into account 
that the Einstein operator $\Delta_E$ preserves tensor type the claim follows now after some algebraic computation from Proposition \ref{f-vf}. Note that for obtaining the terms involving $\di\!\tr$ in $\mathbf{X}_3(g)$ we use that 
$\tr(\mathbf{u}_3)=\tfrac{3}{4}\tr\{h_1,h_2-h_1^2\}$, as granted by \eqref{tr-3} and Lemma \ref{tr-u3}. \\
(ii) After operating a time dependent gauge transformation as provided by Proposition \ref{norm-2nd} we may assume that the family $g_t$ satisfies \eqref{gauge-1} as well as $\di\!\mathbf{X}_3(g_t)=0$. Since 
$h_3^{+}-\tfrac{3}{2}\{h_1,h_2-h_1^2\} $ is trace free by \eqref{tr-3}, applying the trace in (i) shows that $\tr \delta^{\star_g,+}\mathbf{X}_3(g_t)=0$. Since elements of $\Sym^{2,-}TM$ are trace free we have $\tr \delta^{\star_g,-}\mathbf{X}_3(g_t)=0$ hence we end up with 
$\tr \delta^{\star_g}\mathbf{X}_3(g_t)=0$. Because we have normalised such that $\mathbf{X}_3(g_t)=\di\!\varphi$ for some function 
$\varphi$ it follows that 
$\Delta^g \varphi=0$. Having $M$ compact ensures that $\varphi$ is constant and thus $\mathbf{X}_3(g_t)=0$. Finally, since $E<0$ the Einstein 
operator acting on $\Gamma \left (\Sym^{2,+}TM \right )$ has vanishing kernel, fact which forces $h_3^{+}-\tfrac{3}{2}\{h_1,h_2-h_1^2\} =0$.
\end{proof}
To finish this section we now show that the normalisation in \eqref{norm-3} is consistent with the component of the Einstein equation on $\Gamma \left (\Sym^{2,-}TM \right )$ described in 
\eqref{3-eqn} and allows bringing the latter to final form;  we explicitly relate equation \eqref{3-eqn} to the third order integrability of the almost complex structure $J_t$ defined by the Cayley transform. 
To show that the Einstein equation to third order is consistent with the gauge normalisation we will use the perturbed 
Kodaira-Spencer bracket (see \eqref{TT-defn} for the definition) as main tool.
\begin{teo}  \label{main-LL}
Assume that the Taylor coefficients $h_1,h_2, h_3$ of the Einstein deformation $g_t \in \mathscr{M}_1$ with 
$g^{-1}g_t=\id+th_1+\tfrac{t^2}{2!}h_2+\tfrac{t^3}{3!}h_3+o(t^4)$are normalised under the gauge group $\mathbf{G}$ according to $\delta^gh_1=0$ as well as \eqref{gauge-1} and \eqref{norm-3}. Then
\begin{itemize}
\item[(i)]
the component of the third order Einstein equation on $\Gamma \left (\Sym^{2,-}TM \right )$ reads 
\begin{equation} \label{thm11-i}
\widetilde{\Delta}_E^{-} \left (h_3^{-}-\tfrac{3}{2}h_1^3+\tfrac{3}{8}\tr(h_1^2)h_1 \right )=-6 \delta^g[h_1,h_2-h_1^2]^c_{\TT}
\end{equation}
\item[(ii)] the Taylor coefficient to third order in the Maurer-Cartan operator $\bdel \mathrm{C}_t-
[\mathrm{C}_t, \mathrm{C}_t]^c$ is harmonic, in other words the tensor 
$$ \bdel \mathrm{C}_3-6[\mathrm{C}_1,\mathrm{C}_2]^c=-\tfrac{1}{2} \left (\bdel (h_3^{-}-\tfrac{3}{2}h_1^3)+3[h_1,h_2-h_1^2]^c \right ) \in \ker \delta^g \cap \ker \bdel
$$
\item[(iii)] equation \eqref{thm11-i} is unobstructed as an equation in $h_3^{-}$; explicitly 
we must have 
\begin{equation*}
h_3^{-}-\tfrac{3}{2}h_1^3+\tfrac{3}{8}\tr(h_1^2)h_1=\mathbf{q}_3+\delta^{\star_g,-}\grad \mathbf{f}_3=\mathbf{q}_3-\tfrac{1}{2}\L_{J\grad \mathbf{f}_3}
\end{equation*}
where the pair $(\mathbf{q}_3,\mathbf{f}_3)$ in $\TT^{-}(M,g) \oplus C^{\infty}(M,g)$ satisfies 
$$ \Delta_E\mathbf{q}_3=-6\delta^g[h_1,h_2-h_1^2]^c_{\TT} \ \mathrm{and} \ (\Delta-2E)\mathbf{f}_3=-\tfrac{3}{4}\tr \{h_1,h_2-h_1^2\}.
$$
\end{itemize}
\end{teo}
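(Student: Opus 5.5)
Starting point: equation \eqref{3-eqn} from Theorem \ref{main1-bN}, which is the component of the third order Einstein equation on $\Gamma(\Sym^{2,-}TM)$, written in terms of $\mathbf{h}_3=-\tfrac{1}{3}h_3^{-}+\tfrac{1}{2}h_1^3$. Part (ii) of Theorem \ref{vmmm2} gives $h_3^{+}=\tfrac{3}{2}\{h_1,h_2-h_1^2\}$, so the term $\tfrac{2}{3}h_3^{+}-\{h_1,h_2-h_1^2\}$ inside $\delta^{\star_g,-}\delta^g(\cdots)$ in \eqref{3-eqn} vanishes. What remains is $\Delta_E\mathbf{h}_3-2\delta^{\star_g,-}\delta^g\mathbf{h}_3=\widetilde{\Delta}^{-}_E\mathbf{h}_3=2\delta^g[h_1,h_2-h_1^2]^c$. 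To get \eqref{thm11-i} I would multiply by $-3$ and then account for the two corrections: the term $\tfrac{3}{8}\tr(h_1^2)h_1$, and the difference between $[\cdot,\cdot]^c$ and $[\cdot,\cdot]^c_{\TT}$.

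For the second correction, polarise \eqref{TT-defn}. Since $\delta^gh_1=0$ and $\delta^g(h_2-h_1^2)=-\tfrac14\di\tr h_1^2$, one gets
\[
2[h_1,h_2-h_1^2]^c_{\TT}-2[h_1,h_2-h_1^2]^c=-\tfrac14\bigl(\di\tr h_1^2\wedge h_1-J\di\tr h_1^2\wedge Jh_1\bigr)\cdot\tfrac12 .
\]
Its divergence then has to be matched with $-\tfrac{3}{8}\widetilde{\Delta}^{-}_E(\tr(h_1^2)h_1)$. For that matching I would write $\widetilde{\Delta}^{-}_E(fh)$ for $f=\tr h_1^2$ and $h=h_1\in\varepsilon^-(g)$ using Proposition \ref{wz_K}(ii): $\widetilde\Delta^{-}_E=2\delta^g\bdel-\di^{-}\delta^g$ on the $\Sym^{2,-}$ component. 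Then $\bdel(fh)=(\di f\wedge h)^{\lambda^2_-}$, because $\bdel h=0$, and $\delta^g(fh)=-h\,\grad f$. Comparing these two expressions is a direct tensorial computation. Checking it, and in particular getting the numerical constants right, is the first routine but error-prone step.

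For (ii), the formula expressing $\bdel \mathrm{C}_3-6[\mathrm{C}_1,\mathrm{C}_2]^c$ through the $h_i$ is just \eqref{cay-co}. I would verify closedness and co-closedness as follows.
\begin{itemize}
\item[$\bullet$] Closedness: $\bdel\bdel=0$, together with the graded Jacobi identity $\bdel[h_1,h_2-h_1^2]^c=-[h_1,\bdel(h_2-h_1^2)]^c=[h_1,[h_1,h_1]^c]^c=0$, using the obstruction equation $\bdel(h_2-h_1^2)=-[h_1,h_1]^c$ from Theorem \ref{main1-bN}. I would quote the Kodaira–Spencer algebra identities for this rather than reprove them.
\item[$\bullet$] Co-closedness: apply $\delta^g$ and use Proposition \ref{wz_K}(ii). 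Then $\delta^g\bdel(\cdot)=\tfrac12\widetilde\Delta^{-}_E(\cdot)-\tfrac12\di^{-}\delta^g(\cdot)$ as a decomposition into the symmetric part and the $\lambda^2M$ part.
\end{itemize}
The symmetric part vanishes by (i) together with the identity relating the $\tr(h_1^2)h_1$ correction to the $\TT$-correction above. The $\lambda^2M$ part is $\di^{-}$ of $\delta^g(h_3^{-}-\tfrac32 h_1^3)$ combined with the skew part of $\delta^g[h_1,h_2-h_1^2]^c$. The gauge condition \eqref{norm-3}, with $h_3^+$ known, expresses $\delta^g(h_3^{-}-\tfrac32h_1^3)$ as an exact form plus $\delta^g$ of explicit $J$-invariant terms. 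I expect these to cancel against the skew divergence of the bracket, as in Proposition \ref{del-TT}. This cancellation is the main obstacle. I would handle it by computing $\delta^g$ of the $\TT$-bracket identity in Proposition \ref{del-TT}(i), whose divergence is symmetric, in order to identify the skew divergence of $[h_1,h_2-h_1^2]^c$ explicitly.

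For (iii), decompose $\Gamma(\Sym^{2,-}TM)=\TT^-\oplus\{\delta^{\star_g,-}X\}$ and recall that $\delta^{\star_g,-}\grad f=-\tfrac12\L_{J\grad f}J$ by \eqref{delta-K}. By Proposition \ref{del-TT}(i) the right hand side of \eqref{thm11-i} lies in $\TT^-$. Since $E<0$, $\Delta_E$ is invertible on $\TT^-$ modulo $\varepsilon^-(g)$, and the right hand side is orthogonal to $\varepsilon^-(g)$: pairing with $k\in\varepsilon^-$ gives $\la[h_1,h_2-h_1^2]^c,\bdel k\ra=0$. Hence $\mathbf{q}_3$ exists. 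The gradient part $\delta^{\star_g,-}\grad\mathbf{f}_3$ is fixed by the divergence, which is known from \eqref{norm-3}. Taking the trace via Lemma \ref{tr-00}, \eqref{wz2} and Lemma \ref{tr-u3} reduces it to the scalar equation $(\Delta-2E)\mathbf{f}_3=-\tfrac34\tr\{h_1,h_2-h_1^2\}$, which is uniquely solvable because $E<0$. This mirrors the second order construction \eqref{2nd-O}, \eqref{2nd-034}.
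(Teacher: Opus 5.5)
Your treatment of (i) and (ii) follows the paper. It uses \eqref{3-eqn} with $h_3^{+}=\tfrac32\{h_1,h_2-h_1^2\}$, the comparison $[h_1,h_2-h_1^2]^c_{\TT}=[h_1,h_2-h_1^2]^c-\tfrac18\bdel(\tr(h_1^2)h_1)$, Proposition \ref{wz_K}(ii), and the Jacobi identity for closedness.

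The cancellation you call the main obstacle in (ii) is in fact immediate from that same comparison formula. Put $X:=h_3^{-}-\tfrac32h_1^3+\tfrac38\tr(h_1^2)h_1$. Then $\bdel(h_3^{-}-\tfrac32h_1^3)+3[h_1,h_2-h_1^2]^c=\bdel X+3[h_1,h_2-h_1^2]^c_{\TT}$, and its divergence is $\tfrac12\widetilde\Delta^-_EX+3\delta^g[h_1,h_2-h_1^2]^c_{\TT}-\tfrac12\di^{-}\delta^gX$. The first two terms cancel by (i). The last vanishes because \eqref{norm-3} makes $\delta^gX=-\tfrac38\di\tr\{h_1,h_2-h_1^2\}$ exact. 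A small correction: the extra term is $\delta^g$ of the $J$-anti-invariant tensor $\tr(h_1^2)h_1$, not of $J$-invariant terms.

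\textbf{The gap is in (iii).} The statement concerns the given tensor $X$. Your argument only produces \emph{some} $\mathbf q\in\TT^-(M,g)$ with $\Delta_E\mathbf q=-6\delta^g[h_1,h_2-h_1^2]^c_{\TT}$, unique modulo $\varepsilon^-(g)$. It never shows that the $\TT^-$-component of $X$ is such a solution.

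From (i) this requires two things about the non-$\TT$ part of $X$. It must be of the form $\delta^{\star_g,-}\grad f$, and it must be annihilated by $\widetilde\Delta^-_E$. The missing ingredients are
\[
\delta^g\delta^{\star_g,-}\grad f=\tfrac12\di(\Delta-2E)f \quad\text{and}\quad \Delta_E\delta^{\star_g,-}=\delta^{\star_g,-}(\Delta-2E).
\]
Choose $\mathbf f_3$ with $(\Delta-2E)\mathbf f_3=-\tfrac34\tr\{h_1,h_2-h_1^2\}$ and set $\mathbf q_3:=X-\delta^{\star_g,-}\grad\mathbf f_3$.
\begin{itemize}
\item By the first identity and \eqref{norm-3}, $\delta^g\mathbf q_3=0$.
\item By both identities, $\widetilde\Delta^-_E\delta^{\star_g,-}\grad\mathbf f_3=\delta^{\star_g,-}\grad(\Delta-2E)\mathbf f_3-\delta^{\star_g,-}\di(\Delta-2E)\mathbf f_3=0$.
\end{itemize}
Hence (i) becomes exactly $\Delta_E\mathbf q_3=-6\delta^g[h_1,h_2-h_1^2]^c_{\TT}$, which is the paper's route.

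The scalar equation for $\mathbf f_3$ comes from this divergence identity, not from a trace. Every element of $\Gamma(\Sym^{2,-}TM)$ is trace free, and the trace argument was already used to prove $\mathbf X_3(g_t)=0$, which is now a hypothesis.

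Your observation that the right-hand side is $L^2$-orthogonal to $\varepsilon^-(g)$ is correct, and it makes the word ``unobstructed'' more explicit than the paper does. It does not, however, replace the identification step above.
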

\begin{proof}
(i) Since $\tfrac{2}{3}h_3^{+}-\{h_1,h_2-h_1^2\}=0$  the component on $\Gamma \left (\Sym^{2,-}TM \right )$ of the third order Einstein equation reads 
\begin{equation} \label{3-eqnN}
\begin{split}
\la \widetilde{\Delta}_E^{-}\mathbf{h}_3,H \ra_{L^2}&=2\la \delta^g[h_1,h_2-h_1^2]^c,H \ra_{L^2}
\end{split}
\end{equation}
for all $H \in \Gamma \left (\Sym^{2,-}TM \right )$. 

To bring this to final form we taken into the definition of the perturbed Kodaira-Spencer bracket 
from \eqref{TT-defn} and expand 
$$[h_1,h_2-h_1^2]^c_{\TT}=[h_1,h_2-h_1^2]^c+\tfrac{1}{4} \left (\delta^g(h_2-h_1^2) \wedge h_1-\delta^gJ(h_2-h_1^2) \wedge Jh_1 \right )$$
by also using that $h_1$ and $Jh_1$ are divergence free. Because $\delta^g(h_2-h_1^2)=-\tfrac{1}{4}\di\!\tr(h_1^2)$ by \eqref{gauge-1} and also 
$\bdel h_1=0$ this leads to the comparison formula 
\begin{equation} \label{comp-TT1}
[h_1,h_2-h_1^2]^c_{\TT}=[h_1,h_2-h_1^2]^c-\tfrac{1}{8}\bdel (\tr(h_1^2)h_1).
\end{equation}
Thus the symmetric component in $\delta^g[h_1,h_2-h_1^2]^c$ is given by 
$$\la \delta^g[h_1,h_2-h_1^2]^c_{\TT}+\tfrac{1}{8}\delta^g \bdel (\tr(h_1^2)h_1),H 
\ra_{L^2}=\la \delta^g[h_1,h_2-h_1^2]^c_{\TT}+\tfrac{1}{16}
\widetilde{\Delta}_E^{-} (\tr(h_1^2)h_1), H \ra_{L^2}
$$ 
by using part (ii) in Proposition \ref{wz_K}. Since $\mathbf{h}_3=-\tfrac{1}{3}h_3^{-}+\tfrac{1}{2}h_1^3$ an algebraic manipulation shows that 
\begin{equation*}
\la \widetilde{\Delta}_E^{-} \left (h_3^{-}-\tfrac{3}{2}h_1^3+\tfrac{3}{8}\tr(h_1^2)h_1 \right ),H \ra_{L^2}=-6\la \delta^g[h_1,h_2-h_1^2]^c_{\TT}, H \ra_{L^2}.
\end{equation*}
Since $\delta^g[h_1,h_2-h_1^2]^c_{\TT}$ is a $\TT$-tensor by (i) in Proposition \ref{del-TT}, in particular it is symmetric, the claim is fully proved.\\
(ii) Recall that $\bdel h_1=0$ as well as $\bdel (h_2-h_1^2)=-[h_1,h_1]^c$. From the general properties of the Kodaira-Spencer bracket (see e.g. \cite{Huy}) we have $\bdel [h_1,h_2-h_1^2]^c=-[h_1,[h_1,h_1]^c]^c=0$. 
Thus $\bdel [h_1,h_2-h_1^2]^c_{\TT}=0$ as well, as entailed by \eqref{comp-TT1}. Next we recall that 
$$ \delta^g\left (h_3^{-}-\tfrac{3}{2}h_1^3+\tfrac{3}{8}\tr(h_1^2)h_1 \right )=-\tfrac{3}{8}\di\!\tr \{h_1,h_2-h_1^2\}
$$
as entailed by \eqref{norm-3}; in particular $ \di^{-} \delta^g\left (h_3^{-}-\tfrac{3}{2}h_1^3+\tfrac{3}{8}\tr(h_1^2)h_1 \right )=0.$ Thus by using successively \eqref{comp-TT1}, the Weitzenb\"ock formula in Proposition 
\ref{wz_K}, (i) and also part (i) we find that 
\begin{equation*}
\begin{split}
&\delta^g\left (\bdel (h_3^{-}-\tfrac{3}{2}h_1^3)+3[h_1,h_2-h_1^2]^c \right ) \\
=& \delta^g \bdel \left (h_3^{-}-\tfrac{3}{2}h_1^3+\tfrac{3}{8}\tr(h_1^2)h_1 \right )+3[h_1,h_2-h_1^2]^c_{\TT}=0.
\end{split}
\end{equation*}
The claim on the Maurer-Cartan operator follows from the expression for the Taylor coefficients of the Cayley transform in \eqref{cay-co}.\\
(iii) Recall the general formula $\Delta_E \delta^{\star_g,-}X=\delta^{\star_g,-}(\Delta-2E)X$ for all $X \in \Gamma (TM)$. Thus letting $\mathbf{f}_3$ solve the equation $(\Delta-2E)\mathbf{f}_3=-\tfrac{3}{4}\tr \{h_1,h_2-h_1^2\}$(which is always possible since $E<0$) we use \eqref{norm-3} to see that equation \eqref{thm11-i} entails 
$ \Delta_E \mathbf{q}_3=-6\delta^g[h_1,h_2-h_1^2]^c_{\TT}
$ where 
$\mathbf{q}_3:=h_3^{-}-\tfrac{3}{2}h_1^3+\tfrac{3}{8}\tr(h_1^2)h_1-\delta^{\star_g,-}\grad \mathbf{f}_3 $. The claim follows from the fact that $\delta^g[h_1,h_2-h_1^2]^c_{\TT}$ belongs to $\TT^{-}(M,g)$ combined with 
the fact that the Einstein operator preserves the space $\TT^{-}(M,g)$.
\end{proof}


\begin{thebibliography}{99}
\bibitem{Al-Se}
B.Alexandrov, Uwe Semmelmann, 
\textit{Deformations of nearly parallel $\mathrm{G}_2$-structures}, 
Asian J. Math. {\bf{16}} (2012), 713--744. 

\bibitem{Ba}
Wafaâ Batat, Stuart James Hall, Thomas Murphy, James Waldron, \textit{Rigidity of $SU_n$-type symmetric spaces}, 
Int. Math. Res. Not. IMRN 2024, no. 3, 2066--2098.

\bibitem{Besse}
A.Besse, \textit{Einstein manifolds}, Classics in Mathematics, Springer-Verlag, Berlin, 2008.

\bibitem{bourg}
Jean-Pierre  Bourguignon, 
 \textit{Les varietes de dimension 4 a signature non nulle dont la courbure est harmonique sont d'Einstein},
Invent. Math. {\bf 63} (1981), no. 2, 263--286. 

\bibitem{Dai}
Xianzhe Dai, Xiaodong Wang, Guofang Wei, \textit{
On the variational stability of K\"ahler-Einstein metrics}, 
Comm. Anal. Geom. {\bf{15}} (2007), no. 4, 669--693.

\bibitem{deBMa}
P.de Bartolomeis, V.Matveev, \textit{Some remarks on Nijenhuis bracket, formality, and K\"ahler
manifolds}, Adv. Geom. {\bf{13}} (2013), no. 4, 571--581.

\bibitem{HaSe}
Stuart James Hall, Paul Schwahn, Uwe Semmelmann, \textit{On the rigidity of the complex Grassmannians}, 
Trans. Amer. Math. Soc. {\bf{378}} (2025), no. 6, 4335--4367.

\bibitem{AU}
Konstantin Heil, Andrei Moroianu, Uwe Semmelmann,
\textit{Killing and conformal Killing tensors},
J. Geom. Phys. {\bf 106} (2016), 383--400.

\bibitem{Hori}
E. Horikawa, \textit{Algebraic surfaces of general type with small $c_1^2$,I}, Ann. Math. {\bf{104}} (1976) (2), 357--387.

\bibitem{Huy}
Daniel Huybrechts, \textit{Complex Geometry. An introduction}, Universitext, Springer-Verlag, Berlin, 2005.

\bibitem{Koiso}
Norihito Koiso, 
\textit{Rigidity and deformability of Einstein metrics},
Osaka Math. J. {\bf{19}} (1982), 643--668.

\bibitem{Koiso-I}
Norihito Koiso, \textit{Einstein metrics and complex structures}, 
Invent. Math. {\bf{73}} (1983), no. 1, 71--106.

\bibitem{Kollar}
Ivan Kol\'ar, Peter Michor, Jan Slov\'ak, \textit{Natural operations in differential geometry}, Springer-Verlag, Berlin, 1993.

\bibitem{LeB-1}
C. LeBrun, \textit{Einstein metrics and Mostow rigidity}, Math. Res. Lett. {\bf{2}} (1995), 1--8. 

\bibitem{LeB-2}
C. LeBrun, \textit{Einstein metrics and the Yamabe problem}, in: Trends in Mathematical Physics,
Studies in Adv. Math. 13, AMS/IP (1999).

\bibitem{MNS}
Andrei Moroianu, Paul-Andi Nagy, Uwe Semmelmann, \textit{Deformations of nearly-K\"ahler structures}, Pacific Journal of Mathematics {\bf{235}} (2008), no.1, 57--72.

\bibitem{NSE_LMS}
Paul-Andi Nagy, Uwe Semmelmann, \textit{Deformations of nearly $\mathrm{G}_2$-structures}, J.London Math.Soc. (2) {\bf{104}} (2021), 1795--1811.

\bibitem{NS-G2}
Paul-Andi Nagy, Uwe Semmelmann, \textit{The $\mathrm{G}_2$ geometry of 3-Sasaki structures},
J. Geom. Anal. {\bf{34}} (2024), no. 2, Paper No. 61, 53 pp.

\bibitem{NS-E}
Paul-Andi Nagy, Uwe Semmelmann, \textit{Second order Einstein deformations}, 
J. Math. Soc. Japan {\bf{77}} (2025), no. 2, 345--389.

\bibitem{N1}
Paul-Andi Nagy, \textit{Einstein deformations of K\"ahler-Einstein metrics}, \url{https://arxiv.org/abs/2603.09028}.

\bibitem{T1}
Gang Tian, \textit{
Smoothness of the universal deformation space of compact Calabi-Yau manifolds and its Petersson-Weil metric},  Mathematical aspects of string theory (San Diego, Calif., 1986), 629--646,
Adv. Ser. Math. Phys., 1, World Sci. Publishing, Singapore, 1987.

\bibitem{T2}
Andrey N. Todorov, \textit{
The Weil-Petersson geometry of the moduli space of $SU(n \geq 3)$ (Calabi-Yau) manifolds. I.}, 
Comm. Math. Phys. 126 (1989), no. 2, 325--346.

\bibitem{S-Sw}
Paul Schwahn, Uwe Semmelmann, \textit{Einstein metrics, their moduli spaces and stability}, \url{https://arxiv.org/pdf/2507.18463}.
\end{thebibliography}
\end{document}